\newcommand{\dsp}{\hspace{0.2mm}}
\newcommand{\msp}{\hspace{-0.3mm}}
\newcommand{\numberset}{\mathbb}
\newcommand{\N}{\numberset{N}}
\newcommand{\R}{\numberset{R}}
\newcommand{\T}{\numberset{T}}
\newcommand{\F}{\mathcal{F}}  
\newcommand{\dist}{\text{dist}} 
\newcommand{\id}{\text{I}}
\newcommand{\tr}{\text{tr}} 
\newcommand{\mix}{\text{mix}}  
\newcommand{\canc}{\text{canc}}  
\DeclareMathOperator{\p}{\numberset{P}} 
\DeclareMathOperator{\meas}{meas}
\DeclareMathOperator{\sgn}{\text{sgn}} 
\DeclareMathOperator{\ueta}{u^{\eta,\epsilon}} 
\DeclareMathOperator{\chieta}{\chi^{\eta,\epsilon}} 
\DeclareMathOperator{\peta}{p^{\eta,\epsilon}} 
\DeclareMathOperator{\qeta}{q^{\eta,\epsilon}} 
\theoremstyle{definition}
\newtheorem{definition}{Definition}[section]
\newtheorem{theorem}[definition]{Theorem}
\newtheorem{lemma}[definition]{Lemma}
\newtheorem{corollary}[definition]{Corollary}
\newtheorem{proposition}[definition]{Proposition}
\newtheorem{remark}[definition]{Remark}
\newtheorem*{notation*}{Notation}
\newtheorem*{definition*}{Definition}
\newtheorem*{theorem*}{Theorem}
\newtheorem*{lemma*}{Lemma}
\newtheorem*{corollary*}{Corollary}
\newtheorem*{proposition*}{Proposition}
\newtheorem*{fact*}{Fact}
\newtheorem*{example*}{Example}
\newtheorem*{claim*}{Claim}
\newtheorem*{remark*}{Remark}
\newtheorem*{conjecture*}{Conjecture}
\numberwithin{equation}{section}
\title{Porous media equations with nonlinear gradient noise and Dirichlet boundary conditions}
\author{Andrea Clini \thanks{Mathematical Institute, University of Oxford, Oxford OX2 6GG, UK (andrea.clini@maths.ox.ac.uk)}}
\date{\today} 
\begin{document}

\maketitle

\begin{abstract}
    We establish pathwise existence of solutions for stochastic porous media and fast diffusion equations of type \eqref{formula/stochastic porous media equation}, in the full regime $m\in(0,\infty)$ and for any initial data $u_0\in L^2(Q)$.
    Moreover, if the initial data is positive, solutions are pathwise unique. 
    In turn, the solution map to \eqref{formula/stochastic porous media equation} is a continuous function of the driving noise and it generates an associated random dynamical system.
    Finally, in the regime $m\in\{1\}\cup(2,\infty)$, all the aforementioned results also hold for signed initial data.
\end{abstract} 




\section{Introduction}\label{introduction}

In this paper we consider stochastic porous media and fast diffusion equations with nonlinear, conservative noise of the form
\begin{align}[left ={\empheqlbrace}]\label{formula/stochastic porous media equation}
    \begin{aligned}
        &\partial_tu =\Delta (|u|^{m-1}u)+\nabla\cdot(A(x,u)\circ dz_t) &\text{in } Q\times(0,\infty),
        \\
        &u=0 &\text{on } \partial Q\times (0,\infty),
        \\
        &u=u_0 &\text{on }  Q\times \{0\},
    \end{aligned}
\end{align}
for a diffusion exponent $m\in(0,\infty)$, initial data $u_0\in L^2(Q)$, and an $n$-dimensional, $\alpha$-H\"older continuous geometric rough path $z$, which in particular covers the case where $z$ is an $n$-dimensional Brownian motion.
The domain $Q$ is a smooth bounded domain in $\R^d$.
The matrix valued nonlinearity  
\begin{equation}
    A(x,\xi):Q\times\R\to\R^{d\times n} \nonumber
\end{equation}
is assumed to be regular, with regularity dictated by the regularity of the rough path $z$.

We establish path-by-path existence and uniqueness of \eqref{formula/stochastic porous media equation} for positive initial data $u_0\in L^2_+(Q)$ in the full regime $m\in (0,\infty)$ (cf. Theorems \ref{theorem/uniqueness of pathwise kinetic solutions for nonnegative initial data} and \ref{theorem/existence of pathwise kinetic solutions for signed data}), in terms of the central notion of \emph{pathwise kinetic solution} (cf. Definition \ref{definition/pathwise kinetic solution}).
Moreover, we show the solution map to \eqref{formula/stochastic porous media equation} generates a random dynamical system associated to the equation (cf. Theorem \ref{theorem/random dynamical system for nonnegative initial data}) and is a continuous function of the noise (cf. Theorem \ref{theorem/continuous dependence on the noise for nonnegative initial data}).
This has consequences on the support properties of the solution to \eqref{formula/stochastic porous media equation} around the solution of its deterministic version \eqref{formula/deterministic porous media equation 0} (cf. Remark \ref{remark/support properties of the solution}).
Finally, the existence of solutions to \eqref{formula/stochastic porous media equation} is in fact proved for any signed data $u_0\in L^2(Q)$ and in the full regime $m\in(0,\infty)$ (cf. Theorem \ref{theorem/existence of pathwise kinetic solutions for signed data}). In particular, when $m=1$ or $m\in(2,\infty)$, all the aforementioned results extend to general signed data (cf. Theorem \ref{theorem/extension of the results to signed initial data}).

Stochastic porous media equations of this type arise, for example, as a continuum limit of mean-field stochastic differential equations with common noise \cite{Coghi-gess,kurtz-xiong-particle-representation}, with notable relation to the theory of mean field games \cite{lasry-lions-mean-field-games,lasry-lions-jeux-a-champ-moyenne-la-cas-stationnaire}; or in the graph formulation of the stochastic mean curvature and curve shortening flow \cite{sarhir-renesse-ergodicity-of-stochastic-curve-shortening,dirr-novaga-luckaus-a-stochastic-selection,souganidis-yip-uniqueness-of-motion-by-mean-curvature}.
Moreover, equation \eqref{formula/stochastic porous media equation} can be regarded as an approximate model for the fluctuating hydrodynamics of the zero-range particle process around its hydrodynamic limit \cite{dirr-stamakis-zimmer,fehrman-gess-large-deviations-for-conservative,ferrari-presutti-vares--nonequilibrium-fluctuations-for-a-zero}, as an approximation to the Dean-Kawasaki equation arising in fluid dynamics \cite{marconi-tarazona-dynamical-density-functional,dean-langevin-equation,kawasaki-microscopic-analyses,Cornalba-A-Regularized-Dean-Kawasaki-Model:-Derivation-and-Analysis,fehrman-gess-Well-posedness-of-the-Dean-Kawasaki-and-the-nonlinear-Dawson-Watanabe-equation-with-correlated-noise}, and as a model for thin films of Newtonian fluids with negligible surface tension \cite{grun-thin-film-flow}.
We refer to \cite[Section 1.2]{fehrman-gess-Well-posedness-of-the-Dean-Kawasaki-and-the-nonlinear-Dawson-Watanabe-equation-with-correlated-noise} and \cite[Section 1.1]{fehrman-gess-Wellposedness-of-Nonlinear-Diffusion-Equations-with-Nonlinear-Conservative-Noise} for more details on these applications and the references therein.

Our approach to \eqref{formula/stochastic porous media equation} is crucially based on the work \cite{fehrman-gess-Wellposedness-of-Nonlinear-Diffusion-Equations-with-Nonlinear-Conservative-Noise}, where Fehrman and Gess proved analogous results for this equation posed on the $d$-dimensional torus with periodic boundary conditions.
In turn, this was motivated by the works of Lions, Perthame and Souganidis \cite{lions-perthame-souganidis-Scalar-conservation-laws-with-rough-(stochastic)-fluxes,lions-perthame-souganidis-Scalar-conservation-laws-with-rough-(stochastic)-fluxes-the-spatially-dependent-case}, and Gess and Souganidis \cite{gess-souganidis-Scalar-conservation-laws-with-multiple-rough-fluxes,gess-souganidis-Long-Time-Behavior-Invariant-Measures-and-Regularizing-Effects-for-Stochastic-Scalar-Conservation-Laws,gess-souganidis-stochastic-nonisotropic} on stochastic conservation laws and simpler versions of \eqref{formula/stochastic porous media equation}.
The method is essentially based on passing to the equation's kinetic formulation, introduced by Chen and Perthame \cite{chen-perthame-Well-posedness-for-non-isotropic-degenerate-parabolic-hyperbolic-equations,perthame-kinetic-formulation-of-conservation-laws}, for which the noise enters as a linear transport, and then on analytic techniques and rough path analysis.

The main difficulty with respect to the periodic case \cite{fehrman-gess-Wellposedness-of-Nonlinear-Diffusion-Equations-with-Nonlinear-Conservative-Noise} is the handling of the Dirichlet boundary conditions.
The imposition of boundary conditions is a challenging problem.
In the stochastic case, even in the simple linear one-dimensional case $\partial u=\partial_{xx}u+(\partial_xu)\circ \dot{z}$, it is nontrivial \cite{krylov_brownian_trajectory} and in fact not always possible to enforce them in a classical sense.
The conditions can be recast in a weak sense, but even in the deterministic case, this is notoriously difficult for nonlinear equations.

In our case, depending on the diffusion exponent $m\in(0,\infty)$, we obtain $H^1$-regularity for the power $|u|^{m-1}u$ and impose the zero boundary conditions by requiring the trace of $|u|^{m-1}u$ to vanish.
The behaviour of the noise at the boundary is controlled in terms of the Dirichlet conditions imposed on the solution.
This requires a sharp ad-hoc treatment depending on the particular exponent $m\in(0,\infty)$ and the observation of crucial cancellations among the boundary error terms.

The well-posedness of \eqref{formula/stochastic porous media equation} has been an open question for long time, even in the \emph{probabilistic} (i.e. non patwhise) setting and even in the case $z_t$ is given by a Brownian motion $B_t$.
Generalized stochastic porous media equations of the form 
\begin{equation}
    d\,u=\Delta\phi(u) \,dt + \sigma(x,u)\,dB_t \nonumber
\end{equation}
have attracted considerable interest and their well-posedness has been obtained for several classes of nonlinearities $\phi$, noise coefficients $\sigma(x,u)$ and boundary conditions.
We refer to the monographs \cite{barbu-daprato-rockner-stochastic-porous-media,liu-rockner-Stochastic-Partial-Differential-Equations:-An-Introduction}, and to \cite{fehrman-gess-Path-by-path-well-posedness-of-nonlinear-diffusion-equations-with-multiplicative-noise,gess-hofmanova-weel-posedness-and-regularity-for-quasilinear,Debussche-Hofmanova-Vovelle-degenerate-parabolic-stochastic,barbu-rockner-nonlinear-fokker-planck-equations-driven,barbu-rockner-an-operatorial-approach,bauzet-vallet-wittbold-a-degenerate-parabolic-hyperbolic} for recent contributions.
To some extent, the case of a \emph{linear} gradient noise, that is $A(x,u)=h(x)u$ in \eqref{formula/stochastic porous media equation}, could be treated with similar methods (cf. \cite{dareiotis-gess-supremum-estimates,munteanu-rockner-the-total-variation-flow,Tolle-estimates-for-nonlinear}).
However, the nonlinear structure of the gradient noise in \eqref{formula/stochastic porous media equation} requires entirely different techniques.

The aforementioned works \cite{lions-perthame-souganidis-Scalar-conservation-laws-with-rough-(stochastic)-fluxes,lions-perthame-souganidis-Scalar-conservation-laws-with-rough-(stochastic)-fluxes-the-spatially-dependent-case,gess-souganidis-Scalar-conservation-laws-with-multiple-rough-fluxes,gess-souganidis-Long-Time-Behavior-Invariant-Measures-and-Regularizing-Effects-for-Stochastic-Scalar-Conservation-Laws,gess-souganidis-stochastic-nonisotropic} started developing a kinetic approach to tackle scalar conservation laws and simplified versions of \eqref{formula/stochastic porous media equation}.
The work of Fehrman and Gess \cite{fehrman-gess-Wellposedness-of-Nonlinear-Diffusion-Equations-with-Nonlinear-Conservative-Noise} was the first to prove similar results for \eqref{formula/stochastic porous media equation} in the pathwise context, at the cost of high regularity assumptions on the noise coefficient needed to overcome the roughness of the signal (cf. assumption \eqref{formula/assumption on A smooth} below).
Dareiotis and Gess \cite{Dareioris-gess-Nonlinear-diffusion-equations-with-nonlinear-gradient-noise} then managed to lower the regularity assumptions up to $A\in C^3_b(\T^d\times\R)$ and proved well-posedness of \eqref{formula/stochastic porous media equation} with periodic boundary conditions, in the probabilistic sense.
Furthermore, Fehrman and Gess \cite{fehrman-gess-Well-posedness-of-the-Dean-Kawasaki-and-the-nonlinear-Dawson-Watanabe-equation-with-correlated-noise} proved probabilistic well-posedness of \eqref{formula/stochastic porous media equation} with periodic boundary conditions, when $z_t$ is a Brownian motion and, for example, when $A(x,u)=f(x)\sqrt{u}$ with $f\in C_b^2(\T^d)$.

Despite requiring higher regularity, the interest in pathwise results is nonetheless retained and twofold.
First, it is well-known that solutions to stochastic differential equations do not depend continuously on the driving noise (see for instance \cite{lyons-on-the-nonexistence-of-path-integrals}), and even more so if the noise coefficient is a nonlinear function of the solution itself.
However, the continuity of the solution can be recovered by means of a finer rough path topology \cite{Lyons-differential-equations-driven-by-rough-signals}.
In the same fashion, Theorem \ref{theorem/continuous dependence on the noise for nonnegative initial data} below establishes the continuous dependence in the noise of the solution map to \eqref{formula/stochastic porous media equation}.

Secondly, the pathwise nature of the existence and uniqueness Theorems \ref{theorem/uniqueness of pathwise kinetic solutions for nonnegative initial data} and \ref{theorem/existence of pathwise kinetic solutions for signed data} immediately implies the existence of a random dynamical system associated to \eqref{formula/stochastic porous media equation}, stated in Theorem \ref{theorem/random dynamical system for nonnegative initial data}.
This is a notoriously difficult problem (see e.g. \cite{Flandoli-regularity-theory-and-stochastic-flows-for-parabolic-spdes,mohammed-the-stable-manifold-theorem,gess-Random-Attractors-for-Degenerate-Stochastic-Partial-Differential-Equations}).
Even in the linear case $m=1$, the existence of a random dynamical system for a nonlinear SPDE with nonlinear $x$-dependent noise could not be proved before \cite{fehrman-gess-Wellposedness-of-Nonlinear-Diffusion-Equations-with-Nonlinear-Conservative-Noise}.
Indeed, all the aforementioned works on \eqref{formula/stochastic porous media equation} in the probabilistic setting could not obtain these two consequences.

\subsection{Structure of the paper}
The material is organized as follows.
We end Section \ref{introduction} with an overview of the methods and the arguments employed in this work.
In Section \ref{section/preliminaries and main results} we introduce our hypotheses and notations, then we present the main results of the paper.
In Section \ref{section/definition and motivation of pathwise kinetic solutions} we bring forward the kinetic formulation of the equation; after analyzing the associated system of characteristics, we motivate and present the definition of pathwise kinetic solution.
In Section \ref{section/uniqueness of pathwise kinetic solutions} we prove the uniqueness of solutions to \eqref{formula/stochastic porous media equation}.
Section \ref{section/existence of pathwise kinetic solutions} is devoted to the proof of existence of solutions to \eqref{formula/stochastic porous media equation} and of their continuous dependence on the driving noise.
In Section \ref{section/rough path estimates} we present some stability results from the theory of rough paths and we gather some estimates needed throughout the paper.

\subsection{Overview of the methods}\label{section/overview of the methods}
The methods of this paper build upon the kinetic approach put forward in \cite{lions-perthame-souganidis-Scalar-conservation-laws-with-rough-(stochastic)-fluxes,lions-perthame-souganidis-Scalar-conservation-laws-with-rough-(stochastic)-fluxes-the-spatially-dependent-case,gess-souganidis-Scalar-conservation-laws-with-multiple-rough-fluxes,gess-souganidis-Long-Time-Behavior-Invariant-Measures-and-Regularizing-Effects-for-Stochastic-Scalar-Conservation-Laws,gess-souganidis-stochastic-nonisotropic,fehrman-gess-Wellposedness-of-Nonlinear-Diffusion-Equations-with-Nonlinear-Conservative-Noise,fehrman-gess-Path-by-path-well-posedness-of-nonlinear-diffusion-equations-with-multiplicative-noise} for stochastic conservation laws and variants of the stochastic porous media equation.
The aim of Section \ref{section/definition and motivation of pathwise kinetic solutions} is to explain the pathwise and kinetic approach to the \eqref{formula/stochastic porous media equation}, and to derive and motivate the central notion of \emph{pathwise kinetic solution}.
First, we pass to the kinetic formulation of the PDE \cite{chen-perthame-Well-posedness-for-non-isotropic-degenerate-parabolic-hyperbolic-equations}.
This is an equation in $d+2$ variables: the time and space variables $t$ and $x$, and an additional \emph{velocity} variable $\xi$, which corresponds to the magnitude of the solution.
The interest in such formulation of \eqref{formula/stochastic porous media equation} is that now the noise enters the equation as a linear transport.
The transport is well-defined for rough driving signals, when the underlying system is interpreted as a rough differential equation.

Following the strategy used in \cite{lions-perthame-souganidis-Scalar-conservation-laws-with-rough-(stochastic)-fluxes,lions-perthame-souganidis-Scalar-conservation-laws-with-rough-(stochastic)-fluxes-the-spatially-dependent-case,gess-souganidis-Scalar-conservation-laws-with-multiple-rough-fluxes,gess-souganidis-Long-Time-Behavior-Invariant-Measures-and-Regularizing-Effects-for-Stochastic-Scalar-Conservation-Laws,gess-souganidis-stochastic-nonisotropic,fehrman-gess-Wellposedness-of-Nonlinear-Diffusion-Equations-with-Nonlinear-Conservative-Noise,fehrman-gess-Path-by-path-well-posedness-of-nonlinear-diffusion-equations-with-multiplicative-noise}, and previously put forward in the theory of stochastic viscosity solutions \cite{Lions-souganidis-Fully-nonlinear-stochastic-partial-differential-equations:-non-smooth-equations-and-applications,Lions-souganidis-Fully-nonlinear-stochastic-partial-differential-equations,lions-souganidis-Fully-nonlinear-stochastic-pde-with-semilinear-stochastic-dependence,Lions-souganidis-uniqueness-of-solutions-of-Fully-nonlinear-stochastic-partial-differential-equations,Lions-souganidis-viscosity-solutions-of-of-Fully-nonlinear-stochastic-partial-differential-equations}, we test the kinetic equation against a restricted class of test functions only, precisely test functions transported by an underlying conservative system of stochastic characteristics.
When testing against these functions, the terms involving the noise automatically cancel out.
Such test functions are simply obtained with the method of characteristics: namely by flowing an arbitrary initial data back in time along the characteristic curves of the rough differential equations prescribing the transport.
Informally, this is of course the same as flowing the kinetic solution forward in time along the characteristics.
This restricted class of test functions in nonetheless large enough to provide a comprehensive characterization of the solutions to \eqref{formula/stochastic porous media equation}, or better to its kinetic version, which is sufficient to prove its well-posedness.
The precise notion of pathwise kinetic solution is given in Definition \ref{definition/pathwise kinetic solution}.

In comparison to \cite{lions-perthame-souganidis-Scalar-conservation-laws-with-rough-(stochastic)-fluxes} and \cite{fehrman-gess-Path-by-path-well-posedness-of-nonlinear-diffusion-equations-with-multiplicative-noise}, owing to the $x$-dependent gradient structure of the noise, the characteristics' equations cannot be solved explicitly.
Therefore, the solutions need to be controlled with the rough path estimates from Section \ref{section/rough path estimates}.
Moreover, as time passes, the characteristics also move in space (cf. \cite{fehrman-gess-Wellposedness-of-Nonlinear-Diffusion-Equations-with-Nonlinear-Conservative-Noise,lions-perthame-souganidis-Scalar-conservation-laws-with-rough-(stochastic)-fluxes-the-spatially-dependent-case}).
With respect to \cite{fehrman-gess-Wellposedness-of-Nonlinear-Diffusion-Equations-with-Nonlinear-Conservative-Noise,lions-perthame-souganidis-Scalar-conservation-laws-with-rough-(stochastic)-fluxes-the-spatially-dependent-case}, the introduction of Dirichlet boundary conditions further complicates the picture: as time evolves the space characteristics might indeed escape the domain $Q$, and the resulting test functions flowed along these characteristics would not be compactly supported within $Q$ if too much time has passed.
To overcome this difficulty, we make further assumptions on the noise coefficient $A(x,u)$ defining the system of rough characteristics \eqref{formula/forward stochastic characteristics} so as to directly prevent the characteristics from escaping the domain $Q$ (cf. \eqref{formula/assumption on A xi derivative zero on the boundary} and \eqref{formula/smooth characteristics never leave the domain}).
These assumptions simplify the handling of the boundary conditions and are fundamental in our proofs. 
They are justified since we have in mind equation \eqref{formula/stochastic porous media equation} as an approximate model for a noise term $\nabla\cdot(\sigma(x,u)\eta)$, with $\eta$ a space-time white noise.
Indeed, it is always possible to find a cylindrical expansion of $\eta$, which we then truncate at some order, such that the aforementioned conditions are satisfied.
See Remark \ref{remark/assumptions compatible with white noise approximation} for details.

Section \ref{section/uniqueness of pathwise kinetic solutions} is devoted to the proof of uniqueness.
The formal proof of uniqueness follows the same outline presented in \cite{chen-perthame-Well-posedness-for-non-isotropic-degenerate-parabolic-hyperbolic-equations} in the deterministic case.
However, to justify the formal computation, care must be taken to avoid the product of $\delta$-distributions.
This is achieved with a regularization in the space and velocity variables. 
Moreover, to cancel the noise from the equation, we are allowed to use transported test functions only.
Additional error terms arise due to the transport of test functions along characteristics, which are handled using a time-splitting argument that relies crucially on the conservative structure of the equation.
In this setting, the interaction between the $x$-dependent characteristics and the nonlinear diffusion term further complicates the arguments: the error terms generated by the regularization procedure need to be controlled with sharp estimates (cf. Proposition \ref{proposition/singular moments for defect measures d in (0,1)} and \ref{proposition/singular moments for defect measures d=1}) for singular moments of the parabolic defect measure (cf. Definition \ref{definition/pathwise kinetic solution}), especially in the case of small diffusion exponents $m\in(0,1)\cup(1,2]$.

The imposition of Dirichlet boundary conditions makes the analysis even more difficult: to keep everything compactly supported within the domain $Q$ it is necessary to introduce a cutoff function.
This, combined with the displacement of the space characteristics, generates new boundary layer error terms which are controlled with the zero-trace conditions imposed on the solution $u$.
Concretely, the decay at the boundary of $u$ is quantified by means of the $H^1$-regularity of $|u|^{m-1}u$ prescribed by the nonlinear diffusion.
In turn, this forces us to choose the cutoff function $\phi=\phi(m)$ as a function of the particular exponent $m\in(0,\infty)$.
Finally, besides the aforesaid estimates, the handling of both the interior and boundary error terms relies on some fundamental cancellations among them.
Indeed, the gradient structure of the noise implies that the characteristics preserve the underlying Lebesgue measure, and this is crucial both for the above mentioned time-splitting argument and to observe these cancellations.

Finally, in Section \ref{section/existence of pathwise kinetic solutions} we prove the existence of pathwise kinetic solutions and their continuous dependence on the noise in the rough path topology.
This is obtained by proving stable estimates for the solutions of smoothed PDEs approximating \eqref{formula/stochastic porous media equation}, and then using weak convergence and compactness arguments to prove these solutions converge to a solution of \eqref{formula/stochastic porous media equation}.
The aforementioned estimates hold for the limiting solution as well.
Furthermore, they hold uniformly for solutions of \eqref{formula/stochastic porous media equation} corresponding to rough signals $\{z_t^n\}_{n\in\N}$ close to each other in rough path topology.
Then, repeating the same weak convergence and compactness arguments used to show existence, and combining them with the uniqueness of solutions, we prove that the solution map to \eqref{formula/stochastic porous media equation} depends continuously on the noise.

Concretely, we first prove stable estimates for singular moments of the parabolic defect measure of the approximating PDEs \eqref{formula/regularized porous media equation 1} totally akin to those used in the uniqueness proof (cf. Proposition \ref{proposition/stable estimate for L^2 norm and defect measures of smoothed solutions} and Proposition \ref{proposition/singular moments for defect measures d in (0,1)}).
Then we exploit these estimates to show that the corresponding kinetic solutions are uniformly bounded in suitable fractional Sobolev spaces $W_{x,\xi}^{\ell,1}$ for the space and velocity variables (cf. Proposition \ref{proposition/stable estimate for transported kinetic functions}).
Moreover, they have enough regularity in time (cf. Proposition \ref{proposition/stable estimate for time derivative of transported kinetic functions}) to invoke the Aubin-Lions-Simons Lemma \cite{simon-Compact-sets-in-the-spaceLp} and obtain strong convergence.

In this regard, it is worth mentioning that the weak convergence arguments developed in \cite{gess-souganidis-Scalar-conservation-laws-with-multiple-rough-fluxes} for scalar conservation laws with $x$-dependent noise do not apply in the parabolic case \eqref{formula/stochastic porous media equation}.
Indeed, the corresponding class of pathwise entropy solutions is not closed under weak convergence, since the second order structure of the equation cannot ensure the weak limiting object actually is a solution.
Therefore, in \cite{gess-souganidis-stochastic-nonisotropic} the same authors introduced a strong convergence method based on uniform $BV$-estimates to tackle the parabolic case.
However, these arguments are probably restricted to the $x$-independent case as a uniform $BV$-estimate for solutions to \eqref{formula/stochastic porous media equation} does not seem available.



\section{Preliminaries and main results}\label{section/preliminaries and main results}

\subsection{Hypotheses and notations}\label{section/hypotheses and notations}

The domain $Q$ is a smooth bounded domain in $\R^d$ with $d\geq1$.
The diffusion exponent is $m\in(0,\infty)$, and for the signed power we shall use the shorthand 
\begin{equation}
    u^{[m]}:=|u|^{m-1}u. \nonumber
\end{equation}
We shall denote by $L^2_+(Q)$ the closed subspace of $L^2$-integrable functions which are a.e. nonnegative.
The noise is a geometric rough path: for $n\geq1$ and a H\"older exponent $\alpha\in(0,1)$, for each $T>0$
\begin{equation} \label{formula/z is a geometric rough path}
    z_t=(z_t^1,\dots,z_t^n)\in C^{0,\alpha}\left([0,T];G^{\left\lfloor\frac{1}{\alpha}\right\rfloor}(\R^n)\right),
\end{equation}
where $C^{0,\alpha}\big([0,T];G^{\left\lfloor\nicefrac{1}{\alpha}\right\rfloor}(\R^n)\big)$ is the space of $n$-dimensional $\alpha$-H\"older geometric rough paths on $[0,T]$.
We denote by $d_\alpha$ the $\alpha$-H\"older metric defined on this space. See Section \ref{section/rough path estimates} for references on rough path theory.

As regards the noise coefficient $A(x,\xi)$, we assume it is smooth with bounded derivatives.
Precisely, for some $\gamma>\frac{1}{\alpha}$ we assume
\begin{equation}\label{formula/assumption on A smooth}
   D_xA(x,\xi)\in C_b^{\gamma+2}(\R^d\times\R;\R^{d\times d\times n})\quad\text{and}\quad \partial_{\xi}A(x,\xi)\in C_b^{\gamma+2}(\R^d\times\R;\R^{d\times n}).
\end{equation}
This regularity is necessary in order to obtain the rough path estimates of Proposition \ref{proposition/stability results for RDEs}.
In particular, as the regularity of the noise decreases, more regularity is required for the coefficients.

We also need to impose conditions on $A$ so as to control the direction of the characteristics.
To start with, we assume the nonlinearity $A(x,\xi)$ satisfies 
\begin{equation}\label{formula/assumption on A divergence 0 in 0}
    \nabla_x\cdot A(x,0)=\sum_{i=1}^d\partial_{x_i}A_{i,\cdot}(x,0)=0\quad\text{ for each } x\in \R^d.
\end{equation}
This assumption guarantees that the underlying stochastic characteristics preserve the sign of the velocity variable.
Even in the case of smooth driving signal, this condition is necessary to ensure that the evolution of \eqref{formula/stochastic porous media equation} does not increase the mass of the initial condition.

Next, we impose two conditions to govern the behaviour of the space characteristics near the boundary.
These assumptions are \emph{crucial} in our arguments to impose Dirichlet boundary conditions and are justified by Remark \ref{remark/assumptions compatible with white noise approximation} below.
Namely, we impose that $A(x,\xi)$ satisfies
\begin{equation}\label{formula/assumption on A xi derivative zero on the boundary}
    \partial_{\xi} A(x,\xi)=D_x\partial_{\xi} A(x,\xi)=0 \quad\text{on } \partial Q\times\R.
\end{equation}
As already mentioned in Section \ref{section/overview of the methods}, the assumption $\partial_{\xi} A_{|\partial Q\times\R}=0$ ensures that, as time evolves, the space characteristics never leave the domain $Q$.
This condition is necessary to guarantee that the transport of a test function along the characteristics stays compactly supported in $Q$, and thus it is an admissible test function in the kinetic formulation of \eqref{formula/stochastic porous media equation}.
The further hypothesis $D_x\partial_{\xi} A_{|\partial Q\times\R}=0$ is slightly more technical and is needed to effectively exploit the condition $\partial_{\xi} A_{|\partial Q\times\R}=0$ and further quantify the informal fact that the space characteristics move slower as they start closer to the boundary.
Intuitively, it means that the strength of the transport term $\partial_{\xi}A(x,u) \nabla u\circ dz_t$ featuring in \eqref{formula/stochastic porous media equation} decreases more than linearly in terms of the distance $\dist(x,\partial Q)$.

\begin{remark}\label{remark/assumptions compatible with white noise approximation}
We stress that our main interest in \eqref{formula/stochastic porous media equation} is to consider it as a space correlated approximation to the, possibly ill-posed, stochastic porous media equation 
\begin{equation}
    \partial_tu=\Delta u^{[m]}+\nabla\cdot(\sigma(x,u)\circ \eta), \nonumber
\end{equation}
where $\eta$ is a $d$-dimensional space-time white noise and $\sigma(x,\xi)$ is a possibly nonsmooth nonlinearity with $\sigma(x,0)=0$ for every $x\in Q$, for example $\sigma(x,\xi)=\sqrt{\xi}$.
We observe that the assumptions \eqref{formula/assumption on A divergence 0 in 0}-\eqref{formula/assumption on A xi derivative zero on the boundary} are perfectly compatible with this strategy.
Indeed, a standard construction of the space-time white noise on $Q\times [0,\infty)$ is $\eta=\sum_{i=1}^\infty\rho_i(x)dB_t^i$, where $\{B_t^i\}_{i\in\N}$ are independent $d$-dimensional Brownian motions and $\{\rho_i(x)\}_{i\in\N}$ is an orthonormal basis of $L^2(Q)$.
Now consider a basis such that $\rho_i\in C^\infty(\Bar{Q})$ and such that $\rho_i=D_x\rho_i=0$ on $\partial Q$ for each $i\in\N$ (for example, consider a spectral basis of $L^2(Q)$, contained in $H^2_0(Q)$, for the symmetric compact operator $\Delta^{-2}$).
For each $m\in\N$, we can take $n=md$ and set
\begin{equation} \nonumber
    A(x,\xi)=\sigma_m(x,\xi)\Big[\rho_1(x)\id_d\mid\rho_2(x)\id_d\mid\cdots\mid\rho_m(x)\id_d\Big],\quad
    z_t=(B_t^1,\dots,B_t^m),
\end{equation}
where $\id_d$ denote the identity matrix of dimension $d$, and $\sigma_m$ is a smooth approximation to $\sigma$ with $\sigma_m(x,0)=0$ for every $x\in Q$.
Then $A(x,u)\circ dz_t=\sigma_m(x,u)\sum_{i=1}^m\rho_i(x)\circ dB^i_t$ is indeed converging to $\sigma(x,u)\circ\eta$ as $m\to\infty$, and elementary computations show that $A(x,\xi)$ satisfies the assumptions \eqref{formula/assumption on A divergence 0 in 0}-\eqref{formula/assumption on A xi derivative zero on the boundary}.
\end{remark}

\subsection{Main results}\label{section/main results}

We now present the main results of the paper.
The precise notion of \emph{pathwise kinetic solution} is given in Definition \ref{definition/pathwise kinetic solution} below.
Our first result, proved in Section \ref{section/uniqueness of pathwise kinetic solutions}, is a contraction principle for pathwise kinetic solutions with nonnegative initial data, which in particular implies their uniqueness.

\begin{theorem}\label{theorem/uniqueness of pathwise kinetic solutions for nonnegative initial data}
Let $m\in(0,\infty)$ and let $u_0^1,u_0^2\in L_+^2(Q)$.
Under the assumptions \eqref{formula/assumption on A smooth}-\eqref{formula/assumption on A xi derivative zero on the boundary}, pathwise kinetic solutions $u^1$ and $u^2$ of \eqref{formula/stochastic porous media equation} with initial data $u_0^1$ and $u_0^2$ satisfy
\begin{equation}\label{theorem/uniqueness of pathwise kinetic solutions for nonnegative initial data/formula 1}
    \|u^1-u^2\|_{L^{\infty}([0,\infty);L^1(Q))}\leq\|u_0^1-u_0^2\|_{L^1(Q)}. 
\end{equation}
In particular, pathwise kinetic solutions with nonnegative initial data are unique.
\end{theorem}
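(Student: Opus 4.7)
The strategy is the classical doubling-of-variables contraction argument of Chen--Perthame, pushed through the pathwise kinetic formulation described in Section \ref{section/definition and motivation of pathwise kinetic solutions}. Since the initial data are nonnegative and assumption \eqref{formula/assumption on A divergence 0 in 0} propagates nonnegativity, the kinetic functions $\chi^i(t,x,\xi)=\mathbf{1}_{0<\xi<u^i(t,x)}$ are $\{0,1\}$-valued, so
\begin{equation}
\|u^1(t)-u^2(t)\|_{L^1(Q)}=\int_{Q\times\R}|\chi^1-\chi^2|\,dx\,d\xi=\int_{Q\times\R}(\chi^1+\chi^2-2\chi^1\chi^2)\,dx\,d\xi. \nonumber
\end{equation}
Hence it suffices to control $\frac{d}{dt}\int\chi^1\chi^2$ from below (and the linear terms by mass balance), which we do by testing the kinetic equation for $\chi^1$ at $(t,x,\xi)$ against $\chi^2$ at $(t,y,\eta)$, and vice versa, with a joint regularization $\rho_\epsilon(x-y)\rho_\delta(\xi-\eta)$.

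Because we are in the pathwise setting, Definition \ref{definition/pathwise kinetic solution} only permits testing against functions transported by the system of rough characteristics \eqref{formula/forward stochastic characteristics}. I would therefore proceed by a time-splitting argument: fix a small interval $[s,s+\tau]$, solve the backward characteristic flow starting from $\rho_\epsilon\rho_\delta$, plug the resulting transported test functions into the kinetic formulations on $[s,s+\tau]$, and then sum the increments. The gradient (conservative) structure of the noise ensures the characteristics preserve Lebesgue measure, which is what allows the noisy transport terms to cancel exactly between the two equations after summing, and also sets up the crucial cancellations between the interior error terms coming from the displacement of $x$ and of $y$.

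To enforce the Dirichlet condition I would insert an $m$-dependent cutoff $\phi_m(x,y)$ that localizes to the interior of $Q\times Q$. The key point is that assumption \eqref{formula/assumption on A xi derivative zero on the boundary} keeps the space characteristics inside $Q$, so the transported test functions remain compactly supported for short enough $\tau$; meanwhile, the derivatives of $\phi_m$ produce boundary layer terms of size $\dist(\cdot,\partial Q)^{-\beta(m)}$ which I would absorb using the $H^1$ regularity and zero trace of $(u^i)^{[m]}$ (i.e. Hardy-type inequalities scaled appropriately in $m$), and using the higher-order decay $D_x\partial_\xi A=0$ on $\partial Q$ from \eqref{formula/assumption on A xi derivative zero on the boundary} to gain the extra factor of distance-to-boundary needed when $m$ is small. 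The interior error terms produced by commuting the regularization with the nonlinear diffusion are then handled via the singular moment bounds on the parabolic defect measure alluded to in the overview (Propositions \ref{proposition/singular moments for defect measures d in (0,1)} and \ref{proposition/singular moments for defect measures d=1}).

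With those estimates in place, the final step is to send the parameters to zero in the correct order: first the velocity regularization $\delta\to0$, then the cutoff $\phi_m$ to $\mathbf{1}_Q$, then the spatial regularization $\epsilon\to 0$, and finally the time-splitting mesh $\tau\to 0$. Each limit requires the respective error term to vanish thanks to the defect-measure and trace estimates above; this is where I expect the main obstacle to lie, especially in the fast diffusion regime $m\in(0,1)$ and the borderline $m\in(1,2]$, where the defect measure may be too singular near $\xi=0$ and the boundary layer is thinnest. Once these limits are taken, the computation yields
\begin{equation}
\int_{Q\times\R}|\chi^1(t)-\chi^2(t)|\,dx\,d\xi\leq\int_{Q\times\R}|\chi^1(0)-\chi^2(0)|\,dx\,d\xi, \nonumber
\end{equation}
which translates back to \eqref{theorem/uniqueness of pathwise kinetic solutions for nonnegative initial data/formula 1} and, upon taking $u_0^1=u_0^2$, gives pathwise uniqueness.
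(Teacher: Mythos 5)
Your plan captures the essential architecture of the paper's proof: doubling of variables in the pathwise kinetic framework, a time-splitting argument built on the measure-preserving (conservative) characteristics, a boundary cutoff whose scaling depends on $m$ and whose derivatives are absorbed via the vanishing trace and Sobolev regularity of $(u^i)^{[m]}$, the singular moment estimates on the defect measures (Propositions \ref{proposition/singular moments for defect measures d in (0,1)} and \ref{proposition/singular moments for defect measures d=1}), and the cancellations between the displacement error terms arising from the transport. You also correctly isolate where the delicate work lies (the regimes $m\in(0,1)\cup(1,2]$). Up to here you are closely tracking the paper.

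There is, however, a genuine structural gap in the order of limits you propose, and it would break the argument. You suggest sending the velocity regularization $\delta\to0$, then the cutoff scale $\beta\to0$, then the spatial regularization $\epsilon\to0$, then the time-mesh. The paper instead regularizes the space and velocity variables with a \emph{single} transported kernel at one scale $\epsilon$, sends $\epsilon\to0$ first with $\beta$ held fixed, \emph{then} sends $\beta\to0$, and finally $|\mathcal{P}|\to0$ (see formula \eqref{1.3} and Step~9). This order is forced by Steps 4 and 5: the boundary residuals $BR_i^A$ (involving $D^2_y\phi_\beta$, of size $\beta^{-2}$) and $BR_i^B$ (involving $\nabla_y\phi_\beta\cdot\nabla_{x'}\rho^{2,\epsilon}$, of size $\beta^{-1}\epsilon^{-1}$) are individually unbounded as $\beta\to0$; they can only be controlled because, in the limit $\epsilon\to0$ at fixed $\beta$, they cancel one another up to the manageable errors $BE_i^{\canc A,k}$ and $BE_i^{\canc B,j}$ (cf. \eqref{5.8}--\eqref{5.19}). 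Removing the cutoff before the spatial mollifier, as you propose, happens precisely when these residuals are still blowing up. Likewise, the Cauchy--Schwarz step \eqref{4.4}--\eqref{4.6} that neutralizes the product of transported defect measures uses the single-scale, transported kernel in both $(x,\xi)$ and $(x',\xi')$, and its corrections only become estimable once both regularizations have gone to zero simultaneously. So you should couple the two regularization scales and move the cutoff removal to after the (joint) regularization limit. A small secondary note: the paper cuts off only in the doubled variable $y$ (i.e.\ $\phi_\beta(y)$), not in both $x$ and $y$; cutting off in both introduces redundant boundary errors.
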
 

As already mentioned in Section \ref{introduction}, our uniqueness result heavily relies on sharp a priori estimates needed to tackle the nonlinear diffusion and take care of the zero boundary conditions.
In turn, these are coupled with other stable estimates for smoothed equations approximating \eqref{formula/stochastic porous media equation} both in space and time.
Then, using weak convergence and compactness arguments, in Section \ref{section/existence of pathwise kinetic solutions} we prove the existence of pathwise kinetic solutions with a limit procedure.

\begin{theorem}\label{theorem/existence of pathwise kinetic solutions for signed data}
Let $m\in(0,\infty)$ and let $u_0\in L^2(Q)$.
Under the assumptions \eqref{formula/assumption on A smooth}-\eqref{formula/assumption on A xi derivative zero on the boundary}, there exists a pathwise kinetic solution of \eqref{formula/stochastic porous media equation} with initial data $u_0$.
Furthermore, if $u_0\in L^2_+(Q)$, the corresponding solution stays nonnegative, that is
\begin{equation}
    u(x,t)\geq0\quad \text{almost everywhere in }Q\times[0,\infty). \nonumber
\end{equation}
\end{theorem}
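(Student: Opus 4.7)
The plan is to obtain the solution as the limit of a sequence of regularized problems along the lines sketched in Section \ref{section/overview of the methods}. Concretely, I would fix smooth approximations $z^\epsilon$ of the driving rough path $z$ (for instance, piecewise linear or mollified versions, converging to $z$ in the $d_\alpha$-metric on every compact time interval), a smooth nondegenerate approximation of the porous medium nonlinearity (e.g.\ replacing $|u|^{m-1}u$ by $(u^2+\epsilon^2)^{(m-1)/2}u+\epsilon u$ to obtain a uniformly parabolic problem), and a smooth approximation $u_0^\epsilon\in C^\infty_c(Q)$ of the initial datum with $u_0^\epsilon\to u_0$ in $L^2(Q)$ (preserving nonnegativity if $u_0\geq 0$). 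For each $\epsilon>0$ the resulting equation \eqref{formula/regularized porous media equation 1} is a classical quasilinear parabolic PDE with smooth conservative Stratonovich drift, so it admits a unique smooth solution $u^\epsilon$ by standard parabolic theory; this is the approximating sequence whose limit will be our pathwise kinetic solution.

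The heart of the argument is to derive stable estimates uniform in $\epsilon$. First I would establish the $L^2$-energy identity and the associated bound on the parabolic defect measure $q^\epsilon$ coming from testing the equation against $u^\epsilon$ itself: the conservative structure of the noise together with \eqref{formula/assumption on A divergence 0 in 0} ensures the $L^2$-norm does not grow, and yields $|\nabla (u^\epsilon)^{[(m+1)/2]}|\in L^2(Q\times[0,T])$, which via the zero Dirichlet trace provides the quantitative boundary decay needed later. Next I would prove the singular-moment estimates for $q^\epsilon$ analogous to Propositions \ref{proposition/singular moments for defect measures d in (0,1)} and \ref{proposition/singular moments for defect measures d=1}, and combine them with a kinetic-function estimate in a fractional Sobolev space $W^{\ell,1}_{x,\xi}$ in the style of Proposition \ref{proposition/stable estimate for transported kinetic functions}. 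These steps rely on the rough path stability results of Section \ref{section/rough path estimates} applied to the characteristics, and on the boundary assumption \eqref{formula/assumption on A xi derivative zero on the boundary}, which guarantees that the space characteristics stay inside $Q$, so that transported test functions remain admissible.

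With spatial and velocity regularity in hand, I would then bound the time increments of the transported kinetic function in a negative Sobolev norm as in Proposition \ref{proposition/stable estimate for time derivative of transported kinetic functions}. The Aubin--Lions--Simon compactness lemma then furnishes strong convergence of the kinetic functions $\chi^\epsilon=\chi(u^\epsilon;\xi)$ in $L^1_{\mathrm{loc}}$, and up to a subsequence $u^\epsilon\to u$ a.e.\ and in $L^2(Q\times[0,T])$. The defect measures converge weakly-$\ast$ to a nonnegative measure $q$ satisfying the same singular-moment bounds, and $(u^\epsilon)^{[m]}$ converges weakly in $L^2([0,T];H^1_0(Q))$ so the zero-trace condition passes to the limit. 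Passing to the limit in the kinetic equation against transported test functions (possible because we already have uniform regularity on the transported kinetic function and convergence of the characteristics in the rough path topology) identifies the limit $u$ as a pathwise kinetic solution in the sense of Definition \ref{definition/pathwise kinetic solution}. Finally, the nonnegativity statement follows from assumption \eqref{formula/assumption on A divergence 0 in 0}: it implies that the approximating characteristics preserve the sign of $\xi$, hence a standard comparison or truncation argument (test the equation against the $\xi$-cutoff $\mathbf{1}_{\{\xi<0\}}$) yields $u^\epsilon\geq 0$ whenever $u_0^\epsilon\geq 0$, and this property survives the limit.

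The main obstacle will be the derivation of the stable singular-moment bounds on $q^\epsilon$ in the fast-diffusion regime $m\in(0,1)\cup(1,2]$: here one must simultaneously control interior error terms generated by the regularization and boundary-layer error terms created by the cutoff $\phi=\phi(m)$ used to keep transported test functions supported in $Q$. As emphasized in Section \ref{section/overview of the methods}, the cancellations between these terms, which rest on the Lebesgue-measure preservation of the characteristics guaranteed by the gradient/conservative structure of the noise and on the fine trace control of $|u^\epsilon|^{m-1}u^\epsilon$, are delicate and must be reproduced uniformly in $\epsilon$ before the compactness step can be executed.
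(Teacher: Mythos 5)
Your overall scheme matches the paper's strategy: regularize the equation in both noise and diffusion, establish stable $L^2$ and singular-moment estimates, prove fractional Sobolev regularity and a negative-Sobolev time estimate for the transported kinetic function, invoke Aubin--Lions--Simon for strong $L^1$ compactness, and pass to the limit in the transported kinetic equation. The nonnegativity argument (at the level of the regularized parabolic equations, then passing to the limit) is also a legitimate, slightly more elementary route than the paper's Proposition \ref{proposition/positive data stay positive}, which instead proves nonnegativity directly for any pathwise kinetic solution.

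However, there is a genuine gap in the limit passage: you assert that ``the defect measures converge weakly-$\ast$ to a nonnegative measure $q$,'' but the weak-$\ast$ limit $q'$ of the parabolic defect measures $q^{\eta,\epsilon}$ is \emph{not} in general equal to the parabolic defect measure $q$ of the limit $u$, which Definition \ref{definition/pathwise kinetic solution} fixes as the specific quantity
\begin{equation}
q(x,\xi,t)=\frac{4m}{(m+1)^2}\,\delta_0(\xi-u(x,t))\left|\nabla u^{\left[\frac{m+1}{2}\right]}\right|^2. \nonumber
\end{equation}
Since $\nabla(u^{\eta,\epsilon})^{[(m+1)/2]}$ converges only \emph{weakly} in $L^2$, the lower semicontinuity of the norm gives only $q\leq q'$, and strict loss of mass is possible. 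Definition \ref{definition/pathwise kinetic solution} asks for the kinetic equation to hold with this specific $q$ \emph{plus} a nonnegative entropy defect measure $p$; the paper's proof constructs $p$ precisely to absorb this discrepancy by setting $p:=p'+(q'-q)\geq 0$, where $p'$ is the weak limit of the $\eta$-Laplacian entropy defects. Your proposal never introduces this object, so the limit kinetic equation you would obtain is stated in terms of $q'$ rather than $q+p$, and you cannot conclude $u$ is a pathwise kinetic solution as defined. This is not a cosmetic issue: it is exactly why Definition \ref{definition/pathwise kinetic solution} allows for an entropy defect measure in the first place, and without it your limit object satisfies a different equation.

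A secondary, smaller point: you invoke ``singular-moment estimates analogous to Propositions \ref{proposition/singular moments for defect measures d in (0,1)} and \ref{proposition/singular moments for defect measures d=1},'' but the $\delta=0$ estimate of Proposition \ref{proposition/singular moments for defect measures d=1} requires nonnegativity of the solution and is used in the uniqueness proof for $m\in(0,1)\cup(1,2]$, not in the existence proof; the existence argument only needs the $\delta\in(0,1]$ estimates (Proposition \ref{proposition/stable estimate for L^2 norm and defect measures of smoothed solutions}), which hold for general signed data. This is important because Theorem \ref{theorem/existence of pathwise kinetic solutions for signed data} is claimed for all $u_0\in L^2(Q)$, not only nonnegative data, so the existence argument must not rely on the sign-restricted estimate.
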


The pathwise nature of Theorem \ref{theorem/uniqueness of pathwise kinetic solutions for nonnegative initial data} and \ref{theorem/existence of pathwise kinetic solutions for signed data} immediately implies the existence of a \emph{random dynamical systems} associated to \eqref{formula/stochastic porous media equation}.
Precisely, suppose that our driving noise $[0,\infty)\ni t\mapsto z_t=z_t(\omega)$ arises from the sample paths of a stochastic process defined on a probability space $(\Omega, \F, \p)$, in such a way that $z_t(\omega)$ is indeed an $\alpha$-H\"older geometric rough path for almost every $\omega\in\Omega$.
Then we have the following result.

\begin{theorem}\label{theorem/random dynamical system for nonnegative initial data}
Assume the hypotheses \eqref{formula/assumption on A smooth}-\eqref{formula/assumption on A xi derivative zero on the boundary} and let $m\in(0,\infty)$.
When interpreted in the sense of pathwise kinetic solutions, equation \eqref{formula/stochastic porous media equation} defines a random dynamical system on $L^2_+(Q)$.
Let $u(u_0,s,t,z_{\cdot}(\omega))$ denote the solution at time $t$ to \eqref{formula/stochastic porous media equation} started at time $s$ with initial data $u_0\in L^2_+(Q)$ and driving signal $z_{\cdot}(\omega)$.
Then, for almost every $\omega\in (\Omega,\F,\p)$, we have
\begin{equation}
    u(u_0,s,t,z_{\cdot}(\omega))=u(u_0,0,t-s,z_{\cdot+s}(\omega))\quad\forall\, 0\leq s\leq t\,\,\forall\, u_0\in L^2_+(Q). \nonumber
\end{equation}
Moreover, the contraction principle \eqref{theorem/uniqueness of pathwise kinetic solutions for nonnegative initial data/formula 1} implies this dynamical system is continuous when considered with values in $L^1(Q)$.
\end{theorem}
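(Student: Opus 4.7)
The plan is to reduce the statement to a pathwise cocycle identity that follows, for every fixed admissible realization of the noise, from the time-shift invariance of the equation together with the uniqueness of pathwise kinetic solutions. Both Theorems~\ref{theorem/existence of pathwise kinetic solutions for signed data} and~\ref{theorem/uniqueness of pathwise kinetic solutions for nonnegative initial data} are deterministic and valid for every $\alpha$-H\"older geometric rough path, so no probability argument enters the cocycle identity itself; the measurability in $\omega$ needed for the random dynamical system structure will be inherited from Theorem~\ref{theorem/continuous dependence on the noise for nonnegative initial data}.

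Fix $\omega$ in the full-measure set on which $t\mapsto z_t(\omega)$ is an $\alpha$-H\"older geometric rough path, let $0\leq s\leq t$, $u_0\in L^2_+(Q)$, and write $\tilde z_r:=z_{r+s}(\omega)$. Define $w(r):=u(u_0, 0, r, \tilde z_\cdot)$ for $r\in[0,\infty)$, which is well-defined by Theorem~\ref{theorem/existence of pathwise kinetic solutions for signed data}. I would then show that
\[
v(r):=w(r-s), \qquad r\in[s,\infty),
\]
is a pathwise kinetic solution of \eqref{formula/stochastic porous media equation} on $[s,\infty)$, started from $u_0$ at time $s$ with driving signal $z_\cdot(\omega)$. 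This reduces to an autonomy check: the kinetic formulation, the underlying system of characteristic rough differential equations, the associated class of admissible transported test functions, and the parabolic defect measure conditions in Definition~\ref{definition/pathwise kinetic solution} depend on the driving signal only through its rough-path increments, and are therefore covariant under the translation $z_\cdot\mapsto \tilde z_{\cdot-s}$. Once this is verified, applying the contraction principle~\eqref{theorem/uniqueness of pathwise kinetic solutions for nonnegative initial data/formula 1} to $v$ and $u(u_0, s, \cdot, z_\cdot(\omega))$ with common initial datum $u_0$ forces $u(u_0, s, t, z_\cdot(\omega))=v(t)=u(u_0, 0, t-s, z_{\cdot+s}(\omega))$, which is the claimed cocycle identity.

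The remaining ingredients of a random dynamical system are then short. Continuity of the semiflow with values in $L^1(Q)$ is immediate from~\eqref{theorem/uniqueness of pathwise kinetic solutions for nonnegative initial data/formula 1} applied to pairs of initial data $u_0^1,u_0^2\in L^2_+(Q)$. The $\F$-measurability of $\omega\mapsto u(u_0, 0, t, z_\cdot(\omega))$ follows by writing this map as the composition of the measurable map $\omega\mapsto z_\cdot(\omega)$ with the continuous rough-path-to-solution map supplied by Theorem~\ref{theorem/continuous dependence on the noise for nonnegative initial data}.

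The main obstacle I anticipate is the autonomy/time-shift verification in the second paragraph. One must track the rough characteristic flow defining the admissible transported test functions across the translation $z_\cdot\mapsto z_{\cdot+s}$, and check that the defect measure of $v$ on $[s,\infty)$ inherits from that of $w$ on $[0,\infty)$ the integrability and singular-moment conditions required by Definition~\ref{definition/pathwise kinetic solution}. This is a deterministic and essentially term-by-term inspection, relying on the uniqueness of solutions to the rough differential equations of Section~\ref{section/rough path estimates} to identify the shifted characteristics with the original ones restarted at time $s$.
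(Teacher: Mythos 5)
The paper does not write out a dedicated proof of this theorem; it is presented as an immediate consequence of the pathwise nature of Theorems \ref{theorem/uniqueness of pathwise kinetic solutions for nonnegative initial data} and \ref{theorem/existence of pathwise kinetic solutions for signed data}. Your proposal fills in exactly the argument the paper leaves implicit — time-shift covariance of the kinetic formulation and its transported test functions to establish the cocycle identity, uniqueness to close the identity, and the continuity theorem for measurability — so this is the same approach and is correct.
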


Next we present a result stating that the solution map is continuous with respect to the driving noise.
Indeed, all the aforementioned estimates needed for the existence theorem are pathwise estimates and depend on the driving noise $z_t$ cosindered as a geometric rough path; in particular, they are uniform for geometric rough paths close to each other in the $\alpha$-H\"older metric $d_\alpha$.
In Section \ref{section/existence of pathwise kinetic solutions}, using the same compactness arguments as for the existence proof and exploiting the uniqueness of solutions to \eqref{formula/stochastic porous media equation}, we prove the following continuity result.
Unfortunately, this method does not yield an explicit estimate quantifying the convergence of solutions in terms of the convergence of the driving signals.

\begin{theorem}\label{theorem/continuous dependence on the noise for nonnegative initial data}
Assume the hypotheses \eqref{formula/assumption on A smooth}-\eqref{formula/assumption on A xi derivative zero on the boundary}.
Let $m\in(0,\infty)$ and $u_0\in L^2_+(Q)$.
For any $T>0$, let $\{z^k\}_{k\in\N}$ and $z$ be a sequence of $n$-dimensional $\alpha$-H\"older geometric rough paths on $[0,T]$ such that
\begin{equation}
    \lim_{k\to\infty}d_\alpha(z^k,z)=0. \nonumber
\end{equation}
Let $\{u^k\}_{k\in\N}$ and $u$ denote the pathwise kinetic solutions to \eqref{formula/stochastic porous media equation} on $[0,T]$ with initial data $u_0$ and driving signals $\{z^k\}_{k\in\N}$ and $z$ respectively.
Then we have
\begin{equation}
    \lim_{k\to\infty}\|u^k-u\|_{L^1([0,T];L^1(Q))}=0. \nonumber
\end{equation}
\end{theorem}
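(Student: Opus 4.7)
The plan is to follow the compactness-plus-uniqueness scheme anticipated in Section \ref{section/overview of the methods}: derive estimates for $\{u^k\}_{k\in\N}$ that are \emph{stable} in the rough path topology, extract a strongly convergent subsequence, identify its limit as a pathwise kinetic solution driven by $z$, and then invoke the contraction principle of Theorem \ref{theorem/uniqueness of pathwise kinetic solutions for nonnegative initial data} to upgrade this to convergence of the full sequence.

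First I would revisit all of the a priori estimates used in the proof of Theorem \ref{theorem/existence of pathwise kinetic solutions for signed data} -- the $L^2$-energy bound, the $H^1$-bound on $(u^k)^{[m]}$, the singular-moment estimates for the parabolic defect measures, and the mixed space--velocity Sobolev bounds on the transported kinetic functions -- and check that each of them is pathwise and depends on the driver only through its $\alpha$-H\"older rough path norm. Since $d_\alpha(z^k,z)\to 0$, the sequence $\{z^k\}$ is bounded in $C^{0,\alpha}([0,T];G^{\lfloor 1/\alpha\rfloor}(\R^n))$, so Propositions \ref{proposition/stable estimate for L^2 norm and defect measures of smoothed solutions}, \ref{proposition/singular moments for defect measures d in (0,1)}, \ref{proposition/stable estimate for transported kinetic functions} and \ref{proposition/stable estimate for time derivative of transported kinetic functions} apply uniformly in $k$. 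Combining the spatial--velocity regularity with the time regularity and applying the Aubin--Lions--Simon lemma exactly as in the existence proof yields a subsequence, still denoted $\{u^k\}$, converging strongly in $L^1([0,T];L^1(Q))$ to some $\tilde u \in L^\infty([0,T];L^2_+(Q))$.

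The key step is then the passage to the limit in the pathwise kinetic formulation, which involves test functions transported along the characteristics associated with $z^k$. Here I would use the stability results for rough differential equations from Section \ref{section/rough path estimates} (in particular Proposition \ref{proposition/stability results for RDEs}): under the regularity assumption \eqref{formula/assumption on A smooth}, the forward and backward characteristics depend continuously on the driver in the $d_\alpha$-metric, uniformly on compact sets. Consequently, for every admissible initial test function $\rho_0$ the transported test function $\rho^k$ along the $z^k$-characteristics converges uniformly, together with its space and velocity derivatives, to the corresponding object $\rho$ built from $z$; moreover, by \eqref{formula/assumption on A xi derivative zero on the boundary} each $\rho^k$ remains compactly supported in $Q$. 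Strong $L^1$-convergence of $u^k$ to $\tilde u$, together with the uniform bounds, lets us pass to the limit in the transport, diffusion, and error terms of the kinetic formulation; the defect measures $q^k$ are uniformly bounded on $Q\times\R\times[0,T]$ and therefore converge weakly-$*$, up to a further subsequence, to a nonnegative measure $\tilde q$, while the singular-moment bounds (needed in Definition \ref{definition/pathwise kinetic solution}) survive in the limit by lower semicontinuity. This identifies $\tilde u$ as a pathwise kinetic solution of \eqref{formula/stochastic porous media equation} with initial data $u_0$ and driver $z$, which by Theorem \ref{theorem/existence of pathwise kinetic solutions for signed data} stays nonnegative.

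Finally, the contraction principle \eqref{theorem/uniqueness of pathwise kinetic solutions for nonnegative initial data/formula 1} gives $\tilde u = u$; since this identification of the limit does not depend on the chosen subsequence, the whole sequence $u^k$ converges to $u$ in $L^1([0,T];L^1(Q))$. I expect the hardest part to be the limit passage in the boundary-layer and characteristic-displacement error terms that appeared in the uniqueness argument, together with the sharp cancellations among them: these rely on the delicate interplay between the $H^1$-trace of $(u^k)^{[m]}$, the cutoff $\phi(m)$, and the rough path estimates for the characteristics, and they must now be carried out uniformly in $k$ so that every contribution passes cleanly to the limit when $d_\alpha(z^k,z)\to 0$.
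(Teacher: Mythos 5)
Your proposal is correct and follows essentially the same scheme as the paper's proof: boundedness of $\{z^k\}$ in the rough-path metric yields the estimates of Propositions \ref{proposition/stable estimate for L^1 norm of smoothed solutions}, \ref{proposition/stable estimate for L^2 norm and defect measures of smoothed solutions}, \ref{proposition/stable estimate for time derivative of transported kinetic functions} and \ref{proposition/stable estimate for transported kinetic functions} uniformly in $k$, Aubin--Lions--Simon gives a strongly $L^1$-convergent subsequence, the $d_\alpha$-stability of the characteristics from Proposition \ref{proposition/stability results for RDEs} lets you pass to the limit in the transported kinetic formulation, and uniqueness together with subsequence independence upgrades this to convergence of the full sequence. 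One small correction to your closing paragraph: the boundary-layer and characteristic-displacement error terms from the proof of Theorem \ref{theorem/uniqueness of pathwise kinetic solutions for nonnegative initial data} do not need to be re-derived uniformly in $k$, since that theorem is invoked only as a black box to identify the limit; the genuinely delicate step is the existence-style passage to the limit in \eqref{definition/pathwise kinetic solution/formula 1}, exactly as in the proof of Theorem \ref{theorem/existence of pathwise kinetic solutions for signed data}.
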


\begin{remark}\label{remark/support properties of the solution}
Theorem \ref{theorem/continuous dependence on the noise for nonnegative initial data} has immediate consequences on the support properties of the solutions to \eqref{formula/stochastic porous media equation}.
For simplicity, let us suppose that our driving noise $t\mapsto z_t$ arises from the sample paths of an $n$-dimensional Brownian motion $B_t(\omega)$ defined on a probability space $(\Omega, \F, \p)$, in the sense that the mapping $t\mapsto \mathbb{B}^{\text{Strat}}_t(\omega)$ defines $\p$ almost surely an $\alpha$-H\"older geometric rough path $z_t(\omega)$, where $\mathbb{B}^{\text{Strat}}_t$ denotes the Brownian motion enhanced with its iterated Stratonovich integrals (see e.g.  \cite{fritz-hairer-a-course-on-rough-apths}).
For $\p$ a.e. $\omega\in\Omega$, let $u(\omega)$ denote the pathwise kinetic solution of \eqref{formula/stochastic porous media equation} with initial data $u_0\in L^2_+(Q)$ and driving signal $t\mapsto \mathbb{B}^{\text{Strat}}_t(\omega)$.
Given any smooth path $g:[0,\infty)\to\R^n$, let $\Bar{u}_g$ denote the pathwise kinetic solution, with the same initial data $u_0$, of the deterministic porous media equation with convective term
\begin{equation}\label{formula/deterministic porous media equation 0}
    \partial_t \Bar{u}_g=\Delta \Bar{u}_g^{[m]}+\nabla\cdot\big(A(x,\bar{u}_g)\,\dot{g}\big),
\end{equation}
that is of equation \eqref{formula/stochastic porous media equation} driven by the smooth path $z_t:= g(t)$ (cf. \ref{section/rough path estimates}).
In this setting, Theorem \ref{theorem/continuous dependence on the noise for nonnegative initial data} implies that the probability of $u$ being arbitrarily close to the deterministic solution $\Bar{u}_g$ in $L^1$-norm is always nonzero.
Indeed, properties of the Stratonovich enhanced Brownian motion ensure that $\p\left(d_{\alpha}\left(\mathbb{B}^{\text{Strat}}_{\cdot},g\right)\!\leq\epsilon\,\right)>0$ for any $T\geq 0$ and any $\epsilon>0$ (cf. \cite[Chapter 13]{friz-victoir-multidimensiona-stochastic-processes-as-rough-paths}).
Theorem \ref{theorem/continuous dependence on the noise for nonnegative initial data} immediately implies that, for any $T\geq 0$,
\begin{equation}
    \p\left(\|u-\Bar{u}_g\|_{L^1([0,T];L^1(Q))}\!\leq\epsilon\,\right)>0 \quad \forall\epsilon>0.
\end{equation}
That is to say, the support of the law of $u$ in $L^1([0,T];L^1(Q))$ contains the solution of \eqref{formula/deterministic porous media equation 0} for every smooth path $g$.
In fact, since almost every sample path $\mathbb{B}_{\cdot}(\omega)$ arises as $d_{\alpha}$-limit of smooth paths, we actually have that the support of the law of $u$ is the closure of $\{\bar{u}_g\mid\text{$g$ smooth path}\}$ in $L^1([0,T];L^1(Q))$.
\end{remark}

\medskip
Finally, we notice that the methods of this paper apply to general initial data in $L^2(Q)$ provided the diffusion exponent satisfies $m=1$ or $m>2$.
Indeed, the nonnegativity of the solution, and thus of the initial data, is only required in the a-priori estimate presented in Proposition \ref{proposition/singular moments for defect measures d=1}.
In turn, this estimate is only needed to tackle the case of small diffusion exponents $m\in(0,1)\cup(1,2]$ (cf. Remark \ref{remark/extension to signed data}).
As a consequence we get the following result.

\begin{theorem}\label{theorem/extension of the results to signed initial data}
Let $m=1$ or $m>2$.
Under the assumptions \eqref{formula/assumption on A smooth}-\eqref{formula/assumption on A xi derivative zero on the boundary}, for every $u_0\in L^2(Q)$ there exists a unique pathwise kinetic solution of \eqref{formula/stochastic porous media equation} and the analogous results of Theorem \ref{theorem/uniqueness of pathwise kinetic solutions for nonnegative initial data} and Theorems \ref{theorem/continuous dependence on the noise for nonnegative initial data} and \ref{theorem/random dynamical system for nonnegative initial data} hold.
\end{theorem}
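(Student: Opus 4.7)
The plan is to retrace the proofs of Theorems \ref{theorem/uniqueness of pathwise kinetic solutions for nonnegative initial data}, \ref{theorem/existence of pathwise kinetic solutions for signed data}, \ref{theorem/random dynamical system for nonnegative initial data} and \ref{theorem/continuous dependence on the noise for nonnegative initial data} and verify the claim already signaled in the statement: the hypothesis $u_0\in L^2_+(Q)$ enters in one and only one place, namely the application of the singular moment estimate for the parabolic defect measure (Proposition \ref{proposition/singular moments for defect measures d=1}), whose proof relies on $u\geq 0$. Since by Remark \ref{remark/extension to signed data} this estimate is itself only invoked in the small-exponent regimes $m\in(0,1)\cup(1,2]$, the entire machinery should carry over unchanged for $m=1$ or $m>2$ to arbitrary signed data.

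Concretely, to obtain the contraction bound \eqref{theorem/uniqueness of pathwise kinetic solutions for nonnegative initial data/formula 1} for signed $u_0^1,u_0^2\in L^2(Q)$, I would reproduce the doubling-of-variables argument of Section \ref{section/uniqueness of pathwise kinetic solutions} verbatim. The error terms produced by the $(x,\xi)$-regularization, by the transport of the test functions along the rough characteristics, and by the Dirichlet cutoff must be absorbed using the moment estimates of Propositions \ref{proposition/singular moments for defect measures d in (0,1)} and \ref{proposition/singular moments for defect measures d=1}. When $m=1$, the nonlinear diffusion collapses to the linear Laplacian and the defect measure reduces to a standard $L^2_{t,x,\xi}$ object, so only the non-singular estimate is ever needed. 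When $m>2$, the $H^1$-control of $u^{[m]}$ is sufficiently strong that all the relevant error terms (including the boundary-layer contributions detected by the zero-trace of $u^{[m]}$) are controlled by Proposition \ref{proposition/singular moments for defect measures d in (0,1)} alone, without invoking the singular moment of negative order at $\xi=0$. In both cases the uniqueness argument closes for signed data.

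For existence, Theorem \ref{theorem/existence of pathwise kinetic solutions for signed data} already produces a pathwise kinetic solution starting from any $u_0\in L^2(Q)$; the sign-propagation statement it also contains is never reused elsewhere. The stable estimates of Propositions \ref{proposition/stable estimate for L^2 norm and defect measures of smoothed solutions}, \ref{proposition/stable estimate for transported kinetic functions} and \ref{proposition/stable estimate for time derivative of transported kinetic functions}, which feed the Aubin-Lions-Simon compactness argument, are derived for the smoothed PDEs \eqref{formula/regularized porous media equation 1} without any recourse to nonnegativity; the passage to the limit only requires uniqueness of the limiting problem, which for $m=1$ and $m>2$ is now available on all of $L^2(Q)$ by the previous paragraph. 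The random dynamical system property and the continuity in the driving rough path then follow in exactly the same manner as Theorems \ref{theorem/random dynamical system for nonnegative initial data} and \ref{theorem/continuous dependence on the noise for nonnegative initial data}, with the state space $L^2_+(Q)$ simply replaced by $L^2(Q)$, since both are formal consequences of pathwise existence, uniqueness and the stable estimates.

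The main obstacle is one of bookkeeping rather than substance: one has to audit every step of the uniqueness and existence proofs for an implicit use of $u\geq 0$ — for instance in monotonicity identities for $u^{[m]}$, in the precise form of the kinetic function, or in the boundary-layer estimates based on the $H^1$-trace of $u^{[m]}$ — and verify that for $m=1$ (where $u^{[m]}=u$) and $m>2$ (where $u\mapsto u^{[m]}$ is $C^1$ with derivative vanishing at $0$) each such step admits a version valid for signed $u$. Once this audit confirms that the only genuine appearance of positivity is the one pinpointed in Remark \ref{remark/extension to signed data}, the statement of Theorem \ref{theorem/extension of the results to signed initial data} follows.
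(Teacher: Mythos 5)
Your proposal is correct and mirrors the paper's own argument, which is recorded in Remark \ref{remark/extension to signed data}: the nonnegativity of $u_0$ enters only through Proposition \ref{proposition/positive data stay positive} (sign propagation) and the singular-moment estimate of Proposition \ref{proposition/singular moments for defect measures d=1}, and these are invoked solely in Step 7 of the uniqueness proof to control $IE_i^{\canc}$ for $m\in(0,1)\cup(1,2]$ and in Step 8 to control $BE_i^{\canc Bj}$ for $m\in(0,1)$; for $m=1$ both of these error terms vanish identically, and for $m>2$ they are absorbed by the velocity-decomposition arguments of \eqref{7.7}--\eqref{7.12} and \eqref{9.24}--\eqref{9.29}, which rest on the $L^{m+1}$-integrability of $u$ and H\"older continuity of $\xi\mapsto|\xi|^{(m-1)/2}$ rather than on Proposition \ref{proposition/singular moments for defect measures d=1}. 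One small imprecision in your write-up: for $m>2$ the relevant control does not really come from Proposition \ref{proposition/singular moments for defect measures d in (0,1)} alone; it comes from the mass bound on $p^j+q^j$ and the $L^{m+1}$-bound on $u^j$ (Lemma \ref{lemma/poincare inequality for u^(m+1/2)}), fed into the cutoff/dominated-convergence scheme of Step 7. This does not affect the conclusion.
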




\section{Definition and motivation of pathwise kinetic solutions}\label{section/definition and motivation of pathwise kinetic solutions}

The aim of this section is to understand equation \eqref{formula/stochastic porous media equation}, and motivate and present the notion of \emph{pathwise kinetic solution} given in Definition \ref{definition/pathwise kinetic solution}.
For this purpose, we shall first consider a uniformly elliptic regularization of \eqref{formula/stochastic porous media equation} driven by smooth noise.
The assumption \eqref{formula/z is a geometric rough path} ensures that there exists a sequence of smooth paths $\{z^\epsilon:[0,\infty)\to\R^n\}_{\epsilon\in(0,1)}$ such that, for each $T>0$,
\begin{equation}\label{formula/smooth paths converging to rough path}
    \lim_{\epsilon\to0}d_{\alpha}\left(z,z^{\epsilon}\right)=0,
\end{equation}
where $d_\alpha$ denotes the $\alpha$-H\"older metric on the space of geometric rough paths $C^{0,\alpha}\left([0,T];G^{\left\lfloor\nicefrac{1}{\alpha}\right\rfloor}(\R^n)\right)$.
In what follows, for $\epsilon\in(0,1)$, we will denote by $\Dot{z}^{\epsilon}$ the derivative of the smooth path.

Furthermore, it is necessary to introduce an $\eta$-perturbation by the Laplacian, for $\eta\in(0,1)$, in order to remove the degeneracy of the porous media diffusion.
Therefore, for each $\eta\in(0,1)$ and $\epsilon\in(0,1)$, we consider the equation
\begin{align}[left ={\empheqlbrace}]\label{formula/regularized porous media equation 1}
    \begin{aligned}
        &\partial_t\ueta=\Delta (\ueta)^{[m]}+\eta\Delta\ueta+\nabla\cdot(A(x,\ueta)\Dot{z}_t^\epsilon) &\text{in } Q\times(0,\infty),
        \\
        &\ueta=0 &\text{on } \partial Q\times (0,\infty),
        \\
        &\ueta=u_0 &\text{on }  Q\times \{0\}.
    \end{aligned}
\end{align}
We shall derive a formulation of the equation that is well-defined for singular driving signals.
Namely, we shall pass to the kinetic form of \eqref{formula/regularized porous media equation 1}, where the noise enters as a linear transport, and then derive a formulation that is well-defined even after passing to the limit with respect to the regularization.

The following proposition establishes the well-posedness of \eqref{formula/regularized porous media equation 1} in the classical sense.
The proof is a small modification of \cite[Proposition A.1]{fehrman-gess-Wellposedness-of-Nonlinear-Diffusion-Equations-with-Nonlinear-Conservative-Noise}, and thus is omitted.

\begin{proposition}\label{proposition/existence of classical solutions for smoothed equation}
For each $\eta\in(0,1)$, each $\epsilon\in(0,1)$, and each $u_0\in L^2(Q)$, there exists a classical solution $\ueta$ of equation \eqref{formula/regularized porous media equation 1} such that
\begin{align}
    \begin{aligned}
        u\in H^1\left([0,T];H^1_0(Q),H^{-1}(Q)\right),\quad\text{and}\quad u^{[m]},u^{\left[\nicefrac{m+1}{2}\right]}\in L^2\left([0,T];H^1_0(Q)\right).
    \end{aligned}
\end{align}
\end{proposition}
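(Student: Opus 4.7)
The plan is to follow, essentially verbatim, the strategy of \cite[Proposition A.1]{fehrman-gess-Wellposedness-of-Nonlinear-Diffusion-Equations-with-Nonlinear-Conservative-Noise}, with only two modifications required by the Dirichlet framework: replacing the Fourier basis on the torus by an orthonormal basis $\{e_k\}\subset H_0^1(Q)$ of Dirichlet Laplacian eigenfunctions, and checking that the boundary terms arising in the key integrations by parts actually vanish. The crucial structural fact is that for fixed $\eta\in(0,1)$ and $\epsilon\in(0,1)$, equation \eqref{formula/regularized porous media equation 1} is uniformly parabolic: the effective diffusion coefficient $\phi'(\xi)=m|\xi|^{m-1}+\eta$ associated with $\phi(\xi):=\xi^{[m]}+\eta\xi$ is bounded below by $\eta>0$, while smoothness of $z^\epsilon$ makes $\dot z^\epsilon$ bounded on any compact interval. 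Classical quasilinear parabolic theory with Dirichlet data therefore applies.

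First I would construct a Galerkin scheme $u_N^{\eta,\epsilon}(t)=\sum_{k\leq N}c_k^N(t)e_k$ in this Dirichlet eigenbasis, if necessary after a preliminary smooth mollification $\phi_\delta$ of $\phi$ (needed only when $m<1$, to cure the singularity of $\phi'$ at the origin while preserving $\phi_\delta'\geq\eta$). The resulting finite-dimensional system for the coefficients $c_k^N$ has locally Lipschitz right-hand side by the smoothness hypothesis \eqref{formula/assumption on A smooth} and thus admits local solutions, which are continued globally once the energy estimate below is in place.

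The main calculation is the $L^2$ energy estimate. Testing the equation against $u_N^{\eta,\epsilon}$ yields $\tfrac12\tfrac{d}{dt}\|u_N^{\eta,\epsilon}\|_{L^2}^2+\int_Q\phi'(u_N^{\eta,\epsilon})|\nabla u_N^{\eta,\epsilon}|^2\,dx=-\int_Q A(x,u_N^{\eta,\epsilon})\nabla u_N^{\eta,\epsilon}\cdot\dot z^\epsilon_t\,dx$. Setting $B(x,\xi):=\int_0^\xi A(x,s)\,ds$ and applying the chain rule columnwise, the right-hand side is rewritten as a boundary integral, which vanishes both because $u_N^{\eta,\epsilon}=0$ on $\partial Q$ and because $B(x,0)=0$, plus an interior remainder bounded by $C_A\|u_N^{\eta,\epsilon}\|_{L^2}|\dot z^\epsilon_t|$ thanks to \eqref{formula/assumption on A smooth}. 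Gronwall then yields uniform bounds on $u_N^{\eta,\epsilon}$ in $L^\infty_tL^2_x$, on $\eta^{1/2}\nabla u_N^{\eta,\epsilon}$ in $L^2_tL^2_x$, and, through the pointwise identity $m|\xi|^{m-1}|\nabla u|^2=\tfrac{4m}{(m+1)^2}|\nabla u^{[(m+1)/2]}|^2$, on $(u_N^{\eta,\epsilon})^{[(m+1)/2]}$ in $L^2_tH_0^1$. A second test against $\phi(u_N^{\eta,\epsilon})$ upgrades this to $L^2_tH_0^1$ control of $(u_N^{\eta,\epsilon})^{[m]}$, and reading off from the equation controls $\partial_t u_N^{\eta,\epsilon}$ in $L^2_tH^{-1}_x$.

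Aubin-Lions then provides strong $L^2_tL^2_x$ convergence along a subsequence, monotonicity of $\phi$ identifies the limit of the nonlinear term, and standard parabolic Schauder bootstrap (exploiting the uniform ellipticity $\phi'\geq\eta$ together with the smoothness of $A$ and $\dot z^\epsilon$) promotes the limit to a classical solution in the claimed regularity class. The main anticipated obstacle is the fast-diffusion regime $m<1$, where $\xi\mapsto\xi^{[m]}$ is not Lipschitz near zero; this is precisely the issue already resolved in the cited periodic proof, since the $\eta$-perturbation restores uniform ellipticity and the Dirichlet modification is confined to the boundary identity above, which is why the adaptation is described as small.
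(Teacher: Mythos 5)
Your proposal is correct and is essentially the proof the paper has in mind: the paper itself omits the argument and refers to Proposition A.1 of Fehrman--Gess as a ``small modification,'' and you have correctly identified the two modifications needed (Dirichlet eigenbasis in the Galerkin step, and verification that the boundary integrals produced by the convective term vanish via $u_N=0$ on $\partial Q$ and $B(x,0)=0$). One minor imprecision: the mollification $\phi_\delta$ of $\phi(\xi)=\xi^{[m]}+\eta\xi$ is actually needed for all $m\in(0,2)$, not only $m<1$, since $\phi''$ (hence local Lipschitz continuity of the Galerkin vector field) fails at the origin whenever $m<2$; this changes nothing in the structure of the argument.
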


We now pass to the kinetic form of \eqref{formula/regularized porous media equation 1},
complete details of the following derivation are given in \cite[Appendix A]{fehrman-gess-Wellposedness-of-Nonlinear-Diffusion-Equations-with-Nonlinear-Conservative-Noise}.
This formulation is obtained by introducing the kinetic function $\Bar{\chi}:\R^2\to\{-1,0,1\}$ defined by
\begin{equation}
    \Bar{\chi}(v,\xi):= \begin{cases}
                           1\qquad\text{if $0<\xi<v$},
                           \\
                           -1\quad\text{ if $v<\xi<0$},
                           \\
                           0\qquad\text{else.}
                        \end{cases}
    \nonumber
\end{equation}
We then define, for each $\eta\in(0,1)$ and $\epsilon\in(0,1)$, for $\ueta$ the solution of \eqref{formula/regularized porous media equation 1}, the composition
\begin{equation}\label{formula/kinetic function of smooth solutions}
    \chieta(x,\xi,t):=\bar{\chi}(\ueta(x,t),\xi). 
\end{equation}
Proposition \ref{proposition/weak kinetic formulation of the smoothed equation} below states that the kinetic function $\chieta$ is a distributional solution of the equation 
\begin{align}\label{formula/strong kinetic formulation of the smoothed equation}
\begin{aligned}
    \partial_t\chi^{\eta,\epsilon}\!\!=\,m|\xi|^{m-1}\!\Delta_x\chieta\!+\eta\Delta_x\chieta\!
    +(\partial_{\xi}A(x,\xi)\dot{z}_t^\epsilon)\!\cdot\!\nabla_x\chieta\!-\!\left(\nabla_x\!\cdot\! A(x,\xi)\dot{z}^\epsilon_t\right)\partial_{\xi}\!\chieta\!+\partial_{\xi}(\peta+\qeta)  
\end{aligned}
\end{align}
in $Q\times\R\times(0,\infty)$, with Dirichlet boundary conditions and initial data $\bar{\chi}(u_0(x),\xi)$.
Here, the measure $\peta$ is the \emph{entropy defect measure}
\begin{equation}\label{formula/entropy defect measure smooth}
    \peta(x,\xi,t):=\delta_0(\xi-\ueta(x,t))\,\eta\,\left|\nabla\ueta(x,t)\right|^2,
\end{equation}
and the measure $\qeta$ is the \emph{parabolic defect measure}
\begin{equation}\label{formula/parabolic defect measure smooth}
    \qeta(x,\xi,t):=\delta_0(\xi-\ueta(x,t))\frac{4m}{(m+1)^2}\left|\nabla(\ueta)^{\left[\frac{m+1}{2}\right]}(x,t)\right|^2, 
\end{equation}
where $\delta_0$ denotes the one-dimensional Dirac mass centered at the origin.
The sense in which the kinetic function satisfies \eqref{formula/strong kinetic formulation of the smoothed equation} is made precise by the following proposition.
The proof is again a small modification of \cite[Proposition A.2]{fehrman-gess-Wellposedness-of-Nonlinear-Diffusion-Equations-with-Nonlinear-Conservative-Noise} and is omitted.

\begin{proposition}\label{proposition/weak kinetic formulation of the smoothed equation}
For each $\eta\in(0,1)$, $\epsilon\in(0,1)$, and $u_0\in L^2(Q)$, let $\ueta$ be the solution of \eqref{formula/regularized porous media equation 1} from Proposition \ref{proposition/existence of classical solutions for smoothed equation}.
The associated kinetic function $\chieta$ defined in \eqref{formula/kinetic function of smooth solutions} is a distributional solution of equation \eqref{formula/strong kinetic formulation of the smoothed equation} in the sense that, for every $t_1\leq t_2\in[0,\infty)$ and for every $\psi\in C_c^{\infty}(Q\times\R\times[t_1,t_2])$, we have
\begin{align}
        \hspace{-2mm}\int_{Q\times\R}\!\!\!\!\!\!\!\!\chieta(x,\xi,r)\psi(x,\xi,r)\,dx\,d\xi\Big|_{r=t_1}^{r=t_2}
        \!\!\!=&\int_{t_1}^{t_2}\!\!\!\int_{Q\times\R}\!\!\!\chieta\partial_t\psi\,dx\,d\xi\,dr \nonumber
        \\
        &+\!\!
        \int_{t_1}^{t_2}\!\!\!\int_{Q\times\R}\!\!\!\left(m|\xi|^{m-1}+\eta\right)\chieta\Delta_x\psi\,dx\,d\xi\,dr \label{formula/weak kinetic formulation of the smoothed equation}
        \\
        &-\!\!\int_{t_1}^{t_2}\!\!\!\int_{Q\times\R}\!\!\!\!\!\!\!\!\chieta\!\left(\nabla_{\!\!x}\!\!\cdot\!((\partial_\xi A(x,\xi)\dot{z}_t^\epsilon)\psi)\!-\!\partial_\xi((\nabla_{\!\!x}\!\!\cdot\! A(x,\xi)\dot{z}_t^\epsilon)\psi)\right)\,dx\,d\xi\,dr \nonumber
        \\
        &-
        \int_{t_1}^{t_2}\!\!\!\int_{Q\times\R}\!\!\!\!\left(\peta(x,\xi,r)+\qeta(x,\xi,r)\right)\partial_{\xi}\psi\,dx\,d\xi\,dr. \nonumber
\end{align}
\end{proposition}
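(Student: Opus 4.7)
The plan is to differentiate $r\mapsto\int_{Q\times\R}\chieta(x,\xi,r)\psi(x,\xi,r)\,dx\,d\xi$ in time, so that the identity to be proved follows from the fundamental theorem of calculus after one rewrites the $\partial_t\chieta$ contribution using the PDE \eqref{formula/regularized porous media equation 1}. Since $\partial_v\bar\chi(v,\xi)=\delta_0(\xi-v)$, the chain rule gives
\begin{equation*}
\int_{Q\times\R}\partial_t\chieta\,\psi\,dx\,d\xi \;=\; \int_Q \psi(x,\ueta(x,r),r)\,\partial_t\ueta(x,r)\,dx,
\end{equation*}
so after substituting $\partial_t\ueta=\Delta(\ueta)^{[m]}+\eta\Delta\ueta+\nabla\cdot(A(x,\ueta)\dot z^\epsilon_t)$ the task reduces to rewriting the diffusive and convective contributions in the kinetic variables $(x,\xi)$. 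Since $\psi\in C_c^\infty(Q\times\R\times[t_1,t_2])$ and $\ueta$ has zero Dirichlet trace, all the spatial integrations by parts below produce no boundary contribution.

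\textbf{Diffusion term.} Setting $a(\xi):=m|\xi|^{m-1}+\eta$, the chain rule gives $\Delta(\ueta)^{[m]}+\eta\Delta\ueta=a(\ueta)\Delta\ueta+a'(\ueta)|\nabla\ueta|^2$. Integrating $\int_Q\psi(x,\ueta)[a(\ueta)\Delta\ueta]\,dx$ by parts twice in $x$ and regrouping, one obtains
\begin{equation*}
\int_Q \psi(x,\ueta)\bigl[\Delta(\ueta)^{[m]}+\eta\Delta\ueta\bigr]dx=\int_{Q\times\R}a(\xi)\chieta\Delta_x\psi\,dx\,d\xi - \int_{Q\times\R}\!\delta_0(\xi-\ueta)\,a(\xi)|\nabla\ueta|^2\,\partial_\xi\psi\,dx\,d\xi.
\end{equation*}
The second integral is exactly $\int(\peta+\qeta)\partial_\xi\psi$ once one uses $|\nabla(\ueta)^{[(m+1)/2]}|^2=\tfrac{(m+1)^2}{4}|\ueta|^{m-1}|\nabla\ueta|^2$ to recognise $\peta+\qeta=\delta_0(\xi-\ueta)\,a(\xi)|\nabla\ueta|^2$, matching the second and fourth lines of the target identity \eqref{formula/weak kinetic formulation of the smoothed equation}.

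\textbf{Transport term.} Expanding $\nabla\cdot(A(x,\ueta)\dot z^\epsilon_t)=(\nabla_x\cdot A)(x,\ueta)\dot z^\epsilon_t+(\partial_\xi A)(x,\ueta)\dot z^\epsilon_t\cdot\nabla\ueta$ and using the representation $\int_\R \chieta F(x,\xi)\,d\xi=\int_0^{\ueta(x)}F(x,\xi)\,d\xi$, one recasts each of the two pieces as an integral against $\chieta$. The critical step is an integration by parts in $\xi$ on the contribution involving $(\nabla_x\cdot A)\partial_\xi\psi$: the boundary term at $\xi=0$ vanishes thanks to the hypothesis $\nabla_x\cdot A(x,0)=0$ from \eqref{formula/assumption on A divergence 0 in 0}, and by the Schwarz identity $\partial_\xi(\nabla_x\cdot A)=\nabla_x\cdot(\partial_\xi A)$ the interior cross-terms generated here cancel against those produced by an integration by parts in $x$ performed on the $(\partial_\xi A)\cdot\nabla_x\psi$ piece. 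The sum of these two manipulations is precisely the third line of \eqref{formula/weak kinetic formulation of the smoothed equation}.

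Integrating the identity $\frac{d}{dr}\int\chieta\psi=\int\partial_t\chieta\,\psi+\int\chieta\,\partial_t\psi$ over $r\in[t_1,t_2]$ and combining the three pieces above concludes the argument. The only real bookkeeping step is the cancellation in the transport computation; conceptually nothing new appears with respect to \cite[Appendix A]{fehrman-gess-Wellposedness-of-Nonlinear-Diffusion-Equations-with-Nonlinear-Conservative-Noise}, the sole modifications being that (i) spatial integrations by parts carry no boundary term because $\psi$ is compactly supported in $Q$ and $\chieta$ vanishes on $\partial Q\times(\R\setminus\{0\})$ by the Dirichlet condition on $\ueta$, and (ii) the $\xi=0$ boundary term in the transport computation is handled by the compatibility assumption $\nabla_x\cdot A(x,0)=0$. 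For this reason the full calculation is omitted and the result recorded as a direct adaptation of the cited reference.
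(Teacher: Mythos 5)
Your proof is correct and coincides with the argument the paper alludes to (and omits, citing \cite[Proposition A.2]{fehrman-gess-Wellposedness-of-Nonlinear-Diffusion-Equations-with-Nonlinear-Conservative-Noise}): differentiate $\int\chieta\psi$ in time using $\partial_v\bar\chi(v,\xi)=\delta_0(\xi-v)$, substitute the PDE, and re-express the diffusion and transport contributions in kinetic form via integration by parts, with the Schwarz identity $\partial_\xi(\nabla_x\cdot A)=\nabla_x\cdot(\partial_\xi A)$ producing the key cancellation and $\nabla_x\cdot A(x,0)=0$ killing the $\xi=0$ endpoint term from $\partial_\xi\chieta=\delta_0(\xi)-\delta_0(\xi-\ueta)$. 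The two modifications you flag for the Dirichlet setting are precisely what the paper intends; note that the vanishing of spatial boundary terms follows already from $\psi\in C_c^\infty(Q\times\R\times[t_1,t_2])$, so the appeal to $\chieta|_{\partial Q}=0$ is true but redundant.
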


The purpose of this section is to remove the dependency of equation \eqref{formula/weak kinetic formulation of the smoothed equation} on the derivative of the noise, so as to get a formulation of \eqref{formula/stochastic porous media equation} which is well-defined even for rough driving signals.
To achieve this, rather than testing equation \eqref{formula/weak kinetic formulation of the smoothed equation} against arbitrary test functions $\psi$, we shall limit ourselves to only use the solutions of the following transport equations, for any $t_0\in[0,t_1]$ and any initial data $\rho_0\in C_c^{\infty}(Q\times\R)$:
\begin{align}[left ={\empheqlbrace}]\label{formula/smooth underlying transport equation}
    \begin{aligned}
        &\partial_t\rho=\partial_\xi A(x,\xi)\dot{z}_t^\epsilon\cdot\nabla_{\!\!x}\rho -(\nabla_{\!\!x}\!\cdot\! A(x,\xi))\dot{z}_t^\epsilon\,\partial_{\xi}\rho &\text{in } \R^d\times\R\times(t_0,\infty),
        \\
        &\rho=\rho_0 &\text{on }  \R^d\times\R\times \{t_0\}.
    \end{aligned}
\end{align}
Indeed, using such test functions $\rho(x,\xi,t)$ in \eqref{formula/weak kinetic formulation of the smoothed equation}, the first and third term on the right-hand side automatically cancel out.

Owing to assumption \eqref{formula/assumption on A smooth} on $A$ and to the smoothness of the paths $z^{\epsilon}$, equation \eqref{formula/smooth underlying transport equation} is a standard first-order PDE with smooth coefficients, and its solution is represented using the associated characteristics, which we now analyze.
The forward characteristic $(X_{t_0,t}^{x_0,\xi_0,\epsilon},\Xi_{t_0,t}^{x_0,\xi_0,\epsilon})$ associated to \eqref{formula/smooth underlying transport equation} beginning at time $t_0\geq0$ from $(x_0,\xi_0)\in\R^d\times\R$ is defined as the solution of the system
\begin{align}[left ={\empheqlbrace}]\label{formula/forward smooth characteristics}
    \begin{aligned}
        &\dot{X}_{t_0,t}^{x_0,\xi_0,\epsilon}=-\partial_{\xi}A(X_{t_0,t}^{x_0,\xi_0,\epsilon},\Xi_{t_0,t}^{x_0,\xi_0,\epsilon})\dot{z}_t^{\epsilon}&\text{in } (t_0,\infty),
        \\
        &\dot{\Xi}_{t_0,t}^{x_0,\xi_0,\epsilon}=(\nabla_{\!\!x}\!\cdot\! A(X_{t_0,t}^{x_0,\xi_0,\epsilon},\Xi_{t_0,t}^{x_0,\xi_0,\epsilon}))\dot{z}_t^\epsilon&\text{in } (t_0,\infty),
        \\
        &(X_{t_0,t_0}^{x_0,\xi_0,\epsilon},\Xi_{t_0,t_0}^{x_0,\xi_0,\epsilon})=(x_0,\xi_0).
    \end{aligned}
\end{align}
The associated backward characteristic $(Y_{t_0,s}^{x_0,\xi_0,\epsilon},\Pi_{t_0,s}^{x_0,\xi_0,\epsilon})$ beginning at time $0$ from $(x_0,\xi_0)\in\R^d\times\R$ and terminating at $t_0$, is simply defined as the forward characteristic $(X_{t_0,t}^{x_0,\xi_0,\epsilon},\Xi_{t_0,t}^{x_0,\xi_0,\epsilon})$ run backward in time.
Namely, we define
\begin{equation}\label{formula/backward smooth characteristics are forward smooth characteristics run backward in time}
    Y_{t_0,s}^{x_0,\xi_0,\epsilon}:=X_{t_0,t_0-s}^{x_0,\xi_0,\epsilon}\quad\text{and}\quad \Pi_{t_0,s}^{x_0,\xi_0,\epsilon}:=\Xi_{t_0,t_0-s}^{x_0,\xi_0,\epsilon}\qquad \text{for } s\in[0,t_0].
\end{equation}
The forward and backward characteristics are mutually inverse, in the sense that, for each $(x,\xi)\in\R^d\times\R$, for each $t_0\geq0$ and $t\geq t_0$, and for each $s_0\geq 0$ and $s\in[0,s_0]$, we have the relation
\begin{equation}\label{formula/inverse relation for smooth characteristics}
    \left(X_{t_0,t}^{Y_{t,t-t_0}^{x,\xi,\epsilon},\Pi_{t,t-t_0}^{x,\xi,\epsilon},\epsilon},\Xi_{t_0,t}^{Y_{t,t-t_0}^{x,\xi,\epsilon},\Pi_{t,t-t_0}^{x,\xi,\epsilon},\epsilon}\right)=\left(Y_{s_0,s}^{X_{s_0-s,s}^{x,\xi,\epsilon},\Xi_{s_0-s,s}^{x,\xi,\epsilon},\epsilon},\Pi_{s_0,s}^{X_{s_0-s,s}^{x,\xi,\epsilon},\Xi_{s_0-s,s}^{x,\xi,\epsilon},\epsilon}\right)=(x,\xi).
\end{equation}
Furthermore, for each $t_0\geq0$, we define the reversed path
\begin{equation}
    z^{\epsilon}_{t_0,t}:=z^{\epsilon}_{t-t_0}\quad\text{ for $t\in[0,t_0]$}. \nonumber
\end{equation}
It is then easy to check that the backward characteristic $(Y_{t_0,s}^{x_0,\xi_0,\epsilon},\Pi_{t_0,s}^{x_0,\xi_0,\epsilon})$ coincides with the solution of the system
\begin{align}[left ={\empheqlbrace}]\label{formula/backward smooth characteristics}
    \begin{aligned}
        &\dot{Y}_{t_0,t}^{x_0,\xi_0,\epsilon}=-\partial_{\xi}A(Y_{t_0,t}^{x_0,\xi_0,\epsilon},\Pi_{t_0,t}^{x_0,\xi_0,\epsilon})\dot{z}_{t_0,t}^{\epsilon}&\text{in } (0,t_0),
        \\
        &\dot{\Pi}_{t_0,t}^{x_0,\xi_0,\epsilon}=(\nabla_{\!\!x}\!\cdot\! A(Y_{t_0,t}^{x_0,\xi_0,\epsilon},\Pi_{t_0,t}^{x_0,\xi_0,\epsilon}))\dot{z}_{t_0,t}^\epsilon&\text{in } (0,t_0),
        \\
        &(Y_{t_0,t_0}^{x_0,\xi_0,\epsilon},\Pi_{t_0,t_0}^{x_0,\xi_0,\epsilon})=(x_0,\xi_0).
    \end{aligned}
\end{align}

The solution of \eqref{formula/smooth underlying transport equation} is the transport of the initial data along the backward characteristics \eqref{formula/backward smooth characteristics are forward smooth characteristics run backward in time}.
Precisely, for each $\rho_0\in C_c^{\infty}(Q\times\R)$, a direct computation proves that $\rho_0(Y_{t,t-t_0}^{x,\xi,\epsilon},\Pi_{t,t-t_0}^{x,\xi,\epsilon})$ is indeed the solution of \eqref{formula/smooth underlying transport equation}.
For each $t_0\geq 0$ and $\rho_0\in C_c^{\infty}(Q\times\R)$, we shall use the notation
\begin{equation}\label{formula/transported smooth test function}
    \rho_{t_0,t}^{\epsilon}(x,\xi):=\rho_0(Y_{t,t-t_0}^{x,\xi,\epsilon},\Pi_{t,t-t_0}^{x,\xi,\epsilon})
\end{equation}
for the solution of the transport equation \eqref{formula/smooth underlying transport equation}.

Furthermore, as a consequence of \eqref{formula/smooth underlying transport equation}, the characteristics preserve the Lebesgue measure on $\R^d\times\R$.
That is, for every $0\leq t_0<t_1$ and $0\leq s_1<s_0$, for every $\psi\in L^1(\R^d\times\R)$, 
\begin{equation}\label{formula/lebesgue measure is preserved by smooth caharacteristics}
    \int_{R^d\times\R}\psi(x,\xi)\,dx\,d\xi=\int_{R^d\times\R}\psi(X_{t_0,t_1}^{x,\xi,\epsilon},\Xi_{t_0,t_1}^{x,\xi,\epsilon})\,dx\,d\xi=\int_{R^d\times\R}\psi(Y_{s_0,s_1}^{x,\xi,\epsilon},\Pi_{s_0,s_1}^{x,\xi,\epsilon})\,dx\,d\xi.
\end{equation}
This property in turn comes from the conservative structure of the noise term in \eqref{formula/regularized porous media equation 1}, and it is essential to the proof of uniqueness in Section \ref{section/uniqueness of pathwise kinetic solutions}.

Finally, we analyze the consequences for the characteristics of the assumptions \eqref{formula/assumption on A divergence 0 in 0} and \eqref{formula/assumption on A xi derivative zero on the boundary} on the noise coefficient $A(x,\xi)$.
It is immediate from the second line of \eqref{formula/forward smooth characteristics} and \eqref{formula/backward smooth characteristics} that the hypothesis $\nabla_{\!\!x}\cdot A(x,0)\equiv0$ implies the characteristics preserve the sign of the velocity variable.
That is, for each $(x,\xi)\in\R^d\times\R$, for each $t_0\geq0$ and $t\geq t_0$, and for each $s_0\geq 0$ and $s\in[0,s_0]$, we have
\begin{equation}\label{formula/sign of velocity is preserved by smooth characteristics}
    \Xi_{t_0,t}^{x,\xi,\epsilon}=\Pi_{s_0,s}^{x,\xi,\epsilon}=0 \,\,\text{ if and only if \,\,$\xi=0$,\,\, and\,\, }\sgn(\xi)=\sgn(\Xi_{t_0,t}^{x,\xi,\epsilon})=\sgn(\Pi_{s_0,s}^{x,\xi,\epsilon}) \text{ \,\,if\,\, $\xi\neq0$}.
\end{equation}

As regards the assumption $\partial_{\xi}A(x,\xi)_{|\partial Q\times\R}\equiv 0$, the first line of \eqref{formula/forward smooth characteristics} and \eqref{formula/backward smooth characteristics} ensure that space characteristics starting from the boundary do not move. That is, for any $t_0\geq0$,
\begin{equation}\label{formula/smooth characteristics starting from the boundary do not move}
    \text{if \, $(x,\xi)\in\partial Q\times\R$,\, then}\,\,X^{x,\xi,\epsilon}_{t_0,t}=Y^{x,\xi,\epsilon}_{t_0,s}=x\,\,\, \text{for all $t\geq 0$ and all $s\in[0,t_0]$}.
\end{equation}
The uniqueness of solutions then implies that the space characteristics cannot cross the boundary $\partial Q$ and thus, when starting within $Q$, they never leave the domain.
That is, for any $t_0\geq0$,
\begin{equation} \label{formula/smooth characteristics never leave the domain}
    \text{if \,\,$(x,\xi)\in Q\times\R$, \, then }\,X^{x,\xi,\epsilon}_{t_0,t}\in Q\,\,\,\text{for all $t\geq 0$, \, and }\,\, Y^{x,\xi,\epsilon}_{t_0,s}\in Q\,\,\, \text{for all $s\in[0,t_0]$}. 
\end{equation}
In fact, owing to the smoothness hypothesis \eqref{formula/assumption on A smooth}, more is true: the closer to $\partial Q$ is the space initial data $x\in \R^d$, the slower the associated space characteristic moves.
Rigorously speaking, for any $T>0$ we have
\begin{equation}\label{formula/space smooth characteristics move slower the closer they get to the boundary}
    \left|X_{t_0,t}^{x,\xi,\epsilon}-x\right|+\left|Y_{t_0,s}^{x,\xi,\epsilon}-x\right|\leq C\, \dist(x,\partial Q) \quad\forall(x,\xi)\in\R^d\times\R\quad\forall t_0,t\in[0,T]\quad\forall s\in[0,t_0],
\end{equation}
for a constant $C=C(T,A,z)$ depending on the time $T$, the noise coefficient $A(x,\xi)$ defining the systems \eqref{formula/forward smooth characteristics} and \eqref{formula/backward smooth characteristics}, and the rough signal $z$, but uniform for all the paths $\{z^{\epsilon}\}_{\epsilon\in(0,1)}$, which are close to the rough path $z$ in the metric $d_{\alpha}$.
This and other estimates are proved in Appendix \ref{section/rough path estimates}.

Finally, the assumption $\partial_{\xi}A(x,\xi)_{|\partial Q\times\R}\equiv 0$ has fundamental consequences on the transported functions \eqref{formula/transported smooth test function}.
Indeed estimate \eqref{formula/space smooth characteristics move slower the closer they get to the boundary} ensures that
\begin{equation}\label{formula/transported compactly supported functions stays compactly supported - smooth case}
    \text{if $\rho_0\!\in\! C_c(Q\times\R)$, then $\rho^{\epsilon}_{t_0,t}(x,\xi)\!=\!\rho_0(Y_{t,t-t_0}^{x,\xi,\epsilon},\Pi_{t,t-t_0}^{x,\xi,\epsilon})\!\in\! C_c(Q\!\times\!\R\!\times\![t_0,T])$ for any $T\geq t_0\geq0$.} 
\end{equation}
That is, the transport along characteristics of a functions which is compactly supported within $Q\times\R$ stays compactly supported, and thus, in particular, is an admissible test function in equation \eqref{formula/weak kinetic formulation of the smoothed equation}.

Now we go back to equation \eqref{formula/weak kinetic formulation of the smoothed equation}.
The following corollary makes precise the idea of testing the equation against functions transported along the inverse characteristics, so as to get rid of the noise in the equation.
The proof is an immediate consequence of the discussion above, in particular equation \eqref{formula/smooth underlying transport equation}, the representation \eqref{formula/transported smooth test function} and property \eqref{formula/transported compactly supported functions stays compactly supported - smooth case}, and Proposition \ref{proposition/weak kinetic formulation of the smoothed equation}.

\begin{corollary}\label{corollary/transported weak kinetic formulation of the smoothed equation}
Let $\eta,\epsilon\in(0,1)$ and $u_0\in L^2(Q)$.
The kinetic function $\chieta$ from Proposition \ref{proposition/weak kinetic formulation of the smoothed equation} satisfies, for each $t_0\leq t_1\in[0,\infty)$ and $\rho_0\in C_c^{\infty}(Q\times\R)$, for the solution $\rho_{t_0,r}^\epsilon(x,\xi):=\rho(Y^{x,\xi,\epsilon}_{r,r-t_0},\Pi^{x,\xi,\epsilon}_{r,r-t_0})$ of \eqref{formula/smooth underlying transport equation},
\begin{align}\label{formula/transported weak kinetic formulation of the smoothed equation 1}
    \vspace{-2mm}
    \begin{aligned}
        \int_{Q\times\R}\!\!\!\!\!\!\chieta(x,\xi,r)\rho_{t_0,r}^{\epsilon}(x,\xi)\,dx\,d\xi\Big|_{r=t_0}^{r=t_1}
        =&
        \int_{t_0}^{t_1}\!\!\!\int_{Q\times\R}\!\!\!\!\left(m|\xi|^{m-1}+\eta\right)\chieta(x,\xi,r)\Delta_x\rho_{t_0,r}^{\epsilon}(x,\xi)\,dx\,d\xi\,dr
        \\
        &-
        \int_{t_0}^{t_1}\!\!\!\int_{Q\times\R}\!\!\!\!\!\!\left(\peta(x,\xi,r)+\qeta(x,\xi,r)\right)\partial_{\xi}\rho_{t_0,r}^{\epsilon}(x,\xi)\,dx\,d\xi\,dr.
    \end{aligned}
\end{align}
\end{corollary}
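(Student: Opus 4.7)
The plan is to prove this as a direct consequence of Proposition \ref{proposition/weak kinetic formulation of the smoothed equation} by choosing the test function to be the transported function $\rho^{\epsilon}_{t_0,r}$ and exploiting the transport equation it satisfies. First I would verify admissibility. By the smoothness hypothesis \eqref{formula/assumption on A smooth} on $A$ and the smoothness of $z^{\epsilon}$, the characteristics $(Y_{r,r-t_0}^{x,\xi,\epsilon},\Pi_{r,r-t_0}^{x,\xi,\epsilon})$ depend smoothly on all their arguments, so $\rho^{\epsilon}_{t_0,r}$ inherits the $C^{\infty}$ regularity of $\rho_0$. Moreover, property \eqref{formula/transported compactly supported functions stays compactly supported - smooth case} guarantees that $\rho^{\epsilon}_{t_0,r}\in C_c^{\infty}(Q\times\R\times[t_0,t_1])$, so it is a valid test function in \eqref{formula/weak kinetic formulation of the smoothed equation}.

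Next I would substitute $\psi=\rho^{\epsilon}_{t_0,r}$ into \eqref{formula/weak kinetic formulation of the smoothed equation} and simplify the noise term. Using $\nabla_x\cdot\partial_\xi A = \partial_\xi(\nabla_x\cdot A)$ (the Schwarz identity applied to the smooth coefficient $A$) one computes, by the product rule,
\begin{equation*}
    \nabla_{\!x}\!\cdot\!\bigl((\partial_\xi A\,\dot{z}^\epsilon_r)\psi\bigr)-\partial_\xi\bigl((\nabla_{\!x}\!\cdot\! A\,\dot{z}^\epsilon_r)\psi\bigr)
    =(\partial_\xi A\,\dot{z}^\epsilon_r)\!\cdot\!\nabla_{\!x}\psi-(\nabla_{\!x}\!\cdot\! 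A)\,\dot{z}^\epsilon_r\,\partial_\xi\psi,
\end{equation*}
so that the term on the second-to-last line of \eqref{formula/weak kinetic formulation of the smoothed equation} becomes $-\int_{t_0}^{t_1}\int_{Q\times\R}\chieta\bigl[(\partial_\xi A\,\dot{z}^\epsilon_r)\!\cdot\!\nabla_{\!x}\psi-(\nabla_{\!x}\!\cdot\! A)\,\dot{z}^\epsilon_r\,\partial_\xi\psi\bigr]\,dx\,d\xi\,dr$.

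Third, because $\rho^{\epsilon}_{t_0,r}$ solves the transport equation \eqref{formula/smooth underlying transport equation}, its pointwise time derivative is
\begin{equation*}
    \partial_r\rho^{\epsilon}_{t_0,r} = (\partial_\xi A\,\dot{z}^\epsilon_r)\!\cdot\!\nabla_{\!x}\rho^{\epsilon}_{t_0,r}-(\nabla_{\!x}\!\cdot\! A)\,\dot{z}^\epsilon_r\,\partial_\xi\rho^{\epsilon}_{t_0,r},
\end{equation*}
so the $\int\chieta\,\partial_t\psi$ contribution on the first line of the right-hand side of \eqref{formula/weak kinetic formulation of the smoothed equation} cancels the simplified noise term exactly, leaving only the Laplacian term and the defect-measure term, which is precisely the identity \eqref{formula/transported weak kinetic formulation of the smoothed equation 1}.

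There is no real obstacle here: the whole content of the corollary is the observation that the transport equation for $\rho^{\epsilon}_{t_0,r}$ was designed to produce exactly this cancellation. The only delicate point worth double-checking is the Schwarz-type identity $\nabla_{\!x}\!\cdot\!\partial_\xi A=\partial_\xi(\nabla_{\!x}\!\cdot\! A)$ used to reduce the noise term to its transport form, which is immediate from \eqref{formula/assumption on A smooth}, and the verification that $\rho^{\epsilon}_{t_0,r}$ remains compactly supported inside $Q\times\R$, which is guaranteed by \eqref{formula/transported compactly supported functions stays compactly supported - smooth case} (in turn a consequence of the boundary assumption \eqref{formula/assumption on A xi derivative zero on the boundary}).
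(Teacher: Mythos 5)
Your proof is correct and takes exactly the approach the paper relies on: it plugs the transported test function into the weak kinetic formulation of Proposition \ref{proposition/weak kinetic formulation of the smoothed equation}, uses \eqref{formula/transported compactly supported functions stays compactly supported - smooth case} to justify admissibility, and cancels the noise term against $\int\chieta\,\partial_t\psi$ via the transport equation \eqref{formula/smooth underlying transport equation} (after commuting $\nabla_x\cdot$ and $\partial_\xi$ on $A$). The paper states the corollary is "an immediate consequence" of this very chain of observations; you have simply spelled out the algebra.
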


The essential observation is that, in the passage to the singular limit $\epsilon\to 0$, the system of characteristics \eqref{formula/forward smooth characteristics} is well-posed for rough noise when interpreted as a rough differential equation.
In view of the representation \eqref{formula/transported smooth test function}, this implies the well-posedness of equation \eqref{formula/smooth underlying transport equation} for rough signals as well.
Precisely, for each $(x_0,\xi_0)\in\R^d\times\R$ and $t_0\geq 0$, we define the rough forward characteristic $(X_{t_0,t}^{x_0,\xi_0},\Xi_{t_0,t}^{x_0,\xi_0})$ as the solution of the rough differential equation
\begin{align}[left ={\empheqlbrace}]\label{formula/forward stochastic characteristics}
    \begin{aligned}
        &dX_{t_0,t}^{x_0,\xi_0}=-\partial_{\xi}A(X_{t_0,t}^{x_0,\xi_0},\Xi_{t_0,t}^{x_0,\xi_0})\circ dz_t&\text{in } (t_0,\infty),
        \\
        &d\hspace{0.5mm}\Xi_{t_0,t}^{x_0,\xi_0}=(\nabla_{\!\!x}\!\cdot\! A(X_{t_0,t}^{x_0,\xi_0},\Xi_{t_0,t}^{x_0,\xi_0}))\circ dz_t&\text{in } (t_0,\infty),
        \\
        &(X_{t_0,t_0}^{x_0,\xi_0},\Xi_{t_0,t_0}^{x_0,\xi_0})=(x_0,\xi_0).
    \end{aligned}
\end{align}
The continuity properties of the It\^o--Lyons map, summarized in Proposition \ref{proposition/stability results for RDEs}, and the smoothness assumptions \eqref{formula/assumption on A smooth} on $A(x,\xi)$ imply that, for each $T>0$,
\begin{equation}
    \lim_{\epsilon\to0}\left(X_{t_0,t}^{x,\xi,\epsilon},\Xi_{t_0,t}^{x,\xi,\epsilon}\right)=\left(X_{t_0,t}^{x,\xi},\Xi_{t_0,t}^{x,\xi}\right) \,\,\text{uniformly for $t_0\!\leq\!t\!\in\![0,T]$ and  $(x,\xi)\in\R^d\times\R$.} 
\end{equation}

In analogy to the smooth case, we now list the properties of the rough characteristics.
As before, the rough inverse characteristic $(Y_{t_0,s}^{x_0,\xi_0},\Pi_{t_0,s}^{x_0,\xi_0})$ is defined as the forward characteristic run backward in time. 
Namely, we define
\begin{equation}\label{formula/backward stochastic characteristics are forward smooth characteristics run backward in time}
    Y_{t_0,s}^{x_0,\xi_0}:=X_{t_0,t_0-s}^{x_0,\xi_0}\quad\text{and}\quad \Pi_{t_0,s}^{x_0,\xi_0}:=\Xi_{t_0,t_0-s}^{x_0,\xi_0}\qquad \text{for } s\in[0,t_0]. \nonumber
\end{equation}
As before, for each $t_0\geq0$, we define the reversed path $z_{t_0,t}:=z_{t-t_0}$ for $t\in[0,t_0]$, and the backward rough characteristic $(Y_{t_0,s}^{x_0,\xi_0,\epsilon},\Pi_{t_0,s}^{x_0,\xi_0,\epsilon})$ coincides with the solution of the rough differential equation
\begin{align}[left ={\empheqlbrace}]\label{formula/backward stochastic characteristics}
    \begin{aligned}
        &dY_{t_0,t}^{x_0,\xi_0}=-\partial_{\xi}A(Y_{t_0,t}^{x_0,\xi_0},\Pi_{t_0,t}^{x_0,\xi_0})\circ dz_{t_0,t}&\text{in } (0,t_0),
        \\
        &d\hspace{0.5mm}\Pi_{t_0,t}^{x_0,\xi_0}=(\nabla_{\!\!x}\!\cdot\! A(Y_{t_0,t}^{x_0,\xi_0},\Pi_{t_0,t}^{x_0,\xi_0}))\circ dz_{t_0,t}&\text{in } (0,t_0),
        \\
        &(Y_{t_0,t_0}^{x_0,\xi_0},\Pi_{t_0,t_0}^{x_0,\xi_0})=(x_0,\xi_0).
    \end{aligned} 
\end{align}

All the properties \eqref{formula/inverse relation for smooth characteristics}-\eqref{formula/transported compactly supported functions stays compactly supported - smooth case} of the smooth forward and backward characteristics continue to hold for the limiting rough characteristics, just by deleting the symbol $\epsilon$ form the formulas.
The rough forward and backward characteristics are inverse of each other in the sense of formula \eqref{formula/inverse relation for smooth characteristics}, and they preserve the Lebesgue measure as prescribed by \eqref{formula/lebesgue measure is preserved by smooth caharacteristics}.
The assumptions $\nabla_{\!\!x}\cdot A(x,0)\equiv0$ still implies that the rough characteristics preserve the sign of the velocity variable as in \eqref{formula/sign of velocity is preserved by smooth characteristics}. 
Similarly, the assumption $\partial_{\xi}A(x,\xi)_{|\partial Q\times\R}\equiv 0$ guarantees that the rough space characteristics do not move when starting from the boundary, and that, when starting within $Q$, cannot leave the domain and move slower the closer they get to the boundary, as specified by \eqref{formula/smooth characteristics starting from the boundary do not move}-\eqref{formula/space smooth characteristics move slower the closer they get to the boundary}.
These concepts are discussed in Section \ref{section/rough path estimates}.

Finally, it follows from the well-posedness of the characteristic systems \eqref{formula/forward stochastic characteristics} and \eqref{formula/backward stochastic characteristics} that the rough transport equation
\begin{align}[left ={\empheqlbrace}]\label{formula/stochastic underlying transport equation}
    \begin{aligned}
        &\partial_t\rho=\left(\partial_\xi A(x,\xi)\circ dz_t\right)\cdot\nabla_{\!\!x}\rho -\left((\nabla_{\!\!x}\!\cdot\! A(x,\xi))\circ dz_t\right)\,\partial_{\xi}\rho &\text{in } \R^d\times\R\times(t_0,\infty),
        \\
        &\rho=\rho_0 &\text{on }  \R^d\times\R\times \{t_0\},
    \end{aligned}
\end{align}
is well-posed for any $t_0\geq 0$ and any initial data $\rho_0\in C^{\infty}(\R^d\times\R)$.
Indeed, in analogy with \eqref{formula/transported smooth test function}, the solution is represented by the transport of the initial data along the inverse rough characteristics \eqref{formula/backward stochastic characteristics}.
That is, for each $t_0\geq0$ and $\rho_0\in C^{\infty}(\R^d\times\R)$, the solution of \eqref{formula/stochastic underlying transport equation} can be written as
\begin{equation}\label{formula/transported stochastic test function}
    \rho_{t_0,t}(x,\xi):=\rho_0(Y_{t,t-t_0}^{x,\xi},\Pi_{t,t-t_0}^{x,\xi}).
\end{equation}
Then, as in the smooth case, the assumption $\partial_{\xi}A(x,\xi)_{|\partial Q\times\R}\equiv 0$ is fundamental to ensure that the transport along rough characteristics of a function that is compactly supported within $Q\times\R$ stays compactly supported.
Namely, estimate \eqref{formula/space smooth characteristics move slower the closer they get to the boundary} implies that
\begin{equation}\label{formula/transported compactly supported functions stays compactly supported - rough case}
    \text{if $\rho_0\!\in\! C_c(Q\times\R)$, then $\rho_{t_0,t}(x,\xi)\!=\!\rho_0(Y_{t,t-t_0}^{x,\xi},\Pi_{t,t-t_0}^{x,\xi})\!\in\! C_c(Q\!\times\!\R\!\times\![t_0,T])$ for any $T\geq t_0\geq0$.} 
\end{equation}

We are now prepared to present the definition of pathwise kinetic solution.
Proposition \ref{proposition/stable estimate for L^2 norm and defect measures of smoothed solutions} below proves that the solutions $\ueta$ of \eqref{formula/regularized porous media equation 1} satisfy, for each $T>0$, for $C=C(m,Q,T,A,z)$,
\begin{align}\label{formula/stable estimate for L^2 norm and defect measures of smoothed solutions}
    \begin{aligned}
        \hspace{-4mm}\|\ueta\|_{L^{\infty}([0,T];L^2(Q))}^2\!\!+\left\|\nabla(\ueta)^{[\nicefrac{m+1}{2}]}\right\|_{L^2([0,T];L^2(Q))}^2\!\!\!+\left\|\eta^{\frac{1}{2}}\nabla(\ueta)\right\|_{L^2([0,T];L^2(Q))}^2
        \!\!\leq
        C\left(1+\|u_0\|_{L^2(Q)}^2\right), 
    \end{aligned}\nonumber
\end{align}
uniformly for $\eta\in(0,1)$ and $\epsilon\in(0,1)$.
Using similar estimates, and weak convergence and compactness arguments, we would expect the smooth solutions $\ueta$ to converge to some limiting function $u$ as we let $\eta,\epsilon\to 0$.
In turn, we would expect $\chieta=\bar{\chi}(\ueta,\xi)$ to converge to $\bar{\chi}(u,\xi)$.
Moreover, the definitions \eqref{formula/entropy defect measure smooth}-\eqref{formula/parabolic defect measure smooth} of the entropy and parabolic defect measures and the above estimate on their total mass immediately imply that $\peta$ and $\qeta$ converge weakly in the $\eta,\epsilon\to 0$ limit.
Finally, for each $T>0$, Proposition \ref{proposition/stability results for RDEs} ensures that, as $\epsilon\to0$, 
\begin{equation}
    Y_{t,s}^{x,\xi,\epsilon}\to Y_{t,s}^{x,\xi}\quad\text{and}\quad \Pi_{t,s}^{x,\xi,\epsilon}\to \Pi_{t,s}^{x,\xi}\quad\text{uniformly for $s\leq t\in[0,T]$ and $(x,\xi)\in\R^d\times\R$}.\nonumber
\end{equation}
As a consequence, we would expect equation \eqref{formula/transported weak kinetic formulation of the smoothed equation 1} to pass to the $\eta,\epsilon\to 0$ limit as well.
This informal argument motivates the following definition.

\begin{definition}\label{definition/pathwise kinetic solution}
Let $u_0\in L^2(Q)$. 
A \emph{pathwise kinetic solution} with initial data $u_0$ is a function $u\in L^{\infty}_{\text{loc}}([0,\infty);L^2(Q))$ that satisfies the following properties.
\begin{itemize}
    \item [(i)] For each $T>0$,
        \begin{equation}
            u^{\left[\frac{m+1}{2}\right]}\in L^2([0,T];H^1_0(Q)). \nonumber
        \end{equation}
        In particular, for each $T>0$, the parabolic defect measure 
        \begin{equation}\label{formula/parabolic defect measure}
            q(x,\xi,t):=\frac{4m}{(m+1)^2}\delta_0(\xi-u(x,t))\left|\nabla u^{\left[\frac{m+1}{2}\right]}\right|^2 
        \end{equation}
        is finite on $Q\times\R\times[0,T]$.
    \item [(ii)] For the kinetic function $\chi(x,\xi,t)=\bar{\chi}(u(x,t),\xi)$, there exists a nonnegative entropy defect measure $p$ on $Q\times\R\times[0,\infty)$, which is finite on $Q\times\R\times[0,T]$ for each $T>0$, and a subset $\mathcal{N}\subset (0,\infty)$ of Lebesgue measure zero such that, for every $t_0<t_1\in [0,\infty)\setminus \mathcal{N}$ and every $\rho_0\in C_c^{\infty}(Q\times\R)$, for $\rho_{t_0,t}(x,\xi)$ defined by \eqref{formula/transported stochastic test function},
        \begin{align}\label{definition/pathwise kinetic solution/formula 1}
        \vspace{-2mm}
            \begin{aligned}
            \int_{Q\times\R}\!\!\!\!\!\!\chi(x,\xi,r)\rho_{t_0,r}(x,\xi)\,dx\,d\xi\bigg|_{r=t_0}^{r=t_1}
            =&
            \int_{t_0}^{t_1}\!\!\!\int_{Q\times\R}\!\!\!\!\!m|\xi|^{m-1}\chi(x,\xi,r)\,\Delta_x\rho_{t_0,r}(x,\xi)\,dx\,d\xi\,dr
            \\
            &-
            \int_{t_0}^{t_1}\!\!\!\int_{Q\times\R}\!\!\!\!\!\!\left(p(x,\xi,r)+q(x,\xi,r)\right)\,\partial_{\xi}\rho_{t_0,r}(x,\xi)\,dx\,d\xi\,dr.
            \end{aligned}
        \end{align}
    Moreover, the initial condition is enforced in the sense that, when $t_0=0$,
        \begin{equation}
            \int_{Q\times\R}\!\!\!\!\!\!\chi(x,\xi,0)\,\rho_{0,0}(x,\xi)\,dx\,d\xi=\int_{Q\times\R}\!\!\!\!\!\!\bar{\chi}(u_0(x),\xi)\,\rho_0(x,\xi)\,dx\,d\xi. \nonumber
        \end{equation}
\end{itemize}
\end{definition}

\begin{remark}
We observe that in fact, as $\eta\to0$, the entropy defect measures $\peta$ converge weakly to $0$, owing to the regularity implied by the parabolic defect measures $\qeta$.
However, due to the weak lower semicontinuity of the $L^2$-norm, along a subsequence, the weak limit of the parabolic defect measures $\qeta$ may lose mass in the limit, since the gradients $\nabla(\ueta)^{[\nicefrac{m+1}{2}]}$ will, in general, converge only weakly. 
The entropy defect measure appearing in Definition \ref{definition/pathwise kinetic solution} is therefore necessary to account for this potential loss of mass.
The complete details of this phenomenon are given in the proof of Theorem \ref{theorem/existence of pathwise kinetic solutions for signed data} from Section \ref{section/existence of pathwise kinetic solutions}.
\end{remark}

\begin{remark}
We remark that \eqref{definition/pathwise kinetic solution/formula 1} is equivalent to requiring that the kinetic function $\chi$ satisfies, for each $\varphi\in C_c^{\infty}([0,\infty))$, each $t_0\leq t_1\in[0,\infty)\setminus\mathcal{N}$ and each $\rho_0\in C_c^{\infty}(Q\times\R)$, for the solution $\rho_{t_0,t}(x,\xi)$ of \eqref{formula/stochastic underlying transport equation},
\begin{align}\label{formula/pathwise kientic solution equation enhanced version}
    \vspace{-2mm}
    \begin{aligned}
        \int_{t_0}^{t_1}\int_{Q\times\R}\!\!\!\!\!\!\chi(x,\xi,r)\,\rho_{t_0,r}(x,\xi)\,\dot{\varphi}(r)\,dx\,d\xi\,dr
        =&
        \int_{Q\times\R}\!\!\!\!\!\!\chi(x,\xi,r)\rho_{t_0,r}(x,\xi)\varphi(r)\,dx\,d\xi\bigg|_{r=t_0}^{r=t_1}
        \\
        &-\!
        \int_{t_0}^{t_1}\!\!\int_{Q\times\R}\!\!\!\!\!\!\!\!m|\xi|^{m-1}\chi(x,\xi,r)\,\Delta_x\rho_{t_0,r}(x,\xi)\,\varphi(r)\,dx\,d\xi\,dr
        \\
        &+\!
        \int_{t_0}^{t_1}\!\!\int_{Q\times\R}\!\!\!\!\!\!\!\!\!\!\left(p(x,\xi,r)+q(x,\xi,r)\right)\partial_{\xi}\rho_{t_0,r}(x,\xi)\,\varphi(r)\,dx\,d\xi\,dr.
    \end{aligned}
\end{align}
\end{remark}

\begin{remark}
An easy approximation argument shows that in fact we can test equation \eqref{definition/pathwise kinetic solution/formula 1} against the transport along characteristics of any $\rho_0\in C^2(Q\times\R)$ as long as it has bounded first and second derivatives and it satisfies $\rho_0\in C_c(Q\times[-M,M])$ for each $M>0$, i.e. it is compactly supported in $Q$ locally in $\xi$.
\end{remark}

Finally, we observe that the regularity and zero-trace condition imposed by Definition \ref{definition/pathwise kinetic solution}.(i) imply that every pathwise kinetic solution satisfies the following integration by parts formula.
The proof is a small modification of \cite[Lemma 3.6]{fehrman-gess-Wellposedness-of-Nonlinear-Diffusion-Equations-with-Nonlinear-Conservative-Noise} and is omitted.
\begin{lemma}\label{lemma/integration by parts formula}
Let $u$ be a pathwise kinetic solution of \eqref{formula/stochastic porous media equation} in the sense of Defintion \ref{definition/pathwise kinetic solution} and let $\chi(x,\xi,t)$ be the associated kinetic function.
For each $\psi\in C_c^{\infty}(Q\times\R\times[0,\infty))$ and each $t_0\geq0$, we have 
\begin{align}\label{formula/integration by parts formula}
    \int_0^{t_0}\int_{Q\times\R}\frac{m+1}{2}|\xi|^{\frac{m-1}{2}}\chi(x,\xi,r)\nabla\psi(x,\xi,r)\,dx \,d\xi\,dr
    =
    -\int_0^{t_0}\int_Q\nabla u^{\left[\frac{m+1}{2}\right]}\,\psi(x,u(x,r),r)\,dx\,dr.
\end{align}
\end{lemma}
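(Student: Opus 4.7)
I would prove the identity pointwise in $r$ and then integrate in time. Writing $F(\xi):=\xi^{[(m+1)/2]}$, so that $F'(\xi)=\tfrac{m+1}{2}|\xi|^{(m-1)/2}$, the definition of $\bar\chi$ gives, for a.e.\ $(x,r)$, the reduction of the $\xi$-integral to a directed spatial integral,
\begin{equation*}
    \int_\R \chi(x,\xi,r)F'(\xi)\nabla\psi(x,\xi,r)\,d\xi=\int_0^{u(x,r)}F'(\xi)\nabla\psi(x,\xi,r)\,d\xi.
\end{equation*}
Since $\nabla u^{[(m+1)/2]}\in L^2([0,T]\times Q)$ and $\psi$ is bounded with compact $\xi$-support, both sides of the claimed identity are integrable in $r$, so it suffices to establish that for a.e.\ $r$ the integral of the right-hand side above over $x\in Q$ equals $-\int_Q\psi(x,u,r)\nabla u^{[(m+1)/2]}(x,r)\,dx$.

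The plan is to introduce the primitive $\Phi(x,r):=\int_0^{u(x,r)}F'(\xi)\psi(x,\xi,r)\,d\xi$, show that $\Phi(\cdot,r)\in H^1_0(Q)$ for a.e.\ $r$ with
\begin{equation*}
    \nabla\Phi=\psi(x,u,r)\nabla u^{[(m+1)/2]}+\int_0^{u(x,r)}F'(\xi)\nabla\psi(x,\xi,r)\,d\xi,
\end{equation*}
and then integrate this identity over $Q$. The trace-zero property of $\Phi$ forces $\int_Q\nabla\Phi\,dx=0$, and rearrangement gives the desired spatial identity. Here I have used that $F'(u)\nabla u=\nabla u^{[(m+1)/2]}$ in the first term of the gradient formula.

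The main obstacle is the justification of the chain rule: because $\xi\mapsto F(\xi)$ is not Lipschitz when $m<1$ and its inverse is not Lipschitz when $m>1$, the classical Sobolev chain rule cannot be applied directly treating $u$ as the underlying $H^1$-function. To circumvent this, I would work with $w:=u^{[(m+1)/2]}$, which belongs to $H^1_0(Q)$ for a.e.\ $r$ by Definition \ref{definition/pathwise kinetic solution}.(i). The substitution $\xi=t^{[2/(m+1)]}$, for which $F'(\xi)\,d\xi=dt$, rewrites the primitive as $\Phi(x,r)=\tilde\Phi(w(x,r);x,r)$ with
\begin{equation*}
    \tilde\Phi(w;x,r):=\int_0^{w}\psi\bigl(x,t^{[2/(m+1)]},r\bigr)\,dt.
\end{equation*}
Now $\tilde\Phi(\cdot;x,r)$ is Lipschitz in its first argument with constant $\|\psi\|_\infty$ and smooth in $x$ with bounded derivatives. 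The classical chain rule for compositions of bounded-Lipschitz maps with $H^1_0$ functions then yields $\Phi(\cdot,r)\in H^1_0(Q)$, with vanishing trace following from $w|_{\partial Q}=0$ together with $\tilde\Phi(0;x,r)=0$; the stated gradient formula follows upon changing variables back from $t$ to $\xi$ in the derivative of $\tilde\Phi$, using that $\partial_w\tilde\Phi|_{w=w(x,r)}=\psi(x,u(x,r),r)$.
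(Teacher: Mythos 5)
The paper does not give its own proof of this lemma; it refers to a small modification of Lemma~3.6 of Fehrman--Gess, which is stated there in the periodic setting. Your argument is correct and is a natural reconstruction of what that modification must look like. The three essential steps — reducing the $\xi$-integral to $\int_0^{u(x,r)}F'(\xi)\nabla_x\psi\,d\xi$, identifying this as the residual part of $\nabla_x\Phi(\cdot,r)$ for a compactly supported $\Phi\in H^1(Q)$, and invoking $\int_Q\nabla_x\Phi\,dx=0$ — are sound, and the substitution $t=\xi^{[(m+1)/2]}$ is exactly the right trick to make the outer function Lipschitz in $w=u^{[(m+1)/2]}\in H^1_0(Q)$, sidestepping the failure of Lipschitz continuity of $F$ (for $m<1$) or of $F^{-1}$ (for $m>1$). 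Two small points worth recording. First, the chain rule you invoke is for an outer function $\tilde\Phi(w;x,r)$ depending on $x$ as well as on $w$, which is slightly outside the textbook statement; to be complete one should mollify $w$, apply the classical chain rule to $\tilde\Phi(w_\delta(x);x,r)$, and pass to the limit using that $\partial_w\tilde\Phi$ is bounded and continuous (so $\partial_w\tilde\Phi(w_\delta;x,r)\nabla w_\delta\to\partial_w\tilde\Phi(w;x,r)\nabla w$ in $L^1$) and that $\nabla_x\tilde\Phi$ is bounded and continuous. Second, you invoke the vanishing trace of $\Phi$ via $w|_{\partial Q}=0$, but in fact the much simpler observation suffices: since $\psi$ is compactly supported in $Q$, so is $\Phi(\cdot,r)$, and $\int_Q\nabla_x\Phi\,dx=0$ then holds for any compactly supported $W^{1,1}(Q)$ function without any appeal to the trace characterization of $H^1_0$.
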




\section{Uniqueness of pathwise kinetic solutions}\label{section/uniqueness of pathwise kinetic solutions}

In this section we prove that pathwise kinetic solutions are unique.
In order to motivate and give an overview of the actual proof, we sketch the uniqueness argument in the case of the deterministic porous media equation
\begin{align}[left ={\empheqlbrace}]\label{formula/deterministic porous media equation}
    \begin{aligned}
        &\partial_tu=\Delta u^{[m]} &\text{in } Q\times(0,\infty),
        \\
        &u=0 &\text{on } \partial Q\times (0,\infty),
        \\
        &u=u_0 &\text{on }  Q\times {0}. 
    \end{aligned}
\end{align}
The corresponding kinetic formulation is 
\begin{align}[left ={\empheqlbrace}]\label{formula/deterministic porous media equation kinetic formulation}
    \begin{aligned}
        &\partial_t\chi=m|\xi|^{m-1}\Delta_x\chi+\partial_{\xi}(p+q) &\text{in } Q\times\R\times(0,\infty),
        \\
        &\chi=0 &\text{on } \partial Q\times\R\times (0,\infty),
        \\
        &\chi=\bar{\chi}(u_0;\xi) &\text{on }  Q\times\R\times \{0\},
    \end{aligned}
\end{align}
where $p\geq0$ is the nonnegative entropy defect measure and $q$ is the parabolic defect measure defined by \eqref{formula/parabolic defect measure}.

In this setting, the following proof of uniqueness is due to Chen and Perthame \cite{chen-perthame-Well-posedness-for-non-isotropic-degenerate-parabolic-hyperbolic-equations}.
Suppose that $u^1$ and $u^2$ are two kinetic solutions of \eqref{formula/deterministic porous media equation} in the sense that the associated kinetic functions $\chi^1$ and $\chi^2$ solve \eqref{formula/deterministic porous media equation kinetic formulation}.
Properties of the kinetic function yield the identity
\begin{align}\label{proof/formal proof of uniqueness/formula 1}
    \begin{aligned}
        \int_Q\left|u^1-u^2\right|\,dx
        &=\int_\R\int_Q\left|\chi^1-\chi^2\right|^2\,dx\,d\xi
        =\int_\R\int_Q\left|\chi^1\right|+\left|\chi^2\right|-2\chi^1\chi^2\,dx\,d\xi
        \\
        &=\int_\R\int_Q\chi^1\sgn(\xi)+\chi^2\sgn(\xi)-2\chi^1\chi^2\,dx\,d\xi.
    \end{aligned}
\end{align}
Now we formally take the derivative in time of this equation and apply equation \eqref{formula/deterministic porous media equation kinetic formulation}.
Then we integrate by parts in space. Using the distributional equalities
\begin{equation}\label{proof/formal proof of uniqueness/formula 2}
    \partial_\xi\chi^i(x,\xi,t)=\delta_0(\xi)-\delta_0(\xi-u^i(x,t))\,\text{ and }\nabla_x\chi^i=\delta_0(\xi-u^i(x,t))\nabla u^i(x,t),
\end{equation}
for $i=1,2$, and exploiting the zero boundary conditions we eventually get
\begin{align}\label{proof/formal proof of uniqueness/formula 3}
    \begin{aligned}
        \partial_t\int_Q|u^1-u^2|\,dx
        =
        &\frac{16m}{(m+1)^2}\int_\R\int_Q\delta_0(\xi-u^1(x,t))\delta_0(\xi-u^2(x,t))\nabla(u^1)^{\left[\frac{m+1}{2}\right]}\nabla(u^2)^{\left[\frac{m+1}{2}\right]}
        \\ 
        &-2\int_\R\int_Q\delta_0(\xi-u^1(x,t))\left(p^2(x,\xi,t)+q^2(x,\xi,t)\right)
        \\
        &-2\int_\R\int_Q\delta_0(\xi-u^2(x,t))\left(p^1(x,\xi,t)+q^1(x,\xi,t)\right).
    \end{aligned}
\end{align}
Applications of H\"older's inequality and Young's inequality, together with the definition of the parabolic defect measure and the nonnegativity of the entropy defect measure, prove the right-hand side of \eqref{proof/formal proof of uniqueness/formula 3} is nonpositive.
Integrating in time then completes the proof of uniqueness.

The formal argument leading to \eqref{proof/formal proof of uniqueness/formula 3} provides the outline for the proof of Theorem \ref{theorem/uniqueness of pathwise kinetic solutions for nonnegative initial data}.
However, even to justify the formal computation, care must be taken to avoid the product of $\delta$-distributions and exploit the boundary conditions.
This is achieved by regularizing the sgn and kinetic functions in the space and velocity variables.
Additional error terms arise due to the transport of test functions by the inverse characteristics, which are handled using a time-splitting argument that relies crucially on the conservative structure of the equation.

In this setting, the main difficulty with respect to the case with periodic boundary conditions comes from the class of admissible test functions for which equation \eqref{definition/pathwise kinetic solution/formula 1} actually holds.
Indeed, equation \eqref{definition/pathwise kinetic solution/formula 1} shall play the same role as equation \eqref{formula/deterministic porous media equation kinetic formulation} in the outline of the proof sketched above.
Imposing Dirichlet boundary conditions implies that in \eqref{definition/pathwise kinetic solution/formula 1} the admissible test functions need to be compactly supported within the domain $Q$.
In turn, when writing down the rigorous version of the argument above, we shall need to introduce a cutoff function so as to effectively exploit \eqref{definition/pathwise kinetic solution/formula 1}.
To counter the effect of the transport along characteristics imposed by \eqref{definition/pathwise kinetic solution/formula 1}, we consider a cutoff function that has already been transported along the backward characteristics. 
Having introduced a cutoff, new boundary error terms arise to avoid the product of $\delta$-distributions.
We shall have to choose the speed the cutoff function is approaching the boundary with according to the diffusion regime $m\in(0,\infty)$.
This is needed to exploit the zero boundary conditions and the Sobolev regularity of $u^{[m]}$ (cf. Lemma \ref{lemma/sobolev regularity of u^m} below) in order to control the derivatives of the cutoff.

We can now prove the uniqueness of pathwise kinetic solutions.
Besides the aforementioned regularization, time-splitting and cutoff procedures, the rigorous justification of the argument above relies on sharp estimates for singular moments of the parabolic and entropy defect measures.
These estimates are postponed to Proposition \ref{proposition/singular moments for defect measures d in (0,1)} and \ref{proposition/singular moments for defect measures d=1} below.

\begin{remark}
For the remainder of the paper, after applying the integration by parts formula \eqref{formula/integration by parts formula}, we will frequently encounter derivatives of functions $f(x,\xi,r):Q\times\R\times[0,\infty)\to\R$ evaluated at $\xi=u(x,r)$.
In order to simplify the notation, we make the convention that
\begin{equation}\label{convention on derivatives}
    \nabla_xf(x,u(x,r),r)=\nabla_xf(x,\xi,r)_{|\xi=u(x,r)}, 
\end{equation}
and analogous conventions for all possible derivatives.
That is, in every case, the notation indicates the derivative of $f$ evaluated at $(x,u(x,r),r)$ as opposed to the derivative of the full composition.
\end{remark}

\begin{proof}[\textbf{Proof of Theorem \ref{theorem/uniqueness of pathwise kinetic solutions for nonnegative initial data}}]
\hfill

\noindent
The proof will proceed in 10 Steps.
In Step 0 we introduce the notation and the machinery needed in the proof.
Step 1 presents the scheme for the rigorous justification of the formal proof \eqref{proof/formal proof of uniqueness/formula 1}-\eqref{proof/formal proof of uniqueness/formula 3} outlined above, which consists of a time-splitting argument, a cutoff procedure in space, and a regularization in the space and velocity variables.
Step 2 analyzes the terms of \eqref{proof/formal proof of uniqueness/formula 1} involving the $\sgn$ function, and Step 3 considers the mixed term.
In Step 4 we rigorously observe the cancellation coming from the parabolic defect measure we formally noticed in \eqref{proof/formal proof of uniqueness/formula 3}.
In Step 5 we justify the cancellation coming from the integration by parts and the application of the zero boundary conditions formally performed in \eqref{proof/formal proof of uniqueness/formula 2}-\eqref{proof/formal proof of uniqueness/formula 3}.
In Step 6 we make the point and gather the error terms produced by the rigorous computations performed up to this point.
In Step 7 we analyze the internal error terms coming from the transport along characteristics of the $\sgn$ and kinetic functions.
Step 8 tackles the boundary error terms produced by the rigorous application of the integration by parts and the zero boundary conditions.
Finally, in Step 9, we pass to the limit with respect to the space and velocity regularization first, the space cutoff second, and the time splitting third, and we conclude the proof.

\medskip
\noindent
\textbf{Step 0: Set up.}
Let $u^1$ and $u^2$ be two pathwise kinetic solutions with initial data $u_0^1,u_0^2\in L_+^2(Q)$ respectively.
For each $i=1,2$, we shall write $\chi^i$ for the corresponding kinetic function, and $p^i$ and $q^i$ for the corresponding entropy and parabolic defect measures.
In order to simplify the notation, for each $(x,\xi,r)\in Q\times\R\times [0,\infty)$ we shall use 
\begin{equation} \label{0.1}
    \chi_r^i(x,\xi):=\chi(x,\xi,r),\quad p_r^i(x,\xi):=p^i(x,\xi,r),\quad q_r^i(x,\xi):=q^i(x,\xi,r).
\end{equation}
Moreover, we shall consider the transported kinetic functions
\begin{equation}\label{0.2}
    \Tilde{\chi}_{t,s}^i(x,\xi):=\chi_s^i(X_{t,s}^{x,\xi},\Xi_{t,s}^{x,\xi})\quad\text{for each $(x,\xi,t,s)\in Q\times\R\times[0,\infty)^2$,}
\end{equation}
for the rough characteristics \eqref{formula/forward stochastic characteristics}.

The proof will start with a time-splitting argument made possible by the conservative structure of the equation.
For $i=1,2$, let $\mathcal{N}^i\subseteq(0,\infty)$ be the null set corresponding to $u^i$ from Definition \ref{definition/pathwise kinetic solution}, and define $\mathcal{N}=\mathcal{N}^1\cup\mathcal{N}^2$.
Let $T\in[0,\infty)\setminus\mathcal{N}$ be fixed, but arbitrary.
We shall consider arbitrary partitions $\mathcal{P}=\{0=t_0<t_1<\dots<t_N=T\}$ of $[0,T]$, for any $N\in\N$, with the constraint that $\mathcal{P}\subseteq[0,T]\setminus\mathcal{N}$.
We shall denote by $|\mathcal{P}|=\max_{i}|t_{i+1}-t_i|$ the diameter of the partition.

The imposition of the zero boundary conditions requires us to test the kinetic equation \eqref{definition/pathwise kinetic solution/formula 1} against compactly supported test functions only, and we thus need to introduce a cutoff function.
First, define the signed distance function $d_{\partial Q}:\R^d\to\R$ by
\begin{align}\label{0.3}
    d_{\partial Q}(x)=
    \empheqlbrace
    \begin{aligned}
      &\dist(x,\partial Q)    \quad\quad\,\,\,\text{if } x\in \bar{Q},\\
      &-\dist(x,\partial Q)   \quad\text{ if } x\in Q^c.
    \end{aligned}
\end{align}
Since the domain $Q$ is smooth, this defines a smooth function.
Moreover, for $x\in\R^d$ close enough to $\partial Q$, we have the identity
\begin{equation}\label{0.4}
    \nabla d_{\partial Q}(x)=-\Tilde{n}(x),
\end{equation}
where $\Tilde{n}(x)$ denotes the extended unit outward normal to $\partial Q$.
Namely, for any $x_0\in \partial Q$, let $n(x_0)$ denote the actual unit outward normal to $\partial Q$.
For any $x\in\R^d$ close enough to $\partial Q$ there exists a unique pair $(x^*,t)\in \partial Q\times\R$ such that $x=x^*-tn(x^*)$, precisely given by $t=d_{\partial Q}(x)$ and $x^*$ the (unique) closest point on $\partial Q$, and we set $\Tilde{n}(x):=n(x^*)$.
For each $\ell\in(0,1)$, we shall denote
\begin{equation}\label{0.5}
    Q^{\ell}:=\{x\in Q\mid d_{\partial Q}(x)\leq c_1\ell\} \quad \text{and} \quad Q_{\ell}:=\{x\in Q\mid d_{\partial Q}(x)> c_1\ell\},
\end{equation}
for a constant $c_1=c_1(T,A,z,Q,d,m)>0$, possibly changing throughout the proof, but only depending on the parameters specified.
It is then immediate from the definition that
\begin{equation}\label{0.5bis}
    \meas\big(Q^{\ell}\big)\leq C\ell,
\end{equation}
for a constant $C=C(T,A,z,Q,m)$.
Moreover, we point out the following property, which is a simple consequence of estimate \eqref{formula/characteristics move slower the closer they get to the boundary}: for any $(x,\xi)\in Q\times\R$ and any $s\leq t_0\in[0,T]$, possibly after enlargening the constant $c_1$ in \eqref{0.5},
\begin{equation}
    \label{0.5tris}
    \text{if}\,\,\,Y_{t_0,s}^{x,\xi}\in Q^{\ell},\,\,\text{then}\,\,\, x\in Q^{\ell},\,\,\,\text{and viceversa}.
\end{equation}

Now, for each $\beta\in(0,1)$, we define a cutoff function $\phi_{\beta}=\phi_{\beta,m}$ within $Q$, whose shape depends on the particular diffusion exponent $m\in(0,\infty)$.
Precisely, set $\gamma_m=(m+2)\wedge 3$ and consider the piecewise linear function $\psi_{\beta}^{\circ}:\R\to[0,1]$ given by
\begin{align}\label{0.6}
    \psi_{\beta}^{\circ}(s)=\empheqlbrace
    \begin{aligned}
    &0\qquad\qquad\quad\qquad\,\text{if } s\leq\beta^{\gamma_m},
    \\
    &\beta^{-1}\left(s-\beta^{\gamma_m}\right)\,\,\,\quad\text{if }s\in\left[\beta^{\gamma_m},\beta+\beta^{\gamma_m}\right],
    \\
    &1\qquad\quad\quad\qquad\text{if } s\geq\beta+\beta^{\gamma_m}.
\end{aligned}
\end{align}
Let $\rho_1:\R\to[0,1]$ be a standard $1$-dimensional positive radial mollifier supported in the unitary ball, and set
\begin{equation}
    \label{0.7}
    \psi_{\beta}(s):=\rho_1^{\frac{1}{2}\beta^{\gamma_m}}\!\!\!\!\!*\psi_{\beta}^{\circ}(s) \quad\text{ for $s\in\R$,}
\end{equation}
where $\rho_1^{\frac{1}{2}\beta^{\gamma_m}}$ denotes the mollifier rescaled at order $\frac{1}{2}\beta^{\gamma_m}$.
Finally, we define a cutoff function $\phi_{\beta}:\R^d\to[0,1]$ by setting
\begin{equation}
    \label{0.8}
    \phi_{\beta}(x):=\psi_\beta(d_{\partial Q}(x)).
\end{equation}
This definition guarantees that $\phi_{\beta}\in C_c^{\infty}(Q)$ and it satisfies
\begin{align}\label{0.9}
    \phi_{\beta}(y)=\empheqlbrace
    \begin{aligned}
    &1\quad\text{if } d_{\partial Q}(y)>\beta+\frac{3}{2}\beta^{\gamma_m},
    \\
    &0\quad\text{if } d_{\partial Q}(y)<\frac{1}{2}\beta^{\gamma_m}.
\end{aligned}
\end{align}
Since $\beta+\frac{3}{2}\beta^{\gamma_m}\simeq \beta$, recalling the notation \eqref{0.5}, we shall write that $\phi_{\beta}\equiv1$ in $Q_{\beta}$ and $\phi_{\beta}\equiv0$ in $Q^{\beta^{\gamma_m}}$.
We also point out that, for $k\in\N$, for a constant $C=C(Q)$,
\begin{equation}
    \label{0.10}
    D^k\phi_\beta\equiv0\quad\text{in }Q_\beta\quad\text{ and }\quad |D^k\phi_\beta|\leq C\beta^{-k}\quad\text{in }Q^\beta.
\end{equation}
Moreover, we observe the following expression for the laplacian $\Delta\phi_{\beta}$.
The distributional derivatives of $\psi_{\beta}^{\circ}$ and the mollification imply that
\begin{equation}\label{0.12}
    \ddot{\psi}_{\beta}(s)=\frac{1}{\beta}\rho_1^{\frac{1}{2}\beta^{\gamma_m}}\!\!\big(s-\beta^{\gamma_m}\big)-\frac{1}{\beta}\rho_1^{\frac{1}{2}\beta^{\gamma_m}}\!\!\big(s-(\beta+\beta^{\gamma_m})\big).
\end{equation}
Then, the definition of $\phi_\beta$ and formula \eqref{0.4} yield
\begin{equation}\label{0.13}
    \Delta\phi_{\beta}=\ddot{\psi}_{\beta}(d_{\partial Q}(x))-\dot{\psi}_\beta(d_{\partial Q}(x))\nabla\cdot\Tilde{n}(x).
\end{equation}

Finally, we shall need a regularization procedure.
Let $\rho_1$ and $\rho_d$ be standard $1$-dimensional and $d$-dimensional positive radial mollifiers supported in the unitary ball.
For each $\epsilon\in(0,1)$, denote by $\rho_1^{\epsilon}$ and $\rho_d^{\epsilon}$ their rescaling of order $\epsilon$.
For $i=1,2$, with the notation \eqref{0.5}, for each $(y,\eta)\in Q_{\epsilon}\times\R$ and each $t\leq r\in[0,\infty)$, we define the smoothed transported kinetic functions
\begin{align}\label{0.14}
    \begin{aligned}
        \Tilde{\chi}_{t,r}^{i,\epsilon}(y,\eta):=\left(\Tilde{\chi}_{t_i,r}^i*\rho_d^{\epsilon}\rho_1^{\epsilon}\right)(y,\eta)
        &=\int_{Q\times\R}\chi_r^i(X_{t,r}^{x,\xi},\Xi_{t,r}^{x,\xi})\rho_d^{\epsilon}(x-y)\rho_1^{\epsilon}(\xi-\eta)\,dx\,d\xi
        \\
        &=\int_{Q\times\R}\chi_r^i(x,\xi)\rho_d^{\epsilon}(Y_{r,r-t}^{x,\xi}-y)\rho_1^{\epsilon}(\Pi_{r,r-t}^{x,\xi}-\eta)\,dx\,d\xi,
    \end{aligned}
\end{align}
where, in the last inequality, we used the inverse \\eqref{formula/inverse relation for smooth characteristics} and conservative \eqref{formula/lebesgue measure is preserved by smooth caharacteristics} properties of characteristics.
In particular, for each $(y,\eta)\in Q\times\R$ and $t\leq r\in[0,\infty)$,
\begin{equation}
    \label{0.15}
    \lim_{\epsilon\to0}\Tilde{\chi}_{t,r}^{i,\epsilon}(y,\eta)=\Tilde{\chi}_{t,r}^i(y,\eta).
\end{equation}
To simplify the notation, for $(x,y,\xi,\eta)\in Q^2\times\R^2$ and $t\leq r\in[0,\infty)$, we shall write
\begin{equation}
    \label{0.16}
    \rho_{t,r}^{\epsilon}(x,y,\xi,\eta):=\rho_d^{\epsilon}(Y_{r,r-t}^{x,\xi}-y)\rho_1^{\epsilon}(\Pi_{r,r-t}^{x,\xi}-\eta).
\end{equation}
According to \eqref{formula/transported stochastic test function}, this represents the solution to the rough differential equation \eqref{formula/stochastic underlying transport equation} beginning at time $t$ with initial data $\rho_d^{\epsilon}(\cdot-y)\rho_1^{\epsilon}(\cdot-\eta)$.
We observe that, for each fixed $(y,\eta)\in Q_{\epsilon}\times\R$ and $t\leq t'\in[0,\infty)\setminus\mathcal{N}$, the kinetic equation \eqref{definition/pathwise kinetic solution/formula 1} can be applied to \eqref{0.14}.

In conclusion, we shall also consider the regularized transported $\sgn$ function, for $(y,\eta)\in Q_{\epsilon}\times\R$ and $t\leq r\in[0,\infty)$,
\begin{align}\label{0.17}
    \begin{aligned}
        \Tilde{\sgn}_{t,r}^{\epsilon}(y,\eta):&=\int_{Q\times\R}\sgn(\Xi_{t,r}^{x,\xi})\rho_d^{\epsilon}(x-y)\rho_1^{\epsilon}(\xi-\eta)\,dx\,d\xi
        =\int_{Q\times\R}\sgn(\xi)\rho_{t,r}^{\epsilon}(x,y,\xi,\eta)\,dx\,d\xi.
    \end{aligned}
\end{align}
In fact, formula \eqref{formula/sign of velocity is preserved by smooth characteristics} ensures that $\tilde{\sgn}_{t,r}(x,\xi):=\sgn(\Xi_{t,r}^{x,\xi})=\sgn(\xi)$, so that we find
\begin{align}\label{0.18}
    \begin{aligned}
        \Tilde{\sgn}_{t,r}^{\epsilon}(y,\eta)&=\int_{Q\times\R}\sgn(\xi)\rho_d^{\epsilon}(x-y)\rho_1^{\epsilon}(\xi-\eta)\,dx\,d\xi
        =\int_{\R}\sgn(\xi)\rho_1^{\epsilon}(\xi-\eta)\,d\xi.
    \end{aligned}
\end{align}
Thus $\Tilde{\sgn}_{t,r}^{\epsilon}(y,\eta)=\sgn^{\epsilon}(\eta)$ is just the standard $\epsilon$-mollification of the $\sgn$ function and is independent of $t\leq r\in[0,\infty)$ and $y\in Q$.
It will nevertheless be useful to consider this regularized transported expression, since it will clarify important cancellations in the arguments to follow.

\medskip
\noindent
\textbf{Step 1: Time splitting, cutoff and regularization.}
Let $\mathcal{P}=\{0=t_0<t_1<\dots<t_N=T\}$ be a partition of $[0,T]$ with $\mathcal{P}\subseteq[0,T]\setminus\mathcal{N}$.
Properties of the kinetic function, the definition of the cutoff $\phi_\beta$ and property \eqref{formula/smooth characteristics never leave the domain}, and the conservative \eqref{formula/lebesgue measure is preserved by smooth caharacteristics} and inverse \eqref{formula/inverse relation for smooth characteristics} property of the characteristics imply that
\begin{align}\label{1.1}
    \begin{aligned}
        \int_Q\left|u^1(y,r)-u^2(y,r)\right|\,dy\Big|_{r=0}^{r=T}
        &=
        \sum_{i=0}^{N-1}\int_Q\left|u^1(y,r)-u^2(y,r)\right|\,dy\Big|_{r=t_i}^{r=t_{i+1}}
        \\
        &=
        \sum_{i=0}^{N-1}\int_{Q\times\R}\left|\chi^1_r(y,\eta)-\chi^2_r(y,\eta)\right|^2dy\,d\eta\Big|_{r=t_i}^{r=t_{i+1}}
        \\
        &=
        \lim_{\beta\to0}\sum_{i=0}^{N-1}\int_{Q\times\R}\left|\chi^1_{t_i,r}(y,\eta)-\chi^2_{t_i,r}(y,\eta)\right|^2\phi_{\beta}(Y_{r,r-t_i}^{y,\eta})\,dy\,d\eta\Big|_{r=t_i}^{r=t_{i+1}}
        \\
         &=
        \lim_{\beta\to0}\sum_{i=0}^{N-1}
        \!\int_{Q\times\R}\!\!\!\!\left|\tilde{\chi}^1_{t_i,r}(y,\eta)-\tilde{\chi}^2_{t_i,r}(y,\eta)\right|^2\phi_{\beta}(y)\,dy\,d\eta\Big|_{r=t_i}^{r=t_{i+1}}.
    \end{aligned}
\end{align}
Moreover, unfolding the square and introducing the $\epsilon$-regularization, we get
\begin{align}\label{1.3}
    \begin{aligned}
        \int_Q\big|u^1(y,r)&-u^2(y,r)\big|\,dy\Big|_{r=0}^{r=T}
        \\
        &=
        \lim_{\beta\to0}\sum_{i=0}^{N-1}
        \int_{Q\times\R}\!\!\!\!\big(\tilde{\chi}^1_{t_i,r}\tilde{\sgn}_{t_i,r}+\tilde{\chi}^2_{t_i,r}\tilde{\sgn}_{t,_i,r}-2\tilde{\chi}^1_{t_i,r}\tilde{\chi}^2_{t_i,r}\big)\phi_{\beta}(y)\,dy\,d\eta\Big|_{r=t_i}^{r=t_{i+1}}
        \\
        &= 
        \lim_{\beta\to0}\lim_{\epsilon\to0}\sum_{i=0}^{N-1}
        \!\int_{Q\times\R}\!\!\!\!\big(\tilde{\chi}^{1,\epsilon}_{t_i,r}\tilde{\sgn}^{\epsilon}_{t_i,r}+\tilde{\chi}^{2,\epsilon}_{t_i,r}\tilde{\sgn}^{\epsilon}_{t_i,r}-2\tilde{\chi}^{1,\epsilon}_{t_i,r}\tilde{\chi}^{2,\epsilon}_{t_i,r}\big)\phi_{\beta}(y)\,dy\,d\eta\Big|_{r=t_i}^{r=t_{i+1}}.
    \end{aligned}
\end{align}
The next four steps are devoted to the analysis of the difference
\begin{equation}\label{1.4}
\int_{Q\times\R}\!\!\!\!\big(\tilde{\chi}^{1,\epsilon}_{t_i,r}\tilde{\sgn}^{\epsilon}_{t_i,r}+\tilde{\chi}^{2,\epsilon}_{t_i,r}\tilde{\sgn}^{\epsilon}_{t_i,r}-2\tilde{\chi}^{1,\epsilon}_{t_i,r}\tilde{\chi}^{2,\epsilon}_{t_i,r}\big)\phi_{\beta}(y)\,dy\,d\eta\Big|_{r=t_i}^{r=t_{i+1}},
\end{equation}
for any $i=0,\dots,N-1$ and any $\epsilon,\beta\in(0,1)$, with $\epsilon\ll\beta$ and $\beta\ll|\mathcal{P}|$.
In Step 3 we consider the $\sgn$ term $\tilde{\chi}^{j,\epsilon}_{t_i,r}\tilde{\sgn}^{\epsilon}_{t_i,r}$ for $j=1,2$, and in Step 4 we consider the mixed term $\tilde{\chi}^{1,\epsilon}_{t_i,r}\tilde{\chi}^{2,\epsilon}_{t_i,r}$.
Finally, in Step 5 and 6 we shall observe crucial cancellations among these terms.

\medskip
\noindent
\textbf{Step 2: The $\sgn$ terms.}
We will first analyze the term involving the $\sgn$ function in \eqref{1.4}.
For the convolution kernel \eqref{0.16}, we shall write $(x,\xi)\in Q\times \R$ for the integration variables defining $\tilde{\chi}_{t_i,r}^{1,\epsilon}$ and we shall write $\rho_{t_i,r}^{1,\epsilon}$ for the corresponding convolution kernel.
The kinetic equation \eqref{definition/pathwise kinetic solution/formula 1} and \eqref{0.18} imply that, with the notation from \eqref{0.14} and \eqref{0.16},
\begin{align}\label{2.1}
\begin{aligned}
\int_{Q\times\R}\!\!\!\!\!\!\!\tilde{\chi}^{1,\epsilon}_{t_i,r}(y,\eta)\,\tilde{\sgn}^{\epsilon}_{t_i,r}
&
\phi_{\beta}(y)\,dy\,d\eta\Big|_{r=t_i}^{r=t_{i+1}}
\!\!\!
\\
=&
\int_{t_i}^{t_{i+1}}\!\!\!\int_{Q\times\R}\!\!\left(\int_{Q\times\R}m|\xi|^{m-1}\chi_r^1\,\Delta_x\rho_{t_i,r}^{1,\epsilon}\,dx\,d\xi\right)\tilde{\sgn}_{t_i,r}^{\epsilon}\phi_\beta(y)\, dy \, d\eta\, dr
\\
&-
\!
\int_{t_i}^{t_{i+1}}\!\!\!\int_{Q\times\R}\!\!\left(\int_{Q\times\R}(p_r^1+q_r^1)\,\partial_{\xi}\rho_{t_i,r}^{1,\epsilon}\,dx\,d\xi\right)\tilde{\sgn}_{t_i,r}^{\epsilon}\phi_\beta(y)\,dy\,d\eta\,dr.
\end{aligned}
\end{align}
The first and second term in \eqref{2.1} will be handled separately.
Observe that, from \eqref{0.16}, for each $(x,y,\xi,\eta,r)\in\R^{2d}\times\R^2\times[t_i,\infty)$,
\begin{equation}
    \label{2.2}
    \nabla_{x}\rho_{t_i,r}^{1,\epsilon}=-\nabla_{y}\rho_{t_i,r}^{1,\epsilon}(x,y,\xi,\eta)\,D_xY_{r,r-t_i}^{x,\xi}-\partial_{\eta}\rho_{t_i,r}^{1,\epsilon}(x,y,\xi,\eta)\,\nabla_{x}\Pi_{r,r-t_i}^{x,\xi},
\end{equation}
and 
\begin{equation}
    \label{2.3}
    \partial_{\xi}\rho_{t_i,r}^{1,\epsilon}=-\nabla_{y}\rho_{t_i,r}^{1,\epsilon}(x,y,\xi,\eta)\,\partial_\xi Y_{r,r-t_i}^{x,\xi}-\partial_{\eta}\rho_{t_i,r}^{1,\epsilon}(x,y,\xi,\eta)\,\partial_\xi \Pi_{r,r-t_i}^{x,\xi}.
\end{equation}
For the first term on the right-hand side of \eqref{2.1}, we use formula \eqref{2.2}  and then integrate by parts in the $(y,\eta)$ variables to get
\begin{align}\label{2.4}
\begin{aligned}
&
\hspace{-6mm}
\int_{t_i}^{t_{i+1}}
\!\!\!\int_{Q\times\R}\!\!\left(\int_{Q\times\R}m|\xi|^{m-1}\chi_r^1\,\Delta_x\rho_{t_i,r}^{1,\epsilon}\,dx\,d\xi\right)\tilde{\sgn}_{t_i,r}^{\epsilon}\phi_\beta(y)\, dy \, d\eta\, dr 
\\
=&
\int_{t_i}^{t_{i+1}}\!\!\!\int_{Q\times\R}\!\!\!m|\xi|^{m-1}\chi_r^1\nabla_x\cdot\left(\int_{Q\times\R}\nabla_x\rho_{t_i,r}^{1,\epsilon}\tilde{\sgn}_{t_i,r}^{\epsilon}\phi_\beta(y)\, dy \, d\eta\right)dx\,d\xi\, dr  
\\
=
&
\int_{t_i}^{t_{i+1}}\!\!\!\int_{Q\times\R}\!\!\!\!\!\!\!\!\!\!m|\xi|^{m-1}\chi_r^1\nabla_x\!\cdot\!\left(\int_{Q\times\R}\!\!\!\!\!\!\!\!\rho_{t_i,r}^{1,\epsilon}\big(\nabla_{y}\tilde{\sgn}_{t_i,r}^{\epsilon}D_xY_{r,r-t_i}^{x,\xi}\!\!\!+\!\partial_{\eta}\tilde{\sgn}_{t_i,r}^{\epsilon}\nabla_{x}\Pi_{r,r-t_i}^{x,\xi}\big)\phi_\beta(y)\dsp dy \dsp d\eta\!\right)dx\dsp d\xi\dsp dr  
\\
&+
\int_{t_i}^{t_{i+1}}\!\!\!\int_{Q\times\R}\!\!\!m|\xi|^{m-1}\chi_r^1\nabla_x\cdot\left(\int_{Q\times\R}\rho_{t_i,r}^{1,\epsilon}\tilde{\sgn}_{t_i,r}^{\epsilon}\nabla_y\phi_\beta(y)D_xY_{r,r-t_i}^{x,\xi}\, dy \, d\eta\right)dx\,d\xi\, dr.  
\end{aligned}
\end{align}
For the first term on the right-hand side of \eqref{2.4}, it follows from definition \eqref{0.17} and the computation \eqref{2.2} that, after adding and subtracting the terms $D_{x'}Y^{x'\!,\xi'}_{r,r-t_i}$ and $\nabla_{x'}\Pi^{x'\!,\xi'}_{r,r-t_i}$,
\begin{align}\label{2.5}
\begin{aligned}
\int_{t_i}^{t_{i+1}}
&
\!\!\!\int_{Q\times\R}\!\!\!\!\!\!\!\!m|\xi|^{m-1}\chi_r^1\nabla_x\!\cdot\!\left(\int_{Q\times\R}\!\!\!\!\!\!\rho_{t_i,r}^{1,\epsilon}\big(\nabla_{y}\tilde{\sgn}_{t_i,r}^{\epsilon}D_xY_{r,r-t_i}^{x,\xi}\!\!\!+\!\partial_{\eta}\tilde{\sgn}_{t_i,r}^{\epsilon}\nabla_{x}\Pi_{r,r-t_i}^{x,\xi}\big)\phi_\beta(y)\dsp dy \dsp d\eta\!\right)dx\dsp d\xi\dsp dr  
\\
&
=
IE_i^{\sgn1,1}
-
\int_{t_i}^{t_{i+1}}\!\!\!\int_{Q^3\times\R^3}\!\!\!\!\!m|\xi|^{m-1}\chi_r^1\nabla_x\rho_{t_i,r}^{1,\epsilon}\sgn(\xi')\nabla_{x'}\rho_{t_i,r}^{2,\epsilon}\,\phi_\beta(y)\, dx \dsp dx' dy\dsp d\xi\dsp d\xi' d\eta\dsp dr,  
\end{aligned}
\end{align}
for the internal error term relative to the $\sgn$ term, omitting the integration variables,
\begin{align}\label{2.6}
\begin{aligned}
IE_i^{\sgn1,1}
\!
:=
&
\!\!
\int_{t_i}^{t_{i+1}}\!\!\!\int_{Q^3\times\R^3}\!\!\!\!\!\!\!\!\!\!m|\xi|^{m-1}\chi_r^1\nabla_x\!\cdot\!\left(\rho_{t_i,r}^{1,\epsilon}\sgn(\xi')\phi_\beta(y)
\,\nabla_{y}\rho_{t_i,r}^{2,\epsilon}\big(D_xY_{r,r-t_i}^{x,\xi}\!\!\!-\!D_{x'}Y_{r,r-t_i}^{x'\!\!,\xi'}\big)\right)
\\
&
+
\int_{t_i}^{t_{i+1}}\!\!\!\int_{Q^3\times\R^3}\!\!\!\!\!\!\!\!\!\!m|\xi|^{m-1}\chi_r^1\nabla_x\!\cdot\!\left(\rho_{t_i,r}^{1,\epsilon}\sgn(\xi')\phi_\beta(y)
\,\partial_{\eta}\rho_{t_i,r}^{2,\epsilon}\big(\nabla_x\Pi_{r,r-t_i}^{x,\xi}\!\!\!-\!\nabla_{x'}\Pi_{r,r-t_i}^{x'\!\!,\xi'}\big)\right),
\end{aligned}
\end{align}
and where the last term of \eqref{2.5} vanishes after integrating by parts in the $x'$-variable.
That is,
\begin{align}\label{2.7}
    \begin{aligned}
        \int_{t_i}^{t_{i+1}}\!\!\!\int_{Q^3\times\R^3}\!\!\!\!\!m|\xi|^{m-1}\chi_r^1\nabla_x\rho_{t_i,r}^{1,\epsilon}\sgn(\xi')\nabla_{x'}\rho_{t_i,r}^{2,\epsilon}\,\phi_\beta(y)\dsp dx \dsp dx' dy\dsp d\xi\dsp d\xi' d\eta\dsp dr=0.
    \end{aligned}
\end{align}
For the second term of \eqref{2.4}, we first rewrite
\begin{align}\label{2.8}
    \begin{aligned}
        \int_{t_i}^{t_{i+1}}\!\!\!\int_{Q\times\R}
        &
        \!\!\!m|\xi|^{m-1}\chi_r^1\nabla_x\cdot\left(\int_{Q\times\R}\rho_{t_i,r}^{1,\epsilon}\,\tilde{\sgn}_{t_i,r}^{\epsilon}\nabla_y\phi_\beta(y)D_xY_{r,r-t_i}^{x,\xi}\, dy \, d\eta\right)dx\,d\xi\, dr  
        \\
        &
        =
        BE^{\sgn1,1}_i
        +
        \int_{t_i}^{t_{i+1}}\!\!\!\int_{Q^2\times\R^2}\!\!\!m|\xi|^{m-1}\chi_r^1\nabla_x\rho_{t_i,r}^{1,\epsilon}\,\tilde{\sgn}_{t_i,r}^{\epsilon}\nabla_y\phi_\beta(y)D_xY_{r,r-t_i}^{x,\xi}dx\,dy\,d\xi\,d\eta\,dr,
    \end{aligned}
\end{align}
for the boundary error term relative to the $\sgn$ term
\begin{align}\label{2.9}
    \begin{aligned}
        BE^{\sgn1,1}_i
        =
        \int_{t_i}^{t_{i+1}}\!\!\!\int_{Q^2\times\R^2}\!\!\!m|\xi|^{m-1}\chi_r^1\,\rho_{t_i,r}^{1,\epsilon}\,\tilde{\sgn}_{t_i,r}^{\epsilon}\nabla_y\phi_\beta(y)\Delta_xY_{r,r-t_i}^{x,\xi}dx\,dy\,d\xi\,d\eta\,dr.
    \end{aligned}
\end{align}
For the second term on the right-hand side of \eqref{2.8}, arguing as in \eqref{2.4}-\eqref{2.6}, we use formula \eqref{2.2}, add and subtract the terms $D_{x'}Y^{x'\!,\xi'}_{r,r-t_i}$ and $\nabla_{x'}\Pi^{x'\!,\xi'}_{r,r-t_i}$, and then integrate by parts in the $(y,\eta)$ variables to get
\begin{align}\label{2.10}
    \begin{aligned}
        &
        \hspace{-7mm}
        \int_{t_i}^{t_{i+1}}
        \!\!\!\int_{Q^2\times\R^2}\!\!\!m|\xi|^{m-1}\chi_r^1\nabla_x\rho_{t_i,r}^{1,\epsilon}\tilde{\sgn}_{t_i,r}^{\epsilon}\nabla_y\phi_\beta(y)D_xY_{r,r-t_i}^{x,\xi}\, dy \, d\eta\, dx\,d\xi\, dr
        \\
        =
        &
        \int_{t_i}^{t_{i+1}}\!\!\!\int_{Q^3\times\R^3}\!\!\!\!\!\!\!\!\!\!m|\xi|^{m-1}\chi_r^1\,\rho_{t_i,r}^{1,\epsilon}\rho_{t_i,r}^{2,\epsilon}\sgn(\xi')\,\tr\!\left(\!\big(D_xY_{r,r-t_i}^{x,\xi}\big)^{\!T} D_y^2\phi_\beta(y)D_xY_{r,r-t_i}^{x,\xi}\!\right)\dsp dx \dsp dx' dy\dsp d\xi\dsp d\xi' d\eta\dsp dr
        \\
        &-
        \int_{t_i}^{t_{i+1}}\!\!\!\int_{Q^3\times\R^3}\!\!\!\!\!\!m|\xi|^{m-1}\chi_r^1\,\rho_{t_i,r}^{1,\epsilon}\nabla_{x'}\rho_{t_i,r}^{2,\epsilon}\sgn(\xi')\nabla_y\phi_\beta(y)D_xY_{r,r-t_i}^{x,\xi}\dsp dx \dsp dx' dy\dsp d\xi\dsp d\xi' d\eta\dsp dr
        +
        BE_i^{\sgn1,2},
    \end{aligned}
\end{align}
for the boundary error term, omitting the integration variables,
\begin{align}\label{2.11}
\begin{aligned}
BE_i^{\sgn1,2}
\!
:=
&
\!\!
\int_{t_i}^{t_{i+1}}\!\!\!\int_{Q^3\times\R^3}\!\!\!\!\!\!\!\!\!\!m|\xi|^{m-1}\chi_r^1\,\rho_{t_i,r}^{1,\epsilon}\sgn(\xi')\nabla_y\phi_\beta(y)D_xY_{r,r-t_i}^{x,\xi}
\nabla_{y}\rho_{t_i,r}^{2,\epsilon}\big(D_xY_{r,r-t_i}^{x,\xi}\!\!\!-\!D_{x'}Y_{r,r-t_i}^{x'\!\!,\xi'}\big)
\\
&
+
\int_{t_i}^{t_{i+1}}\!\!\!\int_{Q^3\times\R^3}\!\!\!\!\!\!\!\!\!\!m|\xi|^{m-1}\chi_r^1\,\rho_{t_i,r}^{1,\epsilon}\sgn(\xi')\nabla_y\phi_\beta(y)D_xY_{r,r-t_i}^{x,\xi}
\partial_{\eta}\rho_{t_i,r}^{2,\epsilon}\big(\nabla_x\Pi_{r,r-t_i}^{x,\xi}\!\!\!-\!\nabla_{x'}\Pi_{r,r-t_i}^{x'\!\!,\xi'}\big),
\end{aligned}
\end{align}
and where the second term in \eqref{2.10} vanishes after integrating by parts in the $x'$-variable,
\begin{equation}
    \label{2.12}
    \int_{t_i}^{t_{i+1}}\!\!\!\int_{Q^3\times\R^3}\!\!\!m|\xi|^{m-1}\chi_r^1\,\rho_{t_i,r}^{1,\epsilon}\nabla_{x'}\rho_{t_i,r}^{2,\epsilon}\sgn(\xi')\,\nabla_y\phi_\beta(y)D_xY_{r,r-t_i}^{x,\xi}\dsp dx \dsp dx' dy\dsp d\xi\dsp d\xi' d\eta\dsp dr=0.
\end{equation}

We now consider the second term in \eqref{2.1}.
Using formula \eqref{2.3} and integrating by parts in the $(y,\eta)$ variables, we obtain
\begin{align}\label{2.13}
\begin{aligned}
&\int_{t_i}^{t_{i+1}}\!\!\!\int_{Q\times\R}\!\!\left(\int_{Q\times\R}(p_r^1+q_r^1)\,\partial_{\xi}\rho_{t_i,r}^{1,\epsilon}\,dx\,d\xi\right)\tilde{\sgn}_{t_i,r}^{\epsilon}\phi_\beta(y)\,dy\,d\eta\,dr
\\
&\,\,\,=
BE_i^{\sgn1,3}\!\!\!
+\!\!
\int_{t_i}^{t_{i+1}}\!\!\!\int_{Q^2\times\R^2}\!\!\!\!\!\!\!\!\!\!\!\!(p_r^1+q_r^1)\rho_{t_i,r}^{1,\epsilon}\,\phi_\beta(y)\big(\nabla_y\tilde{\sgn}_{t_i,r}^{\epsilon}\partial_{\xi}Y_{r,r-t_i}^{x,\xi}\!\!+\partial_{\eta}\tilde{\sgn}_{t_i,r}^{\epsilon}\partial_{\xi}\Pi_{r,r-t_i}^{x,\xi}\big)\,dx\,dy\,d\xi\,d\eta\,dr,
\end{aligned}
\end{align}
for the boundary error term
\begin{align}\label{2.14}
\begin{aligned}
BE_i^{\sgn1,3}
:=
\int_{t_i}^{t_{i+1}}\!\!\!\int_{Q^2\times\R^2}(p_r^1+q_r^1)\,\rho_{t_i,r}^{1,\epsilon}\,\tilde{\sgn}_{t_i,r}^{\epsilon}\nabla_y\phi_\beta(y)\partial_{\xi}Y_{r,r-t_i}^{x,\xi}dx\,dy\,d\xi\,d\eta\,dr.
\end{aligned}
\end{align}
For the second term on the right-hand side of \eqref{2.13}, we use definition \eqref{0.17}, add and subtract the derivatives $\partial_{\xi'}Y_{r,r-t_i}^{x'\!,\xi'}$ and $\partial_{\xi'}\Pi_{r,r-t_i}^{x'\!,\xi'}$, and recall formula \eqref{2.3} to write
\begin{align}\label{2.15}
\begin{aligned}
&\int_{t_i}^{t_{i+1}}\!\!\!\int_{Q^2\times\R^2}\!\!\!\!\!\!\!\!(p_r^1+q_r^1)\rho_{t_i,r}^{1,\epsilon}\,\phi_\beta(y)\big(\nabla_y\tilde{\sgn}_{t_i,r}^{\epsilon}\partial_{\xi}Y_{r,r-t_i}^{x,\xi}\!\!+\partial_{\eta}\tilde{\sgn}_{t_i,r}^{\epsilon}\partial_{\xi}\Pi_{r,r-t_i}^{x,\xi}\big)\,dx\,dy\,d\xi\,d\eta\,dr
\\
&\,\,\,=
IE_i^{\sgn1,2}\!\!\!
-
\int_{t_i}^{t_{i+1}}\!\!\!\int_{Q^3\times\R^3}\!\!\!\!\!\!\!\!(p_r^1+q_r^1)\,\rho_{t_i,r}^{1,\epsilon}\,\partial_{\xi'}\rho_{t_i,r}^{2,\epsilon}\,\sgn(\xi')\,\phi_\beta(y)\,dxdx'\,dy\,d\xi\,d\xi'\,d\eta\,dr,
\end{aligned}
\end{align}
for the internal error term, omitting the integration variables,
\begin{align}\label{2.16}
\begin{aligned}
IE_i^{\sgn1,2}
:=&
\int_{t_i}^{t_{i+1}}\!\!\!\int_{Q^3\times\R^3}\!\!\!\!\!\!(p_r^1+q_r^1)\,\rho_{t_i,r}^{1,\epsilon}\,\sgn(\xi')\,\phi_\beta(y)\,\nabla_y\rho_{t_i,r}^{2,\epsilon}\big(\partial_{\xi}Y_{r,r-t_i}^{x,\xi}\!\!-\partial_{\xi'}Y_{r,r-t_i}^{x'\!,\xi'}\big)
\\
&
+
\int_{t_i}^{t_{i+1}}\!\!\!\int_{Q^3\times\R^3}\!\!\!\!\!\!(p_r^1+q_r^1)\,\rho_{t_i,r}^{1,\epsilon}\,\sgn(\xi')\,\phi_\beta(y)\,\partial_{\eta}\rho_{t_i,r}^{2,\epsilon}\big(\partial_{\xi}\Pi_{r,r-t_i}^{x,\xi}\!\!-\partial_{\xi'}\Pi_{r,r-t_i}^{x'\!,\xi'}\big).
\end{aligned}
\end{align}
Moreover, after integrating by parts in the $\xi'$-variable and using the distributional equality $\partial_{\xi'}\sgn(\xi')=2\delta_0(\xi')$, the second term in \eqref{2.15} becomes
\begin{align}\label{2.17}
\begin{aligned}
-
\int_{t_i}^{t_{i+1}}\!\!\!\int_{Q^3\times\R^3}\!\!\!\!\!\!\!\!(p_r^1+q_r^1)
& 
\,\rho_{t_i,r}^{1,\epsilon}\,\partial_{\xi'}\rho_{t_i,r}^{2,\epsilon}\,\sgn(\xi')\,\phi_\beta(y)\,dx\,dx'\,dy\,d\xi\,d\xi'\,d\eta\,dr
\\
&=
2\int_{t_i}^{t_{i+1}}\!\!\!\int_{Q^3\times\R^2}\!\!\!\!\!\!\!\!(p_r^1+q_r^1)\,\rho_{t_i,r}^{1,\epsilon}\,\rho_{t_i,r}^{2,\epsilon}(x',y,0,\eta)\,\phi_\beta(y)\,dx\,dx'\,dy\,d\xi\,d\eta\,dr.
\end{aligned}
\end{align}

Returning to \eqref{2.1}, it follows from \eqref{2.4}, \eqref{2.5}, \eqref{2.7}, \eqref{2.8}, \eqref{2.10}, \eqref{2.12}, \eqref{2.13}, \eqref{2.15} and \eqref{2.17} that
\begin{align}\label{2.18}
\begin{aligned}
\int_{Q\times\R}&\!\!\!\!\!\!\!\tilde{\chi}^{1,\epsilon}_{t_i,r}(y,\eta)\,\tilde{\sgn}^{\epsilon}_{t_i,r}\,\phi_{\beta}(y)\,dy\,d\eta\Big|_{r=t_i}^{r=t_{i+1}}
\\
=&
\,\,IE_i^{\sgn1,1}-IE_i^{\sgn1,2}
+
BE_i^{\sgn1,1}+BE_i^{\sgn1,2}-BE_i^{\sgn1,3}
\\
&
+\!\!\int_{t_i}^{t_{i+1}}\!\!\!\int_{Q^3\times\R^3}\!\!\!\!\!\!\!\!\!\!\!\!\!m|\xi|^{m-1}\chi_r^1\,\rho_{t_i,r}^{1,\epsilon}\rho_{t_i,r}^{2,\epsilon}\sgn(\xi')\,\tr\!\left(\!\big(D_xY_{r,r-t_i}^{x,\xi}\big)^{\!T}\! D_y^2\phi_\beta(y)D_xY_{r,r-t_i}^{x,\xi}\!\right)\dsp dx \dsp dx' dy\dsp d\xi\dsp d\xi' d\eta\dsp dr
\\
&
-2\int_{t_i}^{t_{i+1}}\!\!\!\int_{Q^3\times\R^2}\!\!\!\!\!\!\!\!(p_r^1+q_r^1)\,\rho_{t_i,r}^{1,\epsilon}\,\rho_{t_i,r}^{2,\epsilon}(x',y,0,\eta)\,\phi_\beta(y)\,dx\,dx'\,dy\,d\xi\,d\eta\,dr.
\end{aligned}
\end{align}
Furthermore, identical considerations with $\chi^1$ replaced by $\chi^2$ prove that, after swapping the roles of $(x,\xi)$ and $(x',\xi')$,
\begin{align}\label{2.19}
\begin{aligned}
\int_{Q\times\R}&\!\!\!\!\!\!\!\tilde{\chi}^{2,\epsilon}_{t_i,r}(y,\eta)\,\tilde{\sgn}^{\epsilon}_{t_i,r}\,\phi_{\beta}(y)\,dy\,d\eta\Big|_{r=t_i}^{r=t_{i+1}}
\\
=&
\,\,IE_i^{\sgn2,1}-IE_i^{\sgn2,2}
+
BE_i^{\sgn2,1}+BE_i^{\sgn2,2}-BE_i^{\sgn2,3}
\\
&
+\!\!\int_{t_i}^{t_{i+1}}\!\!\!\int_{Q^3\times\R^3}\!\!\!\!\!\!\!\!\!\!\!\!\!\!m|\xi'|^{m-1}\chi_r^2\,\rho_{t_i,r}^{1,\epsilon}\rho_{t_i,r}^{2,\epsilon}\sgn(\xi)\,\tr\!\left(\!\big(D_{x'}Y_{r,r-t_i}^{x'\!\!,\xi'}\big)^{\!T}\!\! D_y^2\phi_\beta(y)D_{x'}Y_{r,r-t_i}^{x'\!\!,\xi'}\!\right) \!dx \dsp dx' dy\dsp d\xi\dsp d\xi' d\eta\dsp dr
\\
&
-2\int_{t_i}^{t_{i+1}}\!\!\!\int_{Q^3\times\R^2}\!\!\!\!\!\!\!\!(p_r^2+q_r^2)\,\rho_{t_i,r}^{2,\epsilon}\,\rho_{t_i,r}^{1,\epsilon}(x,y,0,\eta)\,\phi_\beta(y)\,dx\,dx'\,dy\,d\xi'\,d\eta\,dr,
\end{aligned}
\end{align}
for internal error terms $IE_i^{\sgn2,1}$, $IE_i^{\sgn2,2}$, and boundary error terms $BE_i^{\sgn2,1}$, $BE_i^{\sgn2,2}$ and $BE_i^{\sgn2,3}$, defined in exact analogy with \eqref{2.6}, \eqref{2.16}, and \eqref{2.9}, \eqref{2.11} and \eqref{2.14} respectively.

\medskip
\noindent
\textbf{Step 3: The mixed term.}
We shall now analyze the mixed term in \eqref{1.4}.
For the convolution kernel \eqref{0.16}, we shall write $(x,\xi)\in Q\times\R$ for the integration variables defining $\tilde{\chi}^{1,\epsilon}_{t_i,r}$, and we shall write $\rho_{t_i,r}^{1,\epsilon}$ for the corresponding convolution kernel.
We shall write $(x',\xi')\in Q\times\R$ for the integration variables defining $\tilde{\chi}^{2,\epsilon}_{t_i,r}$, and $\rho_{t_i,r}^{2,\epsilon}$ for the corresponding convolution kernel.
The kinetic equation \eqref{definition/pathwise kinetic solution/formula 1} implies that
\begin{align}\label{3.1}
\begin{aligned}
\int_{Q\times\R}\!\!\!\!\!\!\!\tilde{\chi}^{1,\epsilon}_{t_i,r}(y,\eta)\,\tilde{\chi}^{2,\epsilon}_{t_i,r}(y,\eta)\,
&
\phi_{\beta}(y)\,dy\,d\eta\Big|_{r=t_i}^{r=t_{i+1}}
\!\!\!
\\
=&
\int_{t_i}^{t_{i+1}}\!\!\!\int_{Q\times\R}\!\!\left(\int_{Q\times\R}m|\xi|^{m-1}\chi_r^1\,\Delta_x\rho_{t_i,r}^{1,\epsilon}\,dx\,d\xi\right)\tilde{\chi}^{2,\epsilon}_{t_i,r}\,\phi_\beta(y)\, dy \, d\eta\, dr
\\
&-
\!
\int_{t_i}^{t_{i+1}}\!\!\!\int_{Q\times\R}\!\!\left(\int_{Q\times\R}(p_r^1+q_r^1)\,\partial_{\xi}\rho_{t_i,r}^{1,\epsilon}\,dx\,d\xi\right)\tilde{\chi}^{2,\epsilon}_{t_i,r}\,\phi_\beta(y)\,dy\,d\eta\,dr
\\
&
+\int_{t_i}^{t_{i+1}}\!\!\!\int_{Q\times\R}\!\!\left(\int_{Q\times\R}m|\xi'|^{m-1}\chi_r^2\,\Delta_{x'}\rho_{t_i,r}^{2,\epsilon}\,dx'd\xi'\right)\tilde{\chi}^{1,\epsilon}_{t_i,r}\,\phi_\beta(y)\, dy \, d\eta\, dr
\\
&-
\!
\int_{t_i}^{t_{i+1}}\!\!\!\int_{Q\times\R}\!\!\left(\int_{Q\times\R}(p_r^2+q_r^2)\,\partial_{\xi'}\rho_{t_i,r}^{2,\epsilon}\,dx'd\xi'\right)\tilde{\chi}^{1,\epsilon}_{t_i,r}\,\phi_\beta(y)\,dy\,d\eta\,dr.
\end{aligned}
\end{align}
The first term in \eqref{3.1} is studied in analogy to \eqref{2.4}-\eqref{2.12}.
Precisely, we use formula \eqref{2.2} and then integrate by parts in the $(y,\eta)$ variables to write
\begin{align}\label{3.2}
\begin{aligned}
&
\hspace{-6mm}
\int_{t_i}^{t_{i+1}}
\!\!\!\int_{Q\times\R}\!\!\left(\int_{Q\times\R}m|\xi|^{m-1}\chi_r^1\,\Delta_x\rho_{t_i,r}^{1,\epsilon}\,dx\,d\xi\right)\tilde{\chi}^{2,\epsilon}_{t_i,r}\,\phi_\beta(y)\, dy \, d\eta\, dr 
\\
=
&
\int_{t_i}^{t_{i+1}}\!\!\!\int_{Q\times\R}\!\!\!\!\!\!\!\!\!\!m|\xi|^{m-1}\chi_r^1\nabla_x\!\cdot\!\left(\int_{Q\times\R}\!\!\!\!\!\!\!\!\rho_{t_i,r}^{1,\epsilon}\big(\nabla_{y}\tilde{\chi}^{2,\epsilon}_{t_i,r}D_xY_{r,r-t_i}^{x,\xi}\!\!\!+\!\partial_{\eta}\tilde{\chi}^{2,\epsilon}_{t_i,r}\nabla_{x}\Pi_{r,r-t_i}^{x,\xi}\big)\phi_\beta(y)\dsp dy \dsp d\eta\!\right)dx\dsp d\xi\dsp dr  
\\
&+
\int_{t_i}^{t_{i+1}}\!\!\!\int_{Q\times\R}\!\!\!m|\xi|^{m-1}\chi_r^1\nabla_x\cdot\left(\int_{Q\times\R}\rho_{t_i,r}^{1,\epsilon}\,\tilde{\chi}^{2,\epsilon}_{t_i,r}\nabla_y\phi_\beta(y)D_xY_{r,r-t_i}^{x,\xi}\, dy \, d\eta\right)dx\,d\xi\, dr.  
\end{aligned}
\end{align}
For the first term on the right-hand side of \eqref{3.2}, it follows from definition \eqref{0.14} and formula \eqref{2.2} that, after adding and subtracting the terms $D_{x'}Y^{x'\!,\xi'}_{r,r-t_i}$ and $\nabla_{x'}\Pi^{x'\!,\xi'}_{r,r-t_i}$,
\begin{align}\label{3.3}
\begin{aligned}
\int_{t_i}^{t_{i+1}}
&
\!\!\!\int_{Q\times\R}\!\!\!\!\!\!\!\!m|\xi|^{m-1}\chi_r^1\nabla_x\!\cdot\!\left(\int_{Q\times\R}\!\!\!\!\!\!\rho_{t_i,r}^{1,\epsilon}\big(\nabla_{y}\tilde{\chi}^{2,\epsilon}_{t_i,r}D_xY_{r,r-t_i}^{x,\xi}\!\!\!+\!\partial_{\eta}\tilde{\chi}^{2,\epsilon}_{t_i,r}\nabla_{x}\Pi_{r,r-t_i}^{x,\xi}\big)\phi_\beta(y)\dsp dy \dsp d\eta\!\right)dx\dsp d\xi\dsp dr  
\\
&
=
IE_i^{\mix1,1}
-
\int_{t_i}^{t_{i+1}}\!\!\!\int_{Q^3\times\R^3}\!\!\!\!\!m|\xi|^{m-1}\chi_r^1\chi_r^2\nabla_x\rho_{t_i,r}^{1,\epsilon}\nabla_{x'}\rho_{t_i,r}^{2,\epsilon}\,\phi_\beta(y)\, dx \dsp dx' dy\dsp d\xi\dsp d\xi' d\eta\dsp dr,  
\end{aligned}
\end{align}
for the internal error term relative to the mixed term, omitting the integration variables,
\begin{align}\label{3.4}
\begin{aligned}
IE_i^{\mix1,1}
\!
:=
&
\!\!
\int_{t_i}^{t_{i+1}}\!\!\!\int_{Q^3\times\R^3}\!\!\!\!\!\!\!\!\!\!m|\xi|^{m-1}\chi_r^1\nabla_x\!\cdot\!\left(\rho_{t_i,r}^{1,\epsilon}\chi_r^2\phi_\beta(y)
\,\nabla_{y}\rho_{t_i,r}^{2,\epsilon}\big(D_xY_{r,r-t_i}^{x,\xi}\!\!\!-\!D_{x'}Y_{r,r-t_i}^{x'\!\!,\xi'}\big)\right)
\\
&
+
\int_{t_i}^{t_{i+1}}\!\!\!\int_{Q^3\times\R^3}\!\!\!\!\!\!\!\!\!\!m|\xi|^{m-1}\chi_r^1\nabla_x\!\cdot\!\left(\rho_{t_i,r}^{1,\epsilon}\chi_r^2\phi_\beta(y)
\,\partial_{\eta}\rho_{t_i,r}^{2,\epsilon}\big(\nabla_x\Pi_{r,r-t_i}^{x,\xi}\!\!\!-\!\nabla_{x'}\Pi_{r,r-t_i}^{x'\!\!,\xi'}\big)\right).
\end{aligned}
\end{align}
For the second term of \eqref{3.2}, we first rewrite
\begin{align}\label{3.5}
    \begin{aligned}
        \int_{t_i}^{t_{i+1}}\!\!\!\int_{Q\times\R}
        &
        \!\!\!m|\xi|^{m-1}\chi_r^1\nabla_x\cdot\left(\int_{Q\times\R}\rho_{t_i,r}^{1,\epsilon}\,\tilde{\chi}_{t_i,r}^{2,\epsilon}\,\nabla_y\phi_\beta(y)D_xY_{r,r-t_i}^{x,\xi}\, dy \, d\eta\right)dx\,d\xi\, dr  
        \\
        &
        =
        BE^{\mix1,1}_i
        +
        \int_{t_i}^{t_{i+1}}\!\!\!\int_{Q^2\times\R^2}\!\!\!m|\xi|^{m-1}\chi_r^1\nabla_x\rho_{t_i,r}^{1,\epsilon}\,\tilde{\chi}_{t_i,r}^{2,\epsilon}\,\nabla_y\phi_\beta(y)D_xY_{r,r-t_i}^{x,\xi}dx\,dy\,d\xi\,d\eta\,dr,
    \end{aligned}
\end{align}
for the boundary error term relative to the mixed term
\begin{align}\label{3.6}
    \begin{aligned}
        BE^{\mix1,1}_i
        =
        \int_{t_i}^{t_{i+1}}\!\!\!\int_{Q^2\times\R^2}\!\!\!m|\xi|^{m-1}\chi_r^1\,\rho_{t_i,r}^{1,\epsilon}\,\tilde{\chi}_{t_i,r}^{2,\epsilon}\,\nabla_y\phi_\beta(y)\,\Delta_xY_{r,r-t_i}^{x,\xi}dx\,dy\,d\xi\,d\eta\,dr.
    \end{aligned}
\end{align}
For the second term on the right-hand side of \eqref{3.5}, arguing as in \eqref{3.2}-\eqref{3.4}, we use formula \eqref{2.2}, add and subtract the terms $D_{x'}Y^{x'\!,\xi'}_{r,r-t_i}$ and $\nabla_{x'}\Pi^{x'\!,\xi'}_{r,r-t_i}$, and then integrate by parts in the $(y,\eta)$ variables to get
\begin{align}\label{3.7}
    \begin{aligned}
        &
        \hspace{-7mm}
        \int_{t_i}^{t_{i+1}}
        \!\!\!\int_{Q^2\times\R^2}\!\!\!m|\xi|^{m-1}\chi_r^1\nabla_x\rho_{t_i,r}^{1,\epsilon}\tilde{\chi}_{t_i,r}^{2,\epsilon}\nabla_y\phi_\beta(y)D_xY_{r,r-t_i}^{x,\xi}\, dy \, d\eta\, dx\,d\xi\, dr
        \\
        =
        &
        \int_{t_i}^{t_{i+1}}\!\!\!\int_{Q^3\times\R^3}\!\!\!\!\!\!\!\!\!\!m|\xi|^{m-1}\chi_r^1\,\chi_r^2\,\rho_{t_i,r}^{1,\epsilon}\rho_{t_i,r}^{2,\epsilon}\tr\!\left(\!\big(D_xY_{r,r-t_i}^{x,\xi}\big)^{\!T} D_y^2\phi_\beta(y)D_xY_{r,r-t_i}^{x,\xi}\!\right)\dsp dx \dsp dx' dy\dsp d\xi\dsp d\xi' d\eta\dsp dr
        \\
        &-
        \int_{t_i}^{t_{i+1}}\!\!\!\int_{Q^3\times\R^3}\!\!\!\!\!\!m|\xi|^{m-1}\chi_r^1\,\chi_r^2\,\rho_{t_i,r}^{1,\epsilon}\nabla_{x'}\rho_{t_i,r}^{2,\epsilon}\nabla_y\phi_\beta(y)D_xY_{r,r-t_i}^{x,\xi}\dsp dx \dsp dx' dy\dsp d\xi\dsp d\xi' d\eta\dsp dr
        +
        BE_i^{\mix1,2},
    \end{aligned}
\end{align}
for the boundary error term, omitting the integration variables,
\begin{align}\label{3.8}
\begin{aligned}
BE_i^{\mix1,2}
\!
:=
&
\!\!
\int_{t_i}^{t_{i+1}}\!\!\!\int_{Q^3\times\R^3}\!\!\!\!\!\!\!\!\!\!m|\xi|^{m-1}\chi_r^1\,\chi_r^2\rho_{t_i,r}^{1,\epsilon}\nabla_y\phi_\beta(y)D_xY_{r,r-t_i}^{x,\xi}
\nabla_{y}\rho_{t_i,r}^{2,\epsilon}\big(D_xY_{r,r-t_i}^{x,\xi}\!\!\!-\!D_{x'}Y_{r,r-t_i}^{x'\!\!,\xi'}\big)
\\
&
+
\int_{t_i}^{t_{i+1}}\!\!\!\int_{Q^3\times\R^3}\!\!\!\!\!\!\!\!\!\!m|\xi|^{m-1}\chi_r^1\,\chi_r^2\,\rho_{t_i,r}^{1,\epsilon}\nabla_y\phi_\beta(y)D_xY_{r,r-t_i}^{x,\xi}
\partial_{\eta}\rho_{t_i,r}^{2,\epsilon}\big(\nabla_x\Pi_{r,r-t_i}^{x,\xi}\!\!\!-\!\nabla_{x'}\Pi_{r,r-t_i}^{x'\!\!,\xi'}\big).
\end{aligned}
\end{align}
In conclusion, using \eqref{3.2}, \eqref{3.3}, \eqref{3.5} and \eqref{3.7}, we rewrite the first term on the right-hand side of \eqref{3.1} as
\begin{align}\label{3.9}
\begin{aligned}
\int_{t_i}^{t_{i+1}}\!\!\!&\int_{Q\times\R}\!\!\left(\int_{Q\times\R}m|\xi|^{m-1}\chi_r^1\,\Delta_x\rho_{t_i,r}^{1,\epsilon}\,dx\,d\xi\right)\tilde{\chi}^{2,\epsilon}_{t_i,r}\,\phi_\beta(y)\, dy \, d\eta\, dr
\\
=&
\,\,IE_i^{\mix1,1}
+
BE_i^{\mix1,1}+BE_i^{\mix1,2}
\\
&
\!\!+\!\!\int_{t_i}^{t_{i+1}}\!\!\!\int_{Q^3\times\R^3}\!\!\!\!\!\!\!\!\!\!\!\!\!m|\xi|^{m-1}\chi_r^1\,\chi_r^2\,\rho_{t_i,r}^{1,\epsilon}\,\rho_{t_i,r}^{2,\epsilon}\,\tr\!\left(\!\big(D_xY_{r,r-t_i}^{x,\xi}\big)^{\!T}\! D_y^2\phi_\beta(y)D_xY_{r,r-t_i}^{x,\xi}\!\right)\dsp dx \dsp dx' dy\dsp d\xi\dsp d\xi' d\eta\dsp dr
\\
&  
\!\!-\!\!\int_{t_i}^{t_{i+1}}\!\!\!\int_{Q^3\times\R^3}\!\!\!\!\!\!\!\!\!\!\!\!\!m|\xi|^{m-1}\chi_r^1\,\chi_r^2\,\rho_{t_i,r}^{1,\epsilon}\,\nabla_{x'}\rho_{t_i,r}^{2,\epsilon}\,\nabla_y\phi_\beta(y)D_xY_{r,r-t_i}^{x,\xi} dx \dsp dx' dy\dsp d\xi\dsp d\xi' d\eta\dsp dr
\\
&  
\!\!-\!\!\int_{t_i}^{t_{i+1}}\!\!\!\int_{Q^3\times\R^3}\!\!\!\!\!\!\!\!\!\!\!\!\!m|\xi|^{m-1}\chi_r^1\,\chi_r^2\,\nabla_x\rho_{t_i,r}^{1,\epsilon}\,\nabla_{x'}\rho_{t_i,r}^{2,\epsilon}\,\phi_\beta(y)\dsp dx \dsp dx' dy\dsp d\xi\dsp d\xi' d\eta\dsp dr.
\end{aligned}
\end{align}
Furthermore, after swapping the roles of $\chi^1$ and $\chi^2$, identical considerations allow us to rewrite the third term on the right-hand side of \eqref{3.1} as
\begin{align}\label{3.10}
\begin{aligned}
\int_{t_i}^{t_{i+1}}\!\!\!&\int_{Q\times\R}\!\!\left(\int_{Q\times\R}m|\xi'|^{m-1}\chi_r^2\,\Delta_{x'}\rho_{t_i,r}^{2,\epsilon}\,dx'd\xi'\right)\tilde{\chi}^{1,\epsilon}_{t_i,r}\,\phi_\beta(y)\, dy \, d\eta\, dr
\\
=&
\,\,IE_i^{\mix2,1}
+
BE_i^{\mix2,1}+BE_i^{\mix2,2}
\\
&
\!\!+\!\!\int_{t_i}^{t_{i+1}}\!\!\!\int_{Q^3\times\R^3}\!\!\!\!\!\!\!\!\!\!\!\!\!m|\xi'|^{m-1}\chi_r^1\,\chi_r^2\,\rho_{t_i,r}^{1,\epsilon}\,\rho_{t_i,r}^{2,\epsilon}\,\tr\!\left(\!\big(D_{x'}Y_{r,r-t_i}^{x'\!\!,\xi'}\big)^{\!T}\! D_y^2\phi_\beta(y)D_{x'}Y_{r,r-t_i}^{x'\!\!,\xi'}\!\right)\dsp dx \dsp dx' dy\dsp d\xi\dsp d\xi' d\eta\dsp dr
\\
&  
\!\!-\!\!\int_{t_i}^{t_{i+1}}\!\!\!\int_{Q^3\times\R^3}\!\!\!\!\!\!\!\!\!\!\!\!\!m|\xi'|^{m-1}\chi_r^1\,\chi_r^2\,\nabla_x\rho_{t_i,r}^{1,\epsilon}\,\rho_{t_i,r}^{2,\epsilon}\,\nabla_y\phi_\beta(y)D_{x'}Y_{r,r-t_i}^{x'\!\!,\xi'} dx \dsp dx' dy\dsp d\xi\dsp d\xi' d\eta\dsp dr
\\
&  
\!\!-\!\!\int_{t_i}^{t_{i+1}}\!\!\!\int_{Q^3\times\R^3}\!\!\!\!\!\!\!\!\!\!\!\!\!m|\xi'|^{m-1}\chi_r^1\,\chi_r^2\,\nabla_x\rho_{t_i,r}^{1,\epsilon}\,\nabla_{x'}\rho_{t_i,r}^{2,\epsilon}\,\phi_\beta(y)\dsp dx \dsp dx' dy\dsp d\xi\dsp d\xi' d\eta\dsp dr,
\end{aligned}
\end{align}
for internal and boundary error terms $IE_i^{\sgn2,1}$, $BE_i^{\mix2,1}$ and $BE_i^{\mix2,2}$ defined in exact analogy to \eqref{3.4}, \eqref{3.6} and \eqref{3.8} respectively.

We now treat the second and fourth term in \eqref{3.1} in analogy to \eqref{2.13}-\eqref{2.16}.
Using formula \eqref{2.3} and integrating by parts in the $(y,\eta)$ variables, we obtain
\begin{align}\label{3.11}
\begin{aligned}
&\int_{t_i}^{t_{i+1}}\!\!\!\int_{Q\times\R}\!\!\left(\int_{Q\times\R}(p_r^1+q_r^1)\,\partial_{\xi}\rho_{t_i,r}^{1,\epsilon}\,dx\,d\xi\right)\tilde{\chi}_{t_i,r}^{2,\epsilon}\phi_\beta(y)\,dy\,d\eta\,dr
\\
&\,\,\,=
BE_i^{\mix1,3}\!\!\!
+\!\!
\int_{t_i}^{t_{i+1}}\!\!\!\int_{Q^2\times\R^2}\!\!\!\!\!\!\!\!\!\!\!\!(p_r^1+q_r^1)\rho_{t_i,r}^{1,\epsilon}\,\phi_\beta(y)\big(\nabla_y\tilde{\chi}_{t_i,r}^{2,\epsilon}\partial_{\xi}Y_{r,r-t_i}^{x,\xi}\!\!+\partial_{\eta}\tilde{\chi}_{t_i,r}^{2,\epsilon}\partial_{\xi}\Pi_{r,r-t_i}^{x,\xi}\big)\,dx\,dy\,d\xi\,d\eta\,dr,
\end{aligned}
\end{align}
for the boundary error term
\begin{align}\label{3.12}
\begin{aligned}
BE_i^{\mix1,3}
:=
\int_{t_i}^{t_{i+1}}\!\!\!\int_{Q^2\times\R^2}(p_r^1+q_r^1)\,\rho_{t_i,r}^{1,\epsilon}\,\tilde{\chi}_{t_i,r}^{2,\epsilon}\nabla_y\phi_\beta(y)\partial_{\xi}Y_{r,r-t_i}^{x,\xi}dx\,dy\,d\xi\,d\eta\,dr.
\end{aligned}
\end{align}
For the second term on the right-hand side of \eqref{3.11}, we use definition \eqref{0.14}, add and subtract the derivatives $\partial_{\xi'}Y_{r,r-t_i}^{x'\!,\xi'}$ and $\partial_{\xi'}\Pi_{r,r-t_i}^{x'\!,\xi'}$, and recall formula \eqref{2.3} to write
\begin{align}\label{3.13}
\begin{aligned}
&\int_{t_i}^{t_{i+1}}\!\!\!\int_{Q^2\times\R^2}\!\!\!\!\!\!\!\!(p_r^1+q_r^1)\rho_{t_i,r}^{1,\epsilon}\,\phi_\beta(y)\big(\nabla_y\tilde{\chi}_{t_i,r}^{2,\epsilon}\partial_{\xi}Y_{r,r-t_i}^{x,\xi}\!\!+\partial_{\eta}\tilde{\chi}_{t_i,r}^{2,\epsilon}\partial_{\xi}\Pi_{r,r-t_i}^{x,\xi}\big)\,dx\,dy\,d\xi\,d\eta\,dr
\\
&\,\,\,=
IE_i^{\mix1,2}\!\!\!
-
\int_{t_i}^{t_{i+1}}\!\!\!\int_{Q^3\times\R^3}\!\!\!\!\!\!\!\!(p_r^1+q_r^1)\,\rho_{t_i,r}^{1,\epsilon}\,\partial_{\xi'}\rho_{t_i,r}^{2,\epsilon}\,\chi_r^2\,\phi_\beta(y)\,dxdx'\,dy\,d\xi\,d\xi'\,d\eta\,dr,
\end{aligned}
\end{align}
for the internal error term, omitting the integration variables,
\begin{align}\label{3.14}
\begin{aligned}
IE_i^{\mix1,2}
:=&
\int_{t_i}^{t_{i+1}}\!\!\!\int_{Q^3\times\R^3}\!\!\!\!\!\!(p_r^1+q_r^1)\,\rho_{t_i,r}^{1,\epsilon}\,\chi_r^2\,\phi_\beta(y)\,\nabla_y\rho_{t_i,r}^{2,\epsilon}\big(\partial_{\xi}Y_{r,r-t_i}^{x,\xi}\!\!-\partial_{\xi'}Y_{r,r-t_i}^{x'\!,\xi'}\big)
\\
&
+
\int_{t_i}^{t_{i+1}}\!\!\!\int_{Q^3\times\R^3}\!\!\!\!\!\!(p_r^1+q_r^1)\,\rho_{t_i,r}^{1,\epsilon}\,\chi_r^2\,\phi_\beta(y)\,\partial_{\eta}\rho_{t_i,r}^{2,\epsilon}\big(\partial_{\xi}\Pi_{r,r-t_i}^{x,\xi}\!\!-\partial_{\xi'}\Pi_{r,r-t_i}^{x'\!,\xi'}\big).
\end{aligned}
\end{align}
For the second term on the right-hand side of \eqref{3.13}, the distributional equality
\begin{equation}
    \label{3.15}
    \partial_{\xi'}\chi^2(x',\xi',r)=\delta_0(\xi')-\delta_0(u^2(x',r)-\xi')\quad\text{ for } (x',\xi',r)\in Q\times\R\times[0,\infty),
\end{equation}
implies that
\begin{align}\label{3.16}
\begin{aligned}
-
\int_{t_i}^{t_{i+1}}\!\!\!&\int_{Q^3\times\R^3}\!\!\!\!\!\!\!\!(p_r^1+q_r^1)
\,\rho_{t_i,r}^{1,\epsilon}\,\partial_{\xi'}\rho_{t_i,r}^{2,\epsilon}\,\chi_r^2\,\phi_\beta(y)\,dx\,dx'\,dy\,d\xi\,d\xi'\,d\eta\,dr
\\
=&
\int_{t_i}^{t_{i+1}}\!\!\!\int_{Q^3\times\R^2}\!\!\!\!\!\!\!\!(p_r^1+q_r^1)\,\rho_{t_i,r}^{1,\epsilon}\,\rho_{t_i,r}^{2,\epsilon}(x',y,0,\eta)\,\phi_\beta(y)\,dx\,dx'\,dy\,d\xi\,d\eta\,dr
\\
&-
\int_{t_i}^{t_{i+1}}\!\!\!\int_{Q^3\times\R^2}\!\!\!\!\!\!\!\!(p_r^1+q_r^1)\,\rho_{t_i,r}^{1,\epsilon}\,\rho_{t_i,r}^{2,\epsilon}(x',y,u^2(x',r),\eta)\,\phi_\beta(y)\,dx\,dx'\,dy\,d\xi\,d\eta\,dr.
\end{aligned}
\end{align}
Hence, returning to \eqref{3.11}, it follows from \eqref{3.13} and \eqref{3.16} that
\begin{align}\label{3.17}
\begin{aligned}
\int_{t_i}^{t_{i+1}}\!\!\!\int_{Q\times\R}
&
\!\!\left(\int_{Q\times\R}(p_r^1+q_r^1)\,\partial_{\xi}\rho_{t_i,r}^{1,\epsilon}\,dx\,d\xi\right)\tilde{\chi}^{2,\epsilon}_{t_i,r}\,\phi_\beta(y)\,dy\,d\eta\,dr
\\
=& IE_i^{\mix1,2}+BE_i^{\mix1,3}
\\
&+
\int_{t_i}^{t_{i+1}}\!\!\!\int_{Q^3\times\R^2}\!\!\!\!\!\!\!\!(p_r^1+q_r^1)\,\rho_{t_i,r}^{1,\epsilon}\,\rho_{t_i,r}^{2,\epsilon}(x',y,0,\eta)\,\phi_\beta(y)\,dx\,dx'\,dy\,d\xi\,d\eta\,dr
\\
&-
\int_{t_i}^{t_{i+1}}\!\!\!\int_{Q^3\times\R^2}\!\!\!\!\!\!\!\!(p_r^1+q_r^1)\,\rho_{t_i,r}^{1,\epsilon}\,\rho_{t_i,r}^{2,\epsilon}(x',y,u^2(x',r),\eta)\,\phi_\beta(y)\,dx\,dx'\,dy\,d\xi\,d\eta\,dr.
\end{aligned}
\end{align}
Furthermore, after swapping the roles of $\chi^1$ and $\chi^2$, virtually identical arguments prove that
\begin{align}\label{3.18}
\begin{aligned}
\int_{t_i}^{t_{i+1}}\!\!\!\int_{Q\times\R}
&
\!\!\left(\int_{Q\times\R}(p_r^2+q_r^2)\,\partial_{\xi'}\rho_{t_i,r}^{2,\epsilon}\,dx'd\xi'\right)\tilde{\chi}^{1,\epsilon}_{t_i,r}\,\phi_\beta(y)\,dy\,d\eta\,dr
\\
=& IE_i^{\mix2,2}+BE_i^{\mix2,3}
\\
&+
\int_{t_i}^{t_{i+1}}\!\!\!\int_{Q^3\times\R^2}\!\!\!\!\!\!\!\!(p_r^2+q_r^2)\,\rho_{t_i,r}^{2,\epsilon}\,\rho_{t_i,r}^{1,\epsilon}(x,y,0,\eta)\,\phi_\beta(y)\,dx\,dx'dy\,d\xi'd\eta\,dr
\\
&-
\int_{t_i}^{t_{i+1}}\!\!\!\int_{Q^3\times\R^2}\!\!\!\!\!\!\!\!(p_r^2+q_r^2)\,\rho_{t_i,r}^{2,\epsilon}\,\rho_{t_i,r}^{1,\epsilon}(x,y,u^1(x,r),\eta)\,\phi_\beta(y)\,dx\,dx'dy\,d\xi'd\eta\,dr,
\end{aligned}
\end{align}
for internal and boundary errors $IE_i^{\mix2,2}$ and $BE_i^{\mix2,3}$ defined in exact analogy to \eqref{3.14} and \eqref{3.12} respectively.

Going back to \eqref{3.1}, using \eqref{3.9}, \eqref{3.10}, \eqref{3.17} and \eqref{3.18}, and rearranging the terms, we obtain
\small
\begin{align}\label{3.19}
\begin{aligned}
\int_{Q\times\R}&\!\!\!\!\!\!\!\tilde{\chi}^{1,\epsilon}_{t_i,r}(y,\eta)\,\tilde{\chi}^{2,\epsilon}_{t_i,r}(y,\eta)\,\phi_{\beta}(y)\,dy\,d\eta\Big|_{r=t_i}^{r=t_{i+1}}
\!\!\!
\\
=&
\sum_{j=1}^2IE_i^{\mix j,1}\!\!-IE_i^{\mix j,2}
\,+\,
\sum_{j=1}^2BE_i^{\mix j,1}\!\!+BE_i^{\mix j,2}\!\!-BE_i^{\mix j,3}
\\
&
+\!\!\int_{t_i}^{t_{i+1}}\!\!\!\int_{Q^3\times\R^3}\!\!\!\!\!\!\!\!\!\!\!\!\!m|\xi|^{m-1}\chi_r^1\,\chi_r^2\,\rho_{t_i,r}^{1,\epsilon}\,\rho_{t_i,r}^{2,\epsilon}\,\tr\!\left(\!\big(D_xY_{r,r-t_i}^{x,\xi}\big)^{\!T}\! D_y^2\phi_\beta(y)D_xY_{r,r-t_i}^{x,\xi}\!\right)\dsp dx \dsp dx' dy\dsp d\xi\dsp d\xi' d\eta\dsp dr
\\
&
+\!\!\int_{t_i}^{t_{i+1}}\!\!\!\int_{Q^3\times\R^3}\!\!\!\!\!\!\!\!\!\!\!\!\!m|\xi'|^{m-1}\chi_r^1\,\chi_r^2\,\rho_{t_i,r}^{1,\epsilon}\,\rho_{t_i,r}^{2,\epsilon}\,\tr\!\left(\!\big(D_{x'}Y_{r,r-t_i}^{x'\!\!,\xi'}\big)^{\!T}\! D_y^2\phi_\beta(y)D_{x'}Y_{r,r-t_i}^{x'\!\!,\xi'}\!\right)\dsp dx \dsp dx' dy\dsp d\xi\dsp d\xi' d\eta\dsp dr
\\
&  
-\!\!\int_{t_i}^{t_{i+1}}\!\!\!\int_{Q^3\times\R^3}\!\!\!\!\!\!\!\!\!\!\!\!\!m|\xi|^{m-1}\chi_r^1\,\chi_r^2\,\rho_{t_i,r}^{1,\epsilon}\,\nabla_{x'}\rho_{t_i,r}^{2,\epsilon}\,\nabla_y\phi_\beta(y)D_xY_{r,r-t_i}^{x,\xi} dx \dsp dx' dy\dsp d\xi\dsp d\xi' d\eta\dsp dr
\\
&  
-\!\!\int_{t_i}^{t_{i+1}}\!\!\!\int_{Q^3\times\R^3}\!\!\!\!\!\!\!\!\!\!\!\!\!m|\xi'|^{m-1}\chi_r^1\,\chi_r^2\,\nabla_x\rho_{t_i,r}^{1,\epsilon}\,\rho_{t_i,r}^{2,\epsilon}\,\nabla_y\phi_\beta(y)D_{x'}Y_{r,r-t_i}^{x'\!\!,\xi'} dx \dsp dx' dy\dsp d\xi\dsp d\xi' d\eta\dsp dr
\\
&  
-\!\!\int_{t_i}^{t_{i+1}}\!\!\!\int_{Q^3\times\R^3}\!\!\!\!\!\!\!\!\!\!\!\!\!m\big(|\xi|^{m-1}+|\xi'|^{m-1}\big)\chi_r^1\,\chi_r^2\,\nabla_x\rho_{t_i,r}^{1,\epsilon}\,\nabla_{x'}\rho_{t_i,r}^{2,\epsilon}\,\phi_\beta(y)\dsp dx \dsp dx' dy\dsp d\xi\dsp d\xi' d\eta\dsp dr  
\\
&
-\int_{t_i}^{t_{i+1}}\!\!\!\int_{Q^3\times\R^2}\!\!\!\!\!\!\!\!(p_r^1+q_r^1)\,\rho_{t_i,r}^{1,\epsilon}\,\rho_{t_i,r}^{2,\epsilon}(x',y,0,\eta)\,\phi_\beta(y)\,dx\,dx'\,dy\,d\xi\,d\eta\,dr
\\
&
-\int_{t_i}^{t_{i+1}}\!\!\!\int_{Q^3\times\R^2}\!\!\!\!\!\!\!\!(p_r^2+q_r^2)\,\rho_{t_i,r}^{2,\epsilon}\,\rho_{t_i,r}^{1,\epsilon}(x,y,0,\eta)\,\phi_\beta(y)\,dx\,dx'dy\,d\xi'd\eta\,dr
\\
&+
\int_{t_i}^{t_{i+1}}\!\!\!\int_{Q^3\times\R^2}\!\!\!\!\!\!\!\!(p_r^1+q_r^1)\,\rho_{t_i,r}^{1,\epsilon}\,\rho_{t_i,r}^{2,\epsilon}(x',y,u^2(x',r),\eta)\,\phi_\beta(y)\,dx\,dx'\,dy\,d\xi\,d\eta\,dr
\\
&+
\int_{t_i}^{t_{i+1}}\!\!\!\int_{Q^3\times\R^2}\!\!\!\!\!\!\!\!(p_r^2+q_r^2)\,\rho_{t_i,r}^{2,\epsilon}\,\rho_{t_i,r}^{1,\epsilon}(x,y,u^1(x,r),\eta)\,\phi_\beta(y)\,dx\,dx'dy\,d\xi'd\eta\,dr.
\end{aligned}
\end{align}
\normalsize
This completes the analysis of the mixed term.

\medskip
\noindent
\textbf{Step 4: Internal term cancellation.}
We now go back to \eqref{1.4}.
We insert formulas \eqref{2.18}, \eqref{2.19} and \eqref{3.19} into \eqref{1.4} to write
\small
\begin{align}\label{4.1}
\begin{aligned}
    &\hspace{-3mm}\int_{Q\times\R}\!\!\!\!\big(\tilde{\chi}^{1,\epsilon}_{t_i,r}\tilde{\sgn}^{\epsilon}_{t_i,r}+\tilde{\chi}^{2,\epsilon}_{t_i,r}\tilde{\sgn}^{\epsilon}_{t_i,r}-2\tilde{\chi}^{1,\epsilon}_{t_i,r}\tilde{\chi}^{2,\epsilon}_{t_i,r}\big)\phi_{\beta}(y)\,dy\,d\eta\Big|_{r=t_i}^{r=t_{i+1}}
    \\
    \hspace{-3mm}=&
    \sum_{j=1}^2 IE_i^{\sgn j,1}-IE_i^{\sgn j,2}-2IE_i^{\mix j,1}+2IE_i^{\mix j,2}
    \\
    &\hspace{-3mm}
    +\sum_{j=1}^2 BE_i^{\sgn j,1}+BE_i^{\sgn j,2}-BE_i^{\sgn j,3}-2BE_i^{\mix j,1}-2BE_i^{\mix j,2}+2BE_i^{\mix j,3}
    \\
    &\hspace{-3mm}
    +\!\!\msp\int_{t_i}^{t_{i+1}}\!\!\!\int_{Q^3\times\R^3}\!\!\!\!\!\!\!\!\!\!\!\!\!\!\!m|\xi|^{m-1}\!\big(\chi_r^1\sgn(\xi')\!\msp-\msp\!2\chi_r^1\chi_r^2\big)\rho_{t_i,r}^{1,\epsilon}\rho_{t_i,r}^{2,\epsilon}\tr\!\left(\!\!\big(D_x\!Y_{r,r-t_i}^{x,\xi}\msp\big)^{\!T}\msp\!\! D_y^2\phi_\beta(y)D_x\!Y_{r,r-t_i}^{x,\xi}\!\msp\right)\! dx \dsp dx' dy\dsp d\xi\dsp d\xi' d\eta\dsp dr
    \\
    &\hspace{-3mm}
    +\!\!\msp\int_{t_i}^{t_{i+1}}\!\!\!\int_{Q^3\times\R^3}\!\!\!\!\!\!\!\!\!\!\!\!\!\!\!m|\xi'|^{m-1}\!\big(\chi_r^2\sgn(\xi)\!\msp-\msp\!2\chi_r^1\chi_r^2\big)\rho_{t_i,r}^{1,\epsilon}\rho_{t_i,r}^{2,\epsilon}\tr\!\left(\!\!\big(D_{x'}\!Y_{r,r-t_i}^{x'\!\!,\xi'}\msp\big)^{\!T}\msp\!\! D_y^2\phi_\beta(y)D_{x'}\!Y_{r,r-t_i}^{x'\!\!,\xi'}\!\msp\right)\! dx \dsp dx' dy\dsp d\xi\dsp d\xi' d\eta\dsp dr
    \\
    &  \hspace{-3mm}
    +2\!\!\int_{t_i}^{t_{i+1}}\!\!\!\int_{Q^3\times\R^3}\!\!\!\!\!\!\!\!\!\!\!\!\!m|\xi|^{m-1}\chi_r^1\,\chi_r^2\,\rho_{t_i,r}^{1,\epsilon}\,\nabla_{x'}\rho_{t_i,r}^{2,\epsilon}\,\nabla_y\phi_\beta(y)D_xY_{r,r-t_i}^{x,\xi} dx \dsp dx' dy\dsp d\xi\dsp d\xi' d\eta\dsp dr
    \\
    &  \hspace{-3mm}
    +2\!\!\int_{t_i}^{t_{i+1}}\!\!\!\int_{Q^3\times\R^3}\!\!\!\!\!\!\!\!\!\!\!\!\!m|\xi'|^{m-1}\chi_r^1\,\chi_r^2\,\nabla_x\rho_{t_i,r}^{1,\epsilon}\,\rho_{t_i,r}^{2,\epsilon}\,\nabla_y\phi_\beta(y)D_{x'}Y_{r,r-t_i}^{x'\!\!,\xi'} dx \dsp dx' dy\dsp d\xi\dsp d\xi' d\eta\dsp dr
    \\
    &  \hspace{-3mm}
    +2\!\!\int_{t_i}^{t_{i+1}}\!\!\!\int_{Q^3\times\R^3}\!\!\!\!\!\!\!\!\!\!\!\!\!m\big(|\xi|^{m-1}+|\xi'|^{m-1}\big)\chi_r^1\,\chi_r^2\,\nabla_x\rho_{t_i,r}^{1,\epsilon}\,\nabla_{x'}\rho_{t_i,r}^{2,\epsilon}\,\phi_\beta(y)\dsp dx \dsp dx' dy\dsp d\xi\dsp d\xi' d\eta\dsp dr  
    \\
    & \hspace{-3mm}
    -2\int_{t_i}^{t_{i+1}}\!\!\!\int_{Q^3\times\R^2}\!\!\!\!\!\!\!\!(p_r^1+q_r^1)\,\rho_{t_i,r}^{1,\epsilon}\,\rho_{t_i,r}^{2,\epsilon}(x',y,u^2(x',r),\eta)\,\phi_\beta(y)\,dx\,dx'\,dy\,d\xi\,d\eta\,dr
    \\
    & \hspace{-3mm}
    -2\int_{t_i}^{t_{i+1}}\!\!\!\int_{Q^3\times\R^2}\!\!\!\!\!\!\!\!(p_r^2+q_r^2)\,\rho_{t_i,r}^{2,\epsilon}\,\rho_{t_i,r}^{1,\epsilon}(x,y,u^1(x,r),\eta)\,\phi_\beta(y)\,dx\,dx'dy\,d\xi'd\eta\,dr,
\end{aligned}
\end{align}
\normalsize
where we point out that the terms involving the convolution kernels evaluated at $\xi=0$ or $\xi'=0$ in equations \eqref{2.18} and \eqref{2.19} and equation \eqref{3.19} cancel out with each other.
The aim of this step is to observe an additional cancellation among the residual internal terms in \eqref{4.1}.
Namely, among the last three lines of \eqref{4.1} denoted by
\begin{align}\label{4.2}
\begin{aligned}
    IR_i:=&
    2\int_{t_i}^{t_{i+1}}\!\!\!\int_{Q^3\times\R^3}\!\!\!\!\!\!\!\!m\big(|\xi|^{m-1}+|\xi'|^{m-1}\big)\chi_r^1\,\chi_r^2\,\nabla_x\rho_{t_i,r}^{1,\epsilon}\,\nabla_{x'}\rho_{t_i,r}^{2,\epsilon}\,\phi_\beta(y)\dsp dx \dsp dx' dy\dsp d\xi\dsp d\xi' d\eta\dsp dr  
    \\
    & 
    -2\int_{t_i}^{t_{i+1}}\!\!\!\int_{Q^3\times\R^2}\!\!\!\!\!\!\!\!(p_r^1+q_r^1)\,\rho_{t_i,r}^{1,\epsilon}\,\rho_{t_i,r}^{2,\epsilon}(x',y,u^2(x',r),\eta)\,\phi_\beta(y)\,dx\,dx'\,dy\,d\xi\,d\eta\,dr
    \\
    & 
    -2\int_{t_i}^{t_{i+1}}\!\!\!\int_{Q^3\times\R^2}\!\!\!\!\!\!\!\!(p_r^2+q_r^2)\,\rho_{t_i,r}^{2,\epsilon}\,\rho_{t_i,r}^{1,\epsilon}(x,y,u^1(x,r),\eta)\,\phi_\beta(y)\,dx\,dx'dy\,d\xi'd\eta\,dr.
\end{aligned}
\end{align}
This cancellation is an effect of the integration by parts formula \eqref{formula/integration by parts formula}.
The elementary equality
\begin{equation}
    \label{4.3}
    \left(|\xi|^{\frac{m-1}{2}}-|\xi'|^{\frac{m-1}{2}}\right)^2+2|\xi|^{\frac{m-1}{2}}|\xi'|^{\frac{m-1}{2}}=|\xi|^{m-1}+|\xi'|^{m-1}\quad\text{for }\xi,\xi'\in\R
\end{equation}
allows us to rewrite the first term of \eqref{4.2} as
\begin{align}\label{4.4}
\begin{aligned}
    2\int_{t_i}^{t_{i+1}}
    &
    \!\!\!\int_{Q^3\times\R^3}\!\!\!\!\!\!\!\!m\big(|\xi|^{m-1}+|\xi'|^{m-1}\big)\chi_r^1\,\chi_r^2\,\nabla_x\rho_{t_i,r}^{1,\epsilon}\,\nabla_{x'}\rho_{t_i,r}^{2,\epsilon}\,\phi_\beta(y)\dsp dx \dsp dx' dy\dsp d\xi\dsp d\xi' d\eta\dsp dr  
    \\
    =&
    4m\int_{t_i}^{t_{i+1}}\!\!\!\int_{Q^3\times\R^3}\!\!\!|\xi|^{\frac{m-1}{2}}|\xi'|^{\frac{m-1}{2}}\chi_r^1\,\chi_r^2\,\nabla_x\rho_{t_i,r}^{1,\epsilon}\,\nabla_{x'}\rho_{t_i,r}^{2,\epsilon}\,\phi_\beta(y)\dsp dx \dsp dx' dy\dsp d\xi\dsp d\xi' d\eta\dsp dr
    \\
    &+
    2m\int_{t_i}^{t_{i+1}}\!\!\!\int_{Q^3\times\R^3}\!\!\!\left(|\xi|^{\frac{m-1}{2}}-|\xi'|^{\frac{m-1}{2}}\right)^2\chi_r^1\,\chi_r^2\,\nabla_x\rho_{t_i,r}^{1,\epsilon}\,\nabla_{x'}\rho_{t_i,r}^{2,\epsilon}\,\phi_\beta(y)\dsp dx \dsp dx' dy\dsp d\xi\dsp d\xi' d\eta\dsp dr.
\end{aligned}
\end{align}
For the first term on the right-hand side of \eqref{4.4}, after applying the integration by parts formula \eqref{formula/integration by parts formula} both in the $x$ variable and the $x'$ variable, we have
\begin{align}\label{4.5}
\begin{aligned}
    4m\int_{t_i}^{t_{i+1}}&\!\!\!\int_{Q^3\times\R^3}\!\!\!|\xi|^{\frac{m-1}{2}}|\xi'|^{\frac{m-1}{2}}\chi_r^1\,\chi_r^2\,\nabla_x\rho_{t_i,r}^{1,\epsilon}\,\nabla_{x'}\rho_{t_i,r}^{2,\epsilon}\,\phi_\beta(y)\dsp dx \dsp dx' dy\dsp d\xi\dsp d\xi' d\eta\dsp dr
    \\
    =&
    \frac{16m}{(m+1)^2}\!\int_{t_i}^{t_{i+1}}\!\!\!\int_{Q^3\times\R}\!\!\!\!\!\!\!\!\!\!\nabla(u^1)^{\left[\frac{m+1}{2}\right]}\nabla(u^2)^{\left[\frac{m+1}{2}\right]}\bar{\rho}_{t_i,r}^{1,\epsilon}(x,y,\eta)\,\bar{\rho}_{t_i,r}^{2,\epsilon}(x',y,\eta)\,\phi_{\beta}(y)\dsp dx \dsp dx' dy\dsp d\eta\dsp dr,
\end{aligned}
\end{align}
where, for $j=1,2$ and $(x,y,\eta,r)\in Q^2\times\R\times[t_i,\infty)$, we denote
\begin{equation}\label{4.5bis}
    \Bar{\rho}_{t_i,r}^{j,\epsilon}(x,y,\eta):=\rho_{t_i,r}^{\epsilon}(x,u^j(x,r),y,\eta).
\end{equation}
In turn, Cauchy's inequality, the definition of the parabolic defect measure, and the nonnegativity of the entropy defect measure prove that
\begin{align}\label{4.6}
\begin{aligned}
    4m\int_{t_i}^{t_{i+1}}&\!\!\!\int_{Q^3\times\R^3}\!\!\!|\xi|^{\frac{m-1}{2}}|\xi'|^{\frac{m-1}{2}}\chi_r^1\,\chi_r^2\,\nabla_x\rho_{t_i,r}^{1,\epsilon}\,\nabla_{x'}\rho_{t_i,r}^{2,\epsilon}\,\phi_\beta(y)\, dx \dsp dx' dy\dsp d\xi\dsp d\xi' d\eta\dsp dr
    \\
    \leq&
    2\!\int_{t_i}^{t_{i+1}}\!\!\!\int_{Q^3\times\R^2}\!\!\!\!\!\!\!\!\!\!(p_r^1+q_r^1)\,\rho_{t_i,r}^{1,\epsilon}(x,y,\xi,\eta)\,\bar{\rho}_{t_i,r}^{2,\epsilon}(x',y,\eta)\,\phi_{\beta}(y)\, dx \dsp dx' dy\dsp d\xi\dsp d\eta\dsp dr
    \\
    &+
    2\!\int_{t_i}^{t_{i+1}}\!\!\!\int_{Q^3\times\R^2}\!\!\!\!\!\!\!\!\!\!(p_r^2+q_r^2)\,\bar{\rho}_{t_i,r}^{1,\epsilon}(x,y,\eta)\,\rho_{t_i,r}^{2,\epsilon}(x',y,\xi',\eta)\,\phi_{\beta}(y)\, dx \dsp dx' dy\dsp d\xi' d\eta\dsp dr.
\end{aligned}
\end{align}
Therefore, it follows from \eqref{4.4} and \eqref{4.6} that, for the internal residual term $IR_i$ defined in \eqref{4.2},
\begin{equation}
    \label{4.7}
    \limsup_{\epsilon\to0}IR_i\leq\limsup_{\epsilon\to0}IE_i^{\canc},
\end{equation}
for the internal cancellation error term
\begin{equation}
    \label{4.8}
    IE_i^{\canc}:=2m\int_{t_i}^{t_{i+1}}\!\!\!\int_{Q^3\times\R^3}\!\!\!\left(|\xi|^{\frac{m-1}{2}}-|\xi'|^{\frac{m-1}{2}}\right)^2\chi_r^1\,\chi_r^2\,\nabla_x\rho_{t_i,r}^{1,\epsilon}\,\nabla_{x'}\rho_{t_i,r}^{2,\epsilon}\,\phi_\beta(y)\dsp dx \dsp dx' dy\dsp d\xi\dsp d\xi' d\eta\dsp dr.
\end{equation}

\medskip
\noindent
\textbf{Step 5: Boundary term cancellation.}
The aim of this step is to observe a crucial cancellation between the residual boundary terms on the right-hand side of \eqref{4.1}.
Namely, between the third and fourth lines of \eqref{4.1}, denoted by
\begin{align}\label{5.1}
\begin{aligned}
    \hspace{-3mm}
    BR_i^A=&\int_{t_i}^{t_{i+1}}\!\!\!\int_{Q^3\times\R^3}\!\!\!\!\!\!\!\!\!\!\!\!m|\xi|^{m-1}\big(\chi_r^1\sgn(\xi')\!-\!2\chi_r^1\chi_r^2\big)\rho_{t_i,r}^{1,\epsilon}\,\rho_{t_i,r}^{2,\epsilon}\,\tr\!\left(\!\msp\big(D_x\!Y_{r,r-t_i}^{x,\xi}\big)^{\!T}\msp\! D_y^2\phi_\beta(y)D_x\!Y_{r,r-t_i}^{x,\xi}\!\right)\! 
    \\
    &
    +\int_{t_i}^{t_{i+1}}\!\!\!\int_{Q^3\times\R^3}\!\!\!\!\!\!\!\!\!\!\!\!m|\xi'|^{m-1}\big(\chi_r^2\sgn(\xi)\!-\!2\chi_r^1\chi_r^2\big)\rho_{t_i,r}^{1,\epsilon}\,\rho_{t_i,r}^{2,\epsilon}\,\tr\!\left(\msp\!\big(D_{x'}\!Y_{r,r-t_i}^{x'\!\!,\xi'}\big)^{\!T}\msp\! D_y^2\phi_\beta(y)D_{x'}\!Y_{r,r-t_i}^{x'\!\!,\xi'}\!\right),
\end{aligned}
\end{align}
and the fifth and sixth lines of \eqref{4.1}, denoted by
\begin{align}\label{5.2}
\begin{aligned}
    \hspace{-3mm}
    BR_i^B=&2\int_{t_i}^{t_{i+1}}\!\!\!\int_{Q^3\times\R^3}\!\!\!\!\!\!\!\!\!m|\xi|^{m-1}\chi_r^1\,\chi_r^2\,\rho_{t_i,r}^{1,\epsilon}\,\nabla_{x'}\rho_{t_i,r}^{2,\epsilon}\,\nabla_y\phi_\beta(y)D_xY_{r,r-t_i}^{x,\xi} dx \dsp dx' dy\dsp d\xi\dsp d\xi' d\eta\dsp dr
    \\
    & 
    +2\int_{t_i}^{t_{i+1}}\!\!\!\int_{Q^3\times\R^3}\!\!\!\!\!\!\!\!\!m|\xi'|^{m-1}\chi_r^1\,\chi_r^2\,\nabla_x\rho_{t_i,r}^{1,\epsilon}\,\rho_{t_i,r}^{2,\epsilon}\,\nabla_y\phi_\beta(y)D_{x'}Y_{r,r-t_i}^{x'\!\!,\xi'} dx \dsp dx' dy\dsp d\xi\dsp d\xi' d\eta\dsp dr.
\end{aligned}
\end{align}
We begin by analyzing the first term of $BR_i^B$.
First, we add and subtract $|\xi'|^{m-1}$ and then use the integration by parts formula \eqref{formula/integration by parts formula} in the $x'$ variable and formula \eqref{lemma/sobolev regularity of u^m/ formula 1} from Lemma \ref{lemma/sobolev regularity of u^m} below to get
\begin{align}\label{5.3}
\begin{aligned}
    2\int_{t_i}^{t_{i+1}}&\!\!\!\int_{Q^3\times\R^3}\!\!\!\!\!\!\!\!\!m|\xi|^{m-1}\chi_r^1\,\chi_r^2\,\rho_{t_i,r}^{1,\epsilon}\,\nabla_{x'}\rho_{t_i,r}^{2,\epsilon}\,\nabla_y\phi_\beta(y)D_xY_{r,r-t_i}^{x,\xi} dx \dsp dx' dy\dsp d\xi\dsp d\xi' d\eta\dsp dr
    \\
    =&
    BE_i^{\canc B1}
    -2\int_{t_i}^{t_{i+1}}\!\!\!\int_{Q^3\times\R^2}\!\!\!\!\!\!\!\!\nabla\left(u^2\right)^{[m]}\chi_r^1(x,\xi)\rho_{t_i,r}^{1,\epsilon}\Bar{\rho}_{t_i,r}^{2,\epsilon}\,\nabla_y\phi_\beta(y)D_{x}Y_{r,r-t_i}^{x,\xi} dx \dsp dx' dy\dsp d\xi\dsp d\eta\dsp dr,
\end{aligned}
\end{align}
with the notation \eqref{4.5bis} for $\Bar{\rho}_{t_i,r}^{2,\epsilon}$, for the boundary cancellation error term
\begin{align}\label{5.4}
\begin{aligned}
    \hspace{-2.5mm}
    BE_i^{\canc B1}\!\!:=\!2\!\!\int_{t_i}^{t_{i+1}}\!\!\!\int_{Q^3\times\R^3}\!\!\!\!\!\!\!\!\!\!\!\!\!m\left(|\xi|^{m-1}\!\!\!-|\xi'|^{m-1}\right)\chi_r^1\,\chi_r^2\,\rho_{t_i,r}^{1,\epsilon}\,\nabla_{x'}\rho_{t_i,r}^{2,\epsilon}\,\nabla_y\phi_\beta(y)D_xY_{r,r-t_i}^{x,\xi} dx \dsp dx' dy\dsp d\xi\dsp d\xi' d\eta\dsp dr.
\end{aligned}
\end{align}
For the second term in \eqref{5.3}, recalling the definition \eqref{0.16} of the convolution kernels $\rho_{t_i,r}^{j,\epsilon}$ and the inverse property \eqref{formula/inverse relation for smooth characteristics} of characteristics, we observe
\begin{align}\label{5.5}
\begin{aligned}
    \lim_{\epsilon\to0}2\int_{t_i}^{t_{i+1}}\!\!\!\int_{Q^3\times\R^2}\!\!\!\!\!\!\!\!\!\!\!\!\!\nabla\left(u^2\right)^{[m]}\chi_r^1(x,\xi)\rho_{t_i,r}^{1,\epsilon}(x,y,\xi,\eta)\Bar{\rho}_{t_i,r}^{2,\epsilon}(x',y,\eta)\,\nabla_y\phi_\beta(y)D_{x}Y_{r,r-t_i}^{x,\xi} dx \dsp dx' dy\dsp d\xi\dsp d\eta\dsp dr
    \\
    =
    2\int_{t_i}^{t_{i+1}}\!\!\!\int_{Q}\!\!\nabla(u^2)^{[m]}\,\chi_r^1(x',u^2(x',r))\,\nabla_y\phi_\beta\msp\big(Y_{r,r-t_i}^{x'\!\!,u^2}\big)\,D_{x'}Y_{r,r-t_i}^{x'\!\!,u^2} dx' \dsp dr,
\end{aligned}
\end{align}
where we recall the convention \eqref{convention on derivatives}.
The analysis of the second line of \eqref{5.2} is virtually identical tp \eqref{5.3}-\eqref{5.5}, simply swapping the roles of $u^1$ and $u^2$, and it produces a boundary error term $BE_i^{\canc B2}$ defined in exact analogy to \eqref{5.4}.
Then, using \eqref{5.3} and \eqref{5.5}, and the analogous formulas for the second line of \eqref{5.2}, for the boundary residue $BR_i^B$ defined in \eqref{5.2} we conclude that
\begin{align}\label{5.8}
\begin{aligned}
    \limsup_{\epsilon\to0} BR_i^B\leq &\,\,\,\,\limsup_{\epsilon\to0}\,\, BE_i^{\canc B1}\!\!+\!BE_i^{\canc B2}
    \\
    &\,-2\int_{t_i}^{t_{i+1}}\!\!\!\int_{Q}\!\!\nabla(u^2)^{[m]}\,\chi_r^1(x',u^2(x',r))\,\nabla_y\phi_\beta\msp\big(Y_{r,r-t_i}^{x'\!\!,u^2}\big)\,D_{x'}Y_{r,r-t_i}^{x'\!\!,u^2} dx' \dsp dr
    \\
    &\,-2\int_{t_i}^{t_{i+1}}\!\!\!\int_{Q}\!\!\nabla(u^1)^{[m]}\,\chi_r^2(x,u^1(x,r))\,\nabla_y\phi_\beta\msp\big(Y_{r,r-t_i}^{x,u^1}\big)\,D_{x}Y_{r,r-t_i}^{x,u^1} dx \dsp dr.
\end{aligned}
\end{align}

We now consider the residual term $BR_i^A$, defined in \eqref{5.1}.
First of all, we notice that, upon swapping the roles of the variables $(x,\xi)$ and $(x',\xi')$ in the second integral, this term is rewritten as
\begin{align}\label{5.9}
\begin{aligned}
    BR_i^A\!\!=\!\!\int_{t_i}^{t_{i+1}}\!\!\!\int_{Q^3\times\R^3}\!\!\!\!\!\!\!\!\!\!\!\!\!\!\!m|\xi|^{m-1}\big(\chi_r^1(x,\xi)\sgn(\xi')
    &
    +\chi_r^2(x,\xi)\sgn(\xi')\!-\!2\chi_r^1(x,\xi)\chi_r^2(x',\xi')\!-\!2\chi_r^1(x',\xi')\chi_r^2(x,\xi)\big)
    \\
    &
    \cdot
    \rho_{t_i,r}^{1,\epsilon}\,\rho_{t_i,r}^{2,\epsilon}\,\tr\!\left(\!\msp\big(D_x\!Y_{r,r-t_i}^{x,\xi}\big)^{\!T}\msp\! D_y^2\phi_\beta(y)D_x\!Y_{r,r-t_i}^{x,\xi}\!\right) dx\dsp dx'dy\dsp d\xi\dsp d\xi'd\eta\dsp dr. \nonumber
\end{aligned}
\end{align}
Then, recalling the definition \eqref{0.16} of the convolution kernels and the inverse property \eqref{formula/inverse relation for smooth characteristics} of the characteristics, and using properties of the kinetic function, we observe that
\begin{align}\label{5.10}
\begin{aligned}
    \hspace{-3mm}
    \lim_{\epsilon\to0}BR_i^A
    =&
    \int_{t_i}^{t_{i+1}}\!\!\!\int_{Q\times\R}\!\!\!\!\!\!\!\!\!m|\xi|^{m-1}\big(\chi_r^1(x,\xi)\sgn(\xi)+\chi_r^2(x,\xi)\sgn(\xi)\!-\!4\chi_r^1(x,\xi)\chi_r^2(x,\xi)\big)
    \\
    &\qquad\qquad\qquad\qquad\
    \cdot
    \tr\!\left(\!\msp\big(D_x\!Y_{r,r-t_i}^{x,\xi}\big)^{\!T}\msp\! D_y^2\phi_\beta\msp\big(Y_{r,r-t_i}^{x,\xi}\big)D_x\!Y_{r,r-t_i}^{x,\xi}\!\right)dx\,d\xi\,dr
    \\
    =&
    \int_{t_i}^{t_{i+1}}\!\!\!\int_{Q\times\R}\!\!\!\!\!\!\!\!\!m|\xi|^{m-1}\left|\chi_r^1(x,\xi)-\chi_r^2(x,\xi)\right|^2\tr\!\left(\!\msp\big(D_x\!Y_{r,r-t_i}^{x,\xi}\big)^{\!T}\msp\! D_y^2\phi_\beta\msp\big(Y_{r,r-t_i}^{x,\xi}\big)D_x\!Y_{r,r-t_i}^{x,\xi}\!\right)dx\,d\xi\,dr
    \\
    &
    -2\int_{t_i}^{t_{i+1}}\!\!\!\int_{Q\times\R}\!\!\!\!\!\!\!\!\!m|\xi|^{m-1}\chi_r^1(x,\xi)\chi_r^2(x,\xi)\,\tr\!\left(\!\msp\big(D_x\!Y_{r,r-t_i}^{x,\xi}\big)^{\!T}\msp\! D_y^2\phi_\beta\msp\big(Y_{r,r-t_i}^{x,\xi}\big)D_x\!Y_{r,r-t_i}^{x,\xi}\!\right)dx\,d\xi\,dr.
\end{aligned}
\end{align}
For the second term on the right-hand side of \eqref{5.10}, performing an elementary computation, we obtain
\begin{align}\label{5.11}
\begin{aligned}
    -2
    &
    \int_{t_i}^{t_{i+1}}\!\!\!\int_{Q\times\R}\!\!\!\!\!\!\!\!\!m|\xi|^{m-1}\chi_r^1(x,\xi)\chi_r^2(x,\xi)\,\tr\!\left(\!\msp\big(D_x\!Y_{r,r-t_i}^{x,\xi}\big)^{\!T}\msp\! D_y^2\phi_\beta\msp\big(Y_{r,r-t_i}^{x,\xi}\big)D_x\!Y_{r,r-t_i}^{x,\xi}\!\right)dx\,d\xi\,dr
    \\
    &=
    BE_i^{\canc A,1}\!
    -2\int_{t_i}^{t_{i+1}}\!\!\!\int_{Q\times\R}\!\!\!\!\!\!\!\!\!m|\xi|^{m-1}\chi_r^1(x,\xi)\chi_r^2(x,\xi)\,\nabla_x\!\cdot\!\left(\msp\! \nabla_y\phi_\beta\msp\big(Y_{r,r-t_i}^{x,\xi}\big)D_x\!Y_{r,r-t_i}^{x,\xi}\!\right)dx\,d\xi\,dr,
\end{aligned}
\end{align}
for the boundary cancellation error term
\begin{align}\label{5.12}
\begin{aligned}
    BE_i^{\canc A,1}:=2\int_{t_i}^{t_{i+1}}\!\!\!\int_{Q\times\R}\!\!\!\!\!\!\!\!\!m|\xi|^{m-1}\chi_r^1(x,\xi)\chi_r^2(x,\xi)\, \nabla_y\phi_\beta\msp\big(Y_{r,r-t_i}^{x,\xi}\big)\Delta_x\!Y_{r,r-t_i}^{x,\xi}dx\,d\xi\,dr.
\end{aligned}
\end{align}
Furthermore, for the second term in \eqref{5.11}, using the integration by parts formula \eqref{formula/integration by parts formula}, Lemma \ref{lemma/sobolev regularity of u^m} below and the product rule for derivatives, we have
\begin{align}\label{5.13}
\begin{aligned}
    -2\int_{t_i}^{t_{i+1}}\!\!\!\int_{Q\times\R}\!\!\!\!\!\!\!\!\!m
    &
    |\xi|^{m-1}\chi_r^1(x,\xi)\chi_r^2(x,\xi)\,\nabla_x\!\cdot\!\left(\msp\! \nabla_y\phi_\beta\msp\big(Y_{r,r-t_i}^{x,\xi}\big)D_x\!Y_{r,r-t_i}^{x,\xi}\!\right)dx\,d\xi\,dr
    \\
    =&
    2\int_{t_i}^{t_{i+1}}\!\!\!\int_{Q}\nabla\left(u^1\right)^{[m]}\chi_r^2(x,u^1(x,r))\,\nabla_y\phi_\beta\msp\big(Y_{r,r-t_i}^{x,u^1}\big)D_x\!Y_{r,r-t_i}^{x,u^1}dx\,dr
    \\
    &+
    2\int_{t_i}^{t_{i+1}}\!\!\!\int_{Q}\nabla\left(u^2\right)^{[m]}\chi_r^1(x,u^2(x,r))\,\nabla_y\phi_\beta\msp\big(Y_{r,r-t_i}^{x,u^2}\big)D_x\!Y_{r,r-t_i}^{x,u^2}dx\,dr,
\end{aligned}
\end{align}
which is the exact opposite of the last two terms on the right-hand side of \eqref{5.8}.

Finally, we analyze the first term on the right-hand side of \eqref{5.10}.
After adding and subtracting the identity matrix $\id_d$ to $D_xY_{r,r-t_i}^{x,\xi}$ twice, we obtain
\begin{align}\label{5.14}
\begin{aligned}
    \int_{t_i}^{t_{i+1}}\!\!\!\int_{Q\times\R}\!\!\!\!\!\!\!\!\!m|\xi|^{m-1}\left|\chi_r^1(x,\xi)-\chi_r^2(x,\xi)\right|^2\tr\!\left(\!\msp\big(D_x\!Y_{r,r-t_i}^{x,\xi}\big)^{\!T}\msp\! D_y^2\phi_\beta\msp\big(Y_{r,r-t_i}^{x,\xi}\big)D_x\!Y_{r,r-t_i}^{x,\xi}\!\right)dx\,d\xi\,dr
    \\
    =BE_i^{\canc A,2}\!\!
    +
    \int_{t_i}^{t_{i+1}}\!\!\!\int_{Q\times\R}\!\!\!\!\!\!\!\!\!m|\xi|^{m-1}\left|\chi_r^1(x,\xi)-\chi_r^2(x,\xi)\right|^2\Delta_y\phi_\beta\msp\big(Y_{r,r-t_i}^{x,\xi}\big)dx\,d\xi\,dr,
\end{aligned}
\end{align}
for the boundary error term
\begin{align}\label{5.15}
\begin{aligned}
    \hspace{-2mm}
    BE_i^{\canc A,2}\!\!:=\!\!\int_{t_i}^{t_{i+1}}&\!\!\!\int_{Q\times\R}\!\!\!\!\!\!\!\!\!\!m|\xi|^{m-1}\left|\chi_r^1-\chi_r^2\right|^2
    \\
    &\cdot\tr\!\left(\!\msp\big(D_x\!Y_{r,r-t_i}^{x,\xi}\!\!\!\!-\!\id_d\!\big)^{\!T}\msp\! D_y^2\phi_\beta\msp\big(Y_{r,r-t_i}^{x,\xi}\msp\big)D_x\!Y_{r,r-t_i}^{x,\xi}\!\!\!+\!D_y^2\phi_\beta\msp\big(Y_{r,r-t_i}^{x,\xi}\msp\big)\!\big(D_x\!Y_{r,r-t_i}^{x,\xi}\!\!\!\!-\!\id_d\!\big)\!\!\right)\!dx\,d\xi\,dr.
\end{aligned}
\end{align}
Heuristically, the second term in \eqref{5.14} should be mostly negative as it consists of positive terms multiplied by the laplacian of a cutoff function, and this is constantly equal to $1$ within the domain Q and bends downward near the boundary decreasing up to $0$.
Rigorously, recalling the explicit choice \eqref{0.8} of the cutoff, we use formulas \eqref{0.12}-\eqref{0.13} and the nonnegativity of the convolution kernel $\rho_1$ in definition \eqref{0.7} to compute
\begin{align}\label{5.16}
\begin{aligned}
    \int_{t_i}^{t_{i+1}}
    \!\!\!\int_{Q\times\R}&\!\!\!\!\!\!\!\!\!m|\xi|^{m-1}\left|\chi_r^1-\chi_r^2\right|^2\Delta_y\phi_\beta\msp\big(Y_{r,r-t_i}^{x,\xi}\big)dx\,d\xi\,dr
    \\
    =&
    -
    \int_{t_i}^{t_{i+1}}\!\!\!\int_{Q\times\R}\!\!\!\!\!\!\!\!\!m|\xi|^{m-1}\left|\chi_r^1-\chi_r^2\right|^2\Dot{\psi}_\beta\msp\big(d_{\partial Q}\big(Y_{r,r-t_i}^{x,\xi}\big)\big)\nabla_x\!\cdot\!\Tilde{n}\msp\big(Y_{r,r-t_i}^{x,\xi}\big) dx\,d\xi\,dr
    \\
    &+
    \int_{t_i}^{t_{i+1}}\!\!\!\int_{Q\times\R}\!\!\!\!\!\!\!\!\!m|\xi|^{m-1}\left|\chi_r^1-\chi_r^2\right|^2\ddot{\psi}_\beta\msp\big(d_{\partial Q}\big(Y_{r,r-t_i}^{x,\xi}\big)\big) dx\,d\xi\,dr
    \\
    =&
    \,\,BE_i^{\canc A,3}
    \\
    &+
    \int_{t_i}^{t_{i+1}}\!\!\!\int_{Q\times\R}\!\!\!\!\!\!\!\!\!m|\xi|^{m-1}\left|\chi_r^1-\chi_r^2\right|^2\bigg(\frac{1}{\beta}\,\rho_1^{\frac{1}{2}\beta^{\gamma_m}}\!\!\Big(\!d_{\partial Q}\big(Y_{r,r-t_i}^{x,\xi}\big)\!-\!\beta^{\gamma_m}\!\Big)
    \\
    &\qquad\qquad\qquad\qquad\qquad\qquad\qquad\qquad
    -\frac{1}{\beta}\,\rho_1^{\frac{1}{2}\beta^{\gamma_m}}\!\!\Big(\msp d_{\partial Q}\big(Y_{r,r-t_i}^{x,\xi}\big)\!-\!(\beta+\beta^{\gamma_m})\msp\Big)\bigg) dx\,d\xi\,dr
    \\
    \leq&
    \,\,
    BE_i^{\canc A,3}\!\!+BE_i^{\canc A,4},
\end{aligned}
\end{align}
for the boundary error terms
\begin{align}\label{5.17}
\begin{aligned}
    BE_i^{\canc A,3}:=
    -
    \int_{t_i}^{t_{i+1}}\!\!\!\int_{Q\times\R}\!\!\!\!\!\!\!\!\!m|\xi|^{m-1}\left|\chi_r^1-\chi_r^2\right|^2\,\Dot{\psi}_\beta\msp\big(d_{\partial Q}\big(Y_{r,r-t_i}^{x,\xi}\big)\big)\,\nabla_x\!\cdot\!\Tilde{n}\msp\big(Y_{r,r-t_i}^{x,\xi}\big)\, dx\,d\xi\,dr,
\end{aligned}
\end{align}
and
\begin{align}\label{5.18}
\begin{aligned}
    BE_i^{\canc A,4}:=
    \int_{t_i}^{t_{i+1}}\!\!\!\int_{Q\times\R}\!\!\!\!\!\!\!\!\!m|\xi|^{m-1}\left|\chi_r^1-\chi_r^2\right|^2\,\frac{1}{\beta}\,\rho_1^{\frac{1}{2}\beta^{\gamma_m}}\!\!\Big(\!d_{\partial Q}\big(Y_{r,r-t_i}^{x,\xi}\big)\!-\!\beta^{\gamma_m}\!\Big)\, dx\,d\xi\,dr.
\end{aligned}
\end{align}

In conclusion, it follows from \eqref{5.8}, \eqref{5.10}, \eqref{5.11}, \eqref{5.13}, \eqref{5.14} and \eqref{5.16} that, for the residual boundary terms $BR_i^A$ and $BR_i^B$ defined in \eqref{5.1} and \eqref{5.2} respectively,
\begin{align}\label{5.19}
\begin{aligned}
    \limsup_{\epsilon\to0}\Big(BR_i^A\!\!+\!BR_i^B\Big)
    \leq \limsup_{\epsilon\to0}\bigg(\sum_{k=1}^4BE_i^{\canc A,k}\!+\sum_{j=1}^2BE_i^{\canc Bj}\bigg).
\end{aligned}
\end{align}

\medskip
\noindent
\textbf{Step 6: The final inequality.}
We now go back go \eqref{4.1}.
Using \eqref{4.2} and \eqref{4.7}, and \eqref{5.1} \eqref{5.2} and \eqref{5.19}, we obtain that
\begin{align}\label{5.20}
\begin{aligned}
    \limsup_{\epsilon\to0}
    \int_{Q\times\R}
    &
    \!\!\!\!\big(\tilde{\chi}^{1,\epsilon}_{t_i,r}\tilde{\sgn}^{\epsilon}_{t_i,r}+\tilde{\chi}^{2,\epsilon}_{t_i,r}\tilde{\sgn}^{\epsilon}_{t_i,r}-2\tilde{\chi}^{1,\epsilon}_{t_i,r}\tilde{\chi}^{2,\epsilon}_{t_i,r}\big)\phi_{\beta}(y)\,dy\,d\eta\Big|_{r=t_i}^{r=t_{i+1}}
    \\
    \leq
    \limsup_{\epsilon\to0}\Bigg(
    &
    \sum_{j=1}^2 IE_i^{\sgn j,1}-IE_i^{\sgn j,2}-2IE_i^{\mix j,1}+2IE_i^{\mix j,2}
    \\
    &
    \,\,+
    IE_i^{\canc}
    \\
    &
    \,\,+
    \sum_{j=1}^2 BE_i^{\sgn j,1}+BE_i^{\sgn j,2}-BE_i^{\sgn j,3}-2BE_i^{\mix j,1}-2BE_i^{\mix j,2}+2BE_i^{\mix j,3}
    \\
    &
    \,\,+
    \sum_{k=1}^4BE_i^{\canc A,k}
    +
    \sum_{j=1}^2BE_i^{\canc Bj}\Bigg).
\end{aligned}
\end{align}
In turn, from this and \eqref{1.3} it follows that
\begin{align}\label{6.1}
\begin{aligned}
    \hspace{-3mm}\int_Q|u^1(y,r)-u^2(y,r)|\,dy&\Big|_{r=0}^{r=T}
    \\
    \,\leq
    \limsup_{\beta\to0}
    \limsup_{\epsilon\to0}
    \sum_{i=0}^{N-1}\Bigg(
    &
    \sum_{j=1}^2 IE_i^{\sgn j,1}-IE_i^{\sgn j,2}-2IE_i^{\mix j,1}+2IE_i^{\mix j,2}
    \\
    &
    \,+
    IE_i^{\canc}
    \\
    &
    \,+
    \sum_{j=1}^2\!BE_i^{\sgn j,1}\!\!\!+\!BE_i^{\sgn j,2}\!\!\!-\!BE_i^{\sgn j,3}\!\!\!-\!2BE_i^{\mix j,1}\!\!\!-\!2BE_i^{\mix j,2}\!\!\!+\!2BE_i^{\mix j,3}
    \\
    &
    \,+
    \sum_{k=1}^4BE_i^{\canc A,k}
    +
    \sum_{j=1}^2BE_i^{\canc Bj}\Bigg),
\end{aligned}
\end{align}
for the internal, displacement and boundary error terms defined in the previous steps, and where we recall that $\mathcal{P}=\{0=t_0<t_1<\dots<t_N=T\}\subseteq[0,T]\setminus\mathcal{N}$ is an arbitrary partition.
Incidentally, we mention that, according to the relevant definitions, some of these error terms are actually independent of $\epsilon\in(0,1)$.
The aim of the next two steps is to provide estimates for the internal and boundary errors respectively.
These estimates shall depend on $\epsilon,\beta\in(0,1)$ and on the size $|\mathcal{P}|$ of the arbitrary partition, in such a way that, as we let $\epsilon\to0$ first, $\beta\to0$ then, and finally $|\mathcal{P}|\to0$, the right-hand side of \eqref{6.1} vanishes, thus proving the theorem.

\medskip
\noindent
\textbf{Step 7: The internal errors.}
In this step we analyze the internal error terms in \eqref{6.1}.
We begin with the internal errors from Step 2 and Step 3.
These are easily handled using the stability estimates on the characteristics from Section \ref{section/rough path estimates} and the next crucial observation, which follows immediately from the definition \eqref{0.16} of the convolution kernels.
For every $(x,\xi)$, $(x',\xi')$ and $(y,\eta)\in Q\times\R$ and for every $t_i\in\mathcal{P}$ and $r\in[t_i,\infty)$,
\begin{equation}
    \label{7.1}
    \text{if } \rho_{t_i,r}^{1,\epsilon}(x,y,\xi,\eta)\rho_{t_i,r}^{1,\epsilon}\rho_{t_i,r}^{2,\epsilon}(x',y,\xi',\eta)\neq0,\,\,\text{then } \left|Y_{r,r-t_i}^{x,\xi}-Y_{r,r-t_i}^{x'\!,\xi'}\right|+\left|\Pi_{r,r-t_i}^{x,\xi}-\Pi_{r,r-t_i}^{x'\!,\xi'}\right|\leq2\epsilon.
\end{equation}
For the internal error $IE_i^{\sgn 1,1}$ defined in \eqref{2.6}, using the integration by parts formula \eqref{formula/integration by parts formula} and Lemma \ref{lemma/sobolev regularity of u^m} we obtain, with the notation \eqref{4.5bis},
\begin{align}\label{7.2}
\begin{aligned}
IE_i^{\sgn1,1}
\!
=
&
\!
-\int_{t_i}^{t_{i+1}}\!\!\!\int_{Q^3\times\R^2}\!\!\!\!\!\!\!\!\!\nabla\left(u^1\right)^{[m]}\bar{\rho}_{t_i,r}^{1,\epsilon}\sgn(\xi')\phi_\beta(y)
\,\nabla_{y}\rho_{t_i,r}^{2,\epsilon}\big(D_xY_{r,r-t_i}^{x,u^1}\!\!\!-\!D_{x'}Y_{r,r-t_i}^{x'\!\!,\xi'}\big)
\\
&
-
\int_{t_i}^{t_{i+1}}\!\!\!\int_{Q^3\times\R^2}\!\!\!\!\!\!\!\!\!\nabla\left(u^1\right)^{[m]}\bar{\rho}_{t_i,r}^{1,\epsilon}\sgn(\xi')\phi_\beta(y)
\,\partial_{\eta}\rho_{t_i,r}^{2,\epsilon}\big(D_x\Pi_{r,r-t_i}^{x,u^1}\!\!\!-\!D_{x'}\Pi_{r,r-t_i}^{x'\!\!,\xi'}\big).
\end{aligned}
\end{align}
The error term $IE_i^{\mix 1,1}$ defined in \eqref{4.4} is transformed identically, integrating by parts and using Lemma \ref{lemma/sobolev regularity of u^m}, and the specular terms $IE_i^{\sgn 2,1}$ and $IE_i^{\mix 2,1}$ are handled in exact analogy, swapping the roles of $\chi^1$ and $\chi^2$.
Since the functions $\phi_\beta$, $\sgn$ and $\chi^j$ are bounded, and since there exists a constant $C$, only depending on the the standard mollifier $\rho$ chosen in Step 0, such that
\begin{equation}
    \label{7.3}
    \int_{Q\times\R}|\nabla_y\rho_{t_i,r}^{j,\epsilon}(x',y,\xi',\eta)|+|\partial_{\eta}\rho_{t_i,r}^{j,\epsilon}(x',y,\xi',\eta)|\,dx'\,d\xi'\leq C\epsilon^{-1},
\end{equation}
integrating the convolution kernels over the variables $(y,\eta)$ and $(x',\xi')$, using \eqref{7.1} combined with the estimates \eqref{formula/distance DY and DPi in terms of distance between characteristics}, we get
\begin{equation}
    \label{7.4}
    |IE_i^{\sgn j,1}|+|IE_i^{\mix j,1}|\leq C|t_{i+1}-t_i|^{\alpha}\int_{t_i}^{t_{i+1}}\!\!\!\int_{Q}\left|\nabla\left(u^j\right)^{[m]}\right|\,dx\,dr,
\end{equation}
for $j=1,2$, for a constant $C=C(T,A,z)$.

The remaining error terms $IE_i^{\sgn 1,2}$, defined in \eqref{2.16}, and $IE_i^{\mix 1,2}$, defined in \eqref{3.14}, and the specular terms $IE_i^{\sgn 2,2}$ and $IE_i^{\mix 2,2}$, are treated in analogy to \eqref{7.4}.
Indeed, we use the boundedness of $\phi_\beta$, $\sgn$ and $\chi^j$, formula \eqref{7.3}, and formula \eqref{7.1} combined with \eqref{formula/distance DY and DPi in terms of distance between characteristics}, and integrate the convolution kernels to estimate
\begin{equation}
    \label{7.5}
    |IE_i^{\sgn j,2}|+|IE_i^{\mix j,2}|\leq C|t_{i+1}-t_i|^{\alpha}\int_{t_i}^{t_{i+1}}\!\!\!\int_{Q\times\R}\!\!\!\!p_r^j(x,\xi)+q_r^j(x,\xi)\,\,\,dx\,d\xi\,dr,
\end{equation}
for $j=1,2$, for a constant $C=C(T,A,z)$.
Finally, using \eqref{7.4} and \eqref{7.5}, and summing over the partition, we obtain the following estimate, for $C=C(T,A,z)$,
\begin{align}
\begin{aligned}
    \label{7.5bis}
    \limsup_{\beta\to0}\limsup_{\epsilon\to0}\sum_{i=0}^{N-1}\sum_{j=1}^2\sum_{k=1}^2&|IE_i^{\sgn j,k}|+|IE_i^{\mix j,k}|
    \\
    &
    \leq 
    C|\mathcal{P}|^{\alpha}\sum_{j=1}^2\left(\int_{0}^{T}\!\int_{Q}\!\left|\nabla\left(u^j\right)^{[m]}\right|\,dx\,dr
    +\int_{0}^{T}\!\!\!\int_{Q\times\R}\!\!\!\!p_r^j+q_r^j\,dx\,d\xi\,dr\right).
\end{aligned}
\end{align}

We now consider the error term coming from the internal cancellation in Step 4, namely the internal error $IE_i^{\canc}$ defined in \eqref{4.8}.
The analysis is broken down in three cases: $m=1$, $m\in(2,\infty)$ and $m\in(0,1)\cup(1,2]$.

\emph{Case $m=1$.}
This case is trivial.
Indeed, if $m=1$, it follows automatically from definition \eqref{4.8} that $IE_i^{\canc}=0$.

\emph{Case $m\in(2,\infty)$.}
We form a velocity decomposition of the integral.
For each $M>1$, let $K_M:\R\to[0,1]$ be a smooth function satisfying
\begin{align}\label{7.6}
    K_M(\xi)=\empheqlbrace
    \begin{aligned}
    &1\quad\text{if } |\xi|\leq M,
    \\
    &0\quad\text{if } |\xi|\leq M+1.
\end{aligned}
\end{align}
Then, for each $M>1$, we split
\begin{align}
\begin{aligned}
    \label{7.7}
    \hspace{-3mm}
    I\!E_i^{\canc}\!\!\!&=2m\!\!\int_{t_i}^{t_{i+1}}\!\!\!\int_{Q^3\times\R^3}\!\!\!\!\!\!\!\!\!\!\!\!K_M(\xi)\left(|\xi|^{\frac{m-1}{2}}\!\!-\!|\xi'|^{\frac{m-1}{2}}\right)^2\!\!\chi_r^1\chi_r^2\nabla_x\rho_{t_i,r}^{1,\epsilon}\nabla_{x'}\rho_{t_i,r}^{2,\epsilon}\phi_\beta(y)\dsp dx \dsp dx' dy\dsp d\xi\dsp d\xi' d\eta\dsp dr
    \\
    &\,\,\,\,\,+\!\!
    2m\!\!\int_{t_i}^{t_{i+1}}\!\!\!\int_{Q^3\times\R^3}\!\!\!\!\!\!\!\!\!\!\!\!\!\!(1-K_M(\xi))\left(|\xi|^{\frac{m-1}{2}}\!\!-\!|\xi'|^{\frac{m-1}{2}}\right)^2\!\!\chi_r^1\chi_r^2\nabla_x\rho_{t_i,r}^{1,\epsilon}\nabla_{x'}\rho_{t_i,r}^{2,\epsilon}\,\phi_\beta(y)\dsp dx \dsp dx' dy\dsp d\xi\dsp d\xi' d\eta\dsp dr.
\end{aligned}
\end{align}
For the first term on the right-hand side of \eqref{7.7} we write
\begin{align}
\begin{aligned}
    \label{7.8}
    \bigg|2m\!\!\int_{t_i}^{t_{i+1}}&\!\!\!\int_{Q^3\times\R^3}\!\!\!\!\!\!\!\!\!\!\!\!K_M(\xi)\left(|\xi|^{\frac{m-1}{2}}\!\!-\!|\xi'|^{\frac{m-1}{2}}\right)^2\!\!\chi_r^1\chi_r^2\nabla_x\rho_{t_i,r}^{1,\epsilon}\nabla_{x'}\rho_{t_i,r}^{2,\epsilon}\phi_\beta(y)\dsp dx \dsp dx' dy\dsp d\xi\dsp d\xi' d\eta\dsp dr \bigg|
    \\
    &\leq
    C\int_{t_i}^{t_{i+1}}\!\!\!\int_{Q^3\times\R^3\cap\{|\xi-\xi'|\leq \,c\epsilon\}}\!\!\!\!\!\!\!\!\!\!\!\!|\xi-\xi'|^{(m-1)\wedge 2}\left|\nabla_x\rho_{t_i,r}^{1,\epsilon}\right|\left|\nabla_{x'}\rho_{t_i,r}^{2,\epsilon}\right|\dsp dx \dsp dx' dy\dsp d\xi\dsp d\xi' d\eta\dsp dr
    \\
    &\leq
    C\epsilon^{-2}\int_{t_i}^{t_{i+1}}\!\!\!\int_{Q}\int_{-c\epsilon}^{c\epsilon}\!\!\!\!|\theta|^{(m-1)\wedge 2}d\theta\,dy\,dr
    \leq
    C
    |t_{i+1}-t_i|\epsilon^{(3\wedge m)-2},
\end{aligned}
\end{align}
for a constant $C=C(M,m,Q,T,A,z)$.
In the first passage we used the boundedness of $\chi^j$, observation \eqref{7.1} combined with estimate \eqref{formula/distance between points in terms of distance between characteristics}, and the local Lipschitz continuity, if $m\geq3$, or the H\"older continuity, if $m\in(2,3)$, of the map $\R\ni\xi\mapsto|\xi|^{\frac{m-1}{2}}$.
In the second passage we exploited formula \eqref{2.2} for the derivatives of the convolution kernels, the boundedness of the derivatives of the characteristics from Proposition \ref{proposition/stability results for RDEs} and formula \eqref{7.3}, and we changed variables by setting $\theta=\xi-\xi'$.
In the last passage we simply used the boundedness of the domain $Q$.

For the second term on the right-hand side of \eqref{7.7} we use the following elementary inequality
\begin{equation}
    \label{7.9}
    \left(|\xi|^{\frac{m-1}{2}}\!\!-\!|\xi'|^{\frac{m-1}{2}}\right)^2=\left|\int_{\xi'}^{\xi}\frac{m-1}{2}\theta^{\left[\frac{m-3}{2}\right]}d\theta \right|\leq\left|\frac{m-1}{2}\right|^2\left(|\xi|^{m-3}+|\xi'|^{m-3}\right)|\xi-\xi'|^2.
\end{equation}
Then we estimate
\begin{align}
\begin{aligned}
    \label{7.10}
    \bigg|2&m\!\!\int_{t_i}^{t_{i+1}}\!\!\!\int_{Q^3\times\R^3}\!\!\!\!\!\!\!\!\!\!\!\!(1-K_M(\xi))\left(|\xi|^{\frac{m-1}{2}}\!\!-\!|\xi'|^{\frac{m-1}{2}}\right)^2\!\!\chi_r^1\chi_r^2\nabla_x\rho_{t_i,r}^{1,\epsilon}\nabla_{x'}\rho_{t_i,r}^{2,\epsilon}\phi_\beta(y)\dsp dx \dsp dx' dy\dsp d\xi\dsp d\xi' d\eta\dsp dr \bigg|
    \\
    &\leq
    C\int_{t_i}^{t_{i+1}}\!\!\!\int_{Q^3\times\R^3\cap\{|\xi-\xi'|\leq \,c\epsilon\}}\!\!\!\!\!\!\!\!\!\!\!\!\!\!\!\!\!\!\!\!\!\!\!\!\!\!\!\!\!\!\!\!\!\!\!\!\!\!\!\!\!\!\!(1-K_M(\xi))\left(|\xi|^{m-3}+|\xi'|^{m-3}\right)\epsilon^2\left|\chi_r^1\right|\left|\chi_r^2\right|\left|\nabla_x\rho_{t_i,r}^{1,\epsilon}\right|\left|\nabla_{x'}\rho_{t_i,r}^{2,\epsilon}\right|\dsp dx \dsp dx' dy\dsp d\xi\dsp d\xi' d\eta\dsp dr
    \\
    &\leq
    C\left(\int_{t_i}^{t_{i+1}}\!\!\!\int_{Q\cap\{|u^1|\geq M\}}\!\!\!\!\!\!\!\!\!\!\!\!\!\!\!\!\!\!\!\!\!\!\!\!\!\!\left|u^1\right|^{m-2}dx\,dr+\int_{t_i}^{t_{i+1}}\!\!\!\int_{Q\cap\{|u^2|\geq M-c\dsp\epsilon\}}\!\!\!\!\!\!\!\!\!\!\!\!\!\!\!\!\!\!\!\!\!\!\!\!\!\!\left|u^2\right|^{m-2}dx'dr\right),
\end{aligned}
\end{align}
for constants $C=C(m,T,A,z)$ and $c=c(T,A,z)$, independent of $M\geq 1$.
In the first passage we used \eqref{7.9}, and \eqref{7.1} combined with \eqref{formula/distance between points in terms of distance between characteristics}.
In the second passage we exploited properties of the kinetic function, and formula \eqref{2.2} combined with Proposition \ref{proposition/stability results for RDEs} and formula \eqref{7.3}.

Finally, combining \eqref{7.7} with \eqref{7.8} and \eqref{7.10}, and summing over the partition $\mathcal{P}$, we obtain
\begin{align}
\begin{aligned}
    \label{7.11}
    \sum_{i=0}^{N-1}\!\left|I\!E_i^{\canc}\right|
    \leq
    C_1\,\epsilon^{(3\wedge m)-2}
    \!+\!
    C_2\left(\int_{0}^{T}\!\!\!\int_{Q\cap\{|u^1|\geq M\}}\!\!\!\!\!\!\!\!\!\!\!\!\!\!\!\!\!\!\!\!\!\!\!\!\!\!\left|u^1\right|^{m-2}dx\, dr\!+\!\!\int_{0}^{T}\!\!\!\int_{Q\cap\{|u^2|\geq M-c\dsp\epsilon\}}\!\!\!\!\!\!\!\!\!\!\!\!\!\!\!\!\!\!\!\!\!\!\!\!\!\!\!\!\!\!\!\!\!\!\left|u^2\right|^{m-2}dx' dr\right),
\end{aligned}
\end{align}
for constants $C_1=C_1(M,m,Q,T,A,z)$, $C_2=C_2(m,T,A,z)$ and $c=c(T,A,z)$.
Since $m\in(2,\infty)$ and by Definition \ref{definition/pathwise kinetic solution} of kinetic solution we have $u^j\in L^{m+1}([0,T];L^{m+1}(Q))$, and since the constant $C_2$ is independent of $M$, H\"older's inequality and the dominated convergence theorem prove that the last two terms on the right-hand side of \eqref{7.11} vanish in the limit $M\to\infty$, uniformly for $\epsilon\in(0,1)$.
Therefore, passing first to the limit $\epsilon\to0$ and second to the limit $M\to\infty$, formula \eqref{7.11} yields
\begin{equation}
    \label{7.12}
    \limsup_{\epsilon\to0}\sum_{i=0}^{N-1}\left|IE_i^{\canc}\right|=0.
\end{equation}

\emph{Case $m\in(0,1)\cup(1,2]$.}
For this case the idea is to remove the singularity at the origin and to exploit the full regularity of the solution implied by Proposition \ref{proposition/singular moments for defect measures d=1} below.
Using the integration by parts formula \eqref{formula/integration by parts formula}, which is justified exploiting an approximation argument and Proposition \ref{proposition/singular moments for defect measures d in (0,1)} below, both in the $(x,\xi)$ and the $(x',\xi')$ variables, we write
\begin{align}
\begin{aligned}
    \label{7.13}
    \hspace{-3mm}
    I\!E_i^{\canc}\!\!\!=\frac{4m}{(m+1)^2}\!\!\int_{t_i}^{t_{i+1}}\!\!\!\int_{Q^3\times\R}\!\!\!\!\!\!\!\!\!\!\!\!\sigma_m(u^1,u^2)&\left|u^1\right|^{-\nicefrac{1}{2}}\nabla\left(u^1\right)^{\left[\frac{m+1}{2}\right]}\left|u^2\right|^{-\nicefrac{1}{2}}\nabla\left(u^2\right)^{\left[\frac{m+1}{2}\right]}
    \\
    &\cdot\bar{\rho}_{t_i,r}^{1,\epsilon}(x,y,\eta)\bar{\rho}_{t_i,r}^{2,\epsilon}(x'\!,y,\eta)\phi_\beta(y)\dsp dx \dsp dx' dy\dsp d\eta\dsp dr,
\end{aligned}
\end{align}
where we have defined
\begin{equation}\label{7.14}
    \sigma_m(\xi,\xi'):=|\xi|^{\frac{2-m}{2}}|\xi'|^{\frac{2-m}{2}}\left(|\xi|^{\frac{m-1}{2}}-|\xi|^{\frac{m-1}{2}}\right)^2\quad\text{for }\xi,\xi'\in\R.
\end{equation}
We notice that definition \eqref{4.5bis}, observation \eqref{7.1} and estimate \eqref{formula/distance between points in terms of distance between characteristics} imply that,
\begin{equation}
    \label{7.16}
    \text{if }\, \bar{\rho}_{t_i,r}^{1,\epsilon}(x,y,\eta)\bar{\rho}_{t_i,r}^{2,\epsilon}(x'\!,y,\eta)\neq0,\,\,\text{then } \left|x-x'\right|+\left|u^1-u^2\right|\leq c_1 \epsilon,
\end{equation}
for a constant $c_1=c_1(T,A,z)$.
Moreover, we observe that
\begin{equation}
    \label{7.17}
    \text{if } |\xi-\xi'|\leq c_1\epsilon,\,\,\,\text{then } \left|\sigma_m(\xi,\xi')\right|\leq C\,\epsilon,
\end{equation}
for a constant $C=C(c_1,m)$.
Indeed,if $\max\{|\xi|,|\xi'|\}\leq 2c_1\epsilon$, then, recalling $m\in(0,1)\cup(1,2]$, a direct computation yields
\begin{equation}
\label{7.18}
    \sigma_m(\xi,\xi')=|\xi|^{\frac{m}{2}}|\xi'|^{\frac{2-m}{2}}+2|\xi|^{\frac{1}{2}}|\xi'|^{\frac{1}{2}}+|\xi|^{\frac{2-m}{2}}|\xi'|^{\frac{m}{2}}\leq C\,\epsilon.
\end{equation}
Conversely, assume without loss of generality that $|\xi|>2c_1\epsilon$ with $|\xi|\geq|\xi'|$ and $|\xi-\xi'|\leq c_1\epsilon$.
Thus, in particular, $\xi$ and $\xi'$ have the same sign and $|\xi'|\geq\frac{1}{2}|\xi|$.
Then we compute
\begin{equation}
    \label{7.19}
    \sigma_m(\xi,\xi')=|\xi|^{\frac{2-m}{2}}|\xi'|^{\frac{2-m}{2}}\left(\int_{\xi'}^{\xi}\frac{m-1}{2}\theta^{\left[\frac{m-3}{2}\right]}d\theta \right)^2\leq C|\xi|^{-1}\epsilon^2\leq C\epsilon.
\end{equation}

We now form a velocity decomposition of the integral.
For each $\delta\in(0,1)$, let $K^\delta:\R\to[0,1]$ denote a smooth cutoff function satisfying
\begin{align}\label{7.20}
    K^\delta(\xi)=\empheqlbrace
    \begin{aligned}
    &1\quad\text{if } |\xi|\leq \delta \text{ or }\frac{2}{\delta}\leq|\xi|,
    \\
    &0\quad\text{if }2\delta\leq|\xi|\leq\frac{1}{\delta}.
\end{aligned}
\end{align}
Returning to \eqref{7.13}, consider the decomposition
\begin{align}
\begin{aligned}
    \label{7.21}
    \hspace{-3mm}
    I\!E_i^{\canc}\!\!=&\frac{4m}{(m+1)^2}\!\!\int_{t_i}^{t_{i+1}}\!\!\!\int_{Q^3\times\R}\!\!\!\!\sigma_m^\delta(u^1,u^2)\left|u^1\right|^{-\nicefrac{1}{2}}\nabla\left(u^1\right)^{\left[\frac{m+1}{2}\right]}\left|u^2\right|^{-\nicefrac{1}{2}}\nabla\left(u^2\right)^{\left[\frac{m+1}{2}\right]}
    \\
    &\qquad\qquad\qquad\qquad\qquad\qquad\qquad
    \cdot\bar{\rho}_{t_i,r}^{1,\epsilon}(x,y,\eta)\bar{\rho}_{t_i,r}^{2,\epsilon}(x'\!,y,\eta)\phi_\beta(y)\dsp dx \dsp dx' dy\dsp d\eta\dsp dr
    \\
    &+
    \frac{4m}{(m+1)^2}\!\!\int_{t_i}^{t_{i+1}}\!\!\!\int_{Q^3\times\R}\!\!\!\!\!\tilde{\sigma}_m^\delta(u^1,u^2)\left|u^1\right|^{-\nicefrac{1}{2}}\nabla\left(u^1\right)^{\left[\frac{m+1}{2}\right]}\left|u^2\right|^{-\nicefrac{1}{2}}\nabla\left(u^2\right)^{\left[\frac{m+1}{2}\right]}
    \\
    &\qquad\qquad\qquad\qquad\qquad\qquad\qquad
    \cdot\bar{\rho}_{t_i,r}^{1,\epsilon}(x,y,\eta)\bar{\rho}_{t_i,r}^{2,\epsilon}(x'\!,y,\eta)\phi_\beta(y)\dsp dx \dsp dx' dy\dsp d\eta\dsp dr,
\end{aligned}
\end{align}
where, for each $\delta\in(0,1)$, the functions $\sigma_m^{\delta},\tilde{\sigma}_m^{\delta}:\R^2\to\R$ are defined by
\begin{equation}\label{7.22}
    \sigma_m^{\delta}(\xi,\xi')=\left(K^\delta(\xi)+K^\delta(\xi')-K^\delta(\xi)K^\delta(\xi')\right)\sigma_m(\xi,\xi'),
\end{equation}
and
\begin{equation}\label{7.22bis}
    \tilde{\sigma}_m^{\delta}(\xi,\xi')=\left(1-K^\delta(\xi)\right)\left(1-K^\delta(\xi')\right)\sigma_m(\xi,\xi').
\end{equation}
We start with the second term in \eqref{7.21}.
It follows from \eqref{7.14}, \eqref{7.20} and the local Lipschitz continuity of the map $\R\ni\xi\mapsto|\xi|^{\frac{m-1}{2}}$ on the set $\{\delta\leq|\xi|\leq\frac{2}{\delta}\}$ that, for $C=C(m,\delta)$,
\begin{equation}\label{7.23}
    \left|\tilde{\sigma}_m^{\delta}(\xi,\xi')\right|\leq C\,|\xi-\xi'|^2.
\end{equation}
Moreover, recalling observation \eqref{7.16} and the definition \eqref{0.16} and \eqref{4.5bis} of the kernels $\bar{\rho}_{t_i,r}^{j,\epsilon}$, we point out that
\begin{equation}\label{7.24}
    \int_{Q^2\times\R}\!\!\!\!\!\!\!\bar{\rho}_{t_i,r}^{1,\epsilon}(x,y,\eta)\bar{\rho}_{t_i,r}^{2,\epsilon}(x'\!,y,\eta)\,dx'dy\,d\eta=\int_{\{x'\in Q\mid \,|x-x'|\leq\dsp c_1\dsp\epsilon\}\times Q\times\R}\!\!\!\!\!\!\!\!\!\!\!\!\!\!\!\!\!\!\!\!\!\!\!\!\!\!\!\!\!\!\!\!\!\!\!\!\!\!\!\!\!\bar{\rho}_{t_i,r}^{1,\epsilon}(x,y,\eta)\bar{\rho}_{t_i,r}^{2,\epsilon}(x'\!,y,\eta)\,dx'dy\,d\eta \leq C\,\epsilon^{-1},
\end{equation}
for a constant $C=C(d,c_1)$ depending on the constant $c_1=c_1(T,A,z)$ from \eqref{7.16}.
Then, for the second term of \eqref{7.21}, we compute, for a constant $C=C(\delta,m,d,T,A,z)$,
\begin{align}
\begin{aligned}
    \label{7.25}
    \bigg|\frac{4m}{(m+1)^2}&\!\!\int_{t_i}^{t_{i+1}}\!\!\!\int_{Q^3\times\R}\!\!\!\!\!\!\!\!\!\!\!\!\tilde{\sigma}_m^\delta(u^1,u^2)\!\left|u^1\right|^{-\nicefrac{1}{2}}\nabla\!\!\left(u^1\right)^{\left[\frac{m+1}{2}\right]}\!\left|u^2\right|^{-\nicefrac{1}{2}}\nabla\!\!\left(u^2\right)^{\left[\frac{m+1}{2}\right]}\bar{\rho}_{t_i,r}^{1,\epsilon}\bar{\rho}_{t_i,r}^{2,\epsilon}\phi_\beta(y)\dsp dx \dsp dx' dy\dsp d\eta\dsp dr\bigg|
    \\
    &\leq
    C\epsilon^2
    \left(\int_{t_i}^{t_{i+1}}\!\!\!\int_{Q^3\times\R}\!\!\!\!\!\left|u^1\right|^{-1}\left|\nabla\left(u^1\right)^{\left[\frac{m+1}{2}\right]}\right|^2\bar{\rho}_{t_i,r}^{1,\epsilon}\bar{\rho}_{t_i,r}^{2,\epsilon}\phi_\beta(y)\dsp dx \dsp dx' dy\dsp d\eta\dsp dr\right)^{\frac{1}{2}}
    \\
    &\qquad\qquad\qquad\qquad
    \cdot
    \left(\int_{t_i}^{t_{i+1}}\!\!\!\int_{Q^3\times\R}\!\!\!\!\!\left|u^2\right|^{-1}\left|\nabla\left(u^2\right)^{\left[\frac{m+1}{2}\right]}\right|^2\bar{\rho}_{t_i,r}^{1,\epsilon}\bar{\rho}_{t_i,r}^{2,\epsilon}\phi_\beta(y)\dsp dx \dsp dx' dy\dsp d\eta\dsp dr\right)^{\frac{1}{2}}
    \\
    &\leq
    C\epsilon
    \left(\int_{t_i}^{t_{i+1}}\!\!\!\int_{Q\times\R}\!\!\!\!\!|\xi|^{-1}q_r^1(x,\xi)\,dx\,d\xi\,dr\right)^{\frac{1}{2}}
    \left(\int_{t_i}^{t_{i+1}}\!\!\!\int_{Q\times\R}\!\!\!\!\!|\xi'|^{-1}q_r^2(x'\!,\xi')\,dx'd\xi'dr\right)^{\frac{1}{2}}
    \\
    &\leq
    C\,\epsilon\sum_{j=1}^2\left(1+\|u_0^j\|_{L^2(Q)}^2\right).
\end{aligned}
\end{align}
In the first passage we used H\"older's inequality and \eqref{7.16} and \eqref{7.23}.
In the second passage we exploited \eqref{7.24} and the definition of parabolic defect measure.
The last passage follows from Proposition \ref{proposition/singular moments for defect measures d=1} below.

We now consider to the first term in \eqref{7.21}.
It follows immediately from \eqref{7.17}, \eqref{7.20} and \eqref{7.22} that
\begin{equation}
    \label{7.26}
    \text{if } |\xi-\xi'|\leq c_1\epsilon,\,\,\,\text{then } \left|\sigma_m^\delta(\xi,\xi')\right|\leq C\,\epsilon,
\end{equation}
for $C=C(m,c_1)$, and that, for $\epsilon<\delta\in(0,1)$, with $\epsilon$ small enough depending on $c_1$ and $\delta$,
\begin{equation}
    \label{7.27}
    \text{if }\, |\xi-\xi'|\leq c_1\epsilon\,\,\,\text{ and }\sigma_m^\delta(\xi,\xi')\neq0,\,\,\,\text{then }\, 0<|\xi|,|\xi'|<3\delta\,\text{ or }\,\frac{1}{2\delta}<|\xi|,|\xi'|.
\end{equation}
Then, for the first term of \eqref{7.21}, first using H\"older's inequality and observations \eqref{7.16},\eqref{7.26} and \eqref{7.27}, and then using observation \eqref{7.24} and the definition of parabolic defect measure, we compute
\begin{align}
\begin{aligned}
    \label{7.28}
    \bigg|\frac{4m}{(m+1)^2}&\!\!\int_{t_i}^{t_{i+1}}\!\!\!\int_{Q^3\times\R}\!\!\!\!\!\!\!\!\!\!\!\!\sigma_m^\delta(u^1,u^2)\!\left|u^1\right|^{-\nicefrac{1}{2}}\nabla\!\!\left(u^1\right)^{\left[\frac{m+1}{2}\right]}\!\left|u^2\right|^{-\nicefrac{1}{2}}\nabla\!\!\left(u^2\right)^{\left[\frac{m+1}{2}\right]}\bar{\rho}_{t_i,r}^{1,\epsilon}\bar{\rho}_{t_i,r}^{2,\epsilon}\phi_\beta(y)\dsp dx \dsp dx' dy\dsp d\eta\dsp dr\bigg|
    \\
    &\leq
    C\epsilon
    \left(\int_{t_i}^{t_{i+1}}\!\!\!\int_{U_{\delta}^1\times U_{\delta}^2\times Q\times\R}\!\!\!\!\!\!\!\!\!\!\!\!\!\!\!\left|u^1\right|^{-1}\left|\nabla\left(u^1\right)^{\left[\frac{m+1}{2}\right]}\right|^2\bar{\rho}_{t_i,r}^{1,\epsilon}\bar{\rho}_{t_i,r}^{2,\epsilon}\phi_\beta(y)\dsp dx \dsp dx' dy\dsp d\eta\dsp dr\right)^{\frac{1}{2}}
    \\
    &\qquad\qquad\qquad
    \cdot
    \left(\int_{t_i}^{t_{i+1}}\!\!\!\int_{U_{\delta}^1\times U_{\delta}^2\times Q\times\R}\!\!\!\!\!\!\!\!\!\!\!\!\!\!\left|u^2\right|^{-1}\left|\nabla\left(u^2\right)^{\left[\frac{m+1}{2}\right]}\right|^2\bar{\rho}_{t_i,r}^{1,\epsilon}\bar{\rho}_{t_i,r}^{2,\epsilon}\phi_\beta(y)\dsp dx \dsp dx' dy\dsp d\eta\dsp dr\right)^{\frac{1}{2}}
    \\
    &\leq
    C\sum_{j=1}^2\left(\int_{t_i}^{t_{i+1}}\!\!\!\int_{U_{\delta}^j\times\R}\!\!\!\!\!|\xi|^{-1}q_r^j(x,\xi)\,dx\,d\xi\,dr\right),
\end{aligned}
\end{align}
for a constant $C=C(m,d,T,A,z)$, where we have defined, for $r\in[0,\infty)$,
\begin{equation}
    \label{7.29}
    U_{\delta}^j=U_{\delta}^j(r):=\left\{x\in Q \,\,\bigg|\,\, 0<|u^j(x,r)|<3\delta\text{ or }\frac{1}{2\delta}<|u^j(x,r)|\,\right\}.
\end{equation}
In conclusion, summing over the partition $\mathcal{P}$, the splitting \eqref{7.21} and estimates \eqref{7.25} and \eqref{7.28} imply that, for each $\delta\in (0,1)$, for $C=C(m,d,T,A,z)$,
\begin{equation}
    \label{7.30}
    \limsup_{\epsilon\to0}\sum_{i=0}^{N-1}\left|IE_i^{\canc}\right|
    \leq
    C\sum_{j=1}^2\int_{0}^{T}\!\!\!\int_{U_{\delta}^j\times\R}\!\!\!\!\!|\xi|^{-1}q_r^j(x,\xi)\,dx\,d\xi\,dr.
\end{equation}
The dominated convergence theorem, Proposition \ref{proposition/singular moments for defect measures d=1} below and \eqref{7.29} imply that the right-hand side of \eqref{7.30} vanishes in the $\delta\to0$ limit.
Therefore, we deduce
\begin{equation}
    \label{7.31}
    \limsup_{\epsilon\to0}\sum_{i=0}^{N-1}\left|IE_i^{\canc}\right|=0.
\end{equation}
This concludes the analysis of the internal error terms.

\medskip
\noindent
\textbf{Step 8: The boundary errors.}
In this step we analyze the boundary error terms in \eqref{6.1}.
We begin with the errors produced in Step 2 and Step 3.
These are handled using the estimates on the behaviour of the characteristics near the boundary from Section \ref{section/rough path estimates}, properties \eqref{0.10} of the gradient $\nabla \phi_{\beta}$, observation \eqref{7.1}, and the following crucial fact.
Namely, with the notation \eqref{0.5}, it follows from \eqref{0.5tris}, \eqref{0.10} and \eqref{0.16} that, for any $\epsilon<\beta\in(0,1)$, for any $(x,y,\xi,\eta)\in Q^2\times\R^2$ and any $t_i\leq r\in[0,T]$, for $j=1,2$,
\begin{equation}\label{9.1}
    \text{if }\,\nabla_y\phi_\beta(y)\rho_{t_i,r}^{j,\epsilon}(x,y,\xi,\eta)\neq0,\,\,\text{then }\,y,Y_{r,r-t_i}^{x,\xi},x\in Q^\beta.
\end{equation}
For the boundary error term $BE_i^{\sgn 1,1}$ defined in \eqref{2.9}, for a constant $C=C(T,A,z,Q)$, we compute
\begin{align}\label{9.2}
    \begin{aligned}
        \left|BE^{\sgn1,1}_i\right|
        &
        \leq
        \int_{t_i}^{t_{i+1}}\!\!\!\int_{Q^2\times\R^2}\!\!\!m|\xi|^{m-1}\left|\chi_r^1\right|\left|\rho_{t_i,r}^{1,\epsilon}\right|\left|\tilde{\sgn}_{t_i,r}^{\epsilon}\right|\left|\nabla_y\phi_\beta\right|\left|\Delta_xY_{r,r-t_i}^{x,\xi}\right|\,dx\, dy \, d\xi\,d\eta\, dr
        \\
        &\leq
        C
        |t_{i+1}-t_i|^\alpha\beta^{-1}\int_{t_i}^{t_{i+1}}\!\!\!\int_{Q^\beta\times\R}\!\!\!m|\xi|^{m-1}\left|\chi_r^1\right|\,dx \,d\xi\, dr
        \\
        &\leq
        C
        |t_{i+1}-t_i|^\alpha\beta^{-1}\int_{t_i}^{t_{i+1}}\!\!\!\int_{Q^\beta}\!\left|\left(u^1\right)^{[m]}\right|\,dx \, dr
        \\
        &\leq
        C
        |t_{i+1}-t_i|^\alpha\int_{t_i}^{t_{i+1}}\!\!\!\int_{Q^\beta}\!\left|\nabla\left(u^1\right)^{[m]}\right|\,dx \, dr.
    \end{aligned}
\end{align}
In the second inequality we used observation \eqref{9.1}, estimates \eqref{0.10} and \eqref{formula/bound D^2Y and D^2Pi}, and the boundedness of $\Tilde{\sgn}^\epsilon$, and we integrated the mollifier.
In the third passage we simply used properties of the kinetic function.
In the last passage we exploited the Sobolev regularity $(u^1)^{[m]}\in W_0^{1,p_m}(Q)$ from Lemma \ref{lemma/sobolev regularity of u^m} below, which in particular implies that $(u^1)^{[m]}$ vanishes on the boundary, and we used the mean value theorem applied to points $x\in Q^\beta$, which therefore satisfy $\dist(x,\partial Q)\leq C\beta$. 

An identical analysis holds for $BE_i^{\sgn 2,1}$, simply replacing $\chi^1$ and $u^1$ with $\chi^2$ and $u^2$, and for the analogous error terms $BE^{\mix j,1}_i$ defined in \eqref{3.6}, for $j=1,2$, simply replacing $\tilde{\sgn}^{\epsilon}_{t_i,r}$ with $\tilde{\chi}_{t_i,r}^{\cdot,\epsilon}$, which is bounded as well.
Precisely, we have
\begin{align}\label{9.3}
    \begin{aligned}
        \left|BE^{\sgn j,1}_i\right|+\left|BE^{\mix j,1}_i\right|
        \leq
        C
        |t_{i+1}-t_i|^\alpha\int_{t_i}^{t_{i+1}}\!\!\!\int_{Q^\beta}\!\left|\nabla\left(u^j\right)^{[m]}\right|\,dx \, dr,
    \end{aligned}
\end{align}
for $j=1,2$, for a constant $C=C(T,A,z,Q)$.

Next we analyze the boundary error terms $BE_i^{\sgn 1,2}$ and $BE_i^{\mix 1,2}$, defined in \eqref{2.11} and \eqref{3.8} respectively, and the specular terms $BE_i^{\sgn 2,2}$ and $BE_i^{\mix 2,2}$.
Observations \eqref{7.1} and \eqref{9.1} combined with estimate \eqref{formula/bound D^2Y and D^2Pi} imply that, for $C=C(T,A,z)$,
\begin{equation}\label{9.7}
    \text{if }\,\nabla_{\!y}\phi_\beta\,\rho_{t_i,r}^{1,\epsilon}\,\rho_{t_i,r}^{2,\epsilon}\neq0,\,\,\text{then }\,|D_{\!x}Y_{r,r-t_i}^{x,\xi}\!-\!D_{\!x'}Y_{r,r-t_i}^{x',\xi'}|\!+\!|\nabla_{\!x}\Pi_{r,r-t_i}^{x,\xi}\!-\!\nabla_{\!x'}\Pi_{r,r-t_i}^{x',\xi'}|\!\leq\! C|t_{i+1}-t_i|^\alpha\epsilon.
\end{equation}
Then, for $BE_i^{\sgn 1,2}$, for a constant $C=C(T,A,z,Q)$, we compute
\begin{align}\label{9.8}
\begin{aligned}
\left|BE_i^{\sgn1,2}\right|
&
\leq
C\,|t_{i+1}-t_i|^\alpha\epsilon\beta^{-1}
\int_{t_i}^{t_{i+1}}\!\!\!\int_{\left(Q^\beta\right)^3\times\R^3}\!\!\!\!\!\!\!\!\!\!m|\xi|^{m-1}\left|\chi_r^1\right|\,\left|\rho_{t_i,r}^{1,\epsilon}\right|\left(\left|\nabla_y\rho_{t_i,r}^{2,\epsilon}\right|+\left|\partial_{\eta}\rho_{t_i,r}^{2,\epsilon}\right|\right)
\\
&
\leq
C\,|t_{i+1}-t_i|^\alpha\beta^{-1}
\int_{t_i}^{t_{i+1}}\!\!\!\int_{Q^\beta}\left|\left(u^1\right)^{[m]}\right|\,dx\,dr
\\
&
\leq
C\,|t_{i+1}-t_i|^\alpha
\int_{t_i}^{t_{i+1}}\!\!\!\int_{Q^\beta}\left|\nabla\left(u^1\right)^{[m]}\right|\,dx\,dr.
\end{aligned}
\end{align}
In the first passage we used the boundedness of $D_{\!x}Y_{r,r-t_i}^{x,\xi}$ from Proposition \ref{proposition/stability results for RDEs} and of the $\sgn$ function, estimate \eqref{0.10}, and observations \eqref{9.1} and \eqref{9.7}.
In the second passage we integrated the mollifiers over the variables $(y,\eta)$ and $(x'\!,\xi')$, recalling estimate \eqref{7.3}, and then used properties of the kinetic function.
In the last passage we used the Sobolev regularity $(u^1)^{[m]}\in W_0^{1,p_m}(Q)$ from Lemma \ref{lemma/sobolev regularity of u^m} and applied the mean value theorem to points $x\in Q^\beta$. 

Virtually identical analyses and estimates hold for $BE_i^{\sgn2,2}$ and for $BE_i^{\mix j,2}$, for $j=1,2$.
Precisely, we have 
\begin{align}\label{9.9}
\begin{aligned}
\left|BE_i^{\sgn j,2}\right|+\left|BE_i^{\mix j,2}\right|
&
\leq
C\,|t_{i+1}-t_i|^\alpha
\int_{t_i}^{t_{i+1}}\!\!\!\int_{Q^\beta}\left|\nabla\left(u^j\right)^{[m]}\right|\,dx\,dr,
\end{aligned}
\end{align}
for $j=1,2$ and $C=C(Q,T,A,z)$. 

In conclusion, we sum estimates \eqref{9.3} and \eqref{9.9} over the partition $\mathcal{P}$ to conclude that, for $C=C(Q,T,A,z)$,
\begin{align}\label{9.10bis}
    \begin{aligned}
        \sum_{j=1}^2\sum_{k=1}^2\sum_{i=0}^{N-1}\left|BE^{\sgn j,k}_i\right|+\left|BE^{\mix j,k}_i\right|
        &\leq
        C\,|\mathcal{P}|^\alpha\sum_{j=1}^2\int_{0}^{T}\!\!\!\int_{Q^\beta}\!\left|\nabla\left(u^j\right)^{[m]}\right|\,dx \, dr.
    \end{aligned}
\end{align}

Finally, we analyze the boundary errors $BE_i^{\sgn 1,3}$ and $BE_i^{\mix 1,3}$, defined in \eqref{2.14} and \eqref{3.12} respectively, and the specular terms $BE_i^{\sgn 2,3}$ and $BE_i^{\mix 2,3}$, simply obtained by swapping the roles of $u^1$ and $u^2$.
For $BE_i^{\sgn 1,3}$, we use the boundedness of $\tilde{\sgn}_{t_i,r}^{\epsilon}$, we exploit estimate \eqref{0.10}, and estimate \eqref{formula/DxiY near the boundary} combined with observation \eqref{7.1}, and then we integrate the mollifier to obtain, for $C=C(T,A,z)$,
\begin{align}\label{9.5}
    \begin{aligned}
        \left|BE^{\sgn 1,3}_i\right|
        &\leq
        \int_{t_i}^{t_{i+1}}\!\!\!\int_{Q^2\times\R^2}(p_r^1+q_r^1)\,\left|\rho_{t_i,r}^{1,\epsilon}\right|\,\left|\tilde{\sgn}_{t_i,r}^{\epsilon}\right|\left|\nabla_y\phi_\beta(y)\partial_{\xi}Y_{r,r-t_i}^{x,\xi}\right|dx\,dy\,d\xi\,d\eta\,dr
        \\
        &\leq
        C
        |t_{i+1}-t_i|^\alpha\int_{t_i}^{t_{i+1}}\!\!\!\int_{Q^\beta\times\R}\!(p_r^j+q_r^j)\,dx \, d\xi\,dr.
    \end{aligned}
\end{align}
Virtually identical analyses hold for $BE_i^{\sgn 2,3}$ and for $BE_i^{\mix j,3}$, for $j=1,2$. 
Summing over the partition, we conclude that, for $C=C(T,A,z)$, 
\begin{align}\label{9.6}
    \begin{aligned}
        \sum_{j=1}^2\sum_{i=0}^{N-1}\left|BE^{\sgn j,3}_i\right|+\left|BE^{\mix j,3}_i\right|
        &\leq
        C\,|\mathcal{P}|^\alpha\sum_{j=1}^2\int_{0}^{T}\!\!\!\int_{Q^\beta\times\R}\!\!(p_r^j+q_r^j)\,dx \, d\xi\,dr.
    \end{aligned}
\end{align}

In conclusion, combining estimates \eqref{9.10bis} with the integrability of $\nabla\left(u^j\right)^{[m]}$ from Lemma \ref{lemma/sobolev regularity of u^m}, and estimate \eqref{9.6} with the finiteness of the parabolic and entropy defect measures over $Q\times\R\times[0,T]$, and exploiting the dominated convergence theorem, for the boundary error terms produced in Step 2 and Step 3, we conclude that
\begin{align}\label{9.10}
\begin{aligned}
\limsup_{\beta\to0}\limsup_{\epsilon\to0}\sum_{j=1}^2\sum_{k=1}^3\sum_{i=0}^{N-1}\left|BE_i^{\sgn j,k}\right|+\left|BE_i^{\mix j,k}\right|=0.
\end{aligned}
\end{align}

Now we analyze the boundary cancellation error terms produced in Step 5.
First we observe that \eqref{0.5tris} combined with \eqref{0.10}, for $k=1$ or $k=2$, implies that
\begin{equation}\label{9.11}
    \left|D^k_{\!y}\phi_\beta\right|\leq C\beta^{-k},\,\,\text{ and that, if }\,D^k_{\!y}\phi_\beta\msp\big(Y_{r,r-t_i}^{x,\xi}\big)\neq0,\,\,\text{then }x,Y_{r,r-t_i}^{x,\xi}\in Q^\beta. 
\end{equation}
Then, for the error term $BE_i^{\canc A,1}$ defined in \eqref{5.12}, we compute for $C=C(T,A,z,Q)$
\begin{align}\label{9.12}
\begin{aligned}
    \left|BE_i^{\canc A,1}\right|
    &\leq
    C|t_{i+1}-t_i|^{\alpha}\beta^{-1}\int_{t_i}^{t_{i+1}}\int_{Q^\beta}\left|\left(u^1\right)^{[m]}\right|\,dx\,dr
    \\
    &\leq
    C|t_{i+1}-t_i|^{\alpha}\int_{t_i}^{t_{i+1}}\int_{Q^\beta}\left|\nabla\left(u^1\right)^{[m]}\right|\,dx\,dr
\end{aligned}
\end{align}
In the first passage we used use observation \eqref{9.11} and estimate \eqref{formula/bound D^2Y and D^2Pi}, and then we exploited the boundedness of $|\chi^2|$ and integrated $|\chi^1|$ (or viceversa).
In the second passage we used $\left(u^1\right)^{[m]}\in W_0^{1,p_m}(Q)$ and applied the mean value theorem to points $x\in Q^\beta$.

The error term $BE_i^{\canc A,2}$ defined in \eqref{5.15} is handled almost identically.
Indeed, we use the boundedness of $D_xY_{r,r-t_i}^{x,\xi}$, and observation \eqref{9.11} for $k=2$ combined with estimate \eqref{formula/DxY - id near the boundary}, then we integrate the kinetic functions, and finally we exploit again $\left(u^j\right)^{[m]}\in W_0^{1,p_m}(Q)$ and the mean value theorem to compute, for $C=C(T,A,z,Q)$,
\begin{align}\label{9.14}
\begin{aligned}
    \left|BE_i^{\canc A,2}\right|
    &\leq
    C\int_{t_i}^{t_{i+1}}\int_{Q\times\R}m|\xi|^{m-1}\left(\left|\chi_r^1\right|+\left|\chi_r^2\right|\right)\left|D^2_y\phi_\beta\msp\big(Y_{r,r-t_i}^{x,\xi}\big)\right|\left|D_xY_{r,r-t_i}^{x,\xi}-\id_d\right|\,dx\,d\xi\,dr
    \\
    &\leq 
    C\,|t_{i+1}-t_i|^\alpha\beta^{-1}\int_{t_i}^{t_{i+1}}\int_{Q^\beta}\left|\left(u^1\right)^{[m]}\right|+\left|\left(u^2\right)^{[m]}\right|\,dx\,dr
    \\
    &\leq 
    C\,|t_{i+1}-t_i|^\alpha\int_{t_i}^{t_{i+1}}\int_{Q^\beta}\left|\nabla\left(u^1\right)^{[m]}\right|+\left|\nabla\left(u^2\right)^{[m]}\right|\,dx\,dr.
\end{aligned}
\end{align}

In conclusion, we sum estimates \eqref{9.12} and \eqref{9.14} over the partition $\mathcal{P}$ to conclude that, for $C=C(Q,T,A,z)$,
\begin{align}\label{9.15}
\begin{aligned}
    \sum_{i=0}^{N-1}\left|BE_i^{\canc A,1}\right|+\left|BE_i^{\canc A,2}\right|\leq C\,|\mathcal{P}|^\alpha\int_{0}^{T}\int_{Q^\beta}\left|\nabla\left(u^1\right)^{[m]}\right|+\left|\nabla\left(u^2\right)^{[m]}\right|\,dx\,dr.
\end{aligned}
\end{align}

We now consider the error term $BE_i^{\canc A,3}$ defined in \eqref{5.17}.
First, for the function $d_{\partial Q}$ defined in \eqref{0.3} and $\psi_{\beta}$ defined in \eqref{0.7}, recalling \eqref{0.5tris}, we observe that
\begin{equation}\label{9.16}
    \left|\Dot{\psi}_\beta(s)\right|\leq \beta^{-1}\,\,\forall s\in\R\,\,\,\text{ and, if }\,\Dot{\psi}_\beta\big(d_{\partial Q}\big(Y_{r,r-t_i}^{x,\xi}\big)\big)\neq0,\,\,\text{then }x,Y_{r,r-t_i}^{x,\xi}\in Q^\beta. 
\end{equation}
We also mention that we have a uniform bound $|\nabla\cdot\tilde{n}|\leq C$ for some constant $C=C(Q)$, for the extended unit outward normal to $Q$ defined in \eqref{0.4}.
Then we compute
\begin{align}\label{9.17}
\begin{aligned}
    \left|BE_i^{\canc A,3}\right|
    &\leq
    \int_{t_i}^{t_{i+1}}\int_{Q\times\R}m|\xi|^{m-1}\left|\chi_r^1-\chi_r^2\right|^2\left|\Dot{\psi}_\beta\big(d_{\partial Q}\big(Y_{r,r-t_i}^{x,\xi}\big)\big)\right||\nabla\cdot\tilde{n}|\,dx\,d\xi\,dr
    \\
    &\leq
    C\,\beta^{-1}\int_{t_i}^{t_{i+1}}\int_{Q^{\beta}\times\R}m|\xi|^{m-1}\left(\left|\chi_r^1\right|+\left|\chi_r^2\right|\right)dx\,d\xi\,dr
    \\
    &=
    C\,\beta^{-1}\int_{t_i}^{t_{i+1}}\int_{Q^{\beta}}\left|\left(u^1\right)^{[m]}\right|+\left|\left(u^2\right)^{[m]}\right|\,dx\,dr
    \\
    &\leq
    C\int_{t_i}^{t_{i+1}}\int_{Q^{\beta}}\left|\nabla\!\left(u^1\right)^{[m]}\right|+\left|\nabla\!\left(u^2\right)^{[m]}\right|\,dx\,dr,
\end{aligned}
\end{align}
for a constant $C=C(Q,T,A,z)$.
In the second inequality we used observation \eqref{9.16}, the boundedness of $\nabla\cdot\tilde{n}$ and properties of the kinetic function.
In the last inequality we used $\big(u^j\big)^{[m]}\in W_0^{1,p_m}(Q)$ and the mean value theorem.
In conclusion, we sum over the partition to estimate, for $C=C(Q,T,A,z)$,
\begin{align}\label{9.18}
\begin{aligned}
    \sum_{i=0}^{N-1}\left|BE_i^{\canc A,3}\right|
    &\leq
    C\sum_{j=1}^2\int_{0}^{T}\int_{Q^{\beta}}\left|\nabla\!\left(u^1\right)^{[m]}\right|+\left|\nabla\!\left(u^2\right)^{[m]}\right|\,dx\,dr.
\end{aligned}
\end{align}

Finally we consider the error term $BE_i^{\canc A,4}$ defined in \eqref{5.18}.
First, we make the following observation.
Namely, for the standard $1$-dimensional convolution kernel $\rho_1^{\frac{1}{2}\beta^{\gamma_m}}$ introduced in \eqref{0.6}-\eqref{0.7}, rescaled at order $\frac{1}{2}\beta^{\gamma_m}$ with $\gamma_m=(m+2)\vee3$, recalling \eqref{0.5tris}, we notice that
\begin{equation}\label{9.19}
    \left|\rho_1^{\frac{1}{2}\beta^{\gamma_m}}\right|\leq 2\,\beta^{-\gamma_m}\,\,\,\text{ and, if }\,\rho_1^{\frac{1}{2}\beta^{\gamma_m}}\!\!\Big(d_{\partial Q}\big(Y_{r,r-t_i}^{x,\xi}\big)\!-\!\beta^{\gamma_m}\Big)\neq0,\,\,\text{then }x,Y_{r,r-t_i}^{x,\xi}\in Q^{\beta^{\gamma_m}}. 
\end{equation}
Then we compute
\begin{align}\label{9.20}
\begin{aligned}
    \left|BE_i^{\canc A,4}\right|
    &=
    \int_{t_i}^{t_{i+1}}\int_{Q\times\R}m|\xi|^{m-1}\left|\chi_r^1-\chi_r^2\right|^2\beta^{-1}\rho_1^{\frac{1}{2}\beta^{\gamma_m}}\!\!  \Big(d_{\partial Q}\big(Y_{r,r-t_i}^{x,\xi}\big)\!-\!\beta^{\gamma_m}\!\Big)\,dx\,d\xi\,dr
    \\
    &\leq 
    C \beta^{-1}\beta^{-\gamma_m}\int_{t_i}^{t_{i+1}}\int_{Q^{\beta^{\gamma_m}}\times\R}\!\!\!\!\!\!m|\xi|^{m-1}\left(\left|\chi_r^1\right|+\left|\chi_r^2\right|\right)dx\,d\xi\,dr
    \\
    &=
    C\beta^{-(1+\gamma_m)}\int_{t_i}^{t_{i+1}}\int_{Q^{\beta^{\gamma_m}}}\left|\left(u^1\right)^{[m]}\right|+\left|\left(u^2\right)^{[m]}\right|\,dx\,dr
    \\
    &\leq
    C\beta^{-1}\int_{t_i}^{t_{i+1}}\int_{Q^{\beta^{\gamma_m}}}\left|\nabla\!\left(u^1\right)^{[m]}\right|+\left|\nabla\!\left(u^2\right)^{[m]}\right|\,dx\,dr,
\end{aligned}
\end{align}
for a constant $C=C(T,A,z)$.
In the first inequality we used observation \eqref{9.19} and properties of the kinetic function.
In the last inequality we used the Sobolev regularity $\big(u^j\big)^{[m]}\in W_0^{1,p_m}(Q)$ and the mean value theorem applied to points $x\in Q^{\beta^{\gamma_m}}$, which satisfy $\dist(x,\partial Q)\leq C\beta^{\gamma_m}$.

Next, we sum estimate \eqref{9.20} over the partition, then we apply H\"older's inequality combined with Lemma \ref{lemma/sobolev regularity of u^m}, which prescribes the higher integrability $\nabla\left(u^j\right)^{[m]}\in L^{p_m}([0,T];L^{p_m}(Q))$, and observation \eqref{0.5bis}, and we recall that $\gamma_m=(m+2)\vee3$ and $p_m=\frac{m+1}{m}\wedge2$, to estimate
\begin{align}\label{9.21}
\begin{aligned}
    \sum_{i=0}^{N-1}\left|BE_i^{\canc A,4}\right|
    &\leq
    C\beta^{-1}\sum_{j=1}^2\int_{0}^{T}\int_{Q^{\beta^{\gamma_m}}}\left|\nabla\!\left(u^j\right)^{[m]}\right|\,dx\,dr
    \\
    &\leq
    C\beta^{-1}\left(\beta^{\gamma_m}\right)^{\frac{p_m-1}{p_m}}\sum_{j=1}^2\left\|\nabla\!\left(u^j\right)^{[m]}\right\|_{L^{p_m}\left([0,T];L^{p_m}\left(Q^{\beta^{\gamma_m}}\right)\right)}
    \\
    &=
    C\beta^{\frac{1}{2\vee(m+1)}}\sum_{j=1}^2\left\|\nabla\!\left(u^j\right)^{[m]}\right\|_{L^{p_m}\left([0,T];L^{p_m}\left(Q^{\beta^{\gamma_m}}\right)\right)},
\end{aligned}
\end{align}
for a constant $C=C(Q,T,A,z)$.

In conclusion, combining estimates \eqref{9.15}, \eqref{9.18} and \eqref{9.21} with Lemma \ref{lemma/sobolev regularity of u^m}, it follows from the dominated convergence theorem that
\begin{equation}\label{9.22}
    \limsup_{\beta\to0}\limsup_{\epsilon\to0}\sum_{k=1}^4\sum_{i=0}^{N-1}\left|BE_i^{\canc A,k}\right|=\limsup_{\beta\to0}\sum_{k=1}^4\sum_{i=0}^{N-1}\left|BE_i^{\canc A,k}\right|=0.
\end{equation}

We conclude this step by analyzing the last boundary error terms, namely the boundary cancellation errors $BE_i^{\canc B j}$ defined in \eqref{5.4}, for $j=1,2$.
The analysis is broken down in three cases: $m=1$, $m\in(1,\infty)$ and $m\in(0,1)$.

\emph{Case $m=1$.}
This case is trivial.
Indeed, if $m=1$, it follows automatically from definition \eqref{5.4} that $BE_i^{\canc Bj}=0$.

\emph{Case $m\in(1,\infty)$.}
We form a velocity decomposition of the integral.
For each $M>1$, let $K_M:\R\to[0,1]$ be a smooth function satisfying
\begin{align}\label{9.23}
    K_M(\xi)=\empheqlbrace
    \begin{aligned}
    &1\quad\text{if } |\xi|\leq M,
    \\
    &0\quad\text{if } |\xi|\leq M+1.
\end{aligned}
\end{align}
Then, for each $M>1$, we split
\begin{align}
\begin{aligned}
    \label{9.24}
    \hspace{-3mm}
    B\!E_i^{\canc B1}\!\!&=2m\!\int_{t_i}^{t_{i+1}}\!\!\!\int_{Q^3\times\R^3}\!\!\!\!\!\!\!\!\!\!\!\!K_M(\xi)\left(|\xi|^{m-1}\!\!-\!|\xi'|^{m-1}\right)\chi_r^1\chi_r^2\,\rho_{t_i,r}^{1,\epsilon}\nabla_{x'}\rho_{t_i,r}^{2,\epsilon}\nabla_y\phi_\beta(y)D_xY_{r,r-t_i}^{x,\xi}
    \\
    &\,\,\,\,\,+\!
    2m\!\int_{t_i}^{t_{i+1}}\!\!\!\int_{Q^3\times\R^3}\!\!\!\!\!\!\!\!\!\!\!\!\!\!(1-K_M(\xi))\left(|\xi|^{m-1}\!\!-\!|\xi'|^{m-1}\right)\chi_r^1\chi_r^2\,\rho_{t_i,r}^{1,\epsilon}\nabla_{x'}\rho_{t_i,r}^{2,\epsilon}\nabla_y\phi_\beta(y)D_xY_{r,r-t_i}^{x,\xi}.
\end{aligned}
\end{align}
For the first term on the right-hand side of \eqref{9.24} we write
\begin{align}
\begin{aligned}
    \label{9.25}
    \bigg|2m\!\!\int_{t_i}^{t_{i+1}}&\!\!\!\int_{Q^3\times\R^3}\!\!\!\!\!\!\!\!\!\!\!\!K_M(\xi)\left(|\xi|^{m-1}\!\!-\!|\xi'|^{m-1}\right)\chi_r^1\chi_r^2\,\rho_{t_i,r}^{1,\epsilon}\nabla_{x'}\rho_{t_i,r}^{2,\epsilon}\nabla_y\phi_\beta(y)D_xY_{r,r-t_i}^{x,\xi}\dsp dx \dsp dx' dy\dsp d\xi\dsp d\xi' d\eta\dsp dr \bigg|
    \\
    &\leq
    C\beta^{-1}\int_{t_i}^{t_{i+1}}\!\!\!\int_{\left(Q^\beta\right)^3\times\R^3\cap\{|\xi-\xi'|\leq \,c\epsilon\}}\!\!\!\!\!\!\!\!\!\!\!\!|\xi-\xi'|^{(m-1)\wedge 1}\,\,\rho_{t_i,r}^{1,\epsilon}\left|\nabla_{x'}\rho_{t_i,r}^{2,\epsilon}\right|\dsp dx \dsp dx' dy\dsp d\xi\dsp d\xi' d\eta\dsp dr
    \\
    &\leq
    C\beta^{-1}\epsilon^{-1}\int_{t_i}^{t_{i+1}}\!\!\!\int_{Q^{\beta}}\int_{-c\epsilon}^{c\epsilon}\!\!\!\!|\theta|^{(m-1)\wedge 1}d\theta\,dy\,dr
    \leq
    C\,|t_{i+1}-t_i|\,\epsilon^{(m-1)\wedge1},
\end{aligned}
\end{align}
for a constant $C=C(M,m,Q,T,A,z)$.
In the first passage we used the boundedness of $K_M$, $\chi^j$ and $D_xY_{r,r-t_i}^{x,\xi}$, observation \eqref{0.10} and \eqref{9.1}, observation \eqref{7.1} combined with estimate \eqref{formula/distance between points in terms of distance between characteristics}, and the local Lipschitz continuity, if $m\in(2,\infty)$, or the H\"older continuity, if $m\in(1,2)$, of the map $\R\ni\xi\mapsto|\xi|^{m-1}$.
In the second passage we exploited formula \eqref{2.2} for the derivatives of the convolution kernels, the boundedness of the derivatives of the characteristics from Proposition \ref{proposition/stability results for RDEs}, and then we integrated the convolution kernels, recalling observation \eqref{7.3} and changing variables by setting $\theta=\xi-\xi'$.
In the last passage we simply used formula \eqref{0.5bis}.

For the second term on the right-hand side of \eqref{9.24} we use the following elementary inequality
\begin{equation}
    \label{9.26}
    \left||\xi|^{m-1}\!-\!|\xi'|^{m-1}\right|
    =
    \left|\int_{\xi'}^{\xi}(m-1)\theta^{\left[m-2\right]}d\theta \right|
    \leq
    (m-1)\left(|\xi|^{m-2}+|\xi'|^{m-2}\right)|\xi-\xi'|.
\end{equation}
Then we estimate
\begin{align}
\begin{aligned}
    \label{9.27}
    \bigg|2&m\!\!\int_{t_i}^{t_{i+1}}\!\!\!\int_{Q^3\times\R^3}\!\!\!\!\!\!\!\!\!\!\!\!\!(1\!-\!K_M(\xi))\!\left(|\xi|^{m-1}\!\!-\!|\xi'|^{m-1}\right)\!\chi_r^1\chi_r^2\,\rho_{t_i,r}^{1,\epsilon}\nabla_{x'}\rho_{t_i,r}^{2,\epsilon}\nabla_y\phi_\beta(y)D_xY_{r,r-t_i}^{x,\xi} dx \dsp dx' dy\dsp d\xi\dsp d\xi' d\eta\dsp dr \bigg|
    \\
    &\leq
    C\int_{t_i}^{t_{i+1}}\!\!\!\int_{\left(Q^\beta\right)^3\times\R^3\cap\{|\xi-\xi'|\leq \,c\epsilon\}}\!\!\!\!\!\!\!\!\!\!\!\!\!\!\!\!\!\!\!\!\!\!\!\!\!\!\!\!\!\!\!\!\!\!\!\!\!\!\!\!\!\!\!\!\!\!(1-K_M(\xi))\left(|\xi|^{m-2}\!\!+\!|\xi'|^{m-2}\right)\epsilon\left|\chi_r^1\right|\left|\chi_r^2\right|\,\rho_{t_i,r}^{1,\epsilon}\left|\nabla_{x'}\rho_{t_i,r}^{2,\epsilon}\right|\dsp dx \dsp dx' dy\dsp d\xi\dsp d\xi' d\eta\dsp dr
    \\
    &\leq
    C\left(\int_{t_i}^{t_{i+1}}\!\!\!\int_{Q^\beta\cap\{|u^1|\geq M\}}\!\!\!\!\!\!\!\!\!\!\!\!\!\!\!\!\!\!\!\!\!\!\!\!\!\!\left|u^1\right|^{m-1}dx\,dr+\int_{t_i}^{t_{i+1}}\!\!\!\int_{Q^\beta\cap\{|u^2|\geq M-c\dsp\epsilon\}}\!\!\!\!\!\!\!\!\!\!\!\!\!\!\!\!\!\!\!\!\!\!\!\!\!\!\left|u^2\right|^{m-1}dx'dr\right),
\end{aligned}
\end{align}
for constants $C=C(\beta,m,T,A,z)$ and $c=c(T,A,z)$, independent of $M\geq 1$.
In the first passage we used \eqref{9.26}, \eqref{7.1} combined with \eqref{formula/distance between points in terms of distance between characteristics}, and \eqref{0.10}.
In the second passage we used observation \eqref{7.3} and integrated the convolution kernels in the $(x',\xi')$ and $(y,\eta)$ variables, when hitting $|\xi|^{m-1}$, and in the $(x,\xi)$ and $(y,\eta)$ variables, when hitting $|\xi'|^{m-1}$, and then we used properties of the kinetic function.

Finally, combining \eqref{9.24} with \eqref{9.25} and \eqref{9.27}, and summing over the partition $\mathcal{P}$, we obtain
\begin{align}
\begin{aligned}
    \label{9.28}
    \sum_{i=0}^{N-1}\!\left|B\!E_i^{\canc B1}\right|
    \leq
    C_1\,\epsilon^{(m-1)\wedge1}
    \!+\!
    C_2\left(\!\int_{0}^{{T}}\!\!\int_{Q\cap\{|u^1|\geq M\}}\!\!\!\!\!\!\!\!\!\!\!\!\!\!\left|u^1\right|^{m-1}\!\!\!dx\dsp dr\!+\!\!\int_{0}^{T}\!\!\int_{Q\cap\{|u^2|\geq M-c\dsp\epsilon\}}\!\!\!\!\!\!\!\!\!\!\!\!\!\!\left|u^2\right|^{m-1}\!\!\!dx'\msp dr\right),
\end{aligned}
\end{align}
for constants $C_1=C_1(M,m,Q,T,A,z)$, $C_2=C_2(\beta,m,Q,T,A,z)$ and $c=c(T,A,z)$.
An inequality virtually identical to \eqref{9.28} holds for $BE_i^{\canc B2}$, swapping $u^1$ and $u^2$.
Since $m\in(1,\infty)$ and by Definition \ref{definition/pathwise kinetic solution} of kinetic solution we have $u^j\in L^{m+1}([0,T];L^{m+1}(Q))$, and since the constant $C_2$ is independent of $M\geq1$, H\"older's inequality and the dominated convergence theorem prove that the last two terms on the right-hand side of \eqref{9.28} vanish in the limit $M\to\infty$, uniformly for $\epsilon\in(0,1)$.
Therefore, passing first to the limit $\epsilon\to0$ and second to the limit $M\to\infty$, formula \eqref{9.28} yields
\begin{equation}
    \label{9.29}
    \limsup_{\epsilon\to0}\sum_{i=0}^{N-1}\sum_{j=1}^2\left|BE_i^{\canc Bj}\right|=0.
\end{equation}

\emph{Case $m\in(0,1)$.}
For this case the idea is to remove the singularity at the origin and to exploit the full regularity of the solution implied by Proposition \ref{proposition/singular moments for defect measures d=1} below.
Using the integration by parts formula \eqref{formula/integration by parts formula} in the $(x',\xi')$ variable, which is justified exploiting an approximation argument and Proposition \ref{proposition/singular moments for defect measures d in (0,1)} below, we write
\begin{align}
\begin{aligned}
    \label{9.30}
    \hspace{-3mm}
    B\!E_i^{\canc B1}\!\!\!
    &=\!\!2m\!\int_{t_i}^{t_{i+1}}\!\!\!\int_{Q^3\times\R^3}\!\!\!\!\!\!\!\!\!\!\!\!\!\!\!\sigma_m(\xi,\xi')|\xi|^{\frac{m}{2}-1}|\xi'|^{^{\frac{m}{2}-1}}\!\chi_r^1\chi_r^2\rho_{t_i,r}^{1,\epsilon}\nabla_{\!x'}\rho_{t_i,r}^{2,\epsilon}\nabla_{\!y}\phi_\beta(y)D_x\!Y_{r,r-t_i}^{x,\xi} dx\dsp dx'\msp dy\dsp d\xi\dsp d\xi'\msp d\eta\dsp dr
    \\
    &=\!
    -\frac{4m}{(m+1)}\!\msp\int_{t_i}^{t_{i+1}}\!\!\!\int_Q\!\!\msp\left|u^2\right|^{-\frac{1}{2}}\nabla\left(u^2\right)^{\left[\frac{m+1}{2}\right]}
    \\
    &\qquad\qquad\qquad\qquad\,\,\,
    \cdot\!\left(\int_{Q^2\times\R^2}\msp\!\!\!\!\!\!\!\!\!\!\!\!\!\!\!\sigma_m(\xi,u^2)|\xi|^{\frac{m}{2}-1}\chi_r^1\rho_{t_i,r}^{1,\epsilon}\bar{\rho}_{t_i,r}^{2,\epsilon}\nabla_{\!y}\phi_\beta(y)D_x\!Y_{r,r-t_i}^{x,\xi} dx\dsp dy\dsp d\xi\dsp d\eta\!\right)\msp\!dx'\msp dr,
\end{aligned}
\end{align}
where we recall the notation \eqref{4.5bis} for $\bar{\rho}_{t_i,r}^{2,\epsilon}(x',y,\eta)$, and where we have defined
\begin{equation}\label{9.31}
    \sigma_m(\xi,\xi'):=|\xi|^{\frac{m}{2}}|\xi'|^{1-\frac{m}{2}}-|\xi'|^{\frac{m}{2}}|\xi|^{1-\frac{m}{2}}\quad\text{for }\xi,\xi'\in\R.
\end{equation}
Observation \eqref{7.1} and estimate \eqref{formula/distance between points in terms of distance between characteristics} prove that, for $c_1=c_1(T,A,z)$,
\begin{equation}
    \label{9.32}
    \text{if }\, \rho_{t_i,r}^{1,\epsilon}(x,y,\xi,\eta)\bar{\rho}_{t_i,r}^{2,\epsilon}(x'\!,y,\eta)\neq0,\,\,\text{then } \left|x-x'\right|+\left|\xi-u^2\right|\leq c_1 \epsilon.
\end{equation}
Moreover, we observe that
\begin{equation}
    \label{9.33}
    \text{if } |\xi-\xi'|\leq c_1\epsilon,\,\,\,\text{then } \left|\sigma_m(\xi,\xi')\right|\leq C\,\epsilon,
\end{equation}
for a constant $C=C(c_1,m)$.
Indeed,if $\max\{|\xi|,|\xi'|\}\leq 2c_1\epsilon$, recalling $m\in(0,1)$, the result is immediate from \eqref{9.31}.
Conversely, assume without loss of generality that $|\xi|>2c_1\epsilon$ with $|\xi|\geq|\xi'|$ and $|\xi-\xi'|\leq c_1\epsilon$.
Thus, in particular, $\xi$ and $\xi'$ have the same sign and $|\xi'|\geq\frac{1}{2}|\xi|$.
Then, using $\xi\simeq\xi'$, we compute
\begin{equation}
    \label{9.34}
    \sigma_m(\xi,\xi')=|\xi|^{1-\frac{m}{2}}|\xi'|^{1-\frac{m}{2}}\int_{\xi'}^{\xi}(m-1)\theta^{[m-2]}\,d\theta 
    \leq C|\xi-\xi'|\leq C\epsilon.
\end{equation}

We now form a velocity decomposition of the integral.
For each $\delta\in(0,1)$, let $K^\delta:\R\to[0,1]$ denote a smooth cutoff function satisfying
\begin{align}\label{9.35}
    K^\delta(\xi)=\empheqlbrace
    \begin{aligned}
    &1\quad\text{if } |\xi|\leq \delta,
    \\
    &0\quad\text{if }|\xi|\geq2\delta.
\end{aligned}
\end{align}
Returning to \eqref{9.30}, consider the decomposition
\begin{align}
\begin{aligned}
    \label{9.36}
    \hspace{-3mm}
    B\!E_i^{\canc B1}\!\!
    =&\!
    -\!\frac{4m}{(m+1)}\!\msp\int_{t_i}^{t_{i+1}}\!\!\!\int_Q\!\!\msp\left|u^2\right|^{-\frac{1}{2}}\nabla\left(u^2\right)^{\left[\frac{m+1}{2}\right]}
    \\
    &\qquad\qquad\qquad\,\,\,\,\,\,\,
    \cdot\!\left(\int_{Q^2\times\R^2}\msp\!\!\!\!\!\!\!\!\!\!\!\!\!\!\!\sigma_m^{\delta}\left(\xi,u^2\right)|\xi|^{\frac{m}{2}-1}\chi_r^1\rho_{t_i,r}^{1,\epsilon}\bar{\rho}_{t_i,r}^{2,\epsilon}\nabla_{\!y}\phi_\beta(y)D_x\!Y_{r,r-t_i}^{x,\xi} dx\dsp dy\dsp d\xi\dsp d\eta\!\right)\msp\!dx'\msp dr
    \\
    &
    -\!\frac{4m}{(m+1)}\!\msp\int_{t_i}^{t_{i+1}}\!\!\!\int_Q\!\!\msp\left|u^2\right|^{-\frac{1}{2}}\nabla\left(u^2\right)^{\left[\frac{m+1}{2}\right]}
    \\
    &\qquad\qquad\qquad\,\,\,\,\,\,\,
    \cdot\!\left(\int_{Q^2\times\R^2}\msp\!\!\!\!\!\!\!\!\!\!\!\!\!\!\!\tilde{\sigma}_m^{\delta}\left(\xi,u^2\right)|\xi|^{\frac{m}{2}-1}\chi_r^1\rho_{t_i,r}^{1,\epsilon}\bar{\rho}_{t_i,r}^{2,\epsilon}\nabla_{\!y}\phi_\beta(y)D_x\!Y_{r,r-t_i}^{x,\xi} dx\dsp dy\dsp d\xi\dsp d\eta\!\right)\msp\!dx'\msp dr,
\end{aligned}
\end{align}
where, for each $\delta\in(0,1)$, the functions $\sigma_m^{\delta},\tilde{\sigma}_m^{\delta}:\R^2\to\R$ are defined by
\begin{equation}\label{9.37}
    \sigma_m^{\delta}(\xi,\xi')=\left(K^\delta(\xi)+K^\delta(\xi')-K^\delta(\xi)K^\delta(\xi')\right)\sigma_m(\xi,\xi'),
\end{equation}
and
\begin{equation}\label{9.37bis}
    \tilde{\sigma}_m^{\delta}(\xi,\xi')=\left(1-K^\delta(\xi)\right)\left(1-K^\delta(\xi')\right)\sigma_m(\xi,\xi').
\end{equation}
We start with the second term in \eqref{9.36}.
It follows from \eqref{9.32}, \eqref{9.33} and \eqref{9.37bis} that, for a constant $C=C(m,T,A,z)$, for any $(x,x',y,\xi,\eta,r)\in Q^3\times\R^2\times[t_i,\infty)$,
\begin{equation}
    \label{9.38}
    \text{if }\, \rho_{t_i,r}^{1,\epsilon}(x,y,\xi,\eta)\bar{\rho}_{t_i,r}^{2,\epsilon}(x'\!,y,\eta)\tilde{\sigma}_m^{\delta}\left(\xi,u^2\right)\neq0,\,\,\text{then }|\xi|, \left|u^2\right|\geq\delta \text{ and }\left|\tilde{\sigma}_m^{\delta}\left(\xi,u^2\right)\right|\leq C \epsilon.
\end{equation}
Then, for a constant $C=C(\delta,\beta,m,T,A,z)$, we compute
\begin{align}
\begin{aligned}
    \label{9.39}
    \hspace{-4mm}
    \bigg|&\frac{4m}{(m+1)}\!\!\int_{t_i}^{t_{i+1}}\!\!\!\int_Q\!\!\!\left|u^2\right|^{-\frac{1}{2}}\nabla\!\msp\left(u^2\right)^{\left[\frac{m+1}{2}\right]}\!\!\int_{Q^2\times\R^2}\msp\!\!\!\!\!\!\!\!\!\!\!\!\!\!\!\tilde{\sigma}_m^{\delta}\left(\xi,u^2\right)|\xi|^{\frac{m}{2}-1}\chi_r^1\rho_{t_i,r}^{1,\epsilon}\bar{\rho}_{t_i,r}^{2,\epsilon}\nabla_{\!y}\phi_\beta D_x\!Y_{r,r-t_i}^{x,\xi} dx\dsp dy\dsp d\xi\dsp d\eta\,\,dx'\msp dr\bigg|
    \\
    &\qquad\leq
    C
    \int_{t_i}^{t_{i+1}}\!\!\!\int_Q\left|u^2\right|^{-\frac{1}{2}}\left|\nabla\left(u^2\right)^{\left[\frac{m+1}{2}\right]}\right|\left(\int_{Q^2\times\R^2\cap\{|\xi|\geq\delta\}}\!\!\!\!\!\!\!\!\!\!\!\!\!\!\!\epsilon\,|\xi|^{\frac{m}{2}-1}\chi_r^1\,\rho_{t_i,r}^{1,\epsilon}\,\bar{\rho}_{t_i,r}^{2,\epsilon}\,dx\dsp dy\dsp d\xi\dsp d\eta\right)dx'dr
    \\
    &\qquad\leq
    C\epsilon
    \int_{t_i}^{t_{i+1}}\!\!\!\int_{Q}\left|u^2\right|^{-\frac{1}{2}}\left|\nabla\left(u^2\right)^{\left[\frac{m+1}{2}\right]}\right|dx'dr.
\end{aligned}
\end{align}
In the first passage we used \eqref{9.38}, \eqref{0.10} and the boundedness of $\chi^1$ and $D_xY_{r,r-t_i}^{x,\xi}$.
In the second passage we recalled $m\in(0,1)$, so that $\frac{m}{2}-1<0$ and $|\xi|^{\frac{m}{2}-1}\leq\delta^{\frac{m}{2}-1}$, and integrated the convolution kernels. 

We now consider the first term in \eqref{9.36}.
It follows from \eqref{9.32}, \eqref{9.33} and \eqref{9.37} that, for a constant $C=C(m,T,A,z)$, for any $\epsilon<\delta\in(0,1)$, with $\epsilon$ small enough depending on $C$ and $\delta$, and any $(x,x',y,\xi,\eta,r)\in Q^3\times\R^2\times[t_i,\infty)$,
\begin{equation}
    \label{9.40}
    \text{if }\, \rho_{t_i,r}^{1,\epsilon}(x,y,\xi,\eta)\bar{\rho}_{t_i,r}^{2,\epsilon}(x'\!,y,\eta)\sigma_m^{\delta}\left(\xi,u^2\right)\neq0,\,\,\text{then }|\xi|, \left|u^2\right|\leq3\delta \text{ and }\left|\sigma_m^{\delta}\left(\xi,u^2\right)\right|\leq C \epsilon.
\end{equation}
Moreover, the definition \eqref{0.16} of the convolution kernels implies that, for any $(x',r)\in Q\times[t_i,\infty)$,
\begin{equation}
    \label{9.41}
    \int_{Q^2\times\R}\rho_{t_i,r}^{1,\epsilon}(x,y,\xi,\eta)\,\bar{\rho}_{t_i,r}^{2,\epsilon}(x'\!,y,\eta)\,\,dx\,dy\,d\eta\leq\epsilon^{-1}.
\end{equation}
Then, for the first term of \eqref{9.36}, we compute
\begin{align}
\begin{aligned}
    \label{9.42}
    \hspace{-4mm}
    \bigg|&\frac{4m}{(m+1)}\!\!\int_{t_i}^{t_{i+1}}\!\!\!\int_Q\!\!\!\left|u^2\right|^{-\frac{1}{2}}\!\nabla\!\msp\left(u^2\right)^{\left[\frac{m+1}{2}\right]}\!\!\int_{Q^2\times\R^2}\msp\!\!\!\!\!\!\!\!\!\!\!\!\!\!\!\sigma_m^{\delta}\left(\xi,u^2\right)|\xi|^{\frac{m}{2}-1}\!\chi_r^1\rho_{t_i,r}^{1,\epsilon}\bar{\rho}_{t_i,r}^{2,\epsilon}\nabla_{\!y}\phi_\beta D_x\!Y_{r,r-t_i}^{x,\xi} dx\dsp dy\dsp d\xi\dsp d\eta\,\,dx'\msp dr\bigg|
    \\
    &\quad\leq
    C
    \int_{t_i}^{t_{i+1}}\!\!\!\int_Q\left|u^2\right|^{-\frac{1}{2}}\left|\nabla\left(u^2\right)^{\left[\frac{m+1}{2}\right]}\right|\int_{-3\delta}^{3\delta}|\xi|^{\frac{m}{2}-1}\left(\int_{Q^2\times\R}\!\!\!\!\!\!\!\!\epsilon\,\rho_{t_i,r}^{1,\epsilon}\,\bar{\rho}_{t_i,r}^{2,\epsilon}\,dx\dsp dy\dsp d\eta\right)d\xi\,dx'dr
    \\
    &\quad\leq
    C\delta^{\frac{m}{2}}
    \int_{t_i}^{t_{i+1}}\!\!\!\int_{Q}\left|u^2\right|^{-\frac{1}{2}}\left|\nabla\left(u^2\right)^{\left[\frac{m+1}{2}\right]}\right|dx'dr,
\end{aligned}
\end{align}
for a constant $C=C(\beta,m,T,A,z)$, independent of $\delta\in(0,1)$.
In the first passage we used observation \eqref{9.40}, \eqref{0.10} and the boundedness of $\chi^1$ and $D_xY_{r,r-t_i}^{x,\xi}$.
In the second passage we applied observation \eqref{9.41}. 

Finally, combining the splitting \eqref{9.36} with estimates \eqref{9.39} and \eqref{9.42}, summing over the partition and applying H\"older's inequality, we obtain
\begin{equation}
    \label{9.43}
    \sum_{i=0}^{N-1}\left|BE_i^{\canc B1}\right|
    \leq
    \left(C_1\,\epsilon+C_2\,\delta^{\frac{m}{2}}\right)\left(\int_{0}^{T}\!\!\!\int_{Q}\left|u^2\right|^{-1}\left|\nabla\left(u^2\right)^{\left[\frac{m+1}{2}\right]}\right|^2dx'dr\right)^{\frac{1}{2}},
\end{equation}
for constants $C_1=C_1(\delta,\beta,m,Q,T,A,z)$ and $C_2=C_2(\beta,m,Q,T,A,z)$.
An inequality virtually identical to \eqref{9.43} holds for the specular errors $BE_i^{\canc B2}$, simply swapping the roles of $u^1$ and $u^2$.
Proposition \ref{proposition/singular moments for defect measures d=1} below ensures that the right-hand side of \eqref{9.43} is finite.
Since the constant $C_2$ is independent of $\delta$ and $\epsilon$, passing first to the limit $\epsilon\to0$ and then to the limit $\delta\to0$, formula \eqref{9.43} and its analogue version for $j=2$ yield
\begin{equation}
    \label{9.44}
    \limsup_{\epsilon\to0}\sum_{i=0}^{N-1}\sum_{j=1}^2\left|BE_i^{\canc Bj}\right|=0.
\end{equation}
This concludes the analysis of the boundary error terms.

\medskip
\noindent
\textbf{Step 9: The conclusion.}
We are finally ready to conclude the proof.
Returning to inequality \eqref{6.1}, we use estimates \eqref{7.5bis}, \eqref{7.12} and \eqref{7.31}, \eqref{9.10}, \eqref{9.22}, and \eqref{9.29} and \eqref{9.44}, to deduce
\begin{equation}
\label{10.1}
    \int_Q\!\!\left|u^1(x,r)-u^2(x,r)\right|\,dx\Big|_{r=0}^{r=T}
    \!\leq
    C\,\left|\mathcal{P}\right|^{\alpha}\!\sum_{j=1}^2\!\left(\int_{0}^{T}\!\!\int_{Q}\!\left|\nabla\left(u^j\right)^{[m]}\right|\,dx\,dr
    +\int_{0}^{T}\!\!\!\int_{Q\times\R}\!\!\!\!\!\!p_r^j+q_r^j\,dx\,d\xi\,dr\right),
\end{equation}
for a constant $C=C(T,A,z)$.
Definition \ref{definition/pathwise kinetic solution} of pathwise kinetic solution and Lemma \ref{lemma/sobolev regularity of u^m} guarantee that the right-hand side of \eqref{10.1} is finite.
Recalling from Step 0 that the partition $\mathcal{P}\subseteq[0,T]\setminus\mathcal{N}$ is arbitrary, we let $|\mathcal{P}|\to0$ and from \eqref{10.1} we conclude that
\begin{equation}
\label{10.2}
    \int_Q\left|u^1(x,T)-u^2(x,T)\right|\,dx
    \leq
    \int_Q\left|u^1_0(x)-u^2_0(x)\right|\,dx.
\end{equation}
This completes the uniqueness proof.

\end{proof}

\begin{remark}\label{remark/extension to signed data}
We observe that in the proof of Theorem \ref{theorem/uniqueness of pathwise kinetic solutions for nonnegative initial data} the positivity of the initial data was exploited only in Step 7, to handle the error terms $IE_i^{\canc}$ in the case $m\in(0,1)\cup(1,2]$, and in Step 8, to handle the error terms $BE_i^{\canc Bj}$ in the case $m\in(0,1)$.
Namely, we relied on the positivity of the initial data through the application of Proposition \ref{proposition/positive data stay positive} and \ref{proposition/singular moments for defect measures d=1} below.
The remaining arguments of this paper are obtained for general initial data in $L^2(Q)$.
This completes the proof of Theorem \ref{theorem/extension of the results to signed initial data}.
\end{remark}

Repeating the proof of Theorem \ref{theorem/uniqueness of pathwise kinetic solutions for nonnegative initial data} with $u_2=0$, we get the following estimate for the $L^1$-norm of pathwise kinetic solutions with any arbitrary initial data.

\begin{corollary}\label{corollary/L^1 estimate for patwhise kientic solutions}
Let $m\in(0,\infty)$ and let $u_0\in L^2(Q)$ be arbitrary.
Suppose that $u$ is a pathwise kinetic solution of \eqref{formula/stochastic porous media equation} with initial data $u_0$, then
\begin{equation}
    \|u\|_{L^{\infty}([0,T];L^1(Q))}\leq\|u_0\|_{L^1(Q)}. \nonumber
\end{equation}
\end{corollary}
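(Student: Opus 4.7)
The plan is to repeat the proof of Theorem \ref{theorem/uniqueness of pathwise kinetic solutions for nonnegative initial data} with $u^2 \equiv 0$. The zero function is trivially a pathwise kinetic solution in the sense of Definition \ref{definition/pathwise kinetic solution}, with kinetic function $\chi^2 \equiv 0$ and defect measures $p^2 \equiv q^2 \equiv 0$; equation \eqref{definition/pathwise kinetic solution/formula 1} then holds as $0=0$. Replacing \eqref{1.1}--\eqref{1.3}, I would start from the identity
\begin{equation}
\int_Q |u(y, r)| \, dy = \int_{Q \times \R} |\chi(y, \eta, r)|^2 \, dy \, d\eta = \int_{Q \times \R} \chi(y, \eta, r)\, \sgn(\eta) \, dy \, d\eta, \nonumber
\end{equation}
and apply the time-splitting, cutoff by $\phi_\beta$, and regularization by $\rho_d^\epsilon \rho_1^\epsilon$ exactly as in Steps~0--1.

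Because $\tilde\chi^{2,\epsilon} \equiv 0$, the expansion \eqref{1.3} collapses to its $\tilde\chi^{1,\epsilon} \tilde\sgn^\epsilon$ summand, so only the identity \eqref{2.18} is required; the parallel expansions \eqref{2.19} and \eqref{3.19} contribute nothing. The right-hand side of \eqref{2.18} consists of the internal errors $IE_i^{\sgn 1,1} - IE_i^{\sgn 1,2}$, the boundary errors $BE_i^{\sgn 1,1} + BE_i^{\sgn 1,2} - BE_i^{\sgn 1,3}$, the trace-of-Hessian term, and the nonpositive residual
\begin{equation}
-2 \int_{t_i}^{t_{i+1}} \!\! \int_{Q^3 \times \R^2} (p_r^1 + q_r^1)\, \rho_{t_i,r}^{1,\epsilon}\, \rho_{t_i,r}^{2,\epsilon}(x',y,0,\eta)\, \phi_\beta(y)\, dx\,dx'\,dy\,d\xi\,d\eta\,dr \leq 0, \nonumber
\end{equation}
which, having the favorable sign, is simply discarded from the upper bound.

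For the trace-of-Hessian term I would apply Step~5 verbatim: with $\chi^2 \equiv 0$ it is the only surviving contribution to $BR_i^A$ in \eqref{5.1}, and as $\epsilon \to 0$ it converges to $\int m|\xi|^{m-1} |\chi^1|^2 \tr\!\bigl((D_xY)^T D_y^2\phi_\beta(Y) D_xY\bigr)$, after which \eqref{5.14}--\eqref{5.18} bound it in terms of $BE_i^{\canc A,2}$, $BE_i^{\canc A,3}$, $BE_i^{\canc A,4}$. Crucially, the internal cancellation error $IE_i^{\canc}$ from \eqref{4.8} and the boundary cancellation errors $BE_i^{\canc Bj}$ from \eqref{5.4}, both of which carry a $\chi^1 \chi^2$ factor, never appear in this setting; by Remark \ref{remark/extension to signed data} these are precisely the terms whose estimation in Theorem \ref{theorem/uniqueness of pathwise kinetic solutions for nonnegative initial data} required the positivity of the initial data, so no sign hypothesis is needed. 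Every remaining error is controlled by the bounds from Steps~7 and~8, which rely only on the Sobolev regularity $u^{[m]} \in L^{p_m}([0,T]; W_0^{1,p_m}(Q))$ from Lemma \ref{lemma/sobolev regularity of u^m} and the finiteness of $p$ and $q$ on $Q \times \R \times [0,T]$, both of which hold for any $u_0 \in L^2(Q)$.

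Letting $\epsilon \to 0$, then $\beta \to 0$, and finally $|\mathcal{P}| \to 0$, the analogue of \eqref{10.1}--\eqref{10.2} yields
\begin{equation}
\int_Q |u(x, T)| \, dx \leq \int_Q |u_0(x)| \, dx \nonumber
\end{equation}
for every $T \in [0,\infty) \setminus \mathcal{N}$, which is the claimed bound. The main obstacle is organizational rather than analytic: one must carefully track which error terms in \eqref{6.1} carry a factor of $\chi^2$ or $p^2 + q^2$ and therefore vanish identically when $u^2 \equiv 0$, and in particular confirm that the uncancelled $\rho(\cdot,\cdot,0,\cdot)$ residue has the favorable sign so it can simply be dropped. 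No genuinely new estimate is required.
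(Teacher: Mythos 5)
Your proof is correct and takes essentially the same approach as the paper: repeat the uniqueness proof with $\chi^2 \equiv 0$ and observe that the cancellation error terms $IE_i^{\canc}$ and $BE_i^{\canc Bj}$ — the only ones requiring positivity — vanish identically. You add one useful detail the paper leaves implicit, namely that with $\chi^2 = 0$ the formerly self-cancelling $\rho(\cdot,\cdot,0,\cdot)$ residue from \eqref{2.18} no longer cancels but has the favorable sign and can simply be dropped from the upper bound.
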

\begin{proof}
Let $u_0\in L^2(Q)$ be arbitrary and let $u$ be a pathwise kinetic solution of \eqref{formula/stochastic porous media equation} with initial data $u_0$.
Repeating the proof of Theorem \ref{theorem/uniqueness of pathwise kinetic solutions for nonnegative initial data} with $\chi^2=0$ implies that
\begin{equation}
    \|u\|_{L^{\infty}([0,T];L^1(Q))}=\|u-0\|_{L^{\infty}([0,T];L^1(Q))}\leq\|u_0-0\|_{L^1(Q)}=\|u_0\|_{L^1(Q)}.
\end{equation}
Indeed, if $\chi^2=0$, in the final inequality \eqref{6.1} the error terms $IE_i^{\canc}$, defined in \eqref{4.8}, and $BE_i^{\canc Bj}$, for $j=1,2$, defined in \eqref{5.4}, are identically zero.
In particular, as observed in Remark \ref{remark/extension to signed data}, these errors are the only terms requiring the positivity of the initial data to tackle the regime $m\in(0,1)\cup(1,2]$.
Thus, when $u^2=0$, the proof works in the full regime $m\in(0,\infty)$ regardless of the sign of $u$.
\end{proof}

We conclude this section with a few auxiliary results.
The following proposition ensures that kinetic solutions with positive initial data stay positive, and it is crucial to obtain Proposition \ref{proposition/singular moments for defect measures d=1} below.

\begin{proposition}\label{proposition/positive data stay positive}
Let $u$ be a pathwise kinetic solution with nonnegative initial data $u_0\in L^2_+(Q)$.
Then we have $u(x,t)\geq0$ almost everywhere in $Q\times[0,\infty)$.
Moreover, for almost every $t\in[0,\infty)$ we have that $\|u(\cdot,t)\|_{L^1(Q)}=\|u_0\|_{L^1(Q)}$.
\end{proposition}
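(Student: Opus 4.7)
Both statements are handled by specializing the machinery of the proof of Theorem \ref{theorem/uniqueness of pathwise kinetic solutions for nonnegative initial data} (Steps~0--9) to suitably localized test functions.

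\emph{Nonnegativity.} The idea is to show $u_- \equiv 0$, where $u_- := \max(-u,0)$, via the identity
\[
    \int_Q u_-(x,t)\,dx \;=\; \int_Q \int_\R |\chi(x,\xi,t)|^2\,\mathbf{1}_{\xi<0}(\xi)\,d\xi\,dx,
\]
which vanishes at $t=0$ because $u_0 \geq 0$ forces $\chi(\cdot,\cdot,0)\equiv 0$ on $\{\xi<0\}$. I would re-run Steps~0--9 of the uniqueness proof with $u^1 = u$ and $u^2 \equiv 0$ (so that $\chi^2, p^2, q^2 \equiv 0$), and, crucially, insert as an additional factor in the transported test function a smooth velocity cutoff $\Psi_\delta \in C^\infty(\R)$ that is non-increasing, supported in $(-\infty, 0]$, and equal to $1$ on $(-\infty, -\delta]$. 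By the sign-preservation property \eqref{formula/sign of velocity is preserved by smooth characteristics}, $\Psi_\delta(\Pi^{x,\xi}_{r,r-t_i})$ stays supported in $\{\xi<0\}$ for all times, so the entire computation of Steps~1--6 localizes to the velocity region $\{\xi<0\}$; there $\sgn(\xi) \equiv -1$, so the $\sgn$-terms simplify, and the internal and boundary error estimates of Steps~7--8 carry over uniformly in $\delta$. The defect-measure contribution $-\int (p+q)\,\partial_\xi[\Psi_\delta \circ \Pi \cdot \phi_\beta \circ Y]$ decomposes into a bulk piece with favorable sign (since $p \geq 0$ and $\Psi_\delta' \leq 0$) and a piece concentrated on $\{\xi \in (-\delta,0)\}$; the latter vanishes as $\delta \to 0$ using that $q$ is supported on $\{\xi = u\}$ with density proportional to $|\nabla u^{[(m+1)/2]}|^2$, which vanishes almost everywhere on $\{u = 0\}$ by Stampacchia's lemma. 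Passing to the iterated limit $\epsilon,\beta,|\mathcal{P}|,\delta \to 0$ yields $\int_Q u_-(x,T)\,dx \leq 0$, whence $u \geq 0$.

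\emph{$L^1$-conservation.} With $u \geq 0$ in hand, $\|u(t)\|_{L^1(Q)} = \int_Q u(x,t)\,dx$, and the upper bound $\int_Q u(t)\,dx \leq \int_Q u_0\,dx$ is immediate from Corollary \ref{corollary/L^1 estimate for patwhise kientic solutions}. For the matching lower bound I would repeat the above procedure with a non-decreasing cutoff $\Phi_\delta \in C^\infty(\R)$ approximating $\mathbf{1}_{\xi>0}$ from the right (supported in $[0,\infty)$, equal to $1$ on $[\delta,\infty)$), so that $\int \chi\,\Phi_\delta\,d\xi \to u$. Sign-preservation keeps the transport in $\{\xi \geq 0\}$; the derivative $\Phi_\delta' \geq 0$ now produces a non-negative contribution against $p$ and no contribution from $q$ in the limit (as above), while the space-cutoff error $\int m|\xi|^{m-1}\chi\,\Delta_x \rho_{0,r}$ is controlled, exactly as the boundary errors of Step~8, by the zero-trace regularity $u^{[m]} \in W_0^{1,p_m}(Q)$ from Lemma \ref{lemma/sobolev regularity of u^m}. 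Passing to the iterated limit yields $\int_Q u(t)\,dx - \int_Q u_0\,dx \geq 0$.

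\emph{Main obstacle.} The principal technical hurdle is the interaction of the new velocity cutoff $\Psi_\delta$ (or $\Phi_\delta$) with the $\xi$-derivatives in the transported test function through the characteristic displacements $\partial_\xi \Pi^{x,\xi}_{r,r-t_0}$, which in general differ from $1$: every estimate of Steps~7--8 involving $\partial_\xi \rho_{t_i,r}$ must be redone carrying an extra $\Psi_\delta'$ factor, and uniform-in-$\delta$ bounds verified via the sharp defect-measure estimates of Propositions \ref{proposition/singular moments for defect measures d in (0,1)} and \ref{proposition/singular moments for defect measures d=1}. Adapting the boundary-layer cancellations of Step~5 to the presence of the velocity cutoff also requires careful bookkeeping, but no new idea beyond those already developed in the uniqueness argument.
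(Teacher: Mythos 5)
Your proposal captures the correct structure — run the uniqueness machinery with $u^2\equiv 0$, localize in velocity to $\{\xi<0\}$, and exploit the nonnegativity of the defect measures — and it is essentially the route the paper takes, with a few differences in execution that are worth flagging.

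For nonnegativity, the paper does not smooth the velocity cutoff: it replaces $\sgn$ by $\sgn_-:=\sgn\wedge 0$ throughout Steps~1--8 and works directly with $\sgn_-$. The only place where $\partial_\xi\sgn_-$ appears, namely $\partial_\xi\sgn_-=\delta_0$ acting against $p+q$, produces a term of fixed sign that is simply \emph{dropped} from the inequality; no passage $\delta\to0$ or control of the defect measure near $\{\xi=0\}$ is needed. Your $\Psi_\delta$-smoothing and the appeal to Stampacchia's lemma to kill the contribution on $\{-\delta<\xi<0\}$ are therefore superfluous — that contribution never needs to vanish, only to have the right sign, which it does because $p,q\ge0$. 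The paper's second key observation, which your write-up glosses over, is that $|\xi|^{m-1}\chi\,\sgn_-(\xi)\ge0$ pointwise; this nonnegativity is precisely what lets the trace/$\Delta\phi_\beta$ term be absorbed by the same boundary-error machinery (5.14)--(5.18), (9.12)--(9.21) as in the uniqueness proof, with the negatively-signed bump of $\ddot\psi_\beta$ dropped outright. You should check that the sign convention in your $\Psi_\delta$ version still produces a nonnegative quantity against the trace term — it does, since $\sgn\Psi_\delta=-\Psi_\delta$ and $\chi\le0$ on $\{\xi<0\}$, so $\chi\sgn\Psi_\delta\ge0$ — but this should be stated, as it is the load-bearing point.

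For the $L^1$-conservation, the paper takes a more direct route: after nonnegativity is established, it tests \eqref{definition/pathwise kinetic solution/formula 1} against $\rho_0(x,\xi):=\phi_\beta(x)$ (constant in $\xi$), so that $\int\chi\,d\xi=u$ on the left, and shows that both boundary contributions on the right vanish as $\beta\to0$. One of these, coming from $|u|^{\frac{m-1}{2}}\nabla u^{[(m+1)/2]}\nabla_y\phi_\beta\cdot D_xY$, is estimated using the \emph{singular moment} bound from Proposition~\ref{proposition/singular moments for defect measures d=1} — not just $u^{[m]}\in W^{1,p_m}_0(Q)$ from Lemma~\ref{lemma/sobolev regularity of u^m} — and this is where the nonnegativity just proved is used (Proposition~\ref{proposition/singular moments for defect measures d=1} requires $u_0\ge0$). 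Your two-sided approach, bounding $\int u$ above via Corollary~\ref{corollary/L^1 estimate for patwhise kientic solutions} and below with a second smoothed cutoff $\Phi_\delta$, is correct but longer, and your stated tool (Lemma~\ref{lemma/sobolev regularity of u^m}) would need to be upgraded to the singular-moment estimate when analyzing the $\nabla_y\phi_\beta$ boundary term.
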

\begin{proof}
Suppose $u_0\in L^2_+(Q)$ and let $u$ be a pathwise kinetic solution to \eqref{formula/stochastic porous media equation} with initial data $u_0$, kinetic function $\chi$ and exceptional set $\mathcal{N}$.
Let $\mathcal{P}=\{0=t_0<t_1<\dots<t_N=T\}$ be an arbitrary partition of $[0,T]$ with $\mathcal{P}\subseteq[0,T]\setminus\mathcal{N}$.
By repeating the same argument leading from \eqref{1.1} to \eqref{1.3}, it follows that
\begin{align}\label{proof/proposition/positive data stay positive/1}
    \begin{aligned}
        \int_{Q\times\R}\!\!\!\!\!\chi_r(y,\eta)\sgn_-(\eta)\,dy\,d\eta\Big|_{r=0}^{r=T}
        \!\!= 
        \lim_{\beta\to0}\lim_{\epsilon\to0}\sum_{i=0}^{N-1}
        \int_{Q\times\R}\!\!\!\!\tilde{\chi}^{\epsilon}_{t_i,r}(y,\eta)\,\widetilde{(\sgn_-)}^{\epsilon}_{t_i,r}\phi_{\beta}(y)\,dy\,d\eta\Big|_{r=t_i}^{r=t_{i+1}},
    \end{aligned}
\end{align}
where $\sgn_-(\xi):=\sgn(\xi)\wedge 0$, and $\widetilde{(\sgn_-)}^{\epsilon}_{t_i,r}$ is defined as in \eqref{0.17}-\eqref{0.18} with $\sgn_-$ replacing $\sgn$.
As regards the second term in the sum, the same reasoning leading from \eqref{2.1} to \eqref{2.18}, with $\sgn_-(\xi')$ replacing $\sgn(\xi')$, show that
\begin{align}\label{p1}
\begin{aligned}
&\int_{Q\times\R}\!\!\!\!\!\!\!\tilde{\chi}^{\epsilon}_{t_i,r}(y,\eta)\,\widetilde{(\sgn_-)}^{\epsilon}_{t_i,r}\,\phi_{\beta}(y)\,dy\,d\eta\Big|_{r=t_i}^{r=t_{i+1}}
\\
&\,=
\widebar{IE}_i^{\sgn1,1}-\widebar{IE}_i^{\sgn1,2}
+
\widebar{BE}_i^{\sgn1,1}+\widebar{BE}_i^{\sgn1,2}-\widebar{BE}_i^{\sgn1,3}
\\
&\quad
+\!\!\int_{t_i}^{t_{i+1}}\!\!\!\int_{Q^3\times\R^3}\!\!\!\!\!\!\!\!\!\!\!\!\!\!\!m|\xi|^{m-1}\chi_r^1\rho_{t_i,r}^{1,\epsilon}\rho_{t_i,r}^{2,\epsilon}\sgn_-(\xi')\,\tr\!\left(\!\big(D_xY_{r,r-t_i}^{x,\xi}\big)^{\!T}\! D_y^2\phi_\beta(y)D_xY_{r,r-t_i}^{x,\xi}\!\right)\dsp dx \dsp dx' dy\dsp d\xi\dsp d\xi' d\eta\dsp dr
\\
&\quad
-2\int_{t_i}^{t_{i+1}}\!\!\!\int_{Q^3\times\R^2}\!\!\!\!\!\!\!\!(p_r^1+q_r^1)\,\rho_{t_i,r}^{1,\epsilon}\,\rho_{t_i,r}^{2,\epsilon}(x',y,0,\eta)\,\phi_\beta(y)\,dx\,dx'\,dy\,d\xi\,d\eta\,dr,
\end{aligned}
\end{align}
for error terms $\widebar{IE}_i^{\sgn 1,1}$, $\widebar{IE}_i^{\sgn 1,2}$, $\widebar{BE}_i^{\sgn 1,1}$, $\widebar{BE}_i^{\sgn 1,2}$ and $\widebar{BE}_i^{\sgn 1,3}$ defined simply by setting $u^1=u$ and replacing $\sgn(\xi')$ with $\sgn_-(\xi')$ in definition \eqref{2.6}, \eqref{2.16}, \eqref{2.9}, \eqref{2.11} and \eqref{2.14} respectively.
For the second to last term in \eqref{p1}, we first notice that
\begin{align}\label{p2}
\begin{aligned}
    \lim_{\epsilon\to0}&\int_{t_i}^{t_{i+1}}\!\!\!\int_{Q^3\times\R^3}\!\!\!\!\!\!\!\!\!\!\!\!\!m|\xi|^{m-1}\chi_r^1\rho_{t_i,r}^{1,\epsilon}\rho_{t_i,r}^{2,\epsilon}\sgn_-(\xi')\,\tr\!\left(\!\big(D_x\!Y_{r,r-t_i}^{x,\xi}\big)^{\!T}\! D_y^2\phi_\beta(y)D_x\!Y_{r,r-t_i}^{x,\xi}\!\right)\dsp dx \dsp dx' dy\dsp d\xi\dsp d\xi' d\eta\dsp dr
    \\
    &=
    \int_{t_i}^{t_{i+1}}\!\!\!\int_{Q\times\R}\!\!\!\!\!\!\!m|\xi|^{m-1}\chi_r^1\sgn_-(\xi)\,\tr\!\left(\!\big(D_x\!Y_{r,r-t_i}^{x,\xi}\big)^{\!T}\! D_y^2\phi_\beta(Y_{r,r-t_i}^{x,\xi})D_x\!Y_{r,r-t_i}^{x,\xi}\!\right)\dsp dx \dsp d\xi\dsp \dsp dr.
\end{aligned}
\end{align}
Then, observing that $|\xi|^{m-1}\chi_r^1\sgn_-(\xi)$ is always nonnegative and mimicking the arguments leading from \eqref{5.14} to \eqref{5.18}, we conclude that
\begin{align}\label{p3}
\begin{aligned}
    \int_{t_i}^{t_{i+1}}\!\!\!\int_{Q\times\R}\!\!\!\!\!\!\!m|\xi|^{m-1}\chi_r^1\sgn_-(\xi)\,\tr\!\left(\!\big(D_x\!Y_{r,r-t_i}^{x,\xi}\big)^{\!T}\! D_y^2\phi_\beta(Y_{r,r-t_i}^{x,\xi})D_x\!Y_{r,r-t_i}^{x,\xi}\!\right)\dsp dx \dsp d\xi\dsp \dsp dr
    \\
    \leq
    \widebar{BE}_i^{\canc A,2}+\widebar{BE}_i^{\canc A,3}+\widebar{BE}_i^{\canc A,4},
\end{aligned}
\end{align}
for error terms $\widebar{BE}_i^{\canc A,2}$, $\widebar{BE}_i^{\canc A,3}$ and $\widebar{BE}_i^{\canc A,4}$ defined simply by replacing $|\xi|^{m-1}|\chi_r^1-\chi_r^2|^2$ with $|\xi|^{m-1}\chi_r\sgn_-(\xi)$ in definition \eqref{5.15}, \eqref{5.17} and \eqref{5.18} respectively.

We now go back to \eqref{proof/proposition/positive data stay positive/1}.
Owing to the nonnegativity of the entropy and parabolic defect measure, we drop the last term in \eqref{p1}, and then we exploit \eqref{p2} combined with \eqref{p3} to obtain
\begin{align}\label{p4}
    \begin{aligned}
        \int_{Q\times\R}\!\!\!\!\!\chi_r(y,\eta)\sgn_-(\eta)\,dy\,d\eta\Big|_{r=0}^{r=T}
        \leq
        \limsup_{\beta\to0}\limsup_{\epsilon\to0}\sum_{i=0}^{N-1}
        \bigg(
        &
        \widebar{IE}_i^{\sgn1,1}-\widebar{IE}_i^{\sgn1,2}
        \\
        &+
        \widebar{BE}_i^{\sgn1,1}+\widebar{BE}_i^{\sgn1,2}-\widebar{BE}_i^{\sgn1,3}
        \\
        &+
        \widebar{BE}_i^{\canc A,2}+\widebar{BE}_i^{\canc A,3}+\widebar{BE}_i^{\canc A,4}
        \bigg).
    \end{aligned}
\end{align}
Arguments virtually identical to those in Step 8 ensure that the sum of the boundary terms $\widebar{BE}_i^{\sgn1,1}$, $\widebar{BE}_i^{\sgn1,2}$, $\widebar{BE}_i^{\sgn1,3}$, $\widebar{BE}_i^{\canc A,2}$, $\widebar{BE}_i^{\canc A,3}$ and $\widebar{BE}_i^{\canc A,4}$ vanishes in the limit as $\epsilon\to0$ first and $\beta\to0$ next.
Finally, recalling that the partition $\mathcal{P}\subseteq[0,T]\setminus \mathcal{N}$ is arbitrary, arguments completely analogous to those in Step 7 and Step 9 prove that the sum of the internal errors $\widebar{IE}_i^{\sgn1,1}$ and $\widebar{IE}_i^{\sgn1,2}$ goes to $0$ as we let $\epsilon\to0$ first, $\beta\to0$ next, and then also $|\mathcal{P}|\to0$.
Putting everything together, we conclude that
\begin{equation}
    0\leq\int_{Q\times\R}\chi(x,\xi,T)\sgn_-(\xi)\,dx\,d\xi \leq \int_{Q\times\R}\bar{\chi}(u_0(x),\xi)\sgn_-(\xi)\,dx\,d\xi\leq 0. \nonumber
\end{equation}
Here, the first inequality follows from the definition of the kinetic function and the last inequality follows from the nonnegativity of $u_0$.
Therefore we deduce that, if $u_0\in L^2_+(Q)$, then $u\geq0$ almost everywhere on $Q\times[0,\infty)$.

To prove the second assertion, we test the kinetic equation against the cutoff $\phi_\beta$, we then let $\beta\to0$ and we exploit the nonnegativity of the solution.
An approximation argument shows that we can take $\rho_0(x,\xi):=\phi_\beta(x)$ in equation \eqref{definition/pathwise kinetic solution/formula 1}. 
For any $t\in [0,\infty)\setminus\,\mathcal{N}$, applying the integration by parts formula \eqref{formula/integration by parts formula}, we obtain
\begin{align}
    \begin{aligned}\label{proof/proposition/positive data stay positive/2}
        \int_{Q\times\R}\chi(x,\xi,r)\phi_\beta(Y_{r,r}^{x,\xi})\,dx\hspace{0.08em}d\xi\Big|_{r=0}^{r=t}
        =
        &-\frac{2m}{m+1}\int_0^t\int_Q|u|^{\frac{m-1}{2}}\nabla u^{\left[\frac{m+1}{2}\right]}\nabla_y\phi_\beta(Y_{r,r}^{x,u})D_xY_{r,r}^{x,u}\,dx\hspace{0.08em}dr
        \\
        &
        -\int_0^t\int_{Q\times\R}(p_r+q_r)\nabla_y\phi_\beta(Y_{r,r}^{x,\xi})\partial_{\xi}Y_{r,r}^{x,\xi}\,dx\hspace{0.08em}d\xi\hspace{0.08em}dr.
    \end{aligned} 
\end{align}
The first term on the right-hand side of \eqref{proof/proposition/positive data stay positive/2} vanishes as we let $\beta\to0$.
Indeed, we exploit observation \eqref{9.11}, the Sobolev regularity $u^{\left[\frac{m+1}{2}\right]}\in H_0^1(Q)$ and the mean value theorem applied to points $x\in Q^\beta$, and the definition of the parabolic defect measure to estimate
\begin{align}
    \begin{aligned}\label{proof/proposition/positive data stay positive/3}
        \left|\int_0^t\!\!\int_Q\!\!\!|u|^{\frac{m-1}{2}}\nabla u^{\left[\frac{m+1}{2}\right]}\nabla_y\phi_\beta(Y_{r,r}^{x,u})\nabla_xY_{r,r}^{x,u}\!dx\hspace{0.05em}dr\right|\!
        &\leq
        C\beta^{-1}\int_0^t\!\int_{{Q^\beta}}\!\!\!|u|^{-1}\!\left|u^{\left[\frac{m+1}{2}\right]}\right| \left|\nabla u^{\left[\frac{m+1}{2}\right]}\right|dx\hspace{0.05em}dr
        \\
        &\leq
        C\int_0^t\int_{{Q^\beta}}|u|^{-1} \left|\nabla u^{\left[\frac{m+1}{2}\right]}\right|^2dx\hspace{0.08em}dr
        \\
        &=
        C\int_0^t\int_{{Q^\beta}\times\R}|\xi|^{-1}q(x,\xi,r)\,dx\hspace{0.08em}d\xi\hspace{0.08em}dr,
    \end{aligned} 
\end{align}
for a constant $C=C(Q,m,T,A,z)$.
Since we know that $u$ is nonnegative, Proposition \ref{proposition/singular moments for defect measures d=1} below and the dominated convergence theorem ensure that, as $\beta\to0$, the last line of \eqref{proof/proposition/positive data stay positive/3} vanishes.
Returning to \eqref{proof/proposition/positive data stay positive/2}, virtually the same estimate as \eqref{9.5}-\eqref{9.6} shows that the second term on the right-hand side of \eqref{proof/proposition/positive data stay positive/2} vanishes in the $\beta\to0$ limit.
Finally, the dominated convergence theorem implies that, as $\beta\to0$, the left-hand side of \eqref{proof/proposition/positive data stay positive/2} converges to $\int_{Q\times\R}\chi(x,\xi,r)\,dx\hspace{0.08em}d\xi\Big|_{r=0}^{r=t}=\|u(\cdot,t)\|_{L^1(Q)}-\|u_0\|_{L^1(Q)}$.
This completes the proof.
\end{proof}

Next, we present a result on the higher integrability of the parabolic and entropy defect measures in a neighbourhood of the origin.
In turn, this result will help us to improve the $H^1$-regularity of $u^{\left[\frac{m+1}{2}\right]}$ prescribed by the definition of pathwise kinetic solution, and obtain Sobolev regularity for $u^{[m]}$ in Lemma \ref{lemma/sobolev regularity of u^m}.
We shall need the following estimate, which follows immediately from Poincar\'e inequality.

\begin{lemma}\label{lemma/poincare inequality for u^(m+1/2)}
Let $v:Q\to\R$ be a measurable function such that $v^{[\nicefrac{m+1}{2}]}\in H_0^1(Q)$.
Then, for $C=C(Q)$, we have 
\begin{equation}
    \|v\|_{L^{m+1}(Q)}^{m+1}=\left\| v^{\left[\frac{m+1}{2}\right]}\right\|_{L^2(Q)}^2\leq C\left\|\nabla v^{\left[\frac{m+1}{2}\right]}\right\|_{L^2(Q)}^2. \nonumber
\end{equation}
\end{lemma}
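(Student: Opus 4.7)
The proof is essentially a direct unpacking of definitions followed by an invocation of the standard Poincaré inequality, so I would present it as follows.

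First I would verify the equality. Recall the shorthand $v^{[m']} := |v|^{m'-1}v$, so that with $m' = \frac{m+1}{2}$ we have $v^{[\frac{m+1}{2}]} = |v|^{\frac{m-1}{2}}v$. Squaring gives $\bigl|v^{[\frac{m+1}{2}]}\bigr|^2 = |v|^{m-1}\cdot |v|^2 = |v|^{m+1}$ pointwise a.e. on $Q$. Integrating this identity over $Q$ yields
\begin{equation}
\bigl\| v^{[\frac{m+1}{2}]}\bigr\|_{L^2(Q)}^2 = \int_Q |v|^{m+1}\,dx = \|v\|_{L^{m+1}(Q)}^{m+1}, \nonumber
\end{equation}
which is the first equality in the statement.

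For the inequality, the hypothesis $v^{[\frac{m+1}{2}]} \in H_0^1(Q)$ and the smoothness (hence boundedness) of the domain $Q$ place the function $v^{[\frac{m+1}{2}]}$ in the scope of the standard Poincaré inequality for $H_0^1$-functions on a bounded domain: there exists a constant $C = C(Q)$ such that
\begin{equation}
\bigl\|w\bigr\|_{L^2(Q)}^2 \leq C \bigl\|\nabla w\bigr\|_{L^2(Q)}^2 \qquad \text{for every } w \in H_0^1(Q). \nonumber
\end{equation}
Applying this with $w := v^{[\frac{m+1}{2}]}$ and combining with the preceding equality gives exactly the bound claimed in the lemma.

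There is no real obstacle here; the only mild subtlety is recognising that the exponent $\frac{m+1}{2} > 0$ (since $m \in (0,\infty)$) makes the map $\xi \mapsto \xi^{[\frac{m+1}{2}]}$ a well-defined continuous odd function on $\R$, so that $v^{[\frac{m+1}{2}]}$ is an unambiguous measurable function to which the Sobolev framework applies. Once this is noted, the proof is one line of algebra plus one citation of Poincaré.
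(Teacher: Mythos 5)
Your proof is correct and follows exactly the route the paper intends: the paper itself introduces this lemma with the remark that it ``follows immediately from Poincar\'e inequality,'' and your argument supplies precisely that, namely the pointwise identity $|v^{[\frac{m+1}{2}]}|^2 = |v|^{m+1}$ followed by an application of the standard $H^1_0$ Poincar\'e inequality on the bounded domain $Q$.
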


\begin{proposition}\label{proposition/singular moments for defect measures d in (0,1)}
Let $u_0\in L^2(Q)$ and $\delta \in(0,1]$ be arbitrary.
Suppose that $u$ is a pathwise kinetic solution of \eqref{formula/stochastic porous media equation} with initial data $u_0$.
Then, for each $T>0$, for a constant $C=C(m,Q,T,A,z)$, 
\begin{align}\label{proposition/singular moments for defect measures d in (0,1)/formula 1}
\begin{aligned}
\hspace{-4mm}
    \|u\|_{L^{\infty}([0,T];L^{1+\delta}(Q))}^{1+\delta}\!\!
    +
    \!\delta\!\int_{0}^{T}\!\!\!\int_{Q\times\R}\!\!\!\!\!\!\!\!\!|\xi|^{\delta-1}\!\left(p+q\right)dx\dsp d\xi\dsp dr
    &
    \leq
    C\left(\|u_0\|_{L^{1+\delta}(Q)}^{1+\delta}+\|u_0\|_{L^{2}(Q)}^2\right).
\end{aligned}
\end{align}
\end{proposition}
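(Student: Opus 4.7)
The plan is to establish \eqref{proposition/singular moments for defect measures d in (0,1)/formula 1} first for the smooth approximating solutions $\ueta$ of \eqref{formula/regularized porous media equation 1} with bounds uniform in $\eta,\epsilon$, and then pass to the limit using the strong convergence $\ueta\to u$ established in Section \ref{section/existence of pathwise kinetic solutions}, together with lower semicontinuity of the $L^{1+\delta}$-norm and Fatou's lemma applied to the nonnegative measures $|\xi|^{\delta-1}(\peta+\qeta)$. Since $\ueta$ vanishes on $\partial Q$, so does $(\ueta)^{[\delta]}$; after a standard smoothing at the origin (needed when $\delta<1$ to handle the non-smoothness of $\xi\mapsto\xi^{[\delta]}$ at $0$), it is a legitimate test function for the classical equation \eqref{formula/regularized porous media equation 1}.

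Testing \eqref{formula/regularized porous media equation 1} against $(\ueta)^{[\delta]}$ and integrating by parts, the pointwise identity
\[
\nabla u^{[m]}\!\cdot\!\nabla u^{[\delta]}=m\delta|u|^{m+\delta-2}|\nabla u|^2=\delta|u|^{\delta-1}\tfrac{4m}{(m+1)^2}\bigl|\nabla u^{[\nicefrac{m+1}{2}]}\bigr|^2,
\]
combined with the analogous $\eta$-identity $\eta\nabla u\!\cdot\!\nabla u^{[\delta]}=\eta\delta|u|^{\delta-1}|\nabla u|^2$, identifies the defect-measure contributions and yields the energy identity
\[
\tfrac{1}{1+\delta}\partial_t\|\ueta\|_{L^{1+\delta}(Q)}^{1+\delta}+\delta\!\!\int_{Q\times\R}\!\!\!\!|\xi|^{\delta-1}(\peta+\qeta)\,dx\,d\xi=-\!\!\int_Q\!\!A(x,\ueta)\dot z^\epsilon_t\cdot\nabla(\ueta)^{[\delta]}\,dx.
\]
To reformulate the noise term, I would introduce the primitive $\tilde A_k(x,\xi):=\int_0^\xi A_k(x,\tau)\delta|\tau|^{\delta-1}d\tau$ for each column $A_k$ of $A$, so that $\partial_\xi\tilde A_k=\delta|\cdot|^{\delta-1}A_k$. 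The chain rule and an integration by parts in $x$ (exploiting $\tilde A_k(x,\ueta)|_{\partial Q}=\tilde A_k(x,0)=0$) rewrite the noise contribution as the rough integral $\int_0^t F^{\eta,\epsilon}(r)\cdot dz^\epsilon_r$ of the scalar path $F^{\eta,\epsilon}_k(r):=\int_Q\!\int_0^{\ueta(x,r)}\!(\nabla_x\!\cdot\!A_k)(x,\tau)\delta|\tau|^{\delta-1}d\tau\,dx$.

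The crucial point is that the assumption $\nabla_x\!\cdot\!A_k(x,0)=0$ gives $|(\nabla_x\!\cdot\!A_k)(x,\tau)|\leq C|\tau|$ for small $\tau$ (and is globally bounded), so the inner $\tau$-integral is dominated by $C|\ueta(x,r)|^{1+\delta}$ and hence $|F^{\eta,\epsilon}(r)|\leq C\|\ueta(\cdot,r)\|_{L^{1+\delta}(Q)}^{1+\delta}$ uniformly in $\eta,\epsilon$. The rough integration estimates of Section \ref{section/rough path estimates}, together with the time regularity of $F^{\eta,\epsilon}$ inherited from the energy identity, then yield a pathwise bound $|\int_0^t F^{\eta,\epsilon}\,dz^\epsilon|\leq C(T,A,z)(1+\|\ueta\|_{L^\infty([0,T];L^{1+\delta}(Q))}^{1+\delta})$, uniform for $\{z^\epsilon\}$ in a $d_\alpha$-neighbourhood of $z$. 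Combining this with the elementary Hölder bound $\|u\|_{L^{1+\delta}(Q)}^{1+\delta}\leq C_Q(1+\|u\|_{L^2(Q)}^2)$ (valid since $1+\delta\leq 2$) and the stable $L^2$ estimate of Proposition \ref{proposition/stable estimate for L^2 norm and defect measures of smoothed solutions}, possibly after a time-splitting argument on subintervals of length small enough to absorb the leading $L^{1+\delta}$-contribution into the left-hand side, delivers \eqref{proposition/singular moments for defect measures d in (0,1)/formula 1} for $\ueta$ with constants uniform in $\eta,\epsilon$; the $\eta,\epsilon\to 0$ limit then concludes. The main obstacle is the pathwise rough-path control of the noise integral: the cancellation $\nabla_x\!\cdot\!A(x,0)=0$ is precisely what prevents the singular weight $|\tau|^{\delta-1}$ from producing a divergent integrand when $\delta<1$, and the absorption/iteration argument must be calibrated carefully because the rough-path bound on the noise is not a priori small.
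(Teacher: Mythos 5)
Your strategy departs substantially from the paper's, and it has two gaps that make it unsuitable as a proof of the proposition as stated.

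First, there is a circularity issue. The proposition is stated for an \emph{arbitrary} pathwise kinetic solution $u$, and it is invoked inside the proof of Theorem \ref{theorem/uniqueness of pathwise kinetic solutions for nonnegative initial data} to bound the defect measures of two a priori distinct solutions $u^1,u^2$, before uniqueness is available. Your argument only establishes the estimate for the particular $u$ constructed as a strong limit of the $\ueta$ in Section \ref{section/existence of pathwise kinetic solutions}; since at the point of application you cannot yet assert that a given pathwise kinetic solution arises this way, the proposal does not prove what is needed. The paper avoids this by working directly with the kinetic formulation \eqref{definition/pathwise kinetic solution/formula 1} of $u$ itself: it tests against the transport along characteristics of $\phi_\beta(x)g_\theta(\xi)$, where $g_\theta$ is a bounded-derivative approximation to $\xi\mapsto\xi^{[\delta]}$, iterates along a fine partition of $[0,T]$, and controls the boundary error terms from $\phi_\beta$ and the convective term from $\nabla_x\Pi$ so as to absorb them into the left-hand side.

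Second, the rough-path bound on the noise integral is asserted, not established, and it is in fact the hard part. After your integration by parts the noise contributes $\int_0^t F^{\eta,\epsilon}(r)\cdot\dot{z}^\epsilon_r\,dr$ with $F^{\eta,\epsilon}_k(r)=\int_Q\int_0^{\ueta(x,r)}(\nabla_x\!\cdot\! A_k)(x,\tau)\,\delta|\tau|^{\delta-1}\,d\tau\,dx$; but to get a bound uniform in $\epsilon$ you must integrate $F^{\eta,\epsilon}$ against $dz^\epsilon$ in a manner stable under $d_\alpha$-convergence of $z^\epsilon$. Proposition \ref{proposition/stability results for RDEs} gives stability of flows of RDEs, not of arbitrary integrals $\int F\,dz^\epsilon$; you would need $F^{\eta,\epsilon}$ to be a controlled rough path with uniform-in-$\epsilon$ bounds on its Gubinelli derivative and remainder, and the only way to extract that would be to differentiate $F^{\eta,\epsilon}$ in time using the PDE for $\ueta$, which reintroduces $\dot z^\epsilon$ and leads to exactly the kind of implicit controlled-path analysis the transported-test-function framework was designed to bypass. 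You flag this yourself as ``the main obstacle,'' and indeed without a concrete construction of the Gubinelli derivative of $F^{\eta,\epsilon}$ and a proof that it is controlled uniformly in $\eta,\epsilon$, the bound $|\int_0^t F^{\eta,\epsilon}\,dz^\epsilon|\leq C(T,A,z)\big(1+\|\ueta\|_{L^\infty_tL^{1+\delta}_x}^{1+\delta}\big)$ does not follow. In the paper's approach the noise term is cancelled identically by the transport, so no rough integration is required; what remains are finite-dimensional rough-path estimates on the characteristics $(Y,\Pi)$, which \emph{are} covered by Proposition \ref{proposition/stability results for RDEs}, plus boundary and convective error terms handled by the cutoff, the $H^1_0$ regularity of $u^{[\nicefrac{m+1}{2}]}$, and the smallness condition on the partition diameter.
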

\begin{proof}
Let $\delta\in(0,1]$ be arbitrary.
Let $u$ be a pathwise kinetic solution with initial data $u_0\in L^2(Q)$.
We shall write $\chi$ for its kinetic function, $p$ and $q$ for its entropy and parabolic defect measure respectively, and $\mathcal{N}\subseteq (0,\infty)$ for its null set.
The proof is based on an iterative argument along a suitably fine partition of $[0,T]$ and on the idea of formally testing the kinetic equation \eqref{definition/pathwise kinetic solution/formula 1} against the function $\R\ni\xi\mapsto\xi^{[\delta]}$.
Consider a partition $\mathcal{P}=\{t_0=0<t_1\dots<t_N=T\}$ with the constraints that $\mathcal{P}\subseteq [0,T]\setminus\mathcal{N}$ and that the diameter $|\mathcal{P}|=\max_i|t_{i+1}-t_i|$ is suitably small, as specified by conditions \eqref{d.16} and \eqref{d.20} below.
For each $\beta\in(0,1)$, consider the cutoff $\phi_\beta\in C_c^{\infty}(Q)$ introduced in \eqref{0.8}.
We stress that we consider this particular cutoff only to ease the referencing to follow, but its peculiar shape is not needed in this proof and any cutoff of scale $\beta$ would work.
According to Lemma \ref{lemma/velocity of characteristics comparable to initial data}, there exists a positive constant, which we denote by $C_T>1$, such that
\begin{equation}\label{d.1}
    C_T^{-1}|\xi|\leq\left|\Pi_{t,s}^{x,\xi}\right|\leq C_T|\xi|, \quad\text{for each $(x,\xi)\in\R^d\times\R$ and $s\leq t\in[0,T]$.}
\end{equation}
For each $\theta\in(0,1)$, we shall consider an approximation to the function $\R\ni\xi\to\xi^{[\delta]}$ with bounded derivatives.
Namely, we introduce the piecewise $C^1$ function $g_{\theta}:\R\to\R$ defined by
\begin{align}\label{d.2}
    g_{\theta}(\xi):=\int_0^{\xi}\Dot{g}_{\theta}(\eta)\,d\eta, \quad\text{ and } \quad
    \Dot{g}_{\theta}(\xi)=\empheqlbrace
    \begin{aligned}
    &\delta \left(C_T\theta\right)^{\delta-1}\quad\text{if } |\xi|\leq C_T\theta,
    \\
    &\delta|\xi|^{\delta-1}\quad\qquad\text{if }C_T\theta\leq|\xi|\leq C_T\theta^{-1},
    \\
    &0\quad\qquad\qquad\text{ else}.
    \end{aligned}
\end{align}
The following properties follow immediately
\begin{align}\label{d.3}
    0\leq\dot{g}_{\theta}(\xi)\leq\delta\left(C_T\theta\right)^{\delta-1}\,\,\text{for each }\theta\in(0,1),\text{ and }
    \Dot{g}_{\theta}(\xi)\uparrow\delta|\xi|^{\delta-1} \text{ as }\theta\to0,\,\, \forall\xi\in\R.
\end{align}
Similarly, we have
\begin{align}\label{d.4}
    |g_{\theta}(\xi)|\leq C_T^\delta\theta^{-\delta}\,\,\,\text{for each }\theta\in(0,1),\text{ and }\lim_{\theta\to0}g_{\theta}(\xi)=\xi^{[\delta]},\,\,\, \forall\xi\in\R.
\end{align}
Now, for each $i=0,\dots,N-1$ and each $\theta,\beta\in(0,1)$, we test the kinetic equation \eqref{definition/pathwise kinetic solution/formula 1} of $u$ against the transport along characteristics, started from time $t_i\in\mathcal{P}$, of the function $\phi_\beta(x)g_{\theta}(\xi)\in C(Q\times\R)$.
An approximation argument shows that $\phi_\beta(x)g_{\theta}(\xi)$ is indeed an admissible test function. 
For any $t\in[t_i,t_{i+1}]\setminus\mathcal{N}$, after using the integration by parts formula \eqref{formula/integration by parts formula}, the kinetic equation becomes
\begin{align}\label{d.5}
\begin{aligned}
            0=
            &
            \int_{Q\times\R}\!\!\!\!\!\!\chi_r(x,\xi)\phi_\beta(Y_{r,r-t_i}^{x,\xi})g_{\theta}(\Pi_{r,r-t_i}^{x,\xi})\,dx\,d\xi\bigg|_{r=t_i}^{r=t}
            \\
            &
            +
            \int_{t_i}^{t}\!\int_{Q\times\R}\!\!\!\!\!\!\phi_\beta(Y_{r,r-t_i}^{x,\xi})\dot{g}_{\theta}(\Pi_{r,r-t_i}^{x,\xi})\partial_{\xi}\Pi_{r,r-t_i}^{x,\xi}\left(p_r+q_r\right)\,dx\,d\xi\,dr
            +
            BE_i^{\text{vel}}
            \\
            &      
            +\frac{2m}{m+1}\int_{t_i}^{t}\!\int_{Q}\!\!\!|u|^{\frac{m-1}{2}}\nabla u^{\left[\frac{m+1}{2}\right]}\phi_\beta(Y_{r,r-t_i}^{x,u})\dot{g}_{\theta}(\Pi_{r,r-t_i}^{x,u})\nabla_x\Pi_{r,r-t_i}^{x,u}\,dx\,dr
            +
            BE_i^{\text{space}},
\end{aligned}
\end{align}
for the boundary velocity error relative to the interval $[t_i,t_{i+1}]$
\begin{align}\label{d.6}
\begin{aligned}
        BE_i^{\text{vel}}:=
        \int_{t_i}^{t}\!\int_{Q\times\R}\!\!\!\!\!\!\nabla_{\!y}\phi_\beta(Y_{r,r-t_i}^{x,\xi})\partial_{\xi}Y_{r,r-t_i}^{x,\xi}g_{\theta}(\Pi_{r,r-t_i}^{x,\xi})\left(p_r+q_r\right)\,dx\,d\xi\,dr,
\end{aligned}
\end{align}
and the boundary space error relative to the interval $[t_i,t_{i+1}]$
\begin{align}\label{d.7}
\begin{aligned}
        BE_i^{\text{space}}:=
        \frac{2m}{m+1}\int_{t_i}^{t}\!\int_{Q}\!\!\!|u|^{\frac{m-1}{2}}\nabla u^{\left[\frac{m+1}{2}\right]}\nabla_{\!y}\phi_\beta(Y_{r,r-t_i}^{x,u})D_x\!Y_{r,r-t_i}^{x,u}g_{\theta}(\Pi_{r,r-t_i}^{x,u})\,dx\,dr.
\end{aligned}
\end{align}
Now the idea is to let $\beta\to0$ first and $\theta\to0$ next, and to show that \eqref{d.5} yields the desired inequality \eqref{proposition/singular moments for defect measures d in (0,1)/formula 1} in the interval $[t_i,t_{i+1}]$.
Then, an iteration argument along the partition $\mathcal{P}$ completes the proof.

Let us first show that the boundary errors vanish in the $\beta\to0$ limit.
The boundary velocity error term is handled with the same arguments \eqref{9.5}-\eqref{9.6} as for the error terms $BE^{\sgn 1, 3}_i$ in the uniqueness proof.
Namely, using \eqref{9.11} with $k=1$, estimate \eqref{formula/DxiY near the boundary}, and the bound \eqref{d.4} for $g_{\theta}$ with $\theta$ fixed, we conclude that
\begin{align}\label{d.8bis}
\begin{aligned}
        \lim_{\beta\to0}\left|BE_i^{\text{vel}}\right|=0.
\end{aligned}
\end{align}

For the boundary space error, we start by splitting the integral according to the parameter $\theta\in(0,1)$:
\begin{align}\label{d.9}
\begin{aligned}
        BE_i^{\text{space}}
        =&
        \frac{2m}{m+1}\int_{t_i}^{t}\!\int_{Q\cap\{|u|\geq\theta\}}\!\!\!\!\!\!|u|^{\frac{m-1}{2}}\nabla u^{\left[\frac{m+1}{2}\right]}\nabla_{\!y}\phi_\beta(Y_{r,r-t_i}^{x,u})D_x\!Y_{r,r-t_i}^{x,u}g_{\theta}(\Pi_{r,r-t_i}^{x,u})\,dx\,dr
        \\
        &+
        \frac{2m}{m+1}\int_{t_i}^{t}\!\int_{Q\cap\{|u|<\theta\}}\!\!\!\!\!\!|u|^{\frac{m-1}{2}}\nabla u^{\left[\frac{m+1}{2}\right]}\nabla_{\!y}\phi_\beta(Y_{r,r-t_i}^{x,u})D_x\!Y_{r,r-t_i}^{x,u}g_{\theta}(\Pi_{r,r-t_i}^{x,u})\,dx\,dr.
\end{aligned}
\end{align}
For the first term in \eqref{d.9} we compute
\begin{align}\label{d.10}
\begin{aligned}
        \bigg| \frac{2m}{m+1}&\int_{t_i}^{t}\!\int_{Q\cap\{|u|\geq\theta\}}\!\!\!\!\!\!|u|^{\frac{m-1}{2}}\nabla u^{\left[\frac{m+1}{2}\right]}\nabla_{\!y}\phi_\beta(Y_{r,r-t_i}^{x,u})D_x\!Y_{r,r-t_i}^{x,u}g_{\theta}(\Pi_{r,r-t_i}^{x,u})\,dx\,dr\bigg|
        \\
        &\leq
        C \int_{t_i}^{t_{i+1}}\!\!\!\int_{Q^\beta\cap\{|u|\geq\theta\}}\!\!\!\!\!\!\theta^{-1}\left|u^{\left[\frac{m+1}{2}\right]}\right|\left|\nabla u^{\left[\frac{m+1}{2}\right]}\right|\beta^{-1}\,dx\,dr
        \\
        &\leq
        C \int_{t_i}^{t_{i+1}}\!\!\!\int_{Q^\beta\cap\{|u|\geq\theta\}}\!\!\left|\nabla u^{\left[\frac{m+1}{2}\right]}\right|^2\,dx\,dr,
\end{aligned}
\end{align}
for a constant $C=C(\theta,\delta,m,Q,T,A,z)$ independent of $\beta\in(0,1)$.
In the first passage we used \eqref{9.11}, \eqref{d.4} and $|u|^{\frac{m-1}{2}}\leq\theta^{-1}|u|^{\frac{m+1}{2}}$.
In the second passage we used the Sobolev regularity $u^{\left[\frac{m+1}{2}\right]}\in H_0^1(Q)$ and the mean value theorem applied to points $x\in Q^\beta$.
For the second term in \eqref{d.9}, we preliminarily observe that, for any $(x,\xi)\in Q\times[t_i,T]$,
\begin{equation}\label{d.11}
    \text{if } |u|\leq\theta,\text{ then }\left|g_{\theta}(\Pi_{r,r-t_i}^{x,u})\right|\leq\delta C_T^{\delta}\theta^{\delta-1}|u|.
\end{equation}
Indeed, when $|u|\leq\theta$, Lemma \ref{lemma/velocity of characteristics comparable to initial data} guarantees that $\left|\Pi_{r,r-t_i}^{x,u}\right|\leq C_T|u|\leq C_T\theta$, and in turn definition \eqref{d.2} implies that $\left|g_{\theta}(\Pi_{r,r-t_i}^{x,u})\right|\leq\delta C_T^{\delta-1}\theta^{\delta-1}\left|\Pi_{r,r-t_i}^{x,u}\right|$.
Then we calculate
\begin{align}\label{d.12}
\begin{aligned}
        \bigg| \frac{2m}{m+1}&\int_{t_i}^{t}\!\int_{Q\cap\{|u|<\theta\}}\!\!\!\!\!\!|u|^{\frac{m-1}{2}}\nabla u^{\left[\frac{m+1}{2}\right]}\nabla_{\!y}\phi_\beta(Y_{r,r-t_i}^{x,u})D_x\!Y_{r,r-t_i}^{x,u}g_{\theta}(\Pi_{r,r-t_i}^{x,u})\,dx\,dr\bigg|
        \\
        &\leq
        C \int_{t_i}^{t_{i+1}}\!\!\!\int_{Q^\beta\cap\{|u|<\theta\}}\!\!\left|u^{\left[\frac{m+1}{2}\right]}\right|\left|\nabla u^{\left[\frac{m+1}{2}\right]}\right|\beta^{-1}\,dx\,dr
        \\
        &\leq
        C \int_{t_i}^{t_{i+1}}\!\!\!\int_{Q^\beta\cap\{|u|<\theta\}}\!\!\left|\nabla u^{\left[\frac{m+1}{2}\right]}\right|^2\,dx\,dr,
\end{aligned}
\end{align}
for a constant $C=C(\theta,\delta,m,Q,T,A,z)$ independent of $\beta\in(0,1)$.
In the first passage we used observation \eqref{9.11} and observation \eqref{d.11}.
In the second passage we used $u^{\left[\frac{m+1}{2}\right]}\in H_0^1(Q)$ and the mean value theorem applied to points $x\in Q^\beta$.
In conclusion, combining the splitting \eqref{d.9} with \eqref{d.10} and \eqref{d.12}, the Sobolev regularity $u^{\left[\frac{m+1}{2}\right]}\in L^2([0,T];H_0^1(Q))$ and the dominated convergence theorem yield
\begin{equation}\label{d.13}
    \lim_{\beta\to0}\left|BE_i^{\space}\right|=0.
\end{equation}
This concludes the analysis of the error terms.

We now consider the remaining terms in \eqref{d.5}.
For the first term on the right-hand side of \eqref{d.5}, owing to the integrability $u\in L^{\infty}([0,T];L^2(Q))$ from Definition \ref{definition/pathwise kinetic solution}, properties of the kinetic function, definition \eqref{d.2} and the proportionality \eqref{d.1}, we apply the dominated convergence theorem and then use again formula \eqref{d.1} to get
\begin{align}\label{d.14}
    \begin{aligned}
        \lim_{\theta\to0}\lim_{\beta\to0}\int_{Q\times\R}\!\!\!\!\!\!\!\!\chi_r\,\phi_\beta(Y_{r,r-t_i}^{x,\xi})g_{\theta}(\Pi_{r,r-t_i}^{x,\xi})\,dx\,d\xi\bigg|_{r=t_i}^{r=t}\!\!\!
        &=
        \int_{Q\times\R}\!\!\!\!\!\!\chi_r\,(\Pi_{r,r-t_i}^{x,\xi})^{[\delta]}\,dx\,d\xi\bigg|_{r=t_i}^{r=t}
        \\
        &\geq
        C_T^{-1}\!\int_Q\!\!|u(x,t)|^{1+\delta}dx
        -
        C_T\int_Q\!\!|u(x,t_i)|^{1+\delta}dx.
    \end{aligned}
\end{align}

For the second term in \eqref{d.5}, we first make the following observation.
Namely, Proposition \ref{proposition/stability results for RDEs} and the equality $\partial_{\xi}\Pi_{r,0}^{x,\xi}\equiv1\,\,\forall(x,\xi,r)\in\R^d\times\R\times[0,\infty)$ imply there exists a suitably small value, which we denote by $t^*>0$, such that, for any $(x,\xi)\in\R^d\times\R$ and any $s\leq r\in[0,T]$,
\begin{equation}
    \label{d.15}
    \text{if }\, s\leq t^*,\, \text{ then }\,\,\partial_{\xi}\Pi_{r,s}^{x,\xi}\geq\frac{3}{4}.
\end{equation}
Therefore, if the partition $\mathcal{P}$ satisfies the condition
\begin{equation}
    \label{d.16}
    |\mathcal{P}|=\max_i|t_{i+1}-t_i|\leq t^*,
\end{equation}
then observation \eqref{d.15} and the nonnegativity of the terms integrated yield
\begin{align}\label{d.17}
\begin{aligned}
     \int_{t_i}^{t}\!\int_{Q\times\R}&\!\!\!\!\!\!\!\phi_\beta(Y_{r,r-t_i}^{x,\xi})\,\dot{g}_{\theta}(\Pi_{r,r-t_i}^{x,\xi})\,\partial_{\xi}\Pi_{r,r-t_i}^{x,\xi}\left(p_r\!+\!q_r\right)\,dx\dsp d\xi\dsp dr
     \\
     &
     \geq
     \frac{3}{4} \int_{t_i}^{t}\!\int_{Q\times\R}\!\!\!\!\!\!\!\!\phi_\beta(Y_{r,r-t_i}^{x,\xi})\,\dot{g}_{\theta}(\Pi_{r,r-t_i}^{x,\xi})\left(p_r\!+\!q_r\right)\,dx\dsp d\xi\dsp dr.
\end{aligned}
\end{align}

For the fourth term on the right-hand side of \eqref{d.5}, using \eqref{d.1} and \eqref{d.3}, we preliminarily notice that
\begin{equation}
    \label{d.18}
    0\leq\dot{g}_{\theta}(\Pi_{r,r-t_i}^{x,u})\leq\delta\, C_T^{\delta-1}\,|u|^{\delta-1}\,\,\,\,\forall(x,r)\in Q\times[t_i,T].
\end{equation}
Then we compute, for a constant $C^*=C^*(m,Q,T,A,z)$,
\begin{align}\label{d.19}
\begin{aligned}
\hspace{-4mm}
    \bigg|&\frac{2m}{m+1}\int_{t_i}^{t}\!\int_{Q}\!\!\!|u|^{\frac{m-1}{2}}\nabla u^{\left[\frac{m+1}{2}\right]}\phi_\beta(Y_{r,r-t_i}^{x,u})\dot{g}_{\theta}(\Pi_{r,r-t_i}^{x,u})\nabla_x\Pi_{r,r-t_i}^{x,u}\,dx\,dr\bigg|
    \\
    &\leq
    C^*\!\int_{t_i}^{t}\!\int_{Q}\!\!\!\!|u|^{\frac{m-1}{2}}\!\left|\nabla_x\Pi_{r,r-t_i}^{x,u}\!\right|\dot{g}_{\theta}(\Pi_{r,r-t_i}^{x,u})^{\frac{1}{2}}\phi_\beta(Y_{r,r-t_i}^{x,u})^{\frac{1}{2}}\left|\nabla u^{\left[\frac{m+1}{2}\right]}\right|\dot{g}_{\theta}(\Pi_{r,r-t_i}^{x,u})^{\frac{1}{2}}\phi_\beta(Y_{r,r-t_i}^{x,u})^{\frac{1}{2}}\,dx\,dr
    \\
    &\leq
    C^*|t_{i+1}-t_i|^\alpha\int_{t_i}^{t}\!\int_{Q}\!\!\!\!|u|^{\frac{m+\delta}{2}}\delta^{\frac{1}{2}}\phi_\beta(Y_{r,r-t_i}^{x,u})^{\frac{1}{2}}\left|\nabla u^{\left[\frac{m+1}{2}\right]}\right|\dot{g}_{\theta}(\Pi_{r,r-t_i}^{x,u})^{\frac{1}{2}}\phi_\beta(Y_{r,r-t_i}^{x,u})^{\frac{1}{2}}\,dx\,dr
    \\
    &\leq
    C^*|\mathcal{P}|^\alpha\left(
    \delta\int_{t_i}^{t}\!\int_{Q}\!\!\!\!|u|^{m+\delta}dx\,dr
    +
    \int_{t_i}^{t}\!\int_{Q\times\R}\!\!\!\!\!\!\dot{g}_{\theta}(\Pi_{r,r-t_i}^{x,\xi})\phi_\beta(Y_{r,r-t_i}^{x,\xi})q_r(x,\xi)\,dx\,dr
    \right)
    \\
    &\leq
    C^*|\mathcal{P}|^\alpha\left(
    \delta\left(\int_{t_i}^{t}\!\int_{Q\times\R}\!\!\!\!\!\!q_r\,dx\,d\xi\,dr\right)^{\frac{m+\delta}{m+1}}
    +
    \int_{t_i}^{t}\!\int_{Q\times\R}\!\!\!\!\!\!\dot{g}_{\theta}(\Pi_{r,r-t_i}^{x,\xi})\phi_\beta(Y_{r,r-t_i}^{x,\xi})q_r(x,\xi)\,dx\,dr
    \right).
\end{aligned}
\end{align}
In the second passage we used \eqref{d.18} and \eqref{lemma/velocity of characteristics comparable to initial data/formula 2}.
The third passage follows from H\"older's inequality and the definition of parabolic defect measure.
In the last passage we controlled the first term with H\"older's inequality and Lemma \ref{lemma/poincare inequality for u^(m+1/2)}.

Therefore, if the partition $\mathcal{P}$ satisfies the further condition, for the constant $C^*$ from \eqref{d.19},
\begin{equation}
    \label{d.20}
    C^*|\mathcal{P}|<\frac{1}{4},
\end{equation}
then, for the second and fourth term in \eqref{d.5}, combining \eqref{d.17} and \eqref{d.19}, the monotone convergence theorem, property \eqref{d.3} and the proportionality \eqref{d.1} yield
\begin{align}\label{d.21}
\begin{aligned}
    &\liminf_{\theta\to0}\liminf_{\beta\to0}\int_{t_i}^{t}\!\int_{Q\times\R}\!\!\!\!\!\!\phi_\beta(Y_{r,r-t_i}^{x,\xi})\dot{g}_{\theta}(\Pi_{r,r-t_i}^{x,\xi})\partial_{\xi}\Pi_{r,r-t_i}^{x,\xi}\left(p_r+q_r\right)\,dx\,d\xi\,dr
    \\
    &\qquad\qquad\qquad\quad
    +\frac{2m}{m+1}\int_{t_i}^{t}\!\int_{Q}\!\!\!|u|^{\frac{m-1}{2}}\nabla u^{\left[\frac{m+1}{2}\right]}\phi_\beta(Y_{r,r-t_i}^{x,u})\dot{g}_{\theta}(\Pi_{r,r-t_i}^{x,u})\nabla_x\Pi_{r,r-t_i}^{x,u}\,dx\,dr
    \\
    &
    \quad
    \geq
    \lim_{\theta\to0}\lim_{\beta\to0}\frac{2}{4}\int_{t_i}^{t}\!\int_{Q\times\R}\!\!\!\!\!\!\!\phi_\beta(Y_{r,r-t_i}^{x,\xi})\dot{g}_{\theta}(\Pi_{r,r-t_i}^{x,\xi})\left(p_r+q_r\right)\,dx\,d\xi\,dr
    -\frac{\delta}{4}\left(\int_{t_i}^{t}\!\int_{Q\times\R}\!\!\!\!\!\!q_r\,dx\,d\xi\,dr\right)^{\frac{m+\delta}{m+1}}
    \\
    &
    \quad
    \geq
    \frac{1}{2}\,\delta\, C_T^{\delta-1}\int_{t_i}^{t}\!\int_{Q\times\R}\!\!\!\!\!\!|\xi|^{\delta-1}\left(p_r+q_r\right)\,dx\,d\xi\,dr
    -\frac{\delta}{4}\left(\int_{t_i}^{t}\!\int_{Q\times\R}\!\!\!\!\!\!q_r\,dx\,d\xi\,dr\right)^{\frac{m+\delta}{m+1}}.
\end{aligned}   
\end{align}
We now conclude the proof with an iterative argument along the partition $\mathcal{P}$.
Let us first consider the case $\delta=1$.
Passing to the limit $\beta\to0$ first and $\theta\to0$ next in equation \eqref{d.5}, using \eqref{d.8bis}, \eqref{d.13}, \eqref{d.14}, and \eqref{d.21} with $\delta=1$, and recalling that $t\in[t_{i},t_{i+1}]\setminus\mathcal{N}$ is arbitrary, we obtain
\begin{equation}
    \label{d.22}
    C_T^{-1}\|u\|_{L^{\infty}([t_i,t_{i+1}];L^2(Q))}^2+\frac{1}{4}\int_{t_i}^{t_{i+1}}\!\!\!\int_{Q\times\R}\!\!\!\!\!\!\left(p_r+q_r\right)\,dx\,d\xi\,dr
    \leq
    C_T\|u(\cdot,t_i)\|_{L^2(Q)}^2.
\end{equation}
Then, arguing iteratively first in the interval $[0,t_1]$, then in $[t_1,t_2]$, and so forth up to $[t_{N-1},T]$, we conclude that
\begin{equation}
    \label{d.23}
    \|u\|_{L^{\infty}([0,T];L^2(Q))}^2+\int_{0}^{T}\!\!\!\int_{Q\times\R}\!\!\!\!\!\!\left(p_r+q_r\right)\,dx\,d\xi\,dr
    \leq
    C\|u_0\|_{L^2(Q)}^2,
\end{equation}
for a constant $C=C(m,Q,T,A,z)$ depending on the constant $C_T$ from \eqref{d.1} and on the cardinality of the partition $\mathcal{P}$, that is on the conditions \eqref{d.16} and \eqref{d.20}.

Finally, let us consider any arbitrary $\delta\in(0,1]$.
Taking again the limit $\beta\to0$ first and $\theta\to0$ next in equation \eqref{d.5}, using \eqref{d.8bis}, \eqref{d.13}, \eqref{d.14} and \eqref{d.21}, and recalling that $t\in[t_{i},t_{i+1}]\setminus\mathcal{N}$ is arbitrary, we obtain
\begin{align}
\begin{aligned}
    \label{d.24}
    C_T^{-1}\|u\|_{L^{\infty}([t_i,t_{i+1}];L^{1+\delta}(Q))}^{1+\delta}+\frac{\delta}{2}C_T^{\delta-1}\int_{t_i}^{t_{i+1}}\!\!\!\int_{Q\times\R}\!\!\!\!\!\!|\xi|^{\delta-1}\left(p_r+q_r\right)\,dx\,d\xi\,dr
    \\
    \leq
    C_T\|u(\cdot,t_i)\|_{L^{1+\delta}(Q)}^{1+\delta}+\frac{\delta}{4}\left(\int_{t_i}^{t_{i+1}}\!\!\!\int_{Q\times\R}\!\!\!\!\!\!q_r\,dx\,d\xi\,dr\right)^{\frac{m+\delta}{m+1}}.
\end{aligned}
\end{align}
In conclusion, arguing iteratively over the intervals $[t_i,t_{i+1}]$, and then using Young's inequality and the newly found estimate \eqref{d.23}, we deduce 
\begin{align}\label{d.25}
\begin{aligned}
\hspace{-4mm}
    \|u\|_{L^{\infty}([0,T];L^{1+\delta}(Q))}^{1+\delta}\!\!
    +
    \!\delta\!\int_{0}^{T}\!\!\!\int_{Q\times\R}\!\!\!\!\!\!\!\!\!|\xi|^{\delta-1}\!\!\left(p_r+q_r\right)dx\dsp d\xi\dsp dr
    &
    \leq
    C\!\left(\!\!\|u_0\|_{L^{1+\delta}(Q)}^{1+\delta}
    \!\!+\!
    \delta\!
    \left(\int_{0}^{T}\!\!\!\int_{Q\times\R}\!\!\!\!\!\!\!\!\!\!q_r\dsp dx\dsp d\xi\dsp dr\!\right)^{\frac{m+\delta}{m+1}}
    \right)
    \\
    &
    \leq
    C\left(\|u_0\|_{L^{1+\delta}(Q)}^{1+\delta}+\|u_0\|_{L^{2}(Q)}^2\right),
\end{aligned}
\end{align}
for a constant $C=C(m,Q,T,A,z)$ depending on $C_T$ from \eqref{d.1} and on the cardinality of $\mathcal{P}$.
\end{proof}

As anticipated, we now exploit the above proposition to prove that, if $u$ is a pathwise kinetic solution, the power $u^{[m]}$ lies in a suitable Sobolev space $W^{1,p_m}_0(Q)$, for an exponent $p_m$ depending on the diffusion regime $m$.
In particular, we can pass the vanishing boundary conditions at the level of $u^{[m]}$.
We used this property in the proof of Theorem \ref{theorem/uniqueness of pathwise kinetic solutions for nonnegative initial data} to handle the error terms coming from the cutoff procedure.
Namely, Proposition \ref{proposition/singular moments for defect measures d in (0,1)} and a straightforward modification of \cite[Lemma A.1]{fehrman-gess-Path-by-path-well-posedness-of-nonlinear-diffusion-equations-with-multiplicative-noise} prove the following.

\begin{lemma}\label{lemma/sobolev regularity of u^m}
Let $u_0\in L^2(Q)$ and suppose that $u$ is a pathwise kinetic solution of \eqref{formula/stochastic porous media equation} with initial data $u_0$.
Then, for $p_m:=(\frac{m+1}{m}\wedge 2)$, for any $T>0$ we have
\begin{align}\label{lemma/sobolev regularity of u^m/ formula 1}
    u^{[m]}\in L^{p_m}\left([0,T];W_0^{1,p_m}(Q)\right)\quad\text{with}\quad\nabla u^{[m]}=\frac{2m}{m+1}|u|^{\frac{m-1}{2}}\nabla u^{\left[\frac{m+1}{2}\right]}.
\end{align}
In particular $u^{[m]}$ has vanishing trace, and we have the estimate, for $C=C(m,Q,T)$,
\begin{align}\label{lemma/sobolev regularity of u^m/ formula 2}
    \|u^{[m]}\|_{L^{p_m}([0,T];W_0^{1,p_m}(Q))}\leq
    C\left(1+\|u_0\|_{L^2(Q)}^2\right).
\end{align}
\end{lemma}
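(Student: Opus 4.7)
The plan is to first establish the pointwise chain rule identity $\nabla u^{[m]}=\frac{2m}{m+1}|u|^{(m-1)/2}\nabla u^{[(m+1)/2]}$ by approximation, and then deduce the $L^{p_m}$ integrability and the vanishing trace from Proposition \ref{proposition/singular moments for defect measures d in (0,1)} together with the finiteness of the parabolic defect measure. Writing $v:=u^{[(m+1)/2)]}$, so that $u=v^{[2/(m+1)]}$ and $u^{[m]}=v^{[\alpha]}$ with $\alpha:=2m/(m+1)$, the starting point is the regularity $v\in L^2([0,T];H_0^1(Q))$ prescribed by Definition \ref{definition/pathwise kinetic solution}.(i).

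The first step is to introduce a family of smooth, odd, monotonically increasing approximations $f_\epsilon:\mathbb{R}\to\mathbb{R}$ of $s\mapsto s^{[\alpha]}$ with $f_\epsilon(0)=0$, each $f_\epsilon$ globally Lipschitz, and such that $0\leq f_\epsilon'(s)\uparrow \alpha|s|^{\alpha-1}$ as $\epsilon\to 0$. The classical Sobolev chain rule then gives $f_\epsilon(v(\cdot,t))\in H^1_0(Q)$ with $\nabla f_\epsilon(v)=f_\epsilon'(v)\nabla v$ almost everywhere, and in particular $f_\epsilon(v)\in L^{p_m}([0,T];W^{1,p_m}_0(Q))$ with vanishing boundary trace. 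The next step is to establish a uniform-in-$\epsilon$ bound in this space. In the regime $m\in(0,1]$, for which $p_m=2$, the pointwise estimate $|f_\epsilon'(v)\nabla v|^2\leq \alpha^2|v|^{2(\alpha-1)}|\nabla v|^2=C|u|^{m-1}|\nabla v|^2$ combined with the definition \eqref{formula/parabolic defect measure} of $q$ identifies the space-time integral with $C\int_0^T\!\int_{Q\times\mathbb{R}}|\xi|^{m-1}q\,dx\,d\xi\,dr$, whose finiteness follows from Proposition \ref{proposition/singular moments for defect measures d in (0,1)} applied with $\delta=m$. In the regime $m>1$, for which $p_m=(m+1)/m<2$, Hölder's inequality with exponents $2/p_m$ and its conjugate converts the same pointwise identity into a product of $\|\nabla v\|_{L^2}^{p_m}$ and $\|u\|_{L^{m+1}}^{(m-1)p_m/(2m)}$; a further application of Hölder in time together with Proposition \ref{proposition/singular moments for defect measures d in (0,1)} taken with $\delta=m$ yields the desired uniform bound $\|f_\epsilon(v)\|_{L^{p_m}([0,T];W^{1,p_m})}\leq C(1+\|u_0\|_{L^2}^2)$.

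Finally, I would pass to the limit $\epsilon\to0$. Monotone convergence of $|f_\epsilon'(v)|^2\uparrow \alpha^2|v|^{2(\alpha-1)}$ (for $m\leq1$) or dominated convergence using the uniform bound on $|u|^{m-1}|\nabla v|^2$ from Proposition \ref{proposition/singular moments for defect measures d in (0,1)} (for $m>1$) gives the strong $L^{p_m}$-convergence $\nabla f_\epsilon(v)\to \frac{2m}{m+1}|u|^{(m-1)/2}\nabla v$, while $f_\epsilon(v)\to v^{[\alpha]}=u^{[m]}$ in $L^{p_m}$ by pointwise convergence and domination. Since $f_\epsilon(v)$ lies in the closed subspace $L^{p_m}([0,T];W^{1,p_m}_0(Q))$ for every $\epsilon\in(0,1)$, the limit $u^{[m]}$ inherits both the claimed chain rule formula and the zero-trace property. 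The quantitative estimate \eqref{lemma/sobolev regularity of u^m/ formula 2} then follows immediately by combining the uniform-in-$\epsilon$ bound from the previous step with \eqref{proposition/singular moments for defect measures d in (0,1)/formula 1}.

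The main obstacle is the regime $m\in(0,1)$: here $\alpha<1$, so $s\mapsto s^{[\alpha]}$ is only Hölder continuous and its derivative is singular at the origin, obstructing the classical Sobolev chain rule in the limit. This is precisely the difficulty that Proposition \ref{proposition/singular moments for defect measures d in (0,1)} resolves, by providing the singular moment estimate on the defect measures needed to integrate $|u|^{m-1}|\nabla v|^2$ across the set $\{u=0\}$ and to dominate the approximating gradients.
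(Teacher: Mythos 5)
Your overall strategy — approximate $s\mapsto s^{[\alpha]}$ by Lipschitz functions $f_\epsilon$, apply the classical Sobolev chain rule to $f_\epsilon(v)$ with $v=u^{[(m+1)/2]}\in L^2([0,T];H^1_0(Q))$, obtain uniform $W^{1,p_m}_0$-bounds, and pass to the limit — is the natural route, and it matches what the paper points at (Proposition~\ref{proposition/singular moments for defect measures d in (0,1)} plus a modification of the chain-rule lemma it cites). The identification of $\alpha=2m/(m+1)$, the case split at $m=1$ governing whether the singular moment estimate is needed, and the closedness argument for the vanishing trace are all correct.

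There is one concrete slip in the $m>1$ regime. You invoke Proposition~\ref{proposition/singular moments for defect measures d in (0,1)} with $\delta=m$, but that proposition is stated only for $\delta\in(0,1]$, so $\delta=m>1$ is not admissible. Fortunately your Hölder computation does not actually use the singular moment $\int|\xi|^{m-1}q$ at all: after Hölder with exponents $2/p_m$ and $2/(2-p_m)$ you are left with $\|\nabla v\|_{L^2([0,T]\times Q)}^{p_m}$ and $\|u\|_{L^{m+1}([0,T]\times Q)}^{(m-1)p_m/2}$, and both of these are controlled by Proposition~\ref{proposition/singular moments for defect measures d in (0,1)} with $\delta=1$ (finiteness of $\int q$) together with Lemma~\ref{lemma/poincare inequality for u^(m+1/2)}. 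Since $p_m/2 + (m-1)/(2m) = 1$, the resulting bound is exactly $C(1+\|u_0\|_{L^2(Q)}^2)$. Likewise, for the dominated-convergence passage in the $m>1$ case the dominating function $\alpha|v|^{\alpha-1}|\nabla v|$ lies in $L^{p_m}$ by this same Hölder estimate, again with no need for a singular moment. With the reference corrected, the argument closes. Your treatment of the $m\in(0,1]$ case, where $\delta=m\in(0,1]$ is admissible and $|u|^{m-1}|\nabla v|^2$ is recognized as a $|\xi|^{m-1}$-moment of $q$, is correct.
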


Finally, we extend Proposition \ref{proposition/singular moments for defect measures d in (0,1)} to the case $\delta=0$ and establish a bound on the first singular moment of the defect measures.
This result is used in the proof of uniqueness for diffusion exponents $m\in(0,1)\cup(1,2]$.
Informally, it implies the local $L^2$-integrability of $\nabla u^{\frac{m}{2}}$.

\begin{remark}
We require the nonnegativity of the initial data, and indeed Proposition \ref{proposition/singular moments for defect measures d=1} is false for signed data.
Consider for simplicity the case $d=m=1$ and $A(x,\xi)=0$. Suppose that $u_0(x)=x$ in a neighbourhood of the origin.
Then, since the heat flow preserves the linear behaviour of the initial data locally in time, the failure of Proposition \ref{proposition/singular moments for defect measures d=1} manifests as the non-integrability of the map $\R\ni x\mapsto\nicefrac{1}{|x|}$ in a neighbourhood of the origin.
\end{remark}

\begin{proposition}\label{proposition/singular moments for defect measures d=1}
Suppose that $u_0\in L^2_+(Q)$ and let $u$ be a pathwise kinetic solution of \eqref{formula/stochastic porous media equation} with initial data $u_0$.
Then, for each $T>0$, there exists $C=C(m,Q,T,A,z)$ such that
\begin{align}\label{formula/singular moments for defect measures d=1}
\begin{aligned}
    \int_0^T\int_{\R\times Q}\!\!\!\!|\xi|^{-1} (p+q)\,dx\,d\xi\,dr
    \leq
    C\left(1+\|u_0\|_{L^2(Q)}^2\right).
\end{aligned}
\end{align}
\end{proposition}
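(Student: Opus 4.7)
The strategy is to mimic the proof of Proposition \ref{proposition/singular moments for defect measures d in (0,1)} with the test function approximating $\sgn(\xi)\log|\xi|$ instead of $\xi^{[\delta]}$. The essential enabler is positivity: by Proposition \ref{proposition/positive data stay positive} one has $u\geq 0$ a.e., so $\chi$, $p$ and $q$ are supported in $\{\xi\geq 0\}$ and \eqref{formula/sign of velocity is preserved by smooth characteristics} confines the characteristic velocities $\Pi_{r,s}^{x,\xi}$ to $[0,\infty)$, sidestepping the sign indeterminacy of $\log|\xi|$. For $\theta\in(0,1)$ I take the odd, piecewise-$C^1$ function $g_\theta$ vanishing at $C_T\theta$ with
\begin{equation*}
\dot g_\theta(\eta)=(C_T\theta)^{-1}\mathbf{1}_{|\eta|\leq C_T\theta}+|\eta|^{-1}\mathbf{1}_{C_T\theta<|\eta|<C_T\theta^{-1}},
\end{equation*}
where $C_T$ is the constant from \eqref{d.1}, so that $\dot g_\theta\uparrow|\eta|^{-1}$ pointwise on $\R\setminus\{0\}$ and $\|g_\theta\|_\infty$ remains finite for each fixed $\theta$.

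Pick a partition $\mathcal{P}\subseteq[0,T]\setminus\mathcal{N}$ whose diameter satisfies both \eqref{d.16} and \eqref{d.20}, test the kinetic equation piecewise against the characteristics-transport of $\phi_\beta(x)g_\theta(\xi)$, and pass $\beta\to 0$ first, $\theta\to 0$ second, iterating over $\mathcal{P}$ at the end. The boundary errors \eqref{d.6}--\eqref{d.7} vanish as $\beta\to 0$ for each fixed $\theta$ exactly as in \eqref{d.8bis}--\eqref{d.13}. In the limit $\theta\to 0$, the telescoping term \eqref{d.14} becomes $\int_Q\int_0^{u(x,r)}\log\Pi_{r,r-t_i}^{x,\xi}\,d\xi\,dx\big|_{r=t_i}^{r=t}$; using $C_T^{-1}|\xi|\leq|\Pi|\leq C_T|\xi|$ and $\int_0^u\log\xi\,d\xi=u\log u-u$, this is pointwise bounded by $C(1+u^{1-\epsilon}+u^{1+\epsilon})$ for any small $\epsilon>0$, hence uniformly controlled by $C(1+\|u_0\|_{L^2(Q)}^2)$ via Proposition \ref{proposition/singular moments for defect measures d in (0,1)}. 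The defect-measure term \eqref{d.17}, by monotone convergence, \eqref{d.15}, and $|\Pi|\geq C_T^{-1}|\xi|$, contributes at least $\tfrac{3}{4}C_T^{-1}\int_{t_i}^{t_{i+1}}\!\!\int_{Q\times\R}|\xi|^{-1}(p+q)\,dx\,d\xi\,dr$, precisely the quantity we want to bound.

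The main obstacle is the diffusion term of \eqref{d.19}: a naive Young's split produces the integrand $|u|^{m-2}$, which is not locally integrable when $m<2$. The resolution rests on the sharpened bound $|\nabla_x\Pi_{r,r-t_i}^{x,\xi}|\leq C|t_{i+1}-t_i|^\alpha|\xi|$ from Lemma \ref{lemma/velocity of characteristics comparable to initial data}, itself a consequence of the cancellation $\nabla_x\cdot A(x,0)\equiv 0$ in \eqref{formula/assumption on A divergence 0 in 0}: the extra factor $|\xi|\simeq|u|$ carried by $|\nabla_x\Pi|$ exactly offsets the $|u|^{-1/2}$ coming from $\dot g_\theta(\Pi)^{1/2}$. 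The Cauchy--Schwarz splitting
\begin{equation*}
|u|^{\frac{m-1}{2}}\bigl|\nabla u^{[\frac{m+1}{2}]}\bigr|\dot g_\theta(\Pi)|\nabla_x\Pi|=\bigl(|u|^{\frac{m-1}{2}}|\nabla_x\Pi|\,\dot g_\theta(\Pi)^{1/2}\bigr)\bigl(\bigl|\nabla u^{[\frac{m+1}{2}]}\bigr|\,\dot g_\theta(\Pi)^{1/2}\bigr)
\end{equation*}
followed by Young's inequality produces, as in \eqref{d.19}, a bound of the form $C|\mathcal{P}|^\alpha\bigl(\int\!\!\int|u|^m\,dx\,dr+\int\!\!\int\dot g_\theta(\Pi)\,\phi_\beta(Y)\,q\,dx\,d\xi\,dr\bigr)$. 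The first term is controlled by $C(1+\|u_0\|_{L^2(Q)}^2)$ via interpolation between \eqref{d.23} and the $L^{m+1}$ bound obtained from Lemma \ref{lemma/poincare inequality for u^(m+1/2)}, while the second is absorbed into the left-hand side by the smallness of $|\mathcal{P}|$ exactly as in \eqref{d.20}. Iterating across the partition and collecting all bounds then yields \eqref{formula/singular moments for defect measures d=1}.
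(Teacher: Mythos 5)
Your proposal follows the paper's approach: test the kinetic equation along a fine partition against the transport of $\phi_\beta(x)g_\theta(\xi)$, where $g_\theta$ is a truncated primitive of $\xi\mapsto|\xi|^{-1}$, exploit positivity to stay on $\{\xi\geq 0\}$ so that $\dot g_\theta$ is bounded below and increases monotonically, absorb the diffusion term via the gain $|\nabla_x\Pi|\lesssim|t|^\alpha(|\xi|\wedge 1)$ of Lemma \ref{lemma/velocity of characteristics comparable to initial data} and the smallness of $|\mathcal{P}|$, and iterate. The cancellation you identify — the factor $|\xi|$ carried by $|\nabla_x\Pi|$ offsetting $|\xi|^{-1/2}$ from $\dot g_\theta^{1/2}$ — is precisely the key step of the paper's computation in \eqref{d.19}. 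However, there is a genuine gap in the normalization of $g_\theta$. You ask $g_\theta(C_T\theta)=0$ (and, inconsistently, also that $g_\theta$ be odd, which would force $g_\theta(0)=0$). Either way, on $C_T\theta\leq\xi\leq C_T\theta^{-1}$ one gets $g_\theta(\xi)=\log\xi-\log(C_T\theta)$ up to a bounded constant, so $g_\theta$ does \emph{not} converge to $\log\xi$ as $\theta\to 0$; it diverges. The telescoping term then contributes $-\log(C_T\theta)\bigl(\|u(\cdot,t)\|_{L^1}-\|u(\cdot,t_i)\|_{L^1}\bigr)$, which has no finite limit unless the $L^1$-mass is exactly conserved on $[t_i,t]$. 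But mass conservation is the second half of Proposition \ref{proposition/positive data stay positive} and is proved \emph{using} the present proposition, so invoking it is circular; a priori, Corollary \ref{corollary/L^1 estimate for patwhise kientic solutions} only gives $\|u(\cdot,t)\|_{L^1}\leq\|u(\cdot,t_i)\|_{L^1}$, and with $\log(C_T\theta)<0$ the extra term has the unfavorable sign. The fix is exactly what the paper does in \eqref{d0.1}: anchor the primitive at a $\theta$-independent base point, $g_\theta(\xi)=\int_1^\xi\dot g_\theta$, so that $g_\theta\to\log\xi$ pointwise on $(0,\infty)$ with a $\theta$-uniform dominating function, and the telescoping term converges to the finite quantity in \eqref{d0.6}.

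A secondary issue: you claim the boundary errors $BE_i^{\mathrm{vel}}$, $BE_i^{\mathrm{space}}$ vanish as $\beta\to 0$ "exactly as in \eqref{d.8bis}--\eqref{d.13}." For $\delta\in(0,1]$ that argument on $\{|u|<\theta\}$ rests on \eqref{d.11}, $|g_\theta(\Pi)|\leq\delta C_T^\delta\theta^{\delta-1}|u|$, i.e.\ linear vanishing of $g_\theta$ near the origin. When $\delta=0$, with the paper's $g_\theta$ one has $g_\theta\approx\log(C_T\theta)$ near $\Pi=0$, not $O(|u|)$, so \eqref{d.11} has no analogue. A naive bound of the space boundary error then leads to $\beta^{-1}\int_{Q^\beta}|u|^{-1}\bigl|u^{[\frac{m+1}{2}]}\bigr|\bigl|\nabla u^{[\frac{m+1}{2}]}\bigr|\lesssim\int_{Q^\beta\times\R}|\xi|^{-1}q$, which is exactly what you are trying to bound — circular. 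The paper instead handles this set separately in \eqref{d0.4}, producing a weight $|\xi|^{\frac{m-1}{m+1}\wedge 0}$ that is integrable against $q$ by Proposition \ref{proposition/singular moments for defect measures d in (0,1)} applied with $\delta'=\frac{2m}{m+1}\wedge 1>0$, avoiding circularity. Your write-up should incorporate this distinct treatment rather than referring to the $\delta>0$ argument verbatim. Apart from these two points, the remaining ingredients (positivity, absorption of the diffusion term, interpolation for $\int|u|^m$) are correctly in place.
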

\begin{proof}
The proof is essentially the same as for Proposition \ref{proposition/singular moments for defect measures d in (0,1)}, except that now $\delta=0$ and the idea is to formally test the kinetic equation against the function $\xi\mapsto\log(\xi)$.
We stress that in order for this to work, that is in order to find an approximation of $\xi\mapsto\log(\xi)$ whose derivative is bounded below and increases to the function $\xi\mapsto|\xi|^{-1}$, so as to justify the analogous of the monotone convergence passage \eqref{d.21}, it is crucial to consider positive values of $\xi$ only.
In other words, it is crucial that the kinetic solution $u$ is nonnegative, as ensured by Proposition \ref{proposition/positive data stay positive}.

We now sketch the relevant modifications.
With the notation from the proof of Proposition \ref{proposition/singular moments for defect measures d in (0,1)}, fix a partition $\mathcal{P}$ satisfying conditions \eqref{d.16} and \eqref{d.20}.
The approximation $g_{\theta}:\R\to\R$ is now defined by
\begin{align}\label{d0.1}
    g_{\theta}(\xi):=\int_1^{\xi}\Dot{g}_{\theta}(\eta)\,d\eta, \quad\text{ and }\quad
    \Dot{g}_{\theta}(\xi)=\empheqlbrace
    \begin{aligned}
    &C_T^{-1}\theta^{-1}\quad\text{if } 0\leq\xi\leq C_T\theta,
    \\
    &|\xi|^{-1}\quad\quad\,\,\text{if }C_T\theta\leq\xi\leq C_T\theta^{-1},
    \\
    &0\qquad\quad\,\,\,\,\text{ if } \xi\leq0 \text{ or } C_T\theta^{-1}<\xi.
    \end{aligned}
\end{align}
The properties \eqref{d.3} and \eqref{d.4} are replaced respectively by
\begin{align}\label{d0.2}
    0\leq\dot{g}_{\theta}(\xi)\leq C_T^{-1}\theta^{-1}\,\,\text{for each }\theta\in(0,1),\text{ and }
    \Dot{g}_{\theta}(\xi)\uparrow|\xi|^{-1} \text{ as }\theta\to0,\,\, \forall\xi\geq0,
\end{align}
and by
\begin{align}\label{d0.3}
    |g_{\theta}|\leq C\log(C_T\theta^{-1})\,\,\text{for each }\theta\in(0,1),\text{ and }\lim_{\theta\to0}g_{\theta}(\xi)=\log(\xi),\,\,\forall\xi\geq0.
\end{align}
Testing the kinetic equation against $\phi_\beta(x)g_{\theta}(\xi)\in C(Q\times\R)$ we still get equation \eqref{d.5}, with the only difference that $g_{\theta}$ is now defined by \eqref{d0.1}. 
The error term $BE_i^{\text{vel}}$, still defined by \eqref{d.6}, is handled exactly as before, simply using \eqref{d0.3} in place of \eqref{d.4}.
For the error term $BE_i^{\text{space}}$, still defined by \eqref{d.7}, we consider again the splitting \eqref{d.9}.
The first term in \eqref{d.9} is handled exactly as in \eqref{d.10}, simply using \eqref{d0.3} in place of \eqref{d.4}.
The only difference is that, for the second term in \eqref{d.9}, we now compute
\begin{align}\label{d0.4}
\begin{aligned}
        \bigg|&\frac{2m}{m+1}\int_{t_i}^{t}\!\int_{Q\cap\{|u|<\theta\}}\!\!\!\!\!\!|u|^{\frac{m-1}{2}}\nabla u^{\left[\frac{m+1}{2}\right]}\nabla_{\!y}\phi_\beta(Y_{r,r-t_i}^{x,u})D_x\!Y_{r,r-t_i}^{x,u}g_{\theta}(\Pi_{r,r-t_i}^{x,u})\,dx\,dr\bigg|
        \\
        &\leq
        C \int_{t_i}^{t_{i+1}}\!\!\!\int_{Q^\beta\cap\{|u|<\theta\}}\!\!\!\!\!\!|u|^{\frac{m-1}{m+1}}\left|u^{\left[\frac{m+1}{2}\right]}\right|\left|\nabla u^{\left[\frac{m+1}{2}\right]}\right|\beta^{-1}\,dx\,dr
        \\
        &\leq
        C \int_{t_i}^{t_{i+1}}\!\!\!\int_{Q^\beta\cap\{|u|<\theta\}}\!\!\!\!\!\!|u|^{\frac{m-1}{m+1}}\left|\nabla u^{\left[\frac{m+1}{2}\right]}\right|^2\,dx\,dr
        \\
        &\leq
        C \int_{t_i}^{t_{i+1}}\!\!\!\int_{Q^\beta\times\R\cap\{|\xi|<\theta\}}\!\!\!\!\!\!|\xi|^{\frac{m-1}{m+1}\wedge0}\,\theta^{\frac{m-1}{m+1}\vee0}\,q_r(x,\xi)\,dx\,d\xi\,dr,
\end{aligned}
\end{align}
for a constant $C=C(\theta,m,Q,T,A,z)$ independent of $\beta\in(0,1)$.
In the first passage we used observation \eqref{9.11} and \eqref{d0.3}.
In the second passage we used the Sobolev regularity $u^{\left[\frac{m+1}{2}\right]}\in H_0^1(Q)$ and the mean value theorem. 
The last passage follows from the definition of parabolic defect measure.
Now, formulas \eqref{d.9}, \eqref{d.10}, and \eqref{d0.4} coupled with Proposition \ref{proposition/singular moments for defect measures d in (0,1)}, if $m\in(0,1)$, or simply with the finiteness of the measure $q$, if $m\in[1,\infty)$, and the dominated convergence theorem yield  
\begin{equation}\label{d0.5}
    \lim_{\beta\to0}\left|BE_i^{\space}\right|=0.
\end{equation}

For the first term on the right-hand side of \eqref{d.5}, the integrability $u\in L^{\infty}([0,T];L^2(Q))$, the nonnegativity of $u$, which follows from Proposition \ref{proposition/positive data stay positive}, the integrability of $\xi\mapsto\log(\xi)$ near $0$ and its growth at infinity, properties of the kinetic function, the proportionality \eqref{d.1} and the dominated convergence theorem imply that
\begin{align}\label{d0.6}
    \begin{aligned}
        \lim_{\theta\to0}\lim_{\beta\to0}\int_{Q\times\R}\!\!\!\!\!\!\chi_r(x,\xi)\,\phi_\beta(Y_{r,r-t_i}^{x,\xi})g_{\theta}(\Pi_{r,r-t_i}^{x,\xi})\,dx\,d\xi\bigg|_{r=t_i}^{r=t_{i+1}}
        &=
        \int_{Q\times\R}\!\!\!\!\!\!\chi_r(x,\xi)\,\log(\Pi_{r,r-t_i}^{x,\xi})\,dx\,d\xi\bigg|_{r=t_i}^{r=t_{i+1}}
        \\
        &\leq
        C\left(1+\|u\|_{L^{\infty}([t_i,t_{i+1}];L^2(Q))}^2\right),
    \end{aligned}
\end{align}
for a constant $C=C(T,A,z)$.
The second and fourth term in \eqref{d.5} are handled with the same arguments as in \eqref{d.17}-\eqref{d.21}, except that $g_{\theta}$ is now given by \eqref{d0.1} and estimate \eqref{d.18} is replaced by
\begin{equation}
    \label{d0.7}
    0\leq\dot{g}_{\theta}(\Pi_{r,r-t_i}^{x,u})\leq C_T^{-1}|u|^{-1}\,\,\,\forall(x,r)\in Q\times[t_i,T].
\end{equation}
In turn, the final inequality becomes
\begin{align}\label{d0.8}
\begin{aligned}
    &\liminf_{\theta\to0}\liminf_{\beta\to0}\int_{t_i}^{t_{i+1}}\!\!\!\int_{Q\times\R}\!\!\!\!\!\!\phi_\beta(Y_{r,r-t_i}^{x,\xi})\,\dot{g}_{\theta}(\Pi_{r,r-t_i}^{x,\xi})\,\partial_{\xi}\Pi_{r,r-t_i}^{x,\xi}\left(p_r+q_r\right)\,dx\,d\xi\,dr
    \\
    &\qquad\qquad\qquad
    +\frac{2m}{m+1}\int_{t_i}^{t_{i+1}}\!\!\!\int_{Q}\!\!\!|u|^{\frac{m-1}{2}}\nabla u^{\left[\frac{m+1}{2}\right]}\phi_\beta(Y_{r,r-t_i}^{x,u})\dot{g}_{\theta}(\Pi_{r,r-t_i}^{x,u})\nabla_x\Pi_{r,r-t_i}^{x,u}\,dx\,dr
    \\
    &
    \quad
    \geq
    \frac{1}{2} C_T^{-1}\int_{t_i}^{t_{i+1}}\!\!\!\int_{Q\times\R}\!\!\!\!\!\!|\xi|^{-1}\left(p_r+q_r\right)\,dx\,d\xi\,dr
    -\frac{1}{4}\left(\int_{t_i}^{t_{i+1}}\!\!\!\int_{Q\times\R}\!\!\!\!\!\!q_r\,dx\,d\xi\,dr\right)^{\frac{m}{m+1}}.
\end{aligned}   
\end{align}
In conclusion, passing to the limit as $\beta\to0$ first and $\theta\to0$ then in \eqref{d.5}, using \eqref{d.8bis}, \eqref{d0.5}, \eqref{d0.6} and \eqref{d0.8}, we obtain that
\begin{align}\label{d0.9}
\begin{aligned}
    \int_{t_i}^{t_{i+1}}\!\!\!\int_{Q\times\R}\!\!\!\!\!\!|\xi|^{-1}\left(p_r+q_r\right)\,dx\,d\xi\,dr
    \leq
    C\left(
    1+\|u\|_{L^{\infty}([t_i,t_{i+1}]L^2(Q))}^2
    +
    \left(\int_{t_i}^{t_{i+1}}\!\!\!\int_{Q\times\R}\!\!\!\!\!\!q_r\,dx\,d\xi\,dr\right)^{\frac{m}{m+1}}
    \right),
\end{aligned}   
\end{align}
for a constant $C=C(T,A,z)$.
Iterating \eqref{d0.9} over the partition $\mathcal{P}$, using Young's inequality and estimate \eqref{d.23}, we obtain formula \eqref{formula/singular moments for defect measures d=1}.
\end{proof}



\section{Existence of pathwise kinetic solutions}\label{section/existence of pathwise kinetic solutions}

In this section we establish the existence of pathwise kinetic solutions to equation \eqref{formula/stochastic porous media equation}.
For this, we consider the setting outlined in Section \ref{section/definition and motivation of pathwise kinetic solutions} and derive stable estimates for the regularized equation \eqref{formula/regularized porous media equation 1}, defined for each $\eta\in(0,1)$ and $\epsilon\in(0,1)$.

Let $u_0\in L^2(Q)$ and let $\eta,\epsilon\in(0,1)$ be fixed but arbitrary.
Recall that $z^\epsilon:[0,\infty)\to\R^n$ are smooth paths converging to $z$ with respect to the $\alpha$-H\"older metric on the space of geometric rough paths $C^{0,\alpha}\big([0,T];G^{\left\lfloor\nicefrac{1}{\alpha}\right\rfloor}(\R^n)\big)$, for each $T>0$.
Let $\ueta$ be the solution of \eqref{formula/regularized porous media equation 1} from Proposition \ref{proposition/existence of classical solutions for smoothed equation} and let $\chieta(x,\xi,t):=\bar{\chi}(\ueta(x,t),\xi)$ be its kinetic function and $\peta$ and $\qeta$ the associated entropy and parabolic defect measures.
In Proposition \ref{proposition/weak kinetic formulation of the smoothed equation} we established the kinetic function is a distributional solution of equation \eqref{formula/strong kinetic formulation of the smoothed equation}.
In Corollary \ref{corollary/transported weak kinetic formulation of the smoothed equation} we got rid of the noise in the equation by the testing it against test functions transported along the smooth backward characteristics \eqref{formula/backward smooth characteristics} and showed the kinetic function solves equation \eqref{formula/transported weak kinetic formulation of the smoothed equation 1}.

The idea is now to establish stable estimates for the solutions $\ueta$ and the associated kinetic functions $\chieta$, that allow us to pass to the limit $\eta,\epsilon\to0$ and find a coherent limit for the functions $\ueta$ and the equations \eqref{formula/transported weak kinetic formulation of the smoothed equation 1}.

As in Section 4, consider for motivation to the kinetic formulation \eqref{formula/deterministic porous media equation kinetic formulation} of the deterministic porous media equation.
Following \cite{chen-perthame-Well-posedness-for-non-isotropic-degenerate-parabolic-hyperbolic-equations}, estimates for the solution are obtained by testing the equation against the maps $\R\ni \xi\mapsto\sgn(\xi)$ and $\R\ni\xi\mapsto\xi$.
In the first case, owing to the positivity of the parabolic and entropy defect measures, we informally get
\begin{equation}
    \|u\|_{L^{\infty}\left([0,\infty);L^1(Q)\right)}=\|\chi\|_{L^{\infty}\left([0,\infty);L^1(Q\times\R)\right)}\leq\|\chi(u_0,\xi)\|_{L^1(Q\times\R)}=\|u_0\|_{L^1(Q)}. \nonumber
\end{equation}
In the second case, we informally observe the estimate
\begin{equation}
   \frac{1}{2}\|u\|^2_{L^{\infty}([0,\infty);L^2(Q))}+\int_0^{\infty}\int_{\R\times Q} (p(x,\xi,r)+q(x,\xi,r))\,dx\,d\xi\,dr
   \leq
   \frac{1}{2}\|u_0\|_{L^2(Q)}^2. \nonumber
\end{equation}

\begin{remark}
In the discussion to follow, we will first establish estimates and the existence of pathwise kinetic solutions for initial data $u_0\in C^{\infty}_c(Q)$.
The general results will follow by density, repeating the arguments presented.
\end{remark}

In Proposition \ref{proposition/stable estimate for L^1 norm of smoothed solutions} we obtain the analogue of the $L^1$-estimate, and in Proposition \ref{proposition/stable estimate for L^2 norm and defect measures of smoothed solutions} we obtain the analogue of the $L^2$-estimate and the estimate for the defect measures.
The argument for Proposition \ref{proposition/stable estimate for L^1 norm of smoothed solutions} is just a small modification of the relevant details of Corollary \ref{corollary/L^1 estimate for patwhise kientic solutions}.
The proof of Proposition \ref{proposition/stable estimate for L^2 norm and defect measures of smoothed solutions} is essentially identical to that of Proposition \ref{proposition/singular moments for defect measures d in (0,1)}.
We therefore omit the details.

\begin{proposition}\label{proposition/stable estimate for L^1 norm of smoothed solutions}
For each $u_0\in L^2(Q)$, $\eta\in(0,1)$ and $\epsilon\in(0,1)$, the solution $\ueta$ of \eqref{formula/regularized  porous media equation 1} from Proposition \ref{proposition/existence of classical solutions for smoothed equation} satisfies
\begin{equation}
    \|\ueta\|_{L^{\infty}([0,\infty);L^1(Q))}\leq\|u_0\|_{L^1(Q)}. \nonumber
\end{equation}
\end{proposition}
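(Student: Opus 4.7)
The plan is to imitate the proof of Corollary \ref{corollary/L^1 estimate for patwhise kientic solutions}---itself a specialization of Theorem \ref{theorem/uniqueness of pathwise kinetic solutions for nonnegative initial data} with $\chi^2 \equiv 0$---but to run the argument on the smoothed kinetic identity \eqref{formula/transported weak kinetic formulation of the smoothed equation 1} provided by Corollary \ref{corollary/transported weak kinetic formulation of the smoothed equation}, rather than on \eqref{definition/pathwise kinetic solution/formula 1}. Fix $T > 0$. For any partition $\mathcal{P} = \{0 = t_0 < \cdots < t_N = T\}$ of $[0,T]$, a cutoff $\phi_\beta \in C_c^\infty(Q)$ as in \eqref{0.8}, and a spatial/velocity mollification scale $\kappa \in (0,1)$ (kept disjoint from the noise-smoothing parameter $\epsilon$), first repeat the telescoping of Step 1 of the uniqueness proof with $\chi^2 \equiv 0$ to obtain
\begin{equation*}
    \|u^{\eta,\epsilon}(\cdot,T)\|_{L^1(Q)} - \|u_0\|_{L^1(Q)}
    =
    \lim_{\beta\to0}\lim_{\kappa\to0}\sum_{i=0}^{N-1}\int_{Q\times\R}\tilde{\chi}^{\eta,\epsilon,\kappa}_{t_i,r}\,\tilde{\sgn}^{\kappa}\,\phi_\beta\,dy\,d\xi\Big|_{r=t_i}^{r=t_{i+1}},
\end{equation*}
where $\tilde{\chi}^{\eta,\epsilon,\kappa}_{t_i,r}$ and $\tilde{\sgn}^{\kappa}$ are the $\kappa$-mollifications of $\chi^{\eta,\epsilon}$ and $\sgn$ transported along the smooth characteristics \eqref{formula/forward smooth characteristics}, defined exactly as in \eqref{0.14} and \eqref{0.17}. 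Since $\chi^2 \equiv 0$, only the one-solution internal errors $IE_i^{\sgn1,k}$ and boundary errors $BE_i^{\sgn1,k}$ from Steps 2, 7 and 8 of the uniqueness proof survive, together with a non-positive contribution at $\xi = 0$ generated by the nonnegative defect measures $p^{\eta,\epsilon}$, $q^{\eta,\epsilon}$; the mixed terms, the internal cancellation $IE_i^{\canc}$ and the boundary cancellations $BE_i^{\canc Bj}$ all vanish identically, and with them the need for any sign restriction on the initial data.

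The only genuinely new features with respect to \eqref{definition/pathwise kinetic solution/formula 1} are the extra $\eta\Delta_x$ term and the replacement of the rough characteristics by the smooth ones. The first is handled identically to the $m|\xi|^{m-1}\Delta_x$ term, and is actually easier since $\eta$ is $\xi$-independent: the resulting analogues of $IE_i^{\sgn1,1}$, $BE_i^{\sgn1,1}$ and $BE_i^{\sgn1,2}$ are bounded by $C|\mathcal{P}|^\alpha\,\eta\,\int_0^T\!\!\int_{Q^\beta} |\nabla u^{\eta,\epsilon}|\, dx\, dr$, which is controlled uniformly by the stable bound of Proposition \ref{proposition/stable estimate for L^2 norm and defect measures of smoothed solutions}. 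The smooth characteristics satisfy the properties \eqref{formula/inverse relation for smooth characteristics}--\eqref{formula/space smooth characteristics move slower the closer they get to the boundary} uniformly in $\epsilon$ by Proposition \ref{proposition/stability results for RDEs}, so every rough-path estimate used in Steps 7--8 transfers verbatim. The error $BE_i^{\sgn1,3}$ involving the defect measures is dominated by $|\mathcal{P}|^\alpha$ times the mass of $p^{\eta,\epsilon} + q^{\eta,\epsilon}$ on $Q^\beta \times \R \times [0,T]$, which is finite by Proposition \ref{proposition/stable estimate for L^2 norm and defect measures of smoothed solutions} and vanishes as $\beta \to 0$ by dominated convergence.

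Passing to the limits $\kappa \to 0$, $\beta \to 0$ and $|\mathcal{P}| \to 0$ in this order, every error vanishes and the surviving $\xi = 0$ contribution is non-positive, yielding $\|u^{\eta,\epsilon}(\cdot,T)\|_{L^1(Q)} \leq \|u_0\|_{L^1(Q)}$ for every $T > 0$. The main obstacle is purely bookkeeping: verifying that the constants appearing in Steps 7--8 are uniform in $\eta$ and $\epsilon$. This is immediate, since they depend on the noise only through the $d_\alpha$-bound on $\{z^\epsilon\}$---which is uniform by \eqref{formula/smooth paths converging to rough path}---and on the moment bound of Proposition \ref{proposition/stable estimate for L^2 norm and defect measures of smoothed solutions}, which is itself stable in $\eta$ and $\epsilon$.
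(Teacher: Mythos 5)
Your proposal matches the paper's indicated proof, which is omitted with the remark that it is a small modification of the relevant details of Corollary~\ref{corollary/L^1 estimate for patwhise kientic solutions}: repeating the uniqueness argument with $\chi^2\equiv 0$, now applied to the smoothed identity \eqref{formula/transported weak kinetic formulation of the smoothed equation 1}, and treating the extra $\eta\Delta_x$ term alongside the $m|\xi|^{m-1}\Delta_x$ term. One small bookkeeping remark: the boundary cancellation errors $BE_i^{\canc A,k}$ for $k=2,3,4$ from Step 5 also survive when $\chi^2\equiv 0$ (they arise from the Hessian-of-cutoff trace term and do not contain $\chi^2$); but they are controlled exactly as in Step 8 without any positivity assumption, so your conclusion and the identification of the positivity-sensitive errors ($IE_i^{\canc}$, $BE_i^{\canc Bj}$) are both correct.
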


\begin{proposition}\label{proposition/stable estimate for L^2 norm and defect measures of smoothed solutions}
For each $u_0\in L^2(Q)$, $\eta\in(0,1)$ and $\epsilon\in(0,1)$, let $\ueta$ be the solution of \eqref{formula/regularized  porous media equation 1} from Proposition \ref{proposition/existence of classical solutions for smoothed equation}.
Let $\delta\in (0,1)$.
For each $T>0$, there exists $C=C(m,Q,T,A,z)$ such that
\begin{align}
    \begin{aligned}
        \|\ueta\|_{L^{\infty}([0,T];L^2(Q))}^2\!\!+\!\int_0^T\!\!\int_{\R\times Q}\!\!\!\!\!\big(1+\delta|\xi|^{\delta-1}\big)& (\peta+\qeta)\,dx d\xi dr
       \leq
        C\left(1+\|u_0\|_{L^2(Q)}^2\right). \nonumber
    \end{aligned}
\end{align}
\end{proposition}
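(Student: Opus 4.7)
The plan is to mimic the proof of Proposition \ref{proposition/singular moments for defect measures d in (0,1)} applied to the regularized kinetic equation \eqref{formula/transported weak kinetic formulation of the smoothed equation 1} of Corollary \ref{corollary/transported weak kinetic formulation of the smoothed equation}, treating the cases $\delta=1$ and $\delta\in(0,1)$ in turn. The key point is that all estimates must be stable in $\eta,\epsilon\in(0,1)$; since the noise has been eliminated by testing against transported functions, the only dependence on $\epsilon$ enters through the smooth characteristics $(Y^{\,\cdot,\cdot,\epsilon},\Pi^{\,\cdot,\cdot,\epsilon})$, which by Proposition \ref{proposition/stability results for RDEs} satisfy all the same bounds as $(Y,\Pi)$ uniformly in $\epsilon$.

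I would fix $T>0$ and a partition $\mathcal{P}=\{0=t_0<t_1<\cdots<t_N=T\}$ whose size $|\mathcal{P}|$ is small enough to satisfy the analogues of \eqref{d.16} and \eqref{d.20} for the smooth characteristics; by Proposition \ref{proposition/stability results for RDEs} the constants $C_T$ in the velocity comparison \eqref{d.1} and $t^*$ in \eqref{d.15} can be chosen uniformly in $\epsilon\in(0,1)$, so a single partition works. For each $\theta\in(0,1)$, introduce the approximating function $g_\theta$ from \eqref{d.2} and the cutoff $\phi_\beta$ from \eqref{0.8}. Then I would test \eqref{formula/transported weak kinetic formulation of the smoothed equation 1} against $\phi_\beta(x)g_\theta(\xi)$ on each subinterval $[t_i,t_{i+1}]$. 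Because $\phi_\beta g_\theta$ has compact support in $Q\times\R$, the transport $\rho_{t_i,r}^\epsilon(x,\xi)=\phi_\beta(Y_{r,r-t_i}^{x,\xi,\epsilon})g_\theta(\Pi_{r,r-t_i}^{x,\xi,\epsilon})$ is a legitimate test function (by \eqref{formula/transported compactly supported functions stays compactly supported - smooth case}), and after integrating by parts we obtain the exact analogue of \eqref{d.5}, with an additional $\eta\Delta$-contribution that combines with the parabolic term to produce $m|\xi|^{m-1}+\eta$ and thereby the entropy measure $\peta$ on the right.

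Next I would push $\beta\to 0$ then $\theta\to 0$ as in \eqref{d.8bis}--\eqref{d.21}. The boundary errors $BE_i^{\mathrm{vel}}$ and $BE_i^{\mathrm{space}}$ are handled exactly as in Proposition \ref{proposition/singular moments for defect measures d in (0,1)}: the former by \eqref{formula/DxiY near the boundary} combined with the $L^1$-finiteness of $p^{\eta,\epsilon}+q^{\eta,\epsilon}$ on bounded sets (which is itself provided by this very proposition for $\delta=1$, obtained first), and the latter by the Sobolev regularity $(\ueta)^{[(m+1)/2]}\in L^2([0,T];H_0^1(Q))$ from Proposition \ref{proposition/existence of classical solutions for smoothed equation}. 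After these limits, and applying \eqref{lemma/velocity of characteristics comparable to initial data/formula 2} stably in $\epsilon$, I obtain on each $[t_i,t_{i+1}]$ the analogue of \eqref{d.22} when $\delta=1$, and of \eqref{d.24} when $\delta\in(0,1)$, with constants depending only on $m,Q,T,A,z$. An iteration along the partition, together with Young's inequality to absorb the term $\bigl(\int q^{\eta,\epsilon}\bigr)^{(m+\delta)/(m+1)}$ using the already-proven $\delta=1$ estimate, yields
\begin{equation*}
    \|\ueta\|_{L^{\infty}([0,T];L^2(Q))}^2+\int_0^T\!\!\int_{Q\times\R}\bigl(1+\delta|\xi|^{\delta-1}\bigr)(\peta+\qeta)\,dx\,d\xi\,dr \leq C\bigl(1+\|u_0\|_{L^2(Q)}^2\bigr).
\end{equation*}

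The main obstacle will be verifying the uniformity in $\epsilon$: specifically, one must check that the proportionality constant $C_T$ in \eqref{d.1}, the threshold $t^*$ in \eqref{d.15}, and the constant $C^*$ in \eqref{d.19} — all of which were derived from the rough characteristics — apply verbatim to the smooth characteristics $(Y^{\,\cdot,\cdot,\epsilon},\Pi^{\,\cdot,\cdot,\epsilon})$ with a single choice independent of $\epsilon\in(0,1)$. This is precisely the content of Proposition \ref{proposition/stability results for RDEs} and the associated uniform bounds established in Section \ref{section/rough path estimates}; granting these, the proof transfers with no further modification. As a byproduct, since $\eta>0$ forces $\ueta\in H_0^1$, the Sobolev/trace technicalities appearing in the rough case are trivial here, so no delicate limit argument is needed to justify applying Lemma \ref{lemma/sobolev regularity of u^m}.
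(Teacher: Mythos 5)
Your proposal is correct and follows exactly the route the paper takes (the paper omits the proof, explicitly referring to the proof of Proposition~\ref{proposition/singular moments for defect measures d in (0,1)} as being essentially identical): test the transported kinetic equation \eqref{formula/transported weak kinetic formulation of the smoothed equation 1} against $\phi_\beta\,g_\theta$ on a partition whose mesh is small uniformly in $\epsilon$ (guaranteed by Proposition~\ref{proposition/stability results for RDEs} and \eqref{formula/paths z^epsilon are uniformly close to zero path}), pass $\beta\to0$ then $\theta\to0$, and iterate, first for $\delta=1$ and then for general $\delta$. The only minor slip is attributing the preliminary finiteness of the total mass of $\peta+\qeta$ on $Q\times\R\times[0,T]$ to the $\delta=1$ case of this proposition; for fixed $\eta,\epsilon$ that finiteness is already immediate from the classical regularity $\ueta,(\ueta)^{[(m+1)/2]}\in L^2([0,T];H^1_0(Q))$ of Proposition~\ref{proposition/existence of classical solutions for smoothed equation}, so there is no circularity to worry about in the first place.
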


In general, we do not expect to obtain a stable estimate in time for the solutions $\ueta$.
However, we can obtain some regularity for the time derivative of the transported kinetic functions $\Tilde{\chi}^{\eta,\epsilon}$, defined by
\begin{equation}\label{formula/transported smoothed kinetic function}
    \Tilde{\chi}^{\eta,\epsilon}(x,\xi,t):=\chieta(X_{0,t}^{x,\xi,\epsilon},\Xi_{0,t}^{x,\xi,\epsilon},t)\quad\text{for}\quad (x,\xi,t)\in Q\times\R^d\times[0,\infty).
\end{equation}
In practice, the transport cancels the oscillations introduced by the noise.
The following proposition proves the functions $\partial_t\Tilde{\chi}^{\eta,\epsilon}$ are uniformly bounded in the negative Sobolev space $H^{-s}$, for $s$ big enough.

\begin{proposition}\label{proposition/stable estimate for time derivative of transported kinetic functions}
For each $u_0\in L^2(Q)$, $\eta\in(0,1)$ and $\epsilon\in(0,1)$, consider the transported kinetic function \eqref{formula/transported smoothed kinetic function}.
For each $T>0$, for any Sobolev exponent $s>\frac{d}{2}+1$, there exists $C=C(m,z,Q,T,s)$ such that
\begin{equation}
    \|\partial_t\Tilde{\chi}^{\eta,\epsilon}\|_{L^1([0,T];H^{-s}(Q\times\R))}\leq\left(1+\|u_0\|_{L^2(Q)}^2\right).\nonumber
\end{equation}
\end{proposition}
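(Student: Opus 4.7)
The strategy is to exploit the noise cancellation already encoded in equation \eqref{formula/transported weak kinetic formulation of the smoothed equation 1}: testing against the transported function $\rho^\epsilon_{t_0,r}$ there eliminates the rough transport terms and leaves only the second-order diffusion together with the defect-measure divergence. After re-reading that identity as an evolution for $\tilde\chi^{\eta,\epsilon}$ itself, the stable $L^1_tH^{-s}_{x,\xi}$ bound will follow by duality from the uniform moment estimates of Propositions \ref{proposition/stable estimate for L^1 norm of smoothed solutions} and \ref{proposition/stable estimate for L^2 norm and defect measures of smoothed solutions}.

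\textbf{Step 1.} Fix $\rho_0\in C_c^\infty(Q\times\R)$ and take $t_0=0$ in \eqref{formula/transported weak kinetic formulation of the smoothed equation 1}. The inverse relation \eqref{formula/inverse relation for smooth characteristics} together with the Lebesgue-measure preservation \eqref{formula/lebesgue measure is preserved by smooth caharacteristics} reduce the left-hand side of that equation to $\int\tilde\chi^{\eta,\epsilon}(y,\zeta,r)\rho_0(y,\zeta)\,dy\,d\zeta$, so differentiation in $r$ produces the pointwise-in-$t$ formula
\[
\begin{aligned}
\langle\partial_t\tilde\chi^{\eta,\epsilon}(t),\rho_0\rangle
&=\int_{Q\times\R}(m|\xi|^{m-1}+\eta)\,\chieta(x,\xi,t)\,\Delta_x\rho^\epsilon_{0,t}(x,\xi)\,dx\,d\xi\\
&\quad-\int_{Q\times\R}(\peta+\qeta)(x,\xi,t)\,\partial_\xi\rho^\epsilon_{0,t}(x,\xi)\,dx\,d\xi,
\end{aligned}
\]
with $\rho^\epsilon_{0,t}(x,\xi)=\rho_0(Y^{x,\xi,\epsilon}_{t,t},\Pi^{x,\xi,\epsilon}_{t,t})$. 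In this identity the noise has disappeared entirely; what remains is purely parabolic.

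\textbf{Step 2.} The chain rule, combined with the uniform $\epsilon$-independent bounds on the first- and second-order $(x,\xi)$-derivatives of the characteristics supplied by Proposition \ref{proposition/stability results for RDEs}, yields $\|\Delta_x\rho^\epsilon_{0,t}\|_{L^\infty}+\|\partial_\xi\rho^\epsilon_{0,t}\|_{L^\infty}\leq C\|\rho_0\|_{C^2}$ with $C=C(T,A,z)$. Using the pointwise identity $\int_\R(m|\xi|^{m-1}+\eta)|\chieta|\,d\xi=|\ueta|^m+\eta|\ueta|$ and integrating Step 1 in time leads to
\[
\int_0^T\!|\langle\partial_t\tilde\chi^{\eta,\epsilon}(t),\rho_0\rangle|\,dt\leq C\|\rho_0\|_{C^2}\!\left(\int_0^T\!\!\int_Q\!(|\ueta|^m+\eta|\ueta|)\,dx\,dt+\int_0^T\!\!\int_{Q\times\R}\!(\peta+\qeta)\,dx\,d\xi\,dt\right)\!.
\]

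\textbf{Step 3.} The defect-measure integral is bounded by $C(1+\|u_0\|_{L^2}^2)$ directly via Proposition \ref{proposition/stable estimate for L^2 norm and defect measures of smoothed solutions}. For the first integral, Proposition \ref{proposition/stable estimate for L^2 norm and defect measures of smoothed solutions} combined with Lemma \ref{lemma/poincare inequality for u^(m+1/2)} provides the uniform $L^{m+1}_{t,x}$ estimate for $\ueta$; together with the elementary inequality $|\ueta|^m\leq 1+|\ueta|^{m+1}$ and with the $L^\infty_tL^1_x$ bound from Proposition \ref{proposition/stable estimate for L^1 norm of smoothed solutions} applied to the $\eta$-piece, this yields $\int_0^T\!\int_Q(|\ueta|^m+\eta|\ueta|)\,dx\,dt\leq C(1+\|u_0\|_{L^2(Q)}^2)$. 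Combining the Sobolev embedding $H^s(Q\times\R)\hookrightarrow C^2(Q\times\R)$ (valid under the stated assumption $s>d/2+1$, with the constant allowed to depend on $s$) with the density of $C_c^\infty(Q\times\R)$ in $H^s(Q\times\R)$, we conclude by duality the stated uniform-in-$(\eta,\epsilon)$ estimate.

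\textbf{Main obstacle.} The crucial structural input is the noise cancellation already built into \eqref{formula/transported weak kinetic formulation of the smoothed equation 1} and the $\epsilon$-uniform derivative bounds on the rough characteristics provided by Proposition \ref{proposition/stability results for RDEs}; these together make the whole estimate insensitive to the roughness of $z$. The only genuine quantitative subtlety is the control of $\int_0^T\!\int_Q|\ueta|^m\,dx\,dt$ in the regime of large diffusion exponents $m>2$: this integrability is not contained in the $L^\infty_tL^2_x$ bound of Proposition \ref{proposition/stable estimate for L^2 norm and defect measures of smoothed solutions} and must be extracted indirectly from the parabolic defect measure by upgrading $\|\nabla(\ueta)^{[(m+1)/2]}\|_{L^2_{t,x}}$ into an $L^{m+1}_{t,x}$ bound via Lemma \ref{lemma/poincare inequality for u^(m+1/2)}. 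Beyond this, the proof is routine accounting.
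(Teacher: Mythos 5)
Your proposal diverges from the paper at a structural point, and the divergence creates a genuine gap.

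The paper does not keep $\Delta_x\rho^\epsilon_{0,t}$ in the weak derivative formula. After extracting the weak-in-time identity from \eqref{formula/weak kinetic formulation of the smoothed equation}, it integrates by parts once in $x$ (the smooth analogue of Lemma~\ref{lemma/integration by parts formula}), turning $\int(m|\xi|^{m-1}+\eta)\chieta\,\Delta_x\rho^\epsilon_{0,t}$ into $-\int_Q\bigl(\frac{2m}{m+1}|\ueta|^{\frac{m-1}{2}}\nabla(\ueta)^{[\frac{m+1}{2}]}+\eta\nabla\ueta\bigr)\cdot\nabla_x\rho^\epsilon_{0,t}(x,\ueta)\,dx$. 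The point of this is that the test function then appears only through $\nabla_x\rho^\epsilon_{0,t}$, so the duality pairing is controlled by $\|\rho_0\|_{W^{1,\infty}(Q\times\R)}$. After Cauchy's inequality this produces $\eta$, $\int_Q|\ueta|^{(m-1)\vee 0}$, $\peta$, and a $|\xi|^{(m-1)\wedge 0}$-weighted $\qeta$, all of which are handled by Propositions~\ref{proposition/stable estimate for L^1 norm of smoothed solutions}--\ref{proposition/stable estimate for L^2 norm and defect measures of smoothed solutions} and Lemma~\ref{lemma/poincare inequality for u^(m+1/2)}.

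You instead keep $\Delta_x\rho^\epsilon_{0,t}$ and integrate the kinetic function in $\xi$ to produce $|\ueta|^m+\eta|\ueta|$. That algebra is correct, and your control of $\int_0^T\!\int_Q|\ueta|^m$ via $|\ueta|^m\leq 1+|\ueta|^{m+1}$, Poincar\'e (Lemma~\ref{lemma/poincare inequality for u^(m+1/2)}) and the $\qeta$ bound from Proposition~\ref{proposition/stable estimate for L^2 norm and defect measures of smoothed solutions} is a legitimate alternative for the diffusion piece. \textbf{However}, your route now needs $\|\Delta_x\rho^\epsilon_{0,t}\|_{L^\infty}\leq C\|\rho_0\|_{C^2}$, and then you invoke ``$H^s(Q\times\R)\hookrightarrow C^2(Q\times\R)$, valid under the stated assumption $s>d/2+1$.'' This is false: $Q\times\R$ is a $(d+1)$-dimensional domain, so $H^s\hookrightarrow C^2$ requires $s>2+\frac{d+1}{2}=\frac{d}{2}+\frac{5}{2}$, strictly above the exponent in the Proposition. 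Your argument as written therefore proves the statement only for $s>\frac{d}{2}+\frac{5}{2}$, not for $s>\frac{d}{2}+1$; the integration by parts in the paper's proof is exactly the device that trims one derivative off the requirement on $\rho_0$ and makes the lower threshold attainable. Your ``Main obstacle'' paragraph locates the difficulty in the $m>2$ integrability of $|\ueta|^m$, but that step you actually handle correctly; the missing idea is the integration by parts that avoids needing two derivatives of the test function.
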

\begin{proof}
For any $\rho_0\in C_c^{\infty}(Q\times\R)$ denote $\rho_{0,r}^{\epsilon}(x,\xi):=\rho_0\left(Y_{r,r}^{x,\xi,\epsilon},\Pi_{r,r}^{x,\xi,\epsilon}\right)$.
Take any $\varphi\in C_c^{\infty}([0,\infty))$.
Testing equation \eqref{formula/weak kinetic formulation of the smoothed equation} against $\psi(x,\xi,t):=\rho_{0,r}^{\epsilon}(x,\xi)\varphi(t)$ and exploiting the cancellations coming from the transport of $\rho_0$ along the characteristics, for any $a\leq b\in[0,\infty)$, we get
\begin{align}
        \hspace{-2mm}\int_{a}^{b}\!\!\int_{Q\times\R}\!\!\!\!\!\chieta(x,\xi,t)\rho_{0,t}^{\epsilon}(x,\xi)\,dx\,d\xi\,\dot{\varphi}(t)\,dt\!=&\!\int_{Q\times\R}\!\!\!\!\!\!\!\!\chieta(x,\xi,t)\rho_{0,t}^{\epsilon}(x,\xi)\,dx\,d\xi\,\varphi(t)\Big|_{t=a}^{t=b}
         \nonumber
        \\
        &-\!\!
        \int_{a}^{b}\!\!\int_{Q\times\R}\!\!\!\left(m|\xi|^{m-1}+\eta\right)\chieta(x,\xi,t)\Delta_x\rho_{0,t}^{\epsilon}(x,\xi)\,dx\,d\xi\,\varphi(t)\,dt \nonumber
        \\
        &+
        \int_{a}^{b}\!\!\int_{Q\times\R}\!\!\!\!\left(\peta(x,\xi,t)+\qeta(x,\xi,t)\right)\partial_{\xi}\rho_{0,t}^{\epsilon}(x,\xi)\,dx\,d\xi\,\varphi(t)\,dt. \nonumber
\end{align}
Using the conservative property \eqref{formula/lebesgue measure is preserved by smooth caharacteristics} of the characteristics and integrating by parts the second term on the right-hand side of the equation, we obtain
\begin{align}
        \int_{a}^{b}\!\!\int_{Q\times\R}\!\!\!\!\!\Tilde{\chi}^{\eta,\epsilon}(x,\xi,t)\rho_0(x,\xi)\,dx\,d\xi\,\dot{\varphi}(t)\,dt\!
        =&\!\int_{Q\times\R}\!\!\!\!\!\!\!\!\chieta(x,\xi,t)\rho_{0,t}^{\epsilon}(x,\xi)\,dx\,d\xi\,\varphi(t)\Big|_{t=a}^{t=b} \nonumber
        \\
        &\!\!\!+\!\!
        \int_{a}^{b}\!\!\msp\int_{Q}\!\!\!\left(\!\text{\scalebox{0.85}{$\displaystyle\frac{2m}{m+1}\left|\ueta\right|^{\frac{m-1}{2}}\!\nabla\left(\ueta\right)^{\left[\frac{m+1}{2}\right]}\!+\eta\nabla\ueta$}}\!\msp\right)\!\!\nabla_{\!x}\rho_{0,t}^{\epsilon}\msp(x,\ueta)dx\dsp\varphi(t)\dsp dt \nonumber
        \\
        &\!\!\!+\!\!
        \int_{a}^{b}\!\!\int_{Q\times\R}\!\!\!\!\left(\peta(x,\xi,t)+\qeta(x,\xi,t)\right)\partial_{\xi}\rho_{0,t}^{\epsilon}(x,\xi)\,dx\,d\xi\,\varphi(t)\,dt. \nonumber
\end{align}
Since $\varphi\in C_c^{\infty}([0,\infty))$ is arbitrary, this shows that, for any $\rho_0\in C_c^{\infty}(Q\times\R)$, the mapping
\begin{equation}
    [t_0,\infty)\ni t\mapsto \int_{Q\times\R}\!\!\!\!\!\Tilde{\chi}^{\eta,\epsilon}(x,\xi,t)\rho_0(x,\xi)\,dx\,d\xi  \nonumber
\end{equation}
has a weak derivative given by 
\begin{align}
    \begin{aligned}
    [t_0,\infty)\ni t\mapsto 
        &-\int_{Q}\!\!\!\left(\text{\scalebox{0.9}{$\displaystyle\frac{2m}{m+1}\left|\ueta\right|^{\frac{m-1}{2}}\nabla\left(\ueta\right)^{\left[\frac{m+1}{2}\right]}+\eta\nabla\ueta$}}\right)\nabla_{\!x}\rho_{0,t}^{\epsilon}(x,\ueta)\,dx
        \\
        &\,\,-\int_{Q\times\R}\!\!\!\!\left(\peta(x,\xi,t)+\qeta(x,\xi,t)\right)\partial_{\xi}\rho_{0,t}^{\epsilon}(x,\xi)\,dx\,d\xi. \nonumber
    \end{aligned}
\end{align}

The embedding theorem for Sobolev spaces ensures that for any $s>\frac{d}{2}+1$, for $C=C(Q,s)$, $\|\rho\|_{W^{1,\infty}(Q\times\R)}\leq C\|\rho_0\|_{H^{s}(Q\times\R)}$.
Then we use this bound, Cauchy's inequality and the definition of the parabolic defect measure to estimate
\begin{align}
    \begin{aligned}
        \left|\langle\partial_t\Tilde{\chi}^{\eta,\epsilon},\rho_0\rangle\right|
        \leq
        C
        \left\{
        \eta+\int_{Q}\left|\ueta\right|^{(m-1)\vee0}\,dx
        +\int_{Q\times\R}\!\!\!\!\left(\peta+\left(1+|\xi|^{m-1\wedge0}\right)\qeta\right)dx\,d\xi
        \right\}
        \cdot
        \|\rho_0\|_{H^{s}(Q\times\R)}, \nonumber
    \end{aligned}
\end{align}
for $C=C(m,z,Q,s)$.
Finally, we use the density of $C_c(Q\times\R)$ in $H_0^s(Q\times\R)$ and then we integrate in time to get
\begin{align}
    \begin{aligned}
        \left\|\partial_t\Tilde{\chi}^{\eta,\epsilon}\right\|_{L^1([0,T];H^{-s}(Q\times\R))}
        &\leq
        C
        \int_0^T\left\{
        \eta+\int_{Q}\left|\ueta\right|^{(m-1)\vee0}\,dx
        +\int_{Q\times\R}\!\!\!\!\left(\peta+\left(1+|\xi|^{m-1\wedge0}\right)\qeta\right)dx\,d\xi
        \right\}dt
        \\
        &\leq
        C\left(1+\|u_0\|_{L^2(Q)}\right), \nonumber
    \end{aligned}
\end{align}
for $C=C(m,z,Q,T,s)$.
In the last inequality we used Lemma \ref{lemma/poincare inequality for u^(m+1/2)}, Proposition \ref{proposition/stable estimate for L^2 norm and defect measures of smoothed solutions}, and Young's and H\"older's inequalities.
\end{proof}

It remains to establish the regularity of the kinetic functions with respect to the spatial and velocity variables.
Actually, we shall consider the transported kinetic functions, since these are the ones we proved regularity in time for.
Straightforward modifications to \cite[Corollary 5.5]{fehrman-gess-Wellposedness-of-Nonlinear-Diffusion-Equations-with-Nonlinear-Conservative-Noise} prove the following estimate in the fractional Sobolev space $W^{\ell,1}(Q\times\R)$, for $\ell$ suitably small.

\begin{proposition}\label{proposition/stable estimate for transported kinetic functions}
For each $u_0\in L^2(Q)$, $\eta\in(0,1)$ and $\epsilon\in(0,1)$, consider the transported kinetic function \eqref{formula/transported smoothed kinetic function}.
For each $T>0$ and for each $\ell\in(0,\frac{2}{m+1}\wedge 1)$, there exists $C=C(m,Q,T,\ell)$ such that
\begin{equation}
    \|\Tilde{\chi}^{\eta,\epsilon}\|_{L^1([0,T];W^{\ell,1}(Q\times\R))}\leq C \left(1+\|u_0\|_{L^2(Q)}^2\right). \nonumber
\end{equation}
\end{proposition}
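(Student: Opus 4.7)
[\textbf{Proof strategy for Proposition \ref{proposition/stable estimate for transported kinetic functions}}]
The plan is to follow the strategy of \cite[Corollary~5.5]{fehrman-gess-Wellposedness-of-Nonlinear-Diffusion-Equations-with-Nonlinear-Conservative-Noise}, adapted to the Dirichlet setting. I would characterize the Sobolev-Slobodeckij norm $W^{\ell,1}(Q\times\R)$ via translation differences and estimate translates of $\tilde{\chi}^{\eta,\epsilon}$ separately in the spatial and the velocity variables. A first reduction uses that the rough characteristics $(X^{x,\xi,\epsilon}_{0,t},\Xi^{x,\xi,\epsilon}_{0,t})$ are $C^1$-diffeomorphisms with first derivatives uniformly bounded in $\epsilon\in(0,1)$ and $t\in[0,T]$ (Proposition \ref{proposition/stability results for RDEs}) and preserve Lebesgue measure (\eqref{formula/lebesgue measure is preserved by smooth caharacteristics}). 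A change of variables in the translation differences then shows that it suffices to control $\chi^{\eta,\epsilon}(\cdot,\cdot,t)$ in $W^{\ell,1}(Q\times\R)$ instead of $\tilde{\chi}^{\eta,\epsilon}$.

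For translations in the velocity variable, the structure of the kinetic function (values in $\{-1,0,1\}$ with supports on intervals between $0$ and $u^{\eta,\epsilon}(x,t)$) gives the unconditional estimate
\begin{equation}
\int_{Q\times\R}\!\!\!\bigl|\chi^{\eta,\epsilon}(x,\xi+k,t)-\chi^{\eta,\epsilon}(x,\xi,t)\bigr|\,d\xi\,dx\leq 2|Q|\,|k|,\nonumber
\end{equation}
uniformly in $\eta,\epsilon,t$. For translations in the space variable, I would use the identity $\int_\R|\chi^{\eta,\epsilon}(x+h,\xi,t)-\chi^{\eta,\epsilon}(x,\xi,t)|\,d\xi=|u^{\eta,\epsilon}(x+h,t)-u^{\eta,\epsilon}(x,t)|$ to reduce the problem to an $L^1_x$-translation estimate for the solution. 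Setting $v^{\eta,\epsilon}:=(u^{\eta,\epsilon})^{[(m+1)/2]}\in L^2([0,T];H^1_0(Q))$ and invoking the uniform bound from Proposition \ref{proposition/stable estimate for L^2 norm and defect measures of smoothed solutions} on $\|\nabla v^{\eta,\epsilon}\|_{L^2_tL^2_x}$, I would treat the two regimes separately. In the case $m\geq 1$, the inversion $\zeta\mapsto\zeta^{[2/(m+1)]}$ is globally $\tfrac{2}{m+1}$-H\"older, so that Jensen's and Hölder's inequalities yield
\begin{equation}
\int_Q\!\bigl|u^{\eta,\epsilon}(x+h,t)-u^{\eta,\epsilon}(x,t)\bigr|\,dx\leq C|h|^{\frac{2}{m+1}}\bigl\|\nabla v^{\eta,\epsilon}(\cdot,t)\bigr\|_{L^2(Q)}^{\frac{2}{m+1}}.\nonumber
\end{equation}
In the case $m<1$, the inversion map is only locally Lipschitz with derivative of order $|u^{\eta,\epsilon}|^{(1-m)/2}$, and Cauchy-Schwarz combined with the uniform $L^\infty_tL^2_x$ bound on $u^{\eta,\epsilon}$ gives the Lipschitz-in-$x$ estimate
\begin{equation}
\int_Q\!\bigl|u^{\eta,\epsilon}(x+h,t)-u^{\eta,\epsilon}(x,t)\bigr|\,dx\leq C|h|\,\|u^{\eta,\epsilon}(\cdot,t)\|_{L^2(Q)}^{(1-m)/2}\bigl\|\nabla v^{\eta,\epsilon}(\cdot,t)\bigr\|_{L^2(Q)}.\nonumber
\end{equation}

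To conclude, I would combine the velocity and spatial translation estimates, integrate them against the Slobodeckij weight $|(h,k)|^{-d-1-\ell}$ on $\{|(h,k)|\leq 1\}$ (integrability requires precisely $\ell<\tfrac{2}{m+1}\wedge 1$) and supplement with the trivial $L^1$-bound for $|(h,k)|>1$ using Proposition \ref{proposition/stable estimate for L^1 norm of smoothed solutions}. Applying Hölder's inequality in time—with exponent $m+1$ in the case $m\geq1$ to pair $\|\nabla v^{\eta,\epsilon}(\cdot,t)\|_{L^2}^{2/(m+1)}$ against the $L^2_tL^2_x$ bound—and invoking Proposition \ref{proposition/stable estimate for L^2 norm and defect measures of smoothed solutions} produces a constant of the desired form $C(1+\|u_0\|_{L^2(Q)}^2)$. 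The main subtlety lies in Step~3 for $m<1$: since $\zeta\mapsto\zeta^{[2/(m+1)]}$ is not globally H\"older, one must carefully interpolate the $L^\infty_tL^2_x$ bound on $u^{\eta,\epsilon}$ with the $L^2_tH^1_x$ bound on $v^{\eta,\epsilon}$ and track all powers to keep the resulting constants uniform in $\eta,\epsilon$ and of the form announced.
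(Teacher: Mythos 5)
Your proposal is correct and reproduces exactly the approach the paper intends (it cites \cite[Corollary 5.5]{fehrman-gess-Wellposedness-of-Nonlinear-Diffusion-Equations-with-Nonlinear-Conservative-Noise} and this is what "straightforward modification" means in the Dirichlet setting, where extension by zero is available thanks to $v^{\eta,\epsilon}\in H^1_0(Q)$): reduce from $\tilde\chi^{\eta,\epsilon}$ to $\chi^{\eta,\epsilon}$ via the measure-preserving, uniformly bi-Lipschitz change of variables given by the characteristics, estimate velocity and space translates separately using the structure of the kinetic function and the $L^2_tH^1_x$ bound on $(u^{\eta,\epsilon})^{[(m+1)/2]}$ from Proposition \ref{proposition/stable estimate for L^2 norm and defect measures of smoothed solutions}, then integrate against the Slobodeckij weight, which converges precisely for $\ell<\tfrac{2}{m+1}\wedge 1$. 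All exponents and the resulting dependence $C(1+\|u_0\|_{L^2}^2)$ check out in both regimes $m\geq1$ and $m<1$.
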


We can now establish the existence of pathwise kinetic solutions with initial data $u_0\in L^2(Q)$.
The proof is a consequence of Propositions \ref{proposition/stable estimate for L^2 norm and defect measures of smoothed solutions}, \ref{proposition/stable estimate for time derivative of transported kinetic functions} and \ref{proposition/stable estimate for transported kinetic functions}, and the Aubin-Lions-Simon Lemma \cite{simon-Compact-sets-in-the-spaceLp}.
We remark that the necessity of an entropy defect measure in Definition \ref{definition/pathwise kinetic solution} arises from the fact that, after passing to a subsequence, the gradients $\nabla(\ueta)^{[\nicefrac{m+1}{2}]}$ will converge only weakly in the $\epsilon,\eta\to0$ limit.
Due to the weak lower semicontinuity of the norm, the limit of the parabolic defect measures $\qeta$ may therefore overestimate the energy of the signed power of the limiting solution.
The total mass of the entropy defect measure quantifies this loss.

\begin{proof}[\textbf{Proof of Theorem \ref{theorem/existence of pathwise kinetic solutions for signed data}}]
\hfill

\noindent
Let $u_0\in L^2(Q)$ be arbitrary.
For any $\eta,\epsilon\in (0,1)$, let $\ueta$ be the solution of the regularized equation \eqref{formula/regularized porous media equation 1} with initial data $u_0$, with transported kinetic function $\Tilde{\chi}^{\eta,\epsilon}$, entropy defect measure $\peta$ and parabolic defect measure $\qeta$.
We recall that, for each $\ell\in(0,\frac{2}{m+1}\wedge 1)$ and each $R>0$, the embedding of $W^{\ell,1}(Q\times[-R,R])$ into $L^1(Q\times[-R,R])$ is compact, and that $L^1(Q\times[-R,R])$ embeds continuously into $H^{-s}(Q\times\R)$ for $s>\frac{d}{2}+1$.
Then, Propositions \ref{proposition/stable estimate for time derivative of transported kinetic functions} and \ref{proposition/stable estimate for transported kinetic functions}, the Aubin--Lions--Simon Lemma \cite{simon-Compact-sets-in-the-spaceLp} and a diagonal argument needed to control the tail of $Q\times\R$ show that, for each $T>0$, the family
\begin{equation}
    \{\Tilde{\chi}^{\eta,\epsilon}\}_{\eta,\epsilon\in(0,1)} \text{ is precompact in } L^1([0,T];L^1(Q\times\R)). \nonumber
\end{equation}
The conservative property of the characteristics \eqref{formula/lebesgue measure is preserved by smooth caharacteristics} then implies that, for each $T>0$, the family
\begin{equation}
    \{\chi^{\eta,\epsilon}\}_{\eta,\epsilon\in(0,1)} \text{ is precompact in } L^1([0,T];L^1(Q\times\R)). \nonumber
\end{equation}
In turn, the definition of kinetic function immediately shows that, for each $T>0$, the family
\begin{equation}
    \{\ueta\}_{\eta,\epsilon\in(0,1)} \text{ is precompact in } L^1([0,T];L^1(Q)). \nonumber
\end{equation}
Furthermore, Proposition \ref{proposition/stable estimate for L^2 norm and defect measures of smoothed solutions} and the Riesz--Markov Theorem imply that the sequence of measures
\begin{equation}
    \{(\peta,\qeta)\}_{\eta,\epsilon\in(0,1)} \text{ is weakly precompact in } C_c(Q\times\R\times[0,T])^*, \nonumber
\end{equation}
and that the family
\begin{equation}
    \{(\ueta)^{\left[\frac{m+1}{2}\right]}\}_{\eta,\epsilon\in(0,1)} \text{ is weakly precompact in } L^2([0,T];H^1_0(Q)). \nonumber
\end{equation}

After passing to a subsequence $\{(\eta_k,\epsilon_k)\}_{k\in\N}$ with $\lim_{k\to\infty}(\eta_k,\epsilon_k)=(0,0)$, it follows that there exists a function $u$ such that $u\in L^1([0,T];L^1(Q))$ for any $T>0$, and such that as $k\to\infty$
\begin{equation}\label{proof/theorem/existence of pathwise kinetic solutions for signed data/1}
    u^{\eta_k,\epsilon_k}\!\!\to u \quad \text{ strongly in } L^1([0,T];L^1(Q))
\end{equation}
and
\begin{equation}\label{proof/theorem/existence of pathwise kinetic solutions for signed data/2}
    (u^{\eta_k,\epsilon_k})^{\left[\nicefrac{m+1}{2}\right]}\!\!\rightharpoonup u^{\left[\nicefrac{m+1}{2}\right]} \,\,\,\text{weakly in } L^2([0,T];H_0^1(Q)).
\end{equation}
Furthermore, there exist positive measures $(p',q')$ such that as $k\to\infty$, for any $T>0$,
\begin{equation} \label{proof/theorem/existence of pathwise kinetic solutions for signed data/3}
    (\peta,\qeta)\rightharpoonup(p',q')\quad \text{weakly in } C_c(Q\times\R\times[0,T])^*.
\end{equation}
Recalling the definition \eqref{formula/parabolic defect measure smooth} of the parabolic defect measure, it follows from the strong convergence \eqref{proof/theorem/existence of pathwise kinetic solutions for signed data/1} and the weak lower semicontinuity of the Sobolev norm that, in the sense of measures,
\begin{equation} \label{proof/theorem/existence of pathwise kinetic solutions for signed data/4}
    \delta_0(\xi-u(x,t))\frac{4m}{(m+1)^2}\left|\nabla u^{\left[\frac{m+1}{2}\right]}(x,t)\right|^2\leq q'(x,\xi,t) \quad\text{for } (x,\xi,t)\in Q\times\R\times[0,\infty).
\end{equation}
To see this, let $\psi\in C_c(Q\times\R\times[0,T])$ be an arbitrary nonnegative function.
The strong convergence \eqref{proof/theorem/existence of pathwise kinetic solutions for signed data/1} implies that as $k\to\infty$
\begin{equation}
    \sqrt{\psi(x,u^{\eta_k,\epsilon_k}(x,t),t)}\,\to\sqrt{\psi(x,u(x,t),t)} \quad\text{strongly in } L^2(Q\times[0,T]). \nonumber
\end{equation}
In turn, this and the weak convergence \eqref{proof/theorem/existence of pathwise kinetic solutions for signed data/2} yield
\begin{equation}
    \sqrt{\psi(x,u^{\eta_k,\epsilon_k},t)}\,\nabla\left(u^{\eta_k,\epsilon_k}\right)^{\left[\frac{m+1}{2}\right]}\to\sqrt{\psi(x,u,t)}\,\nabla u^{\left[\frac{m+1}{2}\right]} \quad\text{weakly in } L^2(Q\times[0,T]). \nonumber
\end{equation}
Therefore, the weak convergence \eqref{proof/theorem/existence of pathwise kinetic solutions for signed data/3}, the definition of the measures $q^{\eta_k,\epsilon_k}$ and the weak lower semicontinuity of the $L^2$-norm prove that
\begin{align}
    \begin{aligned}
    \frac{4m}{(m+1)^2}\int_0^T\int_Q\psi(x,u,t)\left|\nabla u^{\left[\frac{m+1}{2}\right]}\right|^2
    & \leq
    \liminf_{k\to\infty}\frac{4m}{(m+1)^2}\int_0^T\int_Q\psi(x,u^{\eta_k,\epsilon_k},t)\left|\nabla \left(u^{\eta_k,\epsilon_k}\right)^{\left[\frac{m+1}{2}\right]}\right|^2
    \\
    & =
    \liminf_{k\to\infty}\int_0^T\int_Q\int_{\R}\psi(x,\xi,t) \, q^{\eta_k,\epsilon_k}
    \\
    &=
    \int_0^T\int_Q\int_{\R}\psi(x,\xi,t)\, q'.
    \end{aligned}
\end{align}
Since $\psi$ was arbitrary, this establishes the inequality \eqref{proof/theorem/existence of pathwise kinetic solutions for signed data/4}.

Now we define the parabolic defect measure by the usual formula \eqref{formula/parabolic defect measure}, and, since \eqref{proof/theorem/existence of pathwise kinetic solutions for signed data/4} implies that $q'-q$ is nonnegative, we define a positive entropy defect measure
\begin{equation}
    p:=p'+q'-q\geq 0\quad\text{on } Q\times\R\times[0,\infty).\nonumber
\end{equation}
Finally, the regularity assumptions \eqref{formula/assumption on A smooth}, the convergence \eqref{formula/smooth paths converging to rough path} of the paths $z^\epsilon$ and Proposition \ref{proposition/stability results for RDEs} implies that, for each $T>0$ and each $k=0,1,2$,
\begin{equation} \label{proof/theorem/existence of pathwise kinetic solutions for signed data/5}
    \lim_{\epsilon\to0}\left|D^k_{\!\!\text{\scalebox{0.7}{$(x,\xi)$}}}\!Y_{t_0,t}^{x,\xi,\epsilon}-D^k_{\!\!\text{\scalebox{0.7}{$(x,\xi)$}}}\!Y_{t_0,t}^{x,\xi}\right|+\left|D^k_{\!\!\text{\scalebox{0.7}{$(x,\xi)$}}}\Pi_{t_0,t}^{x,\xi,\epsilon}-D^k_{\!\!\text{\scalebox{0.7}{$(x,\xi)$}}}\Pi_{t_0,t}^{x,\xi}\right|=0,
\end{equation}
uniformly for $(x,\xi)\in \R^d\!\times\!\R$ and $t_0\!\leq\! t \in[0,T]$. 

For the kinetic function $\chi$ of $u$, the convergence \eqref{proof/theorem/existence of pathwise kinetic solutions for signed data/1} proves that, for a subset $\mathcal{N}\subset (0,\infty)$ of measure zero, for each $t\in[0,\infty)\setminus\,\mathcal{N}$,
\begin{equation}
\lim_{k\to\infty}\|u^{\eta_k,\epsilon_k}(\cdot,t)-u(\cdot,t)\|_{L^1(Q)}=0. \nonumber
\end{equation}
This and the additional convergences \eqref{proof/theorem/existence of pathwise kinetic solutions for signed data/2}, \eqref{proof/theorem/existence of pathwise kinetic solutions for signed data/3} and \eqref{proof/theorem/existence of pathwise kinetic solutions for signed data/5} imply that, for every $t_0\leq t_1\in[0,\infty)\setminus\,\mathcal{N}$ and every $\rho_0\in C_c^\infty(Q\times\R)$, we can pass to the limit in equation \eqref{formula/transported weak kinetic formulation of the smoothed equation 1} and get
\begin{align}
        \vspace{-2mm}
        \begin{aligned}
            \int_{Q\times\R}\!\!\!\!\!\!\chi(x,\xi,r)\rho_{t_0,r}(x,\xi)\,dx\,d\xi\bigg|_{r=t_0}^{r=t_1}
            =&
            \int_{t_0}^{t_1}\!\!\!\int_{Q\times\R}\!\!\!\!\!m|\xi|^{m-1}\chieta(x,\xi,r)\,\Delta_x\rho_{t_0,r}(x,\xi)\,dx\,d\xi\,dr
            \\
            &-
            \int_{t_0}^{t_1}\!\!\!\int_{Q\times\R}\!\!\!\!\!\!\left(p(x,\xi,r)+q(x,\xi,r)\right)\,\partial_{\xi}\rho_{t_0,r}(x,\xi)\,dx\,d\xi\,dr, \nonumber
    \end{aligned}
\end{align}
where $\rho_{t_0,t}$, given by \eqref{formula/transported stochastic test function}, is the solution of \eqref{formula/stochastic underlying transport equation} with initial data $\rho_0$.
Moreover, when $t_0=0$,
\begin{equation}
    \int_{Q\times\R}\!\!\!\!\!\!\chi(x,\xi,0)\,\rho_{0,0}(x,\xi)\,dx\,d\xi
    =\lim_{k\to\infty}\int_{Q\times\R}\!\!\!\!\!\!\chi^{\eta_k,\epsilon_k}(x,\xi,0)\,\rho_{0,0}(x,\xi)\,dx\,d\xi
    =\int_{Q\times\R}\!\!\!\!\!\!\bar{\chi}(u_0(x),\xi)\,\rho_0(x,\xi)\,dx\,d\xi. \nonumber
\end{equation}
This completes the proof that $u$ is a pathwise kinetic solution.
\end{proof}

Finally, we show that pathwise kinetic solutions depend continuously on the driving noise.
The proof will follow from a compactness argument relying on the estimates used in the proof of Theorem \ref{theorem/existence of pathwise kinetic solutions for signed data}, the rough path estimates of Proposition \ref{proposition/stability results for RDEs}, and the uniqueness of solutions from Theorem \ref{theorem/uniqueness of pathwise kinetic solutions for nonnegative initial data}.
Unfortunately, we remark that these methods do not yield an explicit estimate quantifying the convergence of the solutions in terms of the convergence of the noise.

\begin{proof}[\textbf{Proof of Theorem \ref{theorem/continuous dependence on the noise for nonnegative initial data}}]
\hfill

\noindent
Let $u_0\in L_+^2(Q)$ and $T>0$.
Let $\{z^n\}_{n\in\N}$ and $z$ be $\alpha$-H\"older continuous geometric rough paths on $[0,T]$ satisfying 
\begin{equation}
    \lim_{n\to\infty}d_\alpha(z^n,z)=0. \nonumber
\end{equation}
This ensures that we can find $R_0\geq 0$ such that condition \eqref{formula/paths z^epsilon are uniformly close to zero path} from Section A holds.

For each $n\in\N$, let $u^n$ denote the solution of \eqref{formula/stochastic porous media equation} constructed in Theorem \ref{theorem/existence of pathwise kinetic solutions for signed data} with initial data $u_0$ and driving signal $z^n$ respectively.
It follows from \eqref{formula/paths z^epsilon are uniformly close to zero path} and Proposition \ref{proposition/stability results for RDEs} that the solutions $u^n$ satisfy the estimates of Propositions \ref{proposition/stable estimate for L^1 norm of smoothed solutions}, \ref{proposition/stable estimate for L^2 norm and defect measures of smoothed solutions}, \ref{proposition/stable estimate for time derivative of transported kinetic functions} and \ref{proposition/stable estimate for transported kinetic functions} on the interval $[0,T]$ for a constant that is independent of $n\in\N$.

A repetition of the proof of Theorem \ref{theorem/existence of pathwise kinetic solutions for signed data} proves that, after passing to a subsequence $\{u^{n_k}\}_{k\in\N}$, there exists a pathwise kinetic solution $u$ of \eqref{formula/stochastic porous media equation} with initial condition $u_0$ and driving noise $z$ such that
\begin{equation}
    \lim_{k\to\infty}\|u^{n_k}-u\|_{L^1([0,T];L^1(Q))}=0.\nonumber
\end{equation}
However, since it follows from Theorem \ref{theorem/uniqueness of pathwise kinetic solutions for nonnegative initial data} that $u$ is the unique solution of \eqref{formula/stochastic porous media equation} with initial condition $u_0$ and driving noise $z$, we conclude that, along the full sequence $\displaystyle\lim_{n\to\infty}\|u^{n}-u\|_{L^1([0,T];L^1(Q))}=0$.
\end{proof}


\appendix

\section{Rough path estimates}\label{section/rough path estimates}

In this section we present some stability results for rough differential equations.
Then we apply these results to the systems of characteristics \eqref{formula/forward smooth characteristics} and \eqref{formula/forward stochastic characteristics} to obtain several properties and estimates needed throughout the paper.
We refer the reader to Friz and Hairer \cite{fritz-hairer-a-course-on-rough-apths} and Friz and Victoir \cite{friz-victoir-multidimensiona-stochastic-processes-as-rough-paths} for detailed expositions of the theory of rough paths, originally introduced by Lyons \cite{Lyons-differential-equations-driven-by-rough-signals}.

For $d\geq 1$, $T\geq0$ and $\beta\in(0,1)$, we denote by $C^{0,\beta}\big([0,T];G^{\left\lfloor\nicefrac{1}{\beta}\right\rfloor}(\R^n)\big)$ the space of $\beta$-H\"older continuous geometric rough paths, and by $d_{\beta}$ the associated $\beta$-H\"older metric defined on this space (cf. \cite[Definition 7.41]{friz-victoir-multidimensiona-stochastic-processes-as-rough-paths}).
For each $x\in \R^d$ and $z\in C^{0,\beta}\big([0,T];G^{\left\lfloor\nicefrac{1}{\beta}\right\rfloor}(\R^n)\big)$, let $X^{x,z}$ be the solution of the rough differential equation
\begin{align}[left ={\empheqlbrace}]\label{formula/typical RDE}
    \begin{aligned}
        &dX^{x,z}_t= V(X_t^{x,z})\circ dz_t &\text{on }(0,\infty),
        \\
        &X_0^{x,z}=x.&
    \end{aligned}
\end{align}
The collection \eqref{formula/typical RDE} defines a flow map $\psi^z:\R^d\times[0,T]\to\R^d$ by the rule
\begin{equation}
    \psi_t^z(x)=X^{x,z}_t\quad\text{for } (x,t)\in\R^d\times[0,T].\nonumber
\end{equation}
The next proposition encodes the regularity of the flow map with respect to the initial condition and the driving signal. 
The regularity is inherited from the nonlinearity $V$, which must be sufficiently regular to overcome the roughness of the noise.
A proof of the proposition is given in \cite[Lemma 13]{crisan-diehl-friz-oberhauser}.
In the statement below, we shall write $e=1\oplus0\oplus\dots\oplus0$ to denote the signature of the zero path.
\begin{proposition}\label{proposition/stability results for RDEs}
Fix $T\geq0$, $\beta\in(0,1)$, $\gamma>\frac{1}{\beta}\geq1$, and $k\in \N$.
Assume $V\in C^{k+\gamma}_b(\R^d;\R^{d\times n})$.
For any $R\geq 0$ there exist constants $C=C(R,\|V\|_{C_b^{k+\gamma}})>0$ and $M=M(R,\|V\|_{C_b^{k+\gamma}})>0$ such that, for any $z^1,z^2\in C^{0,\beta}\big([0,T];G^{\left\lfloor\nicefrac{1}{\beta}\right\rfloor}(\R^n)\big)$ with
\begin{equation}
    d_{\beta}(z^i,e)\leq R\quad\text{for $i=1,2$}, \nonumber
\end{equation}
the following properties hold.
Here $\|\cdot\|_{\beta}$ is the standard H\"older norm in $C^{\beta}([0,T];\R^N)$ for some $N\in\N$.
\begin{itemize}
    \item [i)] For every $0\leq j\leq k$,
        \begin{equation}
            \sup_{x\in\R^d}\,\left\|D_x^j\Big(\psi_t^{z^{1}}\!-\!\psi_t^{z^{2}}\Big)(x)\right\|_{\beta}+\left\|D_x^j\Big(\big(\psi_t^{z^{1}}\big)^{-1}\!\!-\!\big(\psi_t^{z^2}\big)^{-1}\Big)(x)\right\|_{\beta}\leq C d_{\beta}(z^1,z^2).
        \end{equation}
    \item [ii)] For every $0\leq j\leq k$,
        \begin{equation}
            \sup_{x\in\R^d}\,\left\|D_x^j\Big(\psi_t^{z^{1}}\Big)(x)\right\|_{\beta}+\left\|D_x^j\Big(\big(\psi_t^{z^{1}}\big)^{-1}\Big)(x)\right\|_{\beta}\leq M.
        \end{equation}
\end{itemize}
\end{proposition}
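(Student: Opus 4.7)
The plan is to recast the statement as a standard application of the It\^o--Lyons continuity theorem to an enlarged system of rough differential equations, after first disposing of the inverse flow by a time-reversal argument.

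First, I would observe that the inverse flow $(\psi_t^z)^{-1}$ can be identified with the time-$t$ solution of an RDE driven by a suitable time-reversed (and sign-flipped) enhancement of $z$ (cf.\ \cite[Chapter~11]{friz-victoir-multidimensiona-stochastic-processes-as-rough-paths}). Since the reversal operation is an isometry of $C^{0,\beta}\big([0,T];G^{\lfloor 1/\beta\rfloor}(\R^n)\big)$ with respect to $d_\beta$ and preserves the constraint $d_\beta(z^i,e)\le R$, it suffices to prove the estimates (i) and (ii) for the forward flow $\psi_t^z$; the statements for $(\psi_t^z)^{-1}$ then follow by applying the same argument to the reversed equation.

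Second, I would enlarge the state by coupling \eqref{formula/typical RDE} with its variational equations. For a multi-index $j$ with $|j|\le k$, formal differentiation of $dX_t^{x,z}=V(X_t^{x,z})\circ dz_t$ in the initial condition $x$ produces a system
\begin{equation}
    dD^{|j|}_x X_t^{x,z}= W_{|j|}\bigl(X_t^{x,z},D_xX_t^{x,z},\dots,D^{|j|}_xX_t^{x,z}\bigr)\circ dz_t, \nonumber
\end{equation}
where, by the Fa\`a di Bruno formula, the vector field $W_{|j|}$ is a polynomial in $D_xX,\dots,D^{|j|}_xX$ with coefficients involving the derivatives of $V$ up to order $|j|$. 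Collecting all orders $|j|\le k$ produces an enlarged RDE on a Euclidean space driven by the same rough path $z$, whose vector field has $\gamma$ bounded derivatives provided $V\in C^{k+\gamma}_b$; this is precisely the threshold $\gamma>1/\beta$ needed for the It\^o--Lyons map to be well defined and continuous. The initial datum for this enlarged system is $(x,\mathrm{Id},0,\dots,0)$.

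Third, I would invoke the standard quantitative stability result for RDEs in the form stated, e.g., in \cite[Theorem~10.26]{friz-victoir-multidimensiona-stochastic-processes-as-rough-paths}: if the driving paths satisfy $d_\beta(z^i,e)\le R$ and the vector field is $C^{\gamma}_b$ with $\gamma>1/\beta$, then the solution of the enlarged RDE is uniformly bounded in the $\beta$-H\"older norm by a constant $M=M(R,\|V\|_{C^{k+\gamma}_b})$, and depends on $z$ in a locally Lipschitz way:
\begin{equation}
  \bigl\|\Phi_t^{z^1}(x)-\Phi_t^{z^2}(x)\bigr\|_{\beta}\le C\,d_\beta(z^1,z^2), \nonumber
\end{equation}
uniformly in $x\in\R^d$ (uniformity in $x$ follows because the initial datum $(x,\mathrm{Id},0,\dots,0)$ enters only through a translation in the first component, so the remainder has the same Lipschitz/boundedness constants). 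Reading off the components gives both (i) and (ii) for $\psi_t^z$ and its derivatives up to order $k$.

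The main obstacle is the bookkeeping of the regularity count: one must verify that the vector field $W_{|j|}$ of the enlarged system retains $\gamma$ bounded derivatives, which requires exactly $k+\gamma$ bounded derivatives of $V$ and is where the hypothesis $V\in C^{k+\gamma}_b$ is tight. Once this matching is established, the continuity and boundedness constants depend only on $\|V\|_{C^{k+\gamma}_b}$ and on the uniform bound $R$ on the rough path norms, as claimed. The reduction to Crisan--Diehl--Friz--Oberhauser \cite{crisan-diehl-friz-oberhauser} cited in the excerpt packages precisely this argument.
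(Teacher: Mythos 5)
The paper does not prove this proposition itself; it is quoted from \cite[Lemma 13]{crisan-diehl-friz-oberhauser}, so there is no internal proof to compare against. Evaluated on its own terms, your outline follows the standard template (reduce the inverse flow to a reversed RDE, enlarge the state by the variational equations up to order $k$, invoke quantitative continuity of the It\^o--Lyons map), and the regularity count you perform is the right one: the $j$-th variational vector field consumes $j$ derivatives of $V$, so $V\in C^{k+\gamma}_b$ leaves $C^\gamma$ regularity at the top level. That part is sound.

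The gap is in the sentence asserting that the enlarged vector field ``has $\gamma$ bounded derivatives.'' It does not. The $j$-th variational equation has the form
\begin{equation}
  dD_x^j X = W_j\bigl(X, D_xX,\dots,D_x^jX\bigr)\circ dz,
\end{equation}
where $W_j$ is \emph{polynomial} in the auxiliary variables $D_xX,\dots,D_x^jX$, with coefficients $D^iV(X)$, $i\le j$. Already at the first level, $W_1(X,J)=DV(X)\,J$ is linear and hence unbounded in $J$, and its $X$-derivative $D^2V(X)\,J$ is likewise unbounded. So the enlarged system is \emph{not} $\mathrm{Lip}^\gamma$ in the sense required by the standard stability theorem (Friz--Victoir Thm.~10.26 and its relatives require boundedness of the vector field and of its first $\lfloor\gamma\rfloor$ derivatives, plus a H\"older condition), and one cannot simply ``read off the components.'' To close the argument you need an additional ingredient: exploit the \emph{linear} structure of each variational equation, viewed as a linear RDE in the top variable driven jointly by $z$ and the lower-order components, to obtain a priori growth bounds depending only on $R$ and $\|V\|_{C_b^{k+\gamma}}$ (this is where constants of the form $\exp(C R^{1/\beta})$ enter); once the solution is confined to a ball of radius $M(R)$, truncate the vector field outside that ball to restore $\mathrm{Lip}^\gamma$-ness and then apply the continuity theorem. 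Alternatively, run the hierarchy inductively, at each level invoking the stability estimate for linear RDEs with uniformly bounded coefficients. Either route is how \cite[Lemma 13]{crisan-diehl-friz-oberhauser} proceeds in substance; your proposal as written skips this step, and without it the citation of a bounded-vector-field stability theorem does not go through.

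One smaller point on the inverse flow: the time-reversal reduction is standard, but because the reversal used to produce $(\psi_t^z)^{-1}$ depends on $t$, the $\beta$-H\"older-in-$t$ estimate does not literally reduce to the forward estimate for a single reversed path. The usual fix is to note that $(\psi_t^z)^{-1}$ itself solves a flow-type RDE (or to parametrise by the terminal time and use the two-parameter flow), and then the same a priori bounds apply. Worth flagging, but not a fatal omission.
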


Now we consider the setting outlined in Sections \ref{section/preliminaries and main results} and \ref{section/definition and motivation of pathwise kinetic solutions}, and apply this regularity result to the rough differential equations \eqref{formula/forward smooth characteristics} and \eqref{formula/forward stochastic characteristics} defining the characteristics.

\begin{remark}
In the setting of Section \ref{section/preliminaries and main results} and \ref{section/definition and motivation of pathwise kinetic solutions}, the assumption that, for each $T>0$, the smooth paths $z^{\epsilon}$ converge to $z$ in the metric $d_{\alpha}$ on $C^{0,\alpha}\big([0,T];G^{\left\lfloor\nicefrac{1}{\alpha}\right\rfloor}(\R^n)\big)$, ensures that we can find $R_0\geq 0$ such that
\begin{equation}\label{formula/paths z^epsilon are uniformly close to zero path}
    d_{\alpha}(z,e)+\sup_{\epsilon\in(0,1)}d_{\alpha}(z^{\epsilon},e)\leq R_0. 
\end{equation}
Therefore, all the consequences of Proposition \ref{proposition/stability results for RDEs} holds uniformly for $z^{\epsilon}$, for any $\epsilon\in(0,1)$, and for $z$, with the same constants.
That is, they hold uniformly for the smooth characteristics \eqref{formula/forward smooth characteristics}, for any $\epsilon\in(0,1)$, and for the rough characteristics \eqref{formula/forward stochastic characteristics}.
In the remainder of the section we shall consider the path $z$ and the system \eqref{formula/forward stochastic characteristics}. 
We remark that the exact same arguments work for $z^{\epsilon}$ and the system \eqref{formula/forward smooth characteristics}, just inserting $\epsilon$ where needed.
\end{remark}

First we present a lemma which asserts that the velocity characteristics are locally in time comparable to their initial condition.
The proof is given in \cite[Lemma B.2]{fehrman-gess-Wellposedness-of-Nonlinear-Diffusion-Equations-with-Nonlinear-Conservative-Noise}.

\begin{lemma}\label{lemma/velocity of characteristics comparable to initial data}
For each $T>0$ there exists $C=C(T,A,R_0)\geq 1$ such that, for each $(x,\xi)\in\R^d\times\R$ and $t\leq t_0\in[0,T]$,
\begin{equation}\label{lemma/velocity of characteristics comparable to initial data/formula 1}
    C^{-1}|\xi|\leq\left|\Pi_{t_0,t}^{x,\xi}\right|\leq C|\xi|. \nonumber
\end{equation}
Furthermore, there exists $C=C(T,A,R_0)>0$ such that, for each $(x,\xi)\in\R^d\times\R$ and $t\leq t_0\in[0,T]$, 
\begin{equation}\label{lemma/velocity of characteristics comparable to initial data/formula 2}
   \left|\nabla_{\!x}\Pi_{t_0,t}^{x,\xi}\right|\leq C \,t^{\alpha}(|\xi|\wedge 1). 
\end{equation}
\end{lemma}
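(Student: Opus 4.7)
The strategy rests on the structural hypothesis $\nabla_{\!x}\!\cdot\! A(\cdot,0)\equiv 0$ from \eqref{formula/assumption on A divergence 0 in 0}, which lets us factor $\nabla_{\!x}\!\cdot\! A(x,\xi)=\xi B(x,\xi)$ with $B(x,\xi):=\int_0^1\partial_\xi(\nabla_{\!x}\!\cdot\! A)(x,s\xi)\,ds\in C_b^{\gamma+1}$ by \eqref{formula/assumption on A smooth}. The velocity RDE in \eqref{formula/backward stochastic characteristics} thus reads
\begin{equation*}
    d\Pi_{t_0,t}^{x,\xi}=B(Y_{t_0,t}^{x,\xi},\Pi_{t_0,t}^{x,\xi})\,\Pi_{t_0,t}^{x,\xi}\circ dz_{t_0,t},\qquad \Pi_{t_0,t_0}^{x,\xi}=\xi,
\end{equation*}
i.e.\ linear in $\Pi$ with bounded (rough) coefficient.

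For the first assertion I would run a rough Gronwall / variation-of-constants argument on this linear-like RDE. Since $B$ is bounded and $d_\alpha(z,e)\leq R_0$ via \eqref{formula/paths z^epsilon are uniformly close to zero path}, the standard rough path estimates give $|\Pi_{t_0,t}^{x,\xi}|\leq e^{CR_0}|\xi|$ for some $C=C(T,A)$; the matching lower bound follows by symmetry, applying the same argument along the inverse (forward) flow via \eqref{formula/inverse relation for smooth characteristics}.

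For the gradient estimate I would differentiate \eqref{formula/backward stochastic characteristics} in $x$, obtaining the linear RDE
\begin{equation*}
    d(\nabla_{\!x}\Pi_{t_0,t}^{x,\xi})=\bigl[D_{\!x}(\nabla_{\!x}\!\cdot\! A)(Y,\Pi)\,\nabla_{\!x}Y+\partial_\xi(\nabla_{\!x}\!\cdot\! A)(Y,\Pi)\,\nabla_{\!x}\Pi\bigr]\circ dz_{t_0,t},
\end{equation*}
with initial datum $\nabla_{\!x}\Pi_{t_0,t_0}^{x,\xi}=0$. The key observation is that $\nabla_{\!x}\!\cdot\! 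A(\cdot,0)\equiv 0$ also forces $D_{\!x}(\nabla_{\!x}\!\cdot\! A)(\cdot,0)\equiv 0$, so part (i) yields $|D_{\!x}(\nabla_{\!x}\!\cdot\! A)(Y,\Pi)|\leq C|\Pi|\leq C|\xi|$, while the homogeneous coefficient is bounded by \eqref{formula/assumption on A smooth}. Since $|\nabla_{\!x}Y|\leq M$ uniformly by Proposition \ref{proposition/stability results for RDEs}, the forcing is $O(|\xi|)$, and rough Gronwall gives $|\nabla_{\!x}\Pi_{t_0,t}^{x,\xi}|\leq C|\xi|$ on $[0,T]$. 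For $|\xi|\geq 1$ the competing uniform bound $|\nabla_{\!x}\Pi|\leq M$ from Proposition \ref{proposition/stability results for RDEs} takes over, giving the factor $|\xi|\wedge 1$. The extra $t^\alpha$ factor (understood as $(t_0-t)^\alpha$) finally arises from the vanishing initial datum $\nabla_{\!x}\Pi_{t_0,t_0}^{x,\xi}=0$ combined with the $\alpha$-H\"older-in-time norm of $t\mapsto \nabla_{\!x}\Pi_{t_0,t}^{x,\xi}$ provided again by Proposition \ref{proposition/stability results for RDEs}.

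The main technical obstacle will be justifying the rough Gronwall steps rigorously, since $z$ is only $\alpha$-H\"older and the RDE for $\Pi$ is nonlinear (even if linear-like in $\Pi$). I would sidestep this by first running the whole argument for the smooth approximations $z^\epsilon$, where all estimates are classical ODE estimates and hold uniformly in $\epsilon$ by \eqref{formula/paths z^epsilon are uniformly close to zero path}, and then passing to the rough limit using the continuity of the It\^o--Lyons map from Proposition \ref{proposition/stability results for RDEs}.
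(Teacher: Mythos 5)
The paper does not prove this lemma itself; it cites Lemma B.2 of \cite{fehrman-gess-Wellposedness-of-Nonlinear-Diffusion-Equations-with-Nonlinear-Conservative-Noise}. So I evaluate your argument on its own merits, and there is a genuine gap in the proof of \eqref{lemma/velocity of characteristics comparable to initial data/formula 2}.

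Your argument produces two \emph{separate} upper bounds for $\nabla_{\!x}\Pi_{t_0,t}^{x,\xi}$: one of size $C|\xi|$ (from rough Gronwall with $O(|\xi|)$ forcing) and one of size $Ct^\alpha$ (from the vanishing initial datum together with the $\alpha$-Hölder norm in time from Proposition \ref{proposition/stability results for RDEs}.ii). Two separate upper bounds $a$ and $b$ for the same quantity only give $\min(a,b)$, not the product $ab$; in the regime where both $t^\alpha$ and $|\xi|$ are small, $\min(Ct^\alpha,C|\xi|)$ is strictly larger than $Ct^\alpha(|\xi|\wedge 1)$. The product is exactly what is used downstream (e.g.\ in \eqref{d.19}), so the weaker minimum does not suffice. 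In principle a Duhamel representation for the linear RDE can produce the product in one step, since the rough integral against $z$ on $[0,t]$ contributes the $t^\alpha$ and the $O(|\xi|)$ forcing contributes the $|\xi|$---but that requires bounding the controlled-rough-path norm of the forcing by $C|\xi|$, not just its sup norm, and your sketch instead sources the $t^\alpha$ from the Hölder norm of $\nabla_{\!x}\Pi$ itself, which does not see $|\xi|$. A related soft issue affects \eqref{lemma/velocity of characteristics comparable to initial data/formula 1}: the claim ``standard rough path estimates give $|\Pi|\le e^{CR_0}|\xi|$'' is a handwave, and your fallback (run classical Gronwall at the level of the smooth $z^\epsilon$ and pass to the limit) does not actually rescue it, because the classical Gronwall exponent $\|B\|_\infty\int|\dot z^\epsilon|$ diverges as $\epsilon\to 0$.

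Both assertions follow more cleanly from the mean value theorem in $\xi$ together with Proposition \ref{proposition/stability results for RDEs}, bypassing the Gronwall analysis entirely. The hypothesis $\nabla_{\!x}\!\cdot\! A(\cdot,0)\equiv 0$ forces $\Pi_{t_0,t}^{x,0}\equiv 0$, hence $\nabla_{\!x}\Pi_{t_0,t}^{x,0}\equiv 0$; moreover the backward flow is the identity at its initial time, so $\partial_\xi\nabla_{\!x}\Pi$ also vanishes there. Proposition \ref{proposition/stability results for RDEs}.ii with $k=2$ then gives $|\partial_\xi\nabla_{\!x}\Pi_{t_0,t}^{x,\eta}|\le Mt^\alpha$ uniformly in $\eta$, and the mean value theorem in $\xi$ yields $|\nabla_{\!x}\Pi_{t_0,t}^{x,\xi}|\le Mt^\alpha|\xi|$; taking the minimum with the direct Hölder bound $|\nabla_{\!x}\Pi_{t_0,t}^{x,\xi}|\le Mt^\alpha$ gives \eqref{lemma/velocity of characteristics comparable to initial data/formula 2}. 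For \eqref{lemma/velocity of characteristics comparable to initial data/formula 1}, the same device with $k=1$ (using $\Pi_{t_0,t}^{x,0}\equiv 0$ and $|\partial_\xi\Pi|\le M$) gives $|\Pi_{t_0,t}^{x,\xi}|\le M|\xi|$, and the lower bound follows by applying the identical argument to the inverse flow via \eqref{formula/inverse relation for smooth characteristics}. This uses only what Proposition \ref{proposition/stability results for RDEs} already provides and produces the required product directly.
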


\medskip
We now exploit Proposition \ref{proposition/stability results for RDEs} to study the continuity of the characteristics with respect to the initial data.
First, recalling \eqref{formula/inverse relation for smooth characteristics}, we observe that for each $(x,\xi),(x',\xi')\in\R^d\times\R$, each $T\geq0$ and each $s\leq t_0\in [0,T]$,
\begin{align}
    \begin{aligned}
    |x-x'|=&\left|X_{t_0-s,t_0}^{Y^{x,\xi}_{t_0,s},\Pi^{x,\xi}_{t_0,s}}-X_{t_0-s,t_0}^{Y^{x',\xi'}_{t_0,s},\Pi^{x',\xi'}_{t_0,s}}\right|
    \\
    \leq&
    \sup_{(y,\eta)\in\R^d\times\R}\left|D_{\!y}X_{t_0-s,t_0}^{y,\eta}\right|\left|Y^{x,\xi}_{t_0,s}-Y^{x',\xi'}_{t_0,s}\right|
    +\sup_{(y,\eta)\in\R^d\times\R}\left|\partial_{\eta}X_{t_0-s,t_0}^{y,\eta}\right|\left|\Pi^{x,\xi}_{t_0,s}-\Pi^{x',\xi'}_{t_0,s}\right|.
    \end{aligned}\nonumber
\end{align}
An identical estimate holds for $|\xi-\xi'|$.
Therefore, assumption \eqref{formula/assumption on A smooth} and Proposition \ref{proposition/stability results for RDEs} with $k=1$ imply that, for a constant $C=C(T,A,R_0)$,
\begin{equation}\label{formula/distance between points in terms of distance between characteristics}
    |x-x'|+|\xi-\xi'|\leq C\left(\left|Y^{x,\xi}_{t_0,s}-Y^{x',\xi'}_{t_0,s}\right|+\left|\Pi^{x,\xi}_{t_0,s}-\Pi^{x',\xi'}_{t_0,s}\right|\right).
\end{equation}

In turn, we can use this bound to estimate the difference between derivatives of the characteristics starting from distinct points in terms of the characteristics themselves.
First, notice that the equalities $D^2_{\!(y,\eta)}Y_{t_0,0}^{y,\eta}=0$ and $D^2_{\!(y,\eta)}\Pi_{t_0,0}^{y,\eta}=0$, which follow immediately from the initial conditions, and Proposition \ref{proposition/stability results for RDEs} with $k=2$ imply that, for $C=C(T,A,R_0)$,
\begin{equation}\label{formula/bound D^2Y and D^2Pi}
    \sup_{x\in\R^d,\xi\in\R}\left|D^2_{(x,\xi)}Y^{x,\xi}_{t_0,s}\right|+\left|D^2_{(x,\xi)}\Pi^{x,\xi}_{t_0,s}\right|
    \leq
    C|s|^{\alpha}\quad \forall s\leq t_0\in[0,T].
\end{equation}
Then, for the same constant $C$, we compute
\begin{align}
    \begin{aligned}
    |D_{\!x}Y_{t_0,s}^{x,\xi}-D_{\!x'}Y_{t_0,s}^{x',\xi'}|
    &\leq
    \sup_{y\in\R^d,\eta\in\R}\left(\left|D^2_{\!y}Y_{t_0,s}^{y,\eta}\right|+\left|\partial_{\eta}D_{\!y}Y_{t_0,s}^{y,\eta}\right|\right)(|x-x'|+|\xi-\xi'|)
    \\
    &
    \leq
    C|s|^{\alpha}(|x-x'|+|\xi-\xi'|)
    \\
    &
    \leq
    C|s|^{\alpha}\left(\left|Y^{x,\xi}_{t_0,s}\!\!-\!Y^{x',\xi'}_{t_0,s}\right|\!+\!\left|\Pi^{x,\xi}_{t_0,s}\!\!-\!\Pi^{x',\xi'}_{t_0,s}\right|\right).
    \end{aligned}
\end{align}
Identical computations holds for the other derivatives of $Y^{x,\xi}_{t_0,s}$ and $\Pi_{t_0,s}^{x,\xi}$ and we conclude that, for $C=C(T,A,R_0)$,
\begin{align}\label{formula/distance DY and DPi in terms of distance between characteristics}
    |D_{\!x}Y_{t_0,s}^{x,\xi}-D_{\!x'}Y_{t_0,s}^{x',\xi'}|
    +
    |\partial_{\xi}Y_{t_0,s}^{x,\xi}-\partial_{\xi'}Y_{t_0,s}^{x',\xi'}|
    +
    |\nabla_{\!x}\Pi_{t_0,s}^{x,\xi}-\nabla_{\!x'}\Pi_{t_0,s}^{x',\xi'}|
    +
    |\partial_{\xi}\Pi_{t_0,s}^{x,\xi}-\partial_{\xi'}\Pi_{t_0,s}^{x',\xi'}|
    \\
    \leq
    C|s|^{\alpha}\left(\left|Y^{x,\xi}_{t_0,s}\!\!-\!Y^{x',\xi'}_{t_0,s}\right|\!+\!\left|\Pi^{x,\xi}_{t_0,s}\!\!-\!\Pi^{x',\xi'}_{t_0,s}\right|\right).
\end{align}

We conclude this section by examining the consequences of the conditions $\partial_{\xi}A_{|\partial Q\times\R}\equiv0$ and $D_x\partial_{\xi}A_{|\partial Q\times\R}\equiv0$.
Then we exploit the continuity properties from Proposition \ref{proposition/stability results for RDEs} to obtain information on the behaviour of the space characteristics near the boundary.
These estimates are crucial in the proof of Theorem \ref{theorem/uniqueness of pathwise kinetic solutions for nonnegative initial data} to tackle the boundary error terms coming from the introduction of the cutoff and the transport along characteristics.

The assumption $\partial_{\xi}A(x,\xi)_{|\partial Q\times\R}\equiv 0$, combined with the first line of \eqref{formula/forward stochastic characteristics} and \eqref{formula/backward stochastic characteristics}, ensures that space characteristics starting from the boundary do not move. That is, for each $t_0\geq0$,
\begin{equation}\label{formula/space characteristics stand still on the boundary}
   \text{if \, $(x,\xi)\in\partial Q\times\R$,\, then}\,\,X^{x,\xi}_{t_0,t}=Y^{x,\xi}_{t_0,s}=x\,\,\, \text{for all $t\geq 0$ and all $s\in[0,t_0]$}. 
\end{equation}
The uniqueness of solutions then guarantees that space characteristics cannot cross the boundary.
Thus, when starting from $x\in Q$ they never leave the domain.
That is, for every $t_0\geq0$, 
\begin{equation} 
    \text{if \,\,$(x,\xi)\in Q\times\R$, \, then }\,X^{x,\xi}_{t_0,t}\in Q\,\,\,\text{for all $t\geq 0$, \, and }\,\, Y^{x,\xi}_{t_0,s}\in Q\,\,\, \text{for all $s\in[0,t_0]$}. 
\end{equation}
In fact, owing to the smoothness hypothesis \eqref{formula/assumption on A smooth} and Proposition \ref{proposition/stability results for RDEs} with $k=1$, more is true: the closer to $\partial Q$ is the space initial data $x\in \R^d$, the slower the associated space characteristic moves.
Rigorously, given any $x\in\R^d$, let $x^*\in\partial Q$ be such that $|x-x^*|=\dist(x,\partial Q)$.
Then, for any $\xi\in\R$, any $T\geq0$ and any $s\leq t_0\in[0,T]$, for $C=C(T,A,R_0)$, we compute
\begin{align}\label{formula/characteristics move slower the closer they get to the boundary}
\begin{aligned}
    \left|Y_{t_0,s}^{x,\xi}-x\right|\leq\left|Y_{t_0,s}^{x,\xi}-x^*\right|+\left|x^*-x\right|
    &=
    \left|Y_{t_0,s}^{x,\xi}-Y_{t_0,s}^{x^*,\xi}\right|+\left|x^*-x\right|
    \\
    &\leq
    \sup_{(y,\eta)\in\R^d\times\R}\!\!\left|D_{\!x}Y^{y,\eta}_{t_0,s}\right||x-x^*|\,+|x-x^*|
    \\
    &\leq
    C\,|x-x^*|=C\,\dist(x,\partial Q).
\end{aligned}
\end{align}
Here we used $Y_{t_0,s}^{x^*\xi}\equiv x^*$ from \eqref{formula/space characteristics stand still on the boundary}.
An identical computation holds for $X^{x,\xi}_{t_0,t}$.

The constancy of the space characteristics along the boundary has consequences on their velocity and space derivatives.
Indeed, the smoothness of characteristics combined with \eqref{formula/space characteristics stand still on the boundary} implies that the $\xi$-derivative of space characteristics vanishes on the boundary.
Namely, for each $t_0\in[0,\infty)$,
\begin{equation}\label{formula/velocity derivative of space characteristics vanishes on the boundary}
    \text{if $(x,\xi)\in\partial Q\times\R$,\, then}\,\,\partial_{\xi}X^{x,\xi}_{t_0,t}=\partial_{\xi}Y^{x,\xi}_{t_0,s}=0\quad\text{for all $t\geq 0$ and all $s\in[0,t_0]$.}
\end{equation}

Moreover, Proposition \ref{proposition/stability results for RDEs} allows us to derive \eqref{formula/backward stochastic characteristics} with respect to $x$, and to show that, for any $(x_0,\xi_0)\in\R^d\times\R$, the derivative $D_{\!x}Y_{t_0,s}^{x_0,\xi_0}$ solves the rough differential equation
\begin{align}[left ={\empheqlbrace}]\label{formula/RDE for x-derivative of stochastic backward space characteristics}
    \begin{aligned}
        &dD_{\!x}Y_{t_0,s}^{x_0,\xi_0}\!\!=\!\!-\!\left(\!D_{\!x}\partial_{\xi}A(Y_{t_0,s}^{x_0,\xi_0},\Pi_{t_0,s}^{x_0,\xi_0}) D_{\!x}Y_{t_0,s}^{x_0,\xi_0}\!\!+\!\partial^2_{\xi}A(Y_{t_0,s}^{x_0,\xi_0},\Pi_{t_0,s}^{x_0,\xi_0})\nabla_{\!x}\Pi_{t_0,s}^{x_0,\xi_0}\!\right)\!\circ\! dz_{t_0,s}&\!\!\!\text{in } (0,t_0),
        \\
        &D_{\!x}Y_{t_0,0}^{x_0,\xi_0}=\id_d.
    \end{aligned}
\end{align}
When $x_0\in\partial Q$, we have $Y_{t_0,s}^{x_0,\xi_0}= x_0$ for all $s\in[0,t_0]$.
Then, using $\partial_{\xi}A_{|\partial Q\times\R}\equiv 0$, which trivially implies also $\partial^2_{\xi}A_{|\partial Q\times\R}\equiv 0$, and the further assumption $D_x\partial_{\xi}A_{|\partial Q\times\R}\equiv 0$ from \eqref{formula/assumption on A xi derivative zero on the boundary}, the previous equation simply reduces to
\begin{align}
    dD_{\!x}Y_{t_0,s}^{x_0,\xi_0}\!\!=0\,\,\text{in } (0,t_0),\quad D_{\!x}Y_{t_0,0}^{x_0,\xi_0}=\id_d. \nonumber
\end{align}
We conclude that, for each $t_0\geq0$, 
\begin{equation}\label{formula/x-derivative of space characteristics on the boundary}
    \text{if $(x_0,\xi_0)\in\partial Q\times\R$,\, then}\,\,D_xY_{t_0,s}^{x_0,\xi_0}\equiv \id_d\quad\text{for every } s\in[0,t_0].
\end{equation}

Finally, we use these results and the stability properties from Proposition \ref{proposition/stability results for RDEs} with $k=2$ to obtain estimates on the derivatives of the space characteristics near the boundary.
Given any $x\in\R^d$, let $x^*\in\partial Q$ be such that $|x-x^*|=\dist(x,\partial Q)$.
Then, using \eqref{formula/bound D^2Y and D^2Pi} and \eqref{formula/velocity derivative of space characteristics vanishes on the boundary}, for any $\xi\in\R$, any $T\geq0$ and any $s\leq t_0\in[0,T]$, we compute for the the $\xi$-derivative
\begin{align}\label{formula/DxiY near the boundary}
\begin{aligned}
    \left|\partial_{\xi}Y_{t_0,s}^{x,\xi}\right|=\left|\partial_{\xi}Y_{t_0,s}^{x,\xi}-\partial_{\xi}Y_{t_0,s}^{x^*,\xi}\right|
    &\leq
    \sup_{y\in\R^d}\left|D_{\!x}\partial_{\xi}Y^{y,\xi}_{t_0,s}\right||x-x^*|
    \\
     &\leq
    C\,|s|^{\alpha}\dist(x,\partial Q),
\end{aligned}
\end{align}
for a constant $C=C(T,A,R_0)$.
As regards the $x$-derivative, the same argument using \eqref{formula/bound D^2Y and D^2Pi} and \eqref{formula/x-derivative of space characteristics on the boundary} implies that, for $C=C(T,A,R_0)$,
\begin{align}\label{formula/DxY - id near the boundary}
\begin{aligned}
    \left|D_{\!x}Y_{t_0,s}^{x,\xi}-\id_d\right|
    =
    \left|D_xY_{t_0,s}^{x,\xi}-D_xY_{t_0,s}^{x^*,\xi}\right|
    \leq
    C\,|s|^{\alpha}\dist(x,\partial Q).
\end{aligned}
\end{align}


\section*{Acknowledgments}
The author has no competing interests.
This research has been supported by the EPSRC Centre for Doctoral Training in Mathematics of Random Systems: Analysis, Modelling and Simulation (EP/S023925/1).
The author would like to thank his PhD advisors Benjamin Fehrman (with the support of EPSRC Early Career Fellowship EP/V027824/1) and Jos\'e Carrillo for the unbelievable support, both mathematical and personal.
This work would have not been possible without them.


\bibliographystyle{alpha}

\begin{thebibliography}{CDFO13}

\bibitem[BlRN79]{bardos_firs_order_quasilinear}
C.~Bardos, A.~Y. le~Roux, and J.-C. N\'{e}d\'{e}lec.
\newblock First order quasilinear equations with boundary conditions.
\newblock {\em Comm. Partial Differential Equations}, 4(9):1017--1034, 1979.

\bibitem[BPR16]{barbu-daprato-rockner-stochastic-porous-media}
Viorel Barbu, Giuseppe~Da Prato, and Michael R{\"o}ckner.
\newblock {\em Stochastic Porous Media Equations}, volume 2163 of {\em Lecture
  Notes in Mathematics}.
\newblock Springer, 2016.

\bibitem[BR14]{barbu-rockner-an-operatorial-approach}
Viorel Barbu and Michael Röckner.
\newblock An operatorial approach to stochastic partial differential equations
  driven by linear multiplicative noise.
\newblock {\em Journal of the European Mathematical Society}, 17, 02 2014.

\bibitem[BR17]{barbu-rockner-nonlinear-fokker-planck-equations-driven}
Viorel Barbu and Michael Röckner.
\newblock Nonlinear fokker-planck equations driven by gaussian linear
  multiplicative noise.
\newblock {\em Journal of Differential Equations}, 08 2017.

\bibitem[BVW15]{bauzet-vallet-wittbold-a-degenerate-parabolic-hyperbolic}
Caroline Bauzet, Guy Vallet, and Petra Wittbold.
\newblock {A degenerate parabolic-hyperbolic Cauchy problem with a stochastic
  force}.
\newblock {\em {Journal of Hyperbolic Differential Equations}},
  12(03):501--533, September 2015.

\bibitem[CDFO13]{crisan-diehl-friz-oberhauser}
D.~Crisan, J.~Diehl, P.~K. Friz, and H.~Oberhauser.
\newblock Robust filtering: Correlated noise and multidimensional observation.
\newblock {\em The Annals of Applied Probability}, 23(5), Oct 2013.

\bibitem[CG19]{Coghi-gess}
Michele Coghi and Benjamin Gess.
\newblock Stochastic nonlinear fokker–planck equations.
\newblock {\em Nonlinear Analysis}, 2019.

\bibitem[CP03]{chen-perthame-Well-posedness-for-non-isotropic-degenerate-parabolic-hyperbolic-equations}
Gui-Qiang Chen and Benoit Perthame.
\newblock Well-posedness for non-isotropic degenerate parabolic-hyperbolic
  equations.
\newblock {\em Annales de l'Institut Henri Poincaré, Analyse Non Linéaire},
  20:645--668, 07 2003.

\bibitem[CSZ19]{Cornalba-A-Regularized-Dean-Kawasaki-Model:-Derivation-and-Analysis}
Federico Cornalba, Tony Shardlow, and Johannes Zimmer.
\newblock A regularized dean-kawasaki model: Derivation and analysis.
\newblock {\em SIAM J. Math. Anal.}, 51:1137--1187, 2019.

\bibitem[Dea96]{dean-langevin-equation}
David~S Dean.
\newblock Langevin equation for the density of a system of interacting langevin
  processes.
\newblock {\em Journal of Physics A: Mathematical and General},
  29(24):L613–L617, Dec 1996.

\bibitem[DG17]{dareiotis-gess-supremum-estimates}
Konstantinos Dareiotis and Benjamin Gess.
\newblock Supremum estimates for degenerate, quasilinear stochastic partial
  differential equations.
\newblock {\em Annales de l'Institut Henri Poincaré, Probabilités et
  Statistiques}, 55, 12 2017.

\bibitem[DG20]{Dareioris-gess-Nonlinear-diffusion-equations-with-nonlinear-gradient-noise}
Konstantinos Dareiotis and Benjamin Gess.
\newblock Nonlinear diffusion equations with nonlinear gradient noise.
\newblock {\em Electronic Journal of Probability}, 25, 02 2020.

\bibitem[DHV16]{Debussche-Hofmanova-Vovelle-degenerate-parabolic-stochastic}
Arnaud Debussche, Martina Hofmanova, and Julien Vovelle.
\newblock {Degenerate Parabolic Stochastic Partial Differential Equations:
  Quasilinear case}.
\newblock {\em {Annals of Probability}}, 44(3):1916--1955, 2016.

\bibitem[DLN01]{dirr-novaga-luckaus-a-stochastic-selection}
Nicolas Dirr, Stephan Luckhaus, and Matteo Novaga.
\newblock A stochastic selection principle in case of fattening for curvature
  flow.
\newblock {\em Calculus of Variations}, 13:405--425, 12 2001.

\bibitem[DSZ15]{dirr-stamakis-zimmer}
Nicolas Dirr, Marios~Georgios Stamatakis, and Johannes Zimmer.
\newblock Entropic and gradient flow formulations for nonlinear diffusion.
\newblock {\em Journal of Mathematical Physics}, 57, 08 2015.

\bibitem[ESR12]{sarhir-renesse-ergodicity-of-stochastic-curve-shortening}
Abdelhadi Es-Sarhir and Max Renesse.
\newblock Ergodicity of stochastic curve shortening flow in the plane.
\newblock {\em SIAM J. Math. Analysis}, 44:224--244, 01 2012.

\bibitem[FG19]{fehrman-gess-Wellposedness-of-Nonlinear-Diffusion-Equations-with-Nonlinear-Conservative-Noise}
Benjamin Fehrman and Benjamin Gess.
\newblock Well-posedness of nonlinear diffusion equations with nonlinear,
  conservative noise.
\newblock {\em Archive for Rational Mechanics and Analysis}, 233, 07 2019.

\bibitem[FG20]{fehrman-gess-large-deviations-for-conservative}
Benjamin Fehrman and Benjamin Gess.
\newblock Large deviations for conservative stochastic pde and non-equilibrium
  fluctuations.
\newblock {\em pre-print}, 2020.

\bibitem[FG21a]{fehrman-gess-Path-by-path-well-posedness-of-nonlinear-diffusion-equations-with-multiplicative-noise}
Benjamin Fehrman and Benjamin Gess.
\newblock Path-by-path well-posedness of nonlinear diffusion equations with
  multiplicative noise.
\newblock {\em Journal de Mathématiques Pures et Appliquées}, 148, 01 2021.

\bibitem[FG21b]{fehrman-gess-Well-posedness-of-the-Dean-Kawasaki-and-the-nonlinear-Dawson-Watanabe-equation-with-correlated-noise}
Benjamin Fehrman and Benjamin Gess.
\newblock Well-posedness of the dean-kawasaki and the nonlinear dawson-watanabe
  equation with correlated noise, 2021.

\bibitem[FH14]{fritz-hairer-a-course-on-rough-apths}
Peter~K. Friz and Martin Hairer.
\newblock {\em A Course on Rough Paths}.
\newblock Universitext. Springer, 2014.

\bibitem[Fla95]{Flandoli-regularity-theory-and-stochastic-flows-for-parabolic-spdes}
Franco Flandoli.
\newblock {\em Regularity theory and stochastic flows for parabolic SPDEs}.
\newblock Volume 9 of Stochastic Monographs. Gordon and Breach Science
  Publishers, 1995.

\bibitem[FPV88]{ferrari-presutti-vares--nonequilibrium-fluctuations-for-a-zero}
Pablo~A. Ferrari, Errico Presutti, and M.~E. Vares.
\newblock Non equilibrium fluctuations for a zero range process.
\newblock {\em Annales de l'I.H.P. Probabilit\'es et statistiques},
  24(2):237--268, 1988.

\bibitem[FV10]{friz-victoir-multidimensiona-stochastic-processes-as-rough-paths}
Peter Friz and Nicolas Victoir.
\newblock {\em Multidimensional Stochastic Processes as Rough Paths: Theory and
  Applications}.
\newblock Cambridge Studies in Advanced Mathematics (120). Cambridge University
  Press, 2010.

\bibitem[Ges12]{gess-Random-Attractors-for-Degenerate-Stochastic-Partial-Differential-Equations}
Benjamin Gess.
\newblock Random attractors for degenerate stochastic partial differential
  equations.
\newblock {\em Journal of Dynamics and Differential Equations}, 25, 06 2012.

\bibitem[GH16]{gess-hofmanova-weel-posedness-and-regularity-for-quasilinear}
Benjamin Gess and Martina Hofmanová.
\newblock Well-posedness and regularity for quasilinear degenerate
  parabolic-hyperbolic spde.
\newblock {\em Annals of Probability}, 46, 11 2016.

\bibitem[GMR06]{grun-thin-film-flow}
Günther Grün, Klaus Mecke, and Markus Rauscher.
\newblock Thin-film flow influenced by thermal noise.
\newblock {\em Journal of Statistical Physics}, 122:1261--1291, 01 2006.

\bibitem[GS14a]{gess-souganidis-Long-Time-Behavior-Invariant-Measures-and-Regularizing-Effects-for-Stochastic-Scalar-Conservation-Laws}
Benjamin Gess and Panagiotis Souganidis.
\newblock Long-time behavior, invariant measures, and regularizing effects for
  stochastic scalar conservation laws.
\newblock {\em Communications on Pure and Applied Mathematics}, 70, 11 2014.

\bibitem[GS14b]{gess-souganidis-Scalar-conservation-laws-with-multiple-rough-fluxes}
Benjamin Gess and Panagiotis Souganidis.
\newblock Scalar conservation laws with multiple rough fluxes.
\newblock {\em Communications in Mathematical Sciences}, 13, 06 2014.

\bibitem[GS16]{gess-souganidis-stochastic-nonisotropic}
Benjamin Gess and Panagiotis~E. Souganidis.
\newblock Stochastic non-isotropic degenerate parabolic–hyperbolic equations.
\newblock {\em Stochastic Processes and their Applications}, 127:2961--3004,
  2016.

\bibitem[Kaw98]{kawasaki-microscopic-analyses}
Kyozi Kawasaki.
\newblock Microscopic analyses of the dynamical density functional equation of
  dense fluids.
\newblock {\em Journal of Statistical Physics}, 93:527--546, 11 1998.

\bibitem[Kry03]{krylov_brownian_trajectory}
N.~V. Krylov.
\newblock Brownian trajectory is a regular lateral boundary for the heat
  equation.
\newblock {\em SIAM J. Math. Anal.}, 34(5):1167--1182, 2003.

\bibitem[KX99]{kurtz-xiong-particle-representation}
Thomas Kurtz and Jie Xiong.
\newblock Particle representations for a class of nonlinear spdes.
\newblock {\em Stochastic Processes and their Applications}, 83, 05 1999.

\bibitem[LL06]{lasry-lions-jeux-a-champ-moyenne-la-cas-stationnaire}
Jean-Michel Lasry and Pierre-Louis Lions.
\newblock Jeux à champ moyen. i – le cas stationnaire.
\newblock {\em Comptes Rendus Mathematique - C R MATH}, 343:619--625, 11 2006.

\bibitem[LL07]{lasry-lions-mean-field-games}
Jean-Michel Lasry and Pierre-Louis Lions.
\newblock Mean field games.
\newblock {\em Japanese Journal of Mathematics}, 2:229--260, 03 2007.

\bibitem[LPS13]{lions-perthame-souganidis-Scalar-conservation-laws-with-rough-(stochastic)-fluxes}
Pierre~Louis Lions, Beno{\^i}t Perthame, and Panagiotis~E. Souganidis.
\newblock {Scalar conservation laws with rough (stochastic) fluxes}.
\newblock {\em {Stochastics and Partial Differential Equations: Analysis and
  Computations}}, 1(4):664--686, November 2013.

\bibitem[LPS14]{lions-perthame-souganidis-Scalar-conservation-laws-with-rough-(stochastic)-fluxes-the-spatially-dependent-case}
Pierre~Louis Lions, Beno{\^i}t Perthame, and Panagiotis~E. Souganidis.
\newblock {Scalar conservation laws with rough (stochastic) fluxes; the
  spatially dependent case}.
\newblock {\em {Stochastics and Partial Differential Equations: Analysis and
  Computations}}, 2(4):517--538, December 2014.

\bibitem[LR15]{liu-rockner-Stochastic-Partial-Differential-Equations:-An-Introduction}
Wei Liu and Michael Röckner.
\newblock {\em Stochastic Partial Differential Equations: an Introduction}.
\newblock Universitext. Springer, 2015.

\bibitem[LS98a]{Lions-souganidis-Fully-nonlinear-stochastic-partial-differential-equations}
Pierre-Louis Lions and Panagiotis~E. Souganidis.
\newblock Fully nonlinear stochastic partial differential equations.
\newblock {\em Comptes Rendus de l'Académie des Sciences - Series I -
  Mathematics}, 326:1085--1092, 1998.

\bibitem[LS98b]{Lions-souganidis-Fully-nonlinear-stochastic-partial-differential-equations:-non-smooth-equations-and-applications}
Pierre-Louis Lions and Panagiotis~E. Souganidis.
\newblock Fully nonlinear stochastic partial differential equations: non-smooth
  equations and applications.
\newblock {\em Comptes Rendus de l'Académie des Sciences - Series I -
  Mathematics}, 327:735--741, 1998.

\bibitem[LS00a]{lions-souganidis-Fully-nonlinear-stochastic-pde-with-semilinear-stochastic-dependence}
Pierre-Louis Lions and Panagiotis Souganidis.
\newblock Fully nonlinear stochastic pde with semilinear stochastic dependence.
\newblock {\em Comptes Rendus de l'Académie des Sciences - Series I -
  Mathematics}, 331:617--624, 2000.

\bibitem[LS00b]{Lions-souganidis-uniqueness-of-solutions-of-Fully-nonlinear-stochastic-partial-differential-equations}
Pierre-Louis Lions and Panagiotis Souganidis.
\newblock Uniqueness of weak solutions of fully nonlinear stochastic partial
  differential equations.
\newblock {\em Comptes Rendus de l'Académie des Sciences - Series I -
  Mathematics}, 331:783–790, 2000.

\bibitem[LS02]{Lions-souganidis-viscosity-solutions-of-of-Fully-nonlinear-stochastic-partial-differential-equations}
P.~Lions and Panagiotis Souganidis.
\newblock Viscosity solutions of fully nonlinear stochastic partial
  differential equations.
\newblock {\em Sūrikaisekikenkyūsho Kōkyūroku}, 1287, 01 2002.

\bibitem[Lyo91]{lyons-on-the-nonexistence-of-path-integrals}
T.~Lyons.
\newblock On the non-existence of path integrals.
\newblock {\em Proceedings of the Royal Society of London. Series A:
  Mathematical and Physical Sciences}, 432:281 -- 290, 1991.

\bibitem[Lyo98]{Lyons-differential-equations-driven-by-rough-signals}
Terry~J. Lyons.
\newblock Differential equations driven by rough signals.
\newblock {\em Revista Matemática Iberoamericana}, 14(2):215--310, 1998.

\bibitem[MT99]{marconi-tarazona-dynamical-density-functional}
Umberto Marini~Bettolo Marconi and Pedro Tarazona.
\newblock Dynamic density functional theory of fluids.
\newblock {\em The Journal of Chemical Physics}, 110(16):8032–8044, Apr 1999.

\bibitem[MZZ08]{mohammed-the-stable-manifold-theorem}
Salah-Eldin Mohammed, Tusheng Zhang, and Huaizhong Zhao.
\newblock The stable manifold theorem for semilinear stochastic evolution
  equations and stochastic partial differential equations ii: Existence of
  stable and unstable manifolds.
\newblock {\em Memoirs of the American Mathematical Society}, 196:1--105, 2008.

\bibitem[Per02]{perthame-kinetic-formulation-of-conservation-laws}
B.~Perthame.
\newblock {\em Kinetic Formulation of Conservation Laws}.
\newblock Volume 21 of Oxford Lecture Series in Mathematics and its
  Applications. Oxford University Press, 2002.

\bibitem[RM16]{munteanu-rockner-the-total-variation-flow}
Michael Rockner and Ionuţ Munteanu.
\newblock The total variation flow perturbed by gradient linear multiplicative
  noise.
\newblock {\em Infinite Dimensional Analysis, Quantum Probability and Related
  Topics}, 21, 05 2016.

\bibitem[Sim86]{simon-Compact-sets-in-the-spaceLp}
Jacques Simon.
\newblock {\em Compact sets in the space $L^p(0,T;B)$}, volume 146.
\newblock Annali di Matematica Pura ed Applicata, 01 1986.

\bibitem[SY04]{souganidis-yip-uniqueness-of-motion-by-mean-curvature}
Panagiotis Souganidis and Nung~Kwan Yip.
\newblock Uniqueness of motion by mean curvature perturbed by stochastic noise.
\newblock {\em Annales de l'Institut Henri Poincaré, Analyse Non Linéaire},
  21:1--23, 02 2004.

\bibitem[T{\"o}l18]{Tolle-estimates-for-nonlinear}
J.M. T{\"o}lle.
\newblock Estimates for nonlinear stochastic partial differential equations
  with gradient noise via dirichlet forms.
\newblock {\em Springer Proceedings in Mathematics and Statistics},
  229:249--262, 2018.

\end{thebibliography}

\end{document}